\author{Geoffrey Powell}
\title[Hairy graph complexes]{Cyclic operads, Koszul complexes,  and hairy graph complexes}
\address{Univ Angers, CNRS, LAREMA, SFR MATHSTIC, F-49000 Angers, France}
\email{Geoffrey.Powell@math.cnrs.fr}
\urladdr{https://math.univ-angers.fr/~powell/}
\date{}
\keywords{}
\subjclass[2000]{}
\newtheorem{THM}{Theorem}
\newtheorem{PROP}[THM]{Proposition}
\newtheorem{thm}{Theorem}[section]
\newtheorem{prop}[thm]{Proposition}
\newtheorem{cor}[thm]{Corollary}
\newtheorem{lem}[thm]{Lemma}
\theoremstyle{definition}
\newtheorem{defn}[thm]{Definition}
\newtheorem{exam}[thm]{Example}
\theoremstyle{remark}
\newtheorem{rem}[thm]{Remark}
\newtheorem*{rem*}{Remark}
\newtheorem{nota}[thm]{Notation}
\newtheorem{hyp}[thm]{Hypothesis}
\newcommand{\f}{\mathcal{F}}
\renewcommand{\phi}{\varphi}
\renewcommand{\hom}{\mathrm{Hom}}
\newcommand{\sym}{\mathfrak{S}}
\newcommand{\fs}{{\mathsf{FS}}}
\newcommand{\kmod}{\mathtt{Mod}_\kk}
\newcommand{\calc}{\mathcal{C}}
\newcommand{\nat}{\mathbb{N}}
\newcommand{\zed}{\mathbb{Z}}
\newcommand{\rat}{\mathbb{Q}}
\newcommand{\ext}{\mathrm{Ext}}
\newcommand{\op}{^\mathrm{op}}
\newcommand{\ob}{\mathrm{Ob}\hspace{2pt}}
\newcommand{\fb}{\mathsf{FB}}
\newcommand{\finj}{{\mathsf{FI}}}
\newcommand{\id}{\mathrm{Id}}
\newcommand{\triv}{\mathsf{triv}}
\newcommand{\sgn}{\mathsf{sgn}}
\newcommand{\aut}{\mathrm{Aut}}
\newcommand{\fin}{\mathsf{FA}}
\newcommand{\n}{\mathbf{n}}
\newcommand{\kk}{\mathbbm{k}}
\newcommand{\dash}{\text{-}}
\newcommand{\modules}{\mathsf{mod}}
\newcommand{\ub}{\mathsf{ub}}
\newcommand{\db}{\mathsf{db}}
\newcommand{\vo}{\mathscr{V}_{\mathsf{O}}}
\newcommand{\vsp}{\mathscr{V}_{\mathsf{Sp}}}
\newcommand{\cala}{\mathscr{A}}
\newcommand{\calb}{\mathscr{B}}
\newcommand{\g}{\mathfrak{g}}
\newcommand{\tor}{\mathrm{Tor}}
\newcommand{\tw}{_{(+,-)}}
\newcommand{\fiord}{\finj^{\mathsf{ord}}}
\newcommand{\dub}{\overrightarrow{\ub}}
\newcommand{\dubord}{\dub^\mathsf{ord}}
\newcommand{\m}{\mathbf{m}}
\newcommand{\domega}{\mathfrak{d}}
\newcommand{\cpd}{\mathscr{C}}
\newcommand{\sfb}{S_\odot}
\newcommand{\lfb}{\Lambda_\odot}
\newcommand{\copds}{\mathsf{CyclOpd}}
\newcommand{\nuco}{\copds^{\mathrm{nu}}}
\newcommand{\kdbmm}{(\kk \db)_{(-;-)}}
\newcommand{\kubg}{(\kk\ub)_{(\pm; \mp)}}
\newcommand{\torskubg}{\kubg\dash\mathbf{Tors}}
\newcommand{\loc}{\mathbf{\pi}}
\newcommand{\sat}{\mathbf{s}}
\newcommand{\tors}{\mathsf{tors}\ }
\newcommand{\sbf}{\mathsf{b}}
\newcommand{\schur}{\mathbf{S}}
\newcommand{\ogp}{\mathbf{O}}
\newcommand{\spgp}{\mathbf{Sp}}
\newcommand{\rep}{\mathrm{Rep}}
\newcommand{\ot}{T_\mathsf{O}}
\newcommand{\spt}{T_\mathsf{Sp}}
\newcommand{\falg}{\f^\mathrm{alg}}
\newcommand{\stabvo}{\mathrm{Stab}_\mathsf{O}}
\newcommand{\stabvsp}{\mathrm{Stab}_\mathsf{Sp}}
\newcommand{\ocx}{\mathfrak{C}_\mathsf{O}}
\newcommand{\spcx}{\mathfrak{C}_\mathsf{Sp}}
\numberwithin{equation}{section}
\begin{document}

\begin{abstract}
In this paper, we revisit the construction of the hairy graph complexes associated to a cyclic operad, by exploiting modules over the appropriate twisted $\kk$-linearization of the downward Brauer category (working over a field $\kk$ of characteristic zero). The different flavours (even or odd) of complexes appear as forms of Koszul complexes; the Koszul property of the $\kk$-linear category provides an elegant homological interpretation of their homology.

This approach allows a second form of Koszul complex to enter the picture. For the `even' flavour, this corresponds to a precursor of the Chevalley-Eilenberg complex of the Conant-Vogtmann Lie algebra associated to a cyclic operad and a symplectic vector space (generalizing Kontsevich's Lie algebras). Again, the cohomology of the  Koszul complex has an elegant interpretation.  

This sheds light on the relationship between the unstable case and Kontsevich's identification (generalized by Conant and Vogtmann) of the homology in the infinite-dimensional case with a form of graph homology (in the even case).  We observe that this is already of interest in the case of an algebra with involution, viewed as a cyclic operad. 
\end{abstract}

\maketitle

\section{Introduction}
\label{sect:intro}

For $(B, \sigma)$ a unital $\kk$-algebra with involution over a field $\kk$ of characteristic zero and $(V,\omega)$ a  symplectic $\kk$-vector space, one has the sub Lie algebra of symmetric matrices 
\[
sp_{(V,\omega)} (B,\sigma) \subset gl _V (B).
\]
(see \cite[Chapter 10.5]{MR1600246}, for example).
One can  stabilize with respect to $(V, \omega)$ to obtain the infinite dimensional Lie algebra $sp(B,\sigma)$. 

For $A$ a unital associative $\kk$-algebra and $gl(A)$ the corresponding Lie algebra, the  Loday-Quillen-Tsygan theorem identifies the Lie algebra homology  $H^{\mathsf{CE}} _* (gl(A))$ in terms of the cyclic homology $HC_* (A)$ of $A$ (see  \cite[Chapter 10]{MR1600246} for this and the following). Loday and Procesi \cite{MR937318} generalized this to the symplectic case, establishing the isomorphism
\[
H^{\mathsf{CE}} _* (sp(B,\sigma)) 
\cong 
S^* (HD_* (B,\sigma)[1]), 
\]
where the left hand side is Lie algebra homology and the right hand side is the free graded commutative algebra on the shift of the  dihedral homology of $(B,\sigma)$.  This  identifies the stable Lie algebra homology in terms of $HD_* (B,\sigma)$, which does not  depend explicitly on the Lie algebra $sp (B,\sigma)$. Moreover, one can identify dihedral homology as a form of graph homology. (This may be compared with the $\kk$-algebra case (without involution); in recent work, Dotsenko \cite{MR4945404} has shown how the Loday-Quillen-Tsygan theorem fits into a similar graph-homology type framework, generalizing work of Fuks \cite{MR874337}.)

One can also consider, for each (finite dimensional) symplectic vector space $(V,\omega)$, the Lie algebra homology 
$$
H_*^\mathsf{CE}(sp_{(V,\omega)}(B,\sigma));
$$ 
this is natural with respect to  $(V, \omega)$.  One can thus seek the precursor to this structure by exploiting this naturality. This leads to the following two  questions: what is the algebraic structure involved and what is the appropriate homology theory? Moreover, one can also ask: what happens when $B$ is not unital? 

The above has a far-reaching generalization, by considering $\kk$-algebras with involution as a very particular case of cyclic operads in $\kk$-vector spaces, in the same way that associative $\kk$-algebras are a very particular case of operads. (We will not require that  a cyclic operad has a unit.)

 Consider a cyclic operad $\cpd$; Conant and Vogtmann \cite{MR2026331} showed that, for $(V, \omega)$ a symplectic vector space, the Schur functor $\cpd (V)$ (depending only on the underlying vector space $V$) has a natural Lie algebra structure, induced by the composition structure map of the cyclic operad together with the symplectic form.  This generalizes the Lie algebras considered by Kontsevich \cite{MR1247289,MR1341841} for the `associative', `commutative', and `Lie' cyclic operads. When the cyclic operad is $\cpd_{(B,\sigma)}$, corresponding to the $\kk$-algebra with involution $(B,\sigma)$, then $\cpd _{(B,\sigma)} (V)$ is naturally isomorphic to the Lie algebra $sp_{(V,\omega)} (B,\sigma)$ considered above.

One can then form the Chevalley-Eilenberg complex $(\Lambda^* \cpd (V), d_\mathsf{CE})$ and consider its homology. Exploiting the naturality with respect to the symplectic vector space, Conant and Vogtmann (generalizing  Kontsevich's results), showed that the  homology of the stable Lie algebra is related to the even graph homology for the cyclic operad $\cpd$. This can be seen as a far-reaching generalization of the Loday-Procesi theorem. In the other direction, Conant, Kassabov, and Vogtmann \cite{MR3347586} have explored the relationship between dihedral homology and hairy graph homology by focusing upon genus one graphs.

Here we exploit fully the naturality with respect to the symplectic vector space; for this, we use a form of Brauer-Schur-Weyl duality. The key input is that, for $(V,\omega)$ a symplectic vector space, the association $V^{\otimes \bullet} \colon n \mapsto V^{\otimes n}$ has two types of naturality: the action of the symmetric group $\sym_n$ by place permutations, as well as the maps induced by the form $\omega : V^{\otimes 2} \rightarrow \kk$. This structure is encoded in the fact that $V^{\otimes \bullet}$ is a module over a twisted $\kk$-linearization of the downward Brauer category $\db$; this twisted variant is denoted here   by $(\kk \db)_{(-;+)}$. (See Sections \ref{sect:fiord} and \ref{sect:twist} for a review of the upward and downward Brauer categories and their twisted $\kk$-linearizations.) Moreover, writing $\vsp$ for the category of symplectic vector spaces, this defines a functor 
$$\spt^\bullet \colon \vsp \rightarrow (\kk \db)_{(-;+)}\dash\modules
$$
 to $(\kk \db)_{(-;+)}$-modules (here $\kmod$ is the category of $\kk$-vector spaces), given by $\spt^\bullet (V, \omega) :=V^{\otimes \bullet}$, reflecting the naturality of the construction.

The importance of these structures is well-established. For instance, the fundamental theorems of symplectic invariant theory were exploited by Loday and Procesi in their identification of the  Lie algebra homology of stable symplectic Lie algebras; these theorems are intimately related to Brauer-Schur-Weyl duality. Similarly, Sam and Snowden have shown the importance of the downward Brauer categories and their twisted variants in considering stability in representation theory \cite[Section 4]{MR3376738} and \cite{MR3876732}.

As above, one can ask what are the appropriate algebraic structures that correspond to the natural Lie algebra $\cpd (V)$ (respectively $(\Lambda^* \cpd (V), d_\mathsf{CE})$) via Brauer-Schur-Weyl duality?  The answer brings into play a further ingredient: the relationship between cyclic operads and modules over the downward Brauer category.

Recall that a cyclic operad can be considered as a particular form of modular operad, in the sense of Getzler and Kapranov \cite{MR1601666}. Stoll \cite{MR4541945} has given an  elegant characterization of modular operads as algebras over the Brauer {\em properad}. This can be restricted to give a characterization of cyclic operads.

Here we use a weaker (but related) result, avoiding  working with properads and their algebras. (This corresponds to the fact that we allow non-connected graphs, whereas properads   impose connectivity, by design.) To explain this, recall that the category of $\kk \fb$-modules (where $\fb$ is the category of finite sets and bijections),  has  a symmetric monoidal structure provided by the Day convolution product, denoted here by $\odot$. One can thus form the associated symmetric and exterior algebras, denoted $\sfb^*$ and $\lfb^*$, on any $\kk \fb$-module.  

 These constructions can be applied to the underlying $\kk \fb$-module of a non-unital cyclic operad $\cpd$. The following statement uses $(\kk \db)_{(-;-)}$, another twisted $\kk$-linearization of $\db$.

\begin{THM}(Theorem \ref{thm:cpd_db_kdbm}.)
For $\cpd$ a non-unital cyclic operad, the composition induces 
\begin{enumerate}
\item 
a natural $\kk\db$-module structure on $\sfb^* (\cpd)$; 
\item 
a natural $(\kk \db)_{(-;-)}$-module structure on $\lfb^* (\cpd)$.
\end{enumerate}
\end{THM}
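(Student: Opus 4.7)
The plan is to exploit a generators-and-relations presentation of the downward Brauer category $\db$, and then to verify that the cyclic operad axioms encode exactly the required relations.

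First, I would unwind the definitions. As the symmetric algebra on the $\kk\fb$-module $\cpd$ in the Day-convolution monoidal category, $\sfb^n(\cpd)$ is built from $\cpd^{\odot n}$ by taking $\sym_n$-coinvariants; explicitly,
\[
\sfb^n(\cpd)(S) \;=\; \Bigl( \bigoplus_{S = S_1 \sqcup \cdots \sqcup S_n} \cpd(S_1) \otimes \cdots \otimes \cpd(S_n) \Bigr)_{\sym_n},
\]
with $\sym_n$ permuting the tensor factors (and the indexing subsets). The exterior algebra $\lfb^n(\cpd)(S)$ is the same coinvariant, twisted by the sign character of $\sym_n$. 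In either case the underlying $\kk\fb$-module structure (the action of bijections of $S$) is automatic.

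Next, I would use that $\db$ is generated over its underlying groupoid $\fb$ by elementary pair-contraction arrows $\pi_{\{a,b\}} \colon S \to S \setminus \{a,b\}$, one for each unordered pair of distinct elements $a, b \in S$, subject to $\fb$-equivariance and the commutation $\pi_{\{c,d\}} \pi_{\{a,b\}} = \pi_{\{a,b\}} \pi_{\{c,d\}}$ when $\{a,b\} \cap \{c,d\} = \emptyset$. The action of $\pi_{\{a,b\}}$ on a representative $c_1 \otimes \cdots \otimes c_n \in \cpd(S_1) \otimes \cdots \otimes \cpd(S_n)$ is defined as follows: if $a \in S_i$ and $b \in S_j$ with $i \neq j$, replace the two factors $c_i, c_j$ by their cyclic operadic composition at $a$ and $b$, producing a single factor in $\cpd(S_i \sqcup S_j \setminus \{a,b\})$ and hence an $(n-1)$-fold tensor; if $a$ and $b$ lie in the same factor, declare the result zero, reflecting the absence of self-composition in a plain (non-modular) cyclic operad. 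Equivariance of cyclic composition allows the rule to descend through the $\sym_n$-coinvariants, and cyclic associativity delivers the commutation of disjoint pair-contractions --- in the mixed cases where the vanishing rule applies, both sides are seen to be zero directly.

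For part (2), the same recipe applies to $\lfb^*(\cpd)$, but with careful sign bookkeeping. To form the composition of $c_i$ with $c_j$, the factors must be brought to adjacent positions, which in the antisymmetric setting produces a shuffle sign depending on the positions of the indices $i < j$; this sign, combined with the shift from $n$ to $n-1$ factors, is exactly what the twist $(-;-)$ records on source and target. The main obstacle will be checking that these signs fit together coherently under composition of pair-contractions --- specifically, that disjoint $\pi_{\{a,b\}}$ and $\pi_{\{c,d\}}$ commute on the nose, and not merely up to a sign. This ultimately reduces to a shuffle-sign identity for the sign representation, combined with the cyclic associativity of $\cpd$.
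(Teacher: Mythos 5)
Your proposal is correct and follows essentially the same route as the paper: Proposition \ref{prop:kdb_modules} is exactly your generators-and-relations reduction (a module over $\kk \db$, resp.\ $(\kk\db)_{(-;-)}$, is a degree-one action, i.e.\ a map $M\to M\odot\triv_2$ resp.\ $M\odot \sgn_2$, subject to the quadratic relation encoding composites of disjoint pair-contractions), and Lemmas \ref{lem:cpd_composite_map}, \ref{lem:cpd_composite_map_lfb} and \ref{lem:induced structure morphisms} package your pair-contraction operators in the global $\odot$-language, with the quadratic relation verified via the cyclic-operad associativity axiom exactly as you indicate. One small caveat on signs: in $(\kk\db)_{(-;-)}$ the standard disjoint contractions \emph{anti}commute (an orientation sign is attached to reordering the chords), so the identity to be verified for $\lfb^*(\cpd)$ is the signed commutation illustrated in Example \ref{exam:lfb-module_explicit}, rather than commutation on the nose.
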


This does not yet answer our questions. The missing ingredient is provided by  the fact that $(\kk \db)_{(-;-)}$ is a Koszul $\kk$-linear category over $\kk \fb$. In particular, one has a Koszul complex, denoted here by
\begin{eqnarray}
\label{eqn:K_-otimes_lfb_cpd} 
\mathscr{K}_- \otimes_{(\kk \db)_{(-;-)}} \lfb^* (\cpd)
.
\end{eqnarray}
This is a complex of $(\kk \ub)_{(-;+)}$-modules, where the twisted form $(\kk \ub)_{(-;+)}$ of $\kk$-linearization of $\ub$, the upward Brauer category, is the opposite of $(\kk \db)_{(-;+)}$. 

This leads to  a direct construction of the Chevalley-Eilenberg complex of $\cpd (V)$, without explicitly using its Lie algebra structure:

\begin{THM} (Theorem \ref{thm:vker_K-_versus_CE}.)
\label{THM:CE}
For $(V, \omega)$ a symplectic vector space and $\cpd$ a non-unital cyclic operad, the complex 
$$
V^{\otimes \bullet} \otimes_{(\kk\ub)_{(-;+)}} \mathscr{K}_- \otimes_{(\kk \db)_{(-;-)}} \lfb^* (\cpd) 
$$
is naturally isomorphic to $(\Lambda^* \cpd (V), d_\mathsf{CE})$.
\end{THM}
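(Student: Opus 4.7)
The strategy is to identify both the underlying graded vector space and the differential; naturality in $(V,\omega) \in \vsp$ and in $\cpd$ will be automatic, since every ingredient of the construction is functorial. The proof proceeds in two main steps.

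\textbf{Step 1 (Underlying graded isomorphism).} The first step is to exploit the explicit term-by-term description of the Koszul complex $\mathscr{K}_-$. In Koszul degree $p$, the bimodule $\mathscr{K}_-^p$ is freely built (over $\kk \fb$) by adjoining to $(\kk\ub)_{(-;+)}$ the $p$-th component of the Koszul dual of $(\kk\db)_{(-;-)}$, which encodes $p$ disjoint ``pairing'' generators together with the appropriate sign twist. Tensoring on the left by $V^{\otimes \bullet}\otimes_{(\kk\ub)_{(-;+)}}(-)$ converts each pairing generator into a contraction of two tensor factors via the symplectic form $\omega$, leaving the $\kk\fb$-module structure on $V^{\otimes \bullet}$ untouched. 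On the right, applying $(-) \otimes_{(\kk\db)_{(-;-)}} \lfb^*(\cpd)$ then descends to Schur-functor evaluation; the classical identification of the Day-convolution exterior algebra with the exterior algebra on the Schur functor gives $V^{\otimes \bullet} \otimes_{\kk\fb} \lfb^*(\cpd) \cong \Lambda^*\cpd(V)$. Putting these together, the underlying graded vector space is identified with $\Lambda^* \cpd(V)$.

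\textbf{Step 2 (Identification of the differential).} The Koszul differential on $\mathscr{K}_-$ is dictated by the quadratic relations of $(\kk\db)_{(-;-)}$: it removes a pairing generator and replaces it, through the bimodule action, by the corresponding composite structural maps on the two sides of the tensor product. Decomposed across the outer tensor, this yields on the $V^{\otimes\bullet}$ side a contraction of two tensor factors by $\omega$, and on the $\lfb^*(\cpd)$ side the cyclic composition of two wedge factors of $\cpd$, by virtue of Theorem~\ref{thm:cpd_db_kdbm}(2). Composing these two operations is exactly the Conant--Vogtmann bracket on $\cpd(V)$; summing over all pairs of wedge factors of $\Lambda^* \cpd(V)$ then recovers $d_\mathsf{CE}$ on the nose.

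\textbf{Expected main obstacle.} The most delicate point is sign bookkeeping in Step 2. The Koszul differential inherits signs from three sources: the quadratic presentation of $(\kk\db)_{(-;-)}$, the sign twist distinguishing $(\kk\ub)_{(-;+)}$ from the untwisted $\kk\ub$, and the antisymmetrization built into $\lfb^*$. The Chevalley--Eilenberg differential carries the classical Koszul signs from removing a pair of factors in a wedge product. Verifying that these two sets of signs agree requires carefully tracking the interleaving of the Koszul grading on $\mathscr{K}_-$ with the wedge grading on $\lfb^*(\cpd)$ and identifying the chosen total grading; this is the technical heart of the proof.
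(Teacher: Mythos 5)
Your proposal is correct and follows essentially the same route as the paper: identify the underlying graded object via the symmetric monoidality of the Schur functor (so that $(\lfb^*\cpd)(V)\cong\Lambda^*\cpd(V)$), then match the Koszul differential with $d_\mathsf{CE}$ by recognizing the degree-one piece as the Conant--Vogtmann bracket (contraction by $\omega$ composed with the cyclic composition from Theorem \ref{thm:cpd_db_kdbm}) and extending to higher wedge degrees via the coproduct--bracket--product description of the Chevalley--Eilenberg differential. Your identification of the sign bookkeeping as the technical heart is also consistent with where the paper defers to a direct verification.
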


Moreover, there is an interesting interpretation of the homology of  (\ref{eqn:K_-otimes_lfb_cpd}), which is a direct consequence of the Koszul property (see Corollary \ref{cor:homology_complexes_cpd}):

\begin{THM}
For $\cpd$ a non-unital cyclic operad, the homology of $\mathscr{K}_- \otimes_{(\kk \db)_{(-;-)}} \lfb^* (\cpd)$ is naturally isomorphic to 
$$
\ext^* _{(\kk \db)_{(-;-)}} (\kk \fb, \lfb^* (\cpd)).
$$
This isomorphism respects the natural $(\kk\ub)_{(-;+)}$-module structures.
\end{THM}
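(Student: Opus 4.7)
The plan is to deduce the statement as a direct consequence of the Koszul property of $(\kk\db)_{(-;-)}$ over $\kk\fb$, applied to the specific module $\lfb^*(\cpd)$. The general principle at play is that for any $(\kk\db)_{(-;-)}$-module $M$, the Koszul complex $\mathscr{K}_-$ computes $\ext^*_{(\kk\db)_{(-;-)}}(\kk\fb, M)$ via $\mathscr{K}_- \otimes_{(\kk\db)_{(-;-)}} M$; specializing to $M = \lfb^*(\cpd)$, equipped with its $(\kk\db)_{(-;-)}$-module structure from the preceding theorem, then yields the desired isomorphism.

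Concretely, I would first recall from the sections on the Koszul property that $\mathscr{K}_-$ is constructed from the Koszul dual of the pair $(\kk\fb, (\kk\db)_{(-;-)})$; since the Koszul dual is indexed by the opposite category $\ub$, it naturally carries an $(\kk\ub)_{(-;+)}$-module structure, so that $\mathscr{K}_-$ is a complex of $(\kk\ub)_{(-;+)}$-$(\kk\db)_{(-;-)}$-bimodules. The Koszul property asserts that $\mathscr{K}_-$ resolves $\kk\fb$ (regarded as a $(\kk\db)_{(-;-)}$-module via the augmentation) in precisely the form needed to compute Ext by a tensor product: the Koszul dual sits in cohomological degree, and the Koszul differential raises homological degree so as to produce cohomology on the nose.

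The second step is then a direct application of the resulting general identification: for any $(\kk\db)_{(-;-)}$-module $M$, the homology of $\mathscr{K}_- \otimes_{(\kk\db)_{(-;-)}} M$ is naturally isomorphic to $\ext^*_{(\kk\db)_{(-;-)}}(\kk\fb, M)$. Taking $M = \lfb^*(\cpd)$ gives the theorem. The $(\kk\ub)_{(-;+)}$-equivariance on both sides matches by construction: on the tensor side it comes from the left $(\kk\ub)_{(-;+)}$-factor of the bimodule structure on $\mathscr{K}_-$, and on the Ext side it arises by functoriality applied to the same bimodule resolution.

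The main obstacle is purely bookkeeping: correctly tracking the twistings $(-;-)$ versus $(-;+)$, the left/right actions, and the grading conventions so that the Koszul complex computes Ext (rather than Tor) with the expected $(\kk\ub)_{(-;+)}$-action on the output. Given that the Koszul property of $(\kk\db)_{(-;-)}$ over $\kk\fb$ has already been established earlier in the paper, there is no substantial new ingredient; the result is essentially formal, restating that property at the level of derived functors and reading off the case $M = \lfb^*(\cpd)$.
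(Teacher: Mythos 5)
Your proposal follows essentially the same route as the paper: the result is obtained there as Corollary \ref{cor:homology_complexes_cpd}, which is the specialization to $N = \lfb^*(\cpd)$ of the general identification in Theorem \ref{thm:ext_tor_variant}, itself deduced from the Koszul property (Theorem \ref{thm:koszul_categories}) by taking the explicit finitely-generated projective resolution of each $\kk \aut(W)$ it provides, applying $\hom_{(\kk \db)_{(-;-)}}(-, N)$, and using strong dualizability to rewrite the resulting complex as $\mathscr{K}_- \otimes_{(\kk \db)_{(-;-)}} N$, with the $(\kk\ub)_{(-;+)}$-equivariance identified as the Yoneda action of $\ext^*_{(\kk \db)_{(-;-)}}(\kk\fb,\kk\fb) \cong (\kk\ub)_{(-;+)}$. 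The only imprecision is that $\mathscr{K}_-$ itself is not the projective resolution of $\kk\fb$ in left $(\kk\db)_{(-;-)}$-modules; that role is played by the dual complex arising from the adjunction unit, and the passage to the tensor description is exactly the dualizability step you flag as bookkeeping.
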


This  gives a relationship between the Lie algebra homology of $\cpd (V)$ and the homology of the complex (\ref{eqn:K_-otimes_lfb_cpd}), which can be encoded in  a universal coefficients-type result. Namely, since $V^{\otimes \bullet} \otimes_{(\kk\ub)_{(-;+)}} -$ is not exact, there is a universal coefficients spectral sequence 
\[
\tor^{(\kk \ub)_{(-;+)} } _* \big(
\spt^\bullet, \ext^*_{(\kk \db)_{(-;-)} } (\kk \fb , \lfb^*(\cpd)) 
\big) 
\Rightarrow 
H^\mathsf{CE} _*(\cpd (-))
\]
as functors on $\vsp$,  where $H^\mathsf{CE} _*(\cpd (-))$ is the Lie algebra homology of the Conant-Vogtmann Lie algebra $\cpd (-)$. In particular, one has the  edge homomorphism:
\[
\spt^\bullet 
\otimes_{(\kk \ub)_{(-;+)} }
\ext^*_{(\kk \db)_{(-;-)} } (\kk \fb , \lfb^*(\cpd)) 
\rightarrow 
H^\mathsf{CE} _*(\cpd (-)).
\]
Clearly the above depends on the natural $(\kk \ub)_{(-;+)}$-module structure on $\ext^*_{(\kk \db)_{(-;-)} } (\kk \fb , \lfb^*(\cpd)) $.

We now turn to the `stable' side of the story. This is based on Sam and Snowden's approach to stabilization \cite{MR3376738}, which is reviewed in Section \ref{sect:forms}. The functors from $\vsp$ to $\kk$-vector spaces that we are considering are {\em algebraic}, lying in the full subcategory $\falg (\vsp)$ of the category of functors $\f (\vsp)$.
There is a stabilization functor $\stabvsp : \falg (\vsp) \rightarrow \rep (\spgp)$ to Sam and Snowden's category of algebraic representations of the `infinite' symplectic group $\spgp_\infty$ and this induces an equivalence of categories
$$
\stabvsp :
\falg (\vsp)/ \falg_\tors (\vsp) \stackrel{\simeq}{\rightarrow } \rep (\spgp),
$$
where the domain is obtained by localizing away from the torsion functors.

\begin{rem*}
The theory has a counterpart for the `orthogonal' case. One works with the appropriate category $\vo$ of orthogonal vector spaces and there is a stabilization functor 
$\stabvo : \falg (\vo) \rightarrow \rep (\ogp)$, where $\rep (\ogp)$ is the category of algebraic representations of the `infinite' orthogonal group $\ogp_\infty$. As in the symplectic case, this induces an equivalence of categories 
$$
\stabvo :
\falg (\vo)/ \falg_\tors (\vo) \stackrel{\simeq}{\rightarrow } \rep (\ogp).
$$
For the remainder of this introduction, we focus upon the symplectic case.
\end{rem*}

We use the approximation to stabilization that is given by the functor $
\hom_{\f (\vsp)} (- , \spt^\ast), 
$ from $\f (\vsp)\op$ to $(\kk \db)_{(-;+)}$-modules. This induces the exact functor 
$$
\hom_{\f (\vsp)} (- , \spt^\ast)
: 
\big(\falg (\vsp)/ \falg_\tors (\vsp)\big) \op
\longrightarrow 
(\kk \db)_{(-;+)} \dash \modules 
$$
which restricts to an equivalence between the respective full subcategories of finite length objects. The latter property explains why this can be considered as an approximation to $\stabvsp$.

This is of particular interest when applied to the image of the generalized Schur functor $\spt^\bullet \otimes_{(\kk \ub)_{(-;+)}} -$. As explained in Proposition \ref{prop:natural_iso_hom_vker_vker}, for  $N$ a $(\kk \ub)_{(-;+)}$-module, there is a natural isomorphism of $(\kk \db)_{(-;+)}$-modules:
\[
\hom_{\f (\vsp)} (\spt^\bullet \otimes_{(\kk \ub)_{(-;+)}} N, \spt^\ast) 
\cong 
\hom_\kk ((\kk \ub)_{(-;+)} ^\sharp \otimes _{(\kk \ub)_{(-;+)}} N, \kk),
\]
where $(-)^\sharp$ denotes vector space duality.

This motivates the consideration of the functor $(\kk \ub)_{(-;+)} ^\sharp \otimes _{(\kk \ub)_{(-;+)}}-$ on $(\kk \ub)_{(-;+)}$-modules. This  fits into the adjunction 
 \begin{equation*}
 \resizebox{\hsize}{!}{
 $
\xymatrix{
\kubg^\sharp \otimes_{\kubg} - 
\ar@{} [r]|(.55):
&
\kubg\dash\modules
\ar@<1ex>[rr]
\ar@{}[rr]|(.5)\perp
&&
\torskubg 
\ar@<1ex>[ll]
&
\hom_{\kubg} (\kubg^\sharp , -),
\ar@{}[l]|(.6) :
}
$
}
\end{equation*} 
where $\torskubg$ is the full subcategory of torsion $\kubg$-modules. The functor  $\kubg^\sharp \otimes_{\kubg} - $ vanishes on torsion modules (see Section \ref{sect:torsion} for this and more).

The above  applies to the complex appearing in Theorem \ref{THM:CE}. Namely, applying the functor $\hom_{\f (\vsp)} (- , \spt^\ast)$ to the complex 
$$
\spt^\bullet \otimes_{(\kk\ub)_{(-;+)}} \mathscr{K}_- \otimes_{(\kk \db)_{(-;-)}} \lfb^* (\cpd) 
$$
yields the $\kk$-linear dual of the following complex 
\begin{eqnarray}
\label{eqn:sharp_complex}
(\kk \ub)_{(-;+)}^\sharp \otimes_{(\kk \ub)_{(-;+)} } \mathscr{K}_- \otimes_{(\kk \db)_{(-;-)} } \lfb^* (\cpd),
\end{eqnarray}
which we consider as  a second form of Koszul complex. 

The significance of (\ref{eqn:sharp_complex}) is shown by the following (see Proposition \ref{prop:weakly_stabilize_CE_complex}):

\begin{PROP}
\label{PROP:H^CE}
For $\cpd$ a non-unital cyclic operad, there is a natural isomorphism of graded $(\kk \db)_{(-;+)}$-modules 
$$
\hom_{\falg (\vsp)} (H_* ^\mathsf{CE}( \cpd (-)) , \spt^\ast ) 
\cong 
\big( H_* ((\kk \ub)_{(-;+)}^\sharp \otimes_{(\kk \ub)_{(-;+)} } \mathscr{K}_- \otimes_{(\kk \db)_{(-;-)} } \lfb^* (\cpd)
\big)^\sharp .
$$ 
\end{PROP}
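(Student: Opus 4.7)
The strategy is to apply $\hom_{\falg(\vsp)}(-, \spt^\ast)$ termwise to the Chevalley--Eilenberg complex provided by Theorem \ref{THM:CE}, convert each term using Proposition \ref{prop:natural_iso_hom_vker_vker}, and then commute homology with $\kk$-linear duality.

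Write $C_\bullet := \spt^\bullet \otimes_{(\kk\ub)_{(-;+)}} \mathscr{K}_- \otimes_{(\kk \db)_{(-;-)}} \lfb^* (\cpd)$. Theorem \ref{THM:CE} identifies $C_\bullet$ with $(\Lambda^* \cpd(-), d_\mathsf{CE})$ as a complex in $\falg(\vsp)$; in particular $H_*(C_\bullet) \cong H_*^\mathsf{CE}(\cpd(-))$. Each term of $C_\bullet$ is of the form $\spt^\bullet \otimes_{(\kk\ub)_{(-;+)}} N$ for some $(\kk\ub)_{(-;+)}$-module $N$ built from $\mathscr{K}_-$ and $\lfb^*(\cpd)$. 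Applying Proposition \ref{prop:natural_iso_hom_vker_vker} degree by degree yields a natural isomorphism of cochain complexes of $(\kk\db)_{(-;+)}$-modules
$$
\hom_{\f(\vsp)}(C_\bullet, \spt^\ast) \;\cong\; \bigl( (\kk\ub)_{(-;+)}^\sharp \otimes_{(\kk\ub)_{(-;+)}} \mathscr{K}_- \otimes_{(\kk\db)_{(-;-)}} \lfb^*(\cpd) \bigr)^\sharp.
$$
Since $\kk$-linear duality is exact over a field, taking cohomology commutes with $\sharp$, so the cohomology of the right hand side produces exactly the right hand side of the statement.

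The remaining task is to identify $H^*\bigl(\hom_{\f(\vsp)}(C_\bullet, \spt^\ast)\bigr)$ with $\hom_{\falg(\vsp)}(H_*(C_\bullet), \spt^\ast)$; this is the main obstacle, since $\hom_{\falg(\vsp)}(-, \spt^\ast)$ is not exact on all of $\falg(\vsp)$. As recalled in the introduction, however, it becomes exact after inverting torsion, yielding an exact functor $\bigl(\falg(\vsp)/\falg_\tors(\vsp)\bigr)\op \to (\kk\db)_{(-;+)}\dash\modules$. The displayed isomorphism above shows that, restricted to modules of the form $\spt^\bullet \otimes_{(\kk\ub)_{(-;+)}} N$, the value of $\hom_{\f(\vsp)}(-,\spt^\ast)$ depends only on $(\kk\ub)_{(-;+)}^\sharp \otimes_{(\kk\ub)_{(-;+)}} N$; since this functor vanishes on torsion $(\kk\ub)_{(-;+)}$-modules (as noted in the torsion adjunction preceding the statement), the effect of $\hom(-,\spt^\ast)$ on $C_\bullet$ factors through $\falg(\vsp)/\falg_\tors(\vsp)$.

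Combining the exactness of the localized functor with the termwise identification gives the commutation
$$
H^*\bigl(\hom_{\f(\vsp)}(C_\bullet, \spt^\ast)\bigr) \cong \hom_{\falg(\vsp)}\bigl(H_*(C_\bullet), \spt^\ast\bigr) = \hom_{\falg(\vsp)}\bigl(H_*^\mathsf{CE}(\cpd(-)), \spt^\ast\bigr),
$$
and composing with the cohomology computation of the preceding paragraph yields the asserted natural isomorphism of graded $(\kk\db)_{(-;+)}$-modules.
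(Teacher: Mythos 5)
Your overall route is the same as the paper's: identify the complex $\hom_{\f(\vsp)}(C_\bullet,\spt^\ast)$ termwise with the dual of $(\kk\ub)_{(-;+)}^\sharp\otimes_{(\kk\ub)_{(-;+)}}\mathscr{K}_-\otimes_{(\kk\db)_{(-;-)}}\lfb^*(\cpd)$ via Proposition \ref{prop:natural_iso_hom_vker_vker} (this is Proposition \ref{prop:identify_complexes}), commute duality with homology over the field, and commute $\hom(-,\spt^\ast)$ with passage to homology. The one place where your argument goes astray is the justification of this last step. Your premise that $\hom_{\falg(\vsp)}(-,\spt^\ast)$ ``is not exact on all of $\falg(\vsp)$'' is false: each $\spt^d$ is injective in $\falg(\vsp)$ (Theorem \ref{thm:injectives_falgvo_falgvsp}), so the functor is exact on the nose, and this is precisely how the paper commutes it with homology (Lemma \ref{lem:weak_stabilization_homology}). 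There is no obstacle to overcome.

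Your proposed workaround is also not a correct argument as written. Factoring through $\falg(\vsp)/\falg_\tors(\vsp)$ is a property of the functor (it kills torsion objects because the $\spt^d$ are torsion-free), not something one can establish ``on $C_\bullet$'' by inspecting the terms; and even granting the factorization, an object-by-object statement about the terms $\spt^\bullet\otimes_{(\kk\ub)_{(-;+)}}N$ does not suffice to commute $\hom(-,\spt^\ast)$ with homology, because the cycles, boundaries and homology objects of $C_\bullet$ are general objects of $\falg(\vsp)$, not of that special form. What saves you is that the facts you cite (the functor factors through the localization and the induced functor on the quotient is exact) already imply exactness on all of $\falg(\vsp)$, since the localization functor is exact; but then the whole detour is redundant, and the clean statement is simply the injectivity of the $\spt^d$.
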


Since the left hand side is our approximation to the stabilization of $H_* ^\mathsf{CE} (\cpd (-))$ (considered as a graded object of $\falg (\vsp)$), it follows that the homology of 
(\ref{eqn:sharp_complex}) calculates this approximation (up to the vector space duality).

Moreover,  (\ref{eqn:sharp_complex}) is  a complex of $(\kk \ub)_{(-;+)} $-modules and its homology has a nice interpretation (see Corollary \ref{cor:homology_complexes_cpd} again):

\begin{THM}
For $\cpd$ a non-unital cyclic operad, the homology of $(\kk \ub)_{(-;+)}^\sharp \otimes_{(\kk \ub)_{(-;+)} }\mathscr{K}_- \otimes_{(\kk \db)_{(-;-)}} \lfb^* (\cpd)$ is naturally isomorphic to 
$$
\tor_* ^{(\kk \db)_{(-;-)}} (\kk \fb, \lfb^* (\cpd)).
$$ 
This isomorphism respects the natural $(\kk\ub)_{(-;+)}$-module structures.
\end{THM}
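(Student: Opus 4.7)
The plan is to show that $(\kk\ub)_{(-;+)}^\sharp \otimes_{(\kk\ub)_{(-;+)}} \mathscr{K}_-$, viewed as a complex of right $(\kk\db)_{(-;-)}$-modules, is a flat resolution of $\kk\fb$; the claim then follows from the definition of $\tor$. This is the Tor-flavoured sibling of the Ext identification already established in the companion part of the corollary.

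The starting point is the Koszul property of $(\kk\db)_{(-;-)}$ over $\kk\fb$, which produces $\mathscr{K}_-$ with commuting left $(\kk\ub)_{(-;+)}$ and right $(\kk\db)_{(-;-)}$ actions. The Ext computation in the other half of the corollary identifies $\mathscr{K}_- \otimes_{(\kk\db)_{(-;-)}} -$ with a Hom out of the Koszul dual data, so, informally, $\mathscr{K}_-$ sits in the position dual to the Koszul resolution. I would then verify that applying $(\kk\ub)_{(-;+)}^\sharp \otimes_{(\kk\ub)_{(-;+)}} -$ strips this duality: on the levelwise description of $\mathscr{K}_-$, using local finiteness of $(\kk\ub)_{(-;+)}$ over $\kk\fb$, each term becomes a free right $(\kk\db)_{(-;-)}$-module, and the resulting complex is precisely the standard Koszul resolution of $\kk\fb$ over $(\kk\db)_{(-;-)}$, acyclic in positive degrees with $H_0 \cong \kk\fb$.

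Granted the resolution, tensoring with $\lfb^*(\cpd)$ over $(\kk\db)_{(-;-)}$ computes $\tor^{(\kk\db)_{(-;-)}}_*(\kk\fb, \lfb^*(\cpd))$. For the compatibility with the $(\kk\ub)_{(-;+)}$-action, one uses that $(\kk\ub)_{(-;+)}^\sharp$ is itself a $(\kk\ub)_{(-;+)}$-bimodule, so the residual left $(\kk\ub)_{(-;+)}$-action on the full complex $(\kk\ub)_{(-;+)}^\sharp \otimes_{(\kk\ub)_{(-;+)}} \mathscr{K}_- \otimes_{(\kk\db)_{(-;-)}} \lfb^*(\cpd)$ commutes with the differentials and passes to homology. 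Naturality in $\cpd$ is automatic from the functoriality of all constructions.

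The main obstacle is the precise identification of $(\kk\ub)_{(-;+)}^\sharp \otimes_{(\kk\ub)_{(-;+)}} \mathscr{K}_-$ with the standard Koszul resolution of $\kk\fb$ over $(\kk\db)_{(-;-)}$. This is the concrete incarnation of Koszul duality interchanging $(\kk\db)_{(-;-)}$ and $(\kk\ub)_{(-;+)}$, and relies on the local-finiteness assumptions together with the adjoint properties of $(\kk\ub)_{(-;+)}^\sharp \otimes_{(\kk\ub)_{(-;+)}} -$ established in the torsion section of the paper; the rest of the proof is then a routine bookkeeping exercise.
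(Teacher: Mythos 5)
Your proposal follows essentially the same route as the paper: the complex $(\kk \ub)_{(-;+)}^\sharp \otimes_{(\kk \ub)_{(-;+)}} \mathscr{K}_-$, rewritten as $\mathscr{K}_- \otimes_{(\kk \db)_{(-;+)}} (\kk \db)_{(-;+)}^\sharp$, is shown (via the Koszul property, i.e.\ the counit being a weak equivalence) to be a resolution of $\kk \fb$ by finitely-generated projective modules in the relevant variance, and tensoring with $\lfb^*(\cpd)$ then computes $\tor_*^{(\kk \db)_{(-;-)}}(\kk \fb, \lfb^*(\cpd))$, with the $(\kk\ub)_{(-;+)}$-action coming from the bimodule structure on $(\kk\ub)_{(-;+)}^\sharp$. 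One small correction: the terms are not \emph{free} right $(\kk \db)_{(-;-)}$-modules, since $(\kk \db)_{(-;+)}(\mathbf{s}, Y)$ need not be free over $\kk\sym_s$; the correct ingredient is semisimplicity of $\kk\sym_s$ in characteristic zero (rather than local finiteness), which yields projectivity of each term --- and projectivity, or indeed flatness as you frame it, is all that is needed.
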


The complex (\ref{eqn:sharp_complex}) can be related to the even hairy graph complex. Thus, Proposition \ref{PROP:H^CE} can be viewed as a  generalization to the case of graphs with hairs of  Kontsevich's theorem  relating the homology of his infinite dimensional Lie algebras to graph homology, as generalized to cyclic operads by Conant and Vogtmann \cite{MR2026331}. 

For this, so as to relate to the hairy graph complex considered in \cite{MR3029423}, for a cyclic operad $\cpd$, we restrict to the subobject $\cpd_{\geq 3}$ of $\cpd$ with underlying $\kk \fb$-module that is supported on sets of cardinality greater than two. A proof of the following result is sketched in Section \ref{sect:complexes}:

\begin{THM}
\label{THM:hairy}
(Theorem \ref{thm:hairy-graph_complexes}.)
For $\cpd$ a non-unital cyclic operad and $l \in \nat$, the complex
\begin{eqnarray*}
(\kk\ub)_{(-;+)}(-,\mathbf{l}) ^\sharp  \otimes_{(\kk\ub)_{(-;+)}} \mathscr{K}_- \otimes_{(\kk\db)_{(-;-)}} \lfb^* (\cpd_{\geq 3})
&\cong &
(\kk\db)_{(-;+)}(-, \mathbf{l}) ^\sharp  \otimes_{\kk \fb} \lfb^* (\cpd_{\geq 3})
\end{eqnarray*}
is isomorphic to the even  hairy graph complex with legs labelled by $\mathbf{l}$.
\end{THM}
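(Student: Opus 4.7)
The plan is to prove the theorem in two stages: first to establish the isomorphism between the two complexes displayed, and then to identify the right-hand side with the standard even hairy graph complex with legs labelled by $\mathbf{l}$.

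For the first isomorphism, the key point is that the Koszul complex $\mathscr{K}_-$ is, termwise, built from bimodules of a very controlled form, where the $(\kk\ub)_{(-;+)}$-variable is essentially free over $\kk\fb$ once the matching structure introduced by the Koszul generators is extracted. Applying the contravariant evaluation $(\kk\ub)_{(-;+)}(-,\mathbf{l})^\sharp \otimes_{(\kk\ub)_{(-;+)}} -$ is a Yoneda-style duality that pins $l$ half-edges to the leg labels $\mathbf{l}$, leaving the remaining half-edges paired off by the Koszul matching. Since what is left on the $(\kk\db)_{(-;-)}$-side is free as a $\kk\fb$-module, the subsequent tensor product with $\lfb^*(\cpd_{\geq 3})$ over $(\kk\db)_{(-;-)}$ collapses to a tensor product over $\kk\fb$, with all of the Koszul/matching data now recorded by the single representable $(\kk\db)_{(-;+)}(-,\mathbf{l})^\sharp$.

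For the identification with the hairy graph complex, I would unpack the two factors of the right-hand side combinatorially. A morphism $\mathbf{n} \to \mathbf{l}$ in $\db$ is the data of a subset $S \subset \mathbf{n}$ of cardinality $l$ together with a bijection $S \cong \mathbf{l}$ and a perfect matching on $\mathbf{n} \setminus S$; consequently $(\kk\db)_{(-;+)}(\mathbf{n}, \mathbf{l})^\sharp$ records exactly ``external hairs labelled by $\mathbf{l}$ plus internal edges'' on $n$ half-edges, with the sign-twist producing the even edge orientation. The $k$-th component of $\lfb^*(\cpd_{\geq 3})$ on $\mathbf{n}$ is the direct sum over ordered partitions $\mathbf{n} = \bigsqcup_{i=1}^k T_i$ with each $|T_i| \geq 3$ of $\bigotimes_i \cpd_{\geq 3}(T_i)$, modulo the sign action permuting the blocks, so it records $k$ vertices decorated by $\cpd$ of arity at least three, with the exterior vertex orientation. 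Taking $\otimes_{\kk\fb}$ identifies the half-edges of the vertices with those structured by the $\db$-morphism, coinvariants by $\sym_n$, and gives precisely the module of $k$-vertex hairy graphs with legs labelled by $\mathbf{l}$ in the even flavour. Under this dictionary, the Koszul differential of $\mathscr{K}_-$ corresponds to the edge-contraction differential of the hairy graph complex: a Koszul coboundary introduces a pairing of two half-edges, which on the graph side amounts to the creation of an internal edge and its immediate contraction via the operadic composition in $\cpd$ of the two incident vertices.

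The principal obstacle will be sign bookkeeping. One must verify that the twist conventions $(-;+)$ and $(-;-)$ together with the exterior signs on vertices reproduce exactly the signs appearing in the edge-contraction differential of the even hairy graph complex of \cite{MR3029423}. The combinatorial translation between Brauer-category morphisms and half-edge data of graphs is standard, but checking that the Koszul differential agrees on the nose with the graph-complex differential, including all orientation signs and not merely up to an invertible sign automorphism, is where the main technical care is required.
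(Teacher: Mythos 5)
Your proposal is correct and follows essentially the same route as the paper's own sketch: the collapse of the two Brauer-category tensor products to a single $\otimes_{\kk \fb}$ is exactly the paper's identification of the underlying object of the Koszul complex, and your dictionary (legs plus perfect matchings from $(\kk\db)_{(-;+)}(-,\mathbf{l})^\sharp$, operad-decorated vertices from $\lfb^*(\cpd_{\geq 3})$, gluing of half-edges via $\otimes_{\kk\fb}$, with the $(-;+)$ twist and the exterior signs supplying the even orientation data) is the same one the paper sets up, merely read in the opposite direction. One small adjustment: after dualization the differential does not ``create and immediately contract'' a new edge --- it deletes an existing chord of the $\db$-datum and operadically composes the two incident vertices, i.e.\ it is contraction of an existing internal edge (the chord count must drop by one between consecutive degrees, which the create-then-contract reading would violate); with that correction, the remaining sign verification you flag is precisely the point the paper also leaves to the reader.
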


\begin{rem*}
One can drop the restriction to $\cpd_{\geq 3}$. However, if $\cpd$ has a unit, the homology of the hairy graph complex is supported on $\mathbf{0}$, by the acyclicity result Theorem \ref{thm:acyclicity}. (This can also be seen as a consequence of the behaviour of the stabilization of $H_*^\mathsf{CE} (\cpd (V))$, by using Theorem \ref{thm:stabilization_trivial_action}.) In this case, if $\cpd$ is augmented, the natural solution is to restrict to the `augmentation ideal' $\overline{\cpd}$ of $\cpd$; the hairy graph homology can then be non-trivial in the presence of hairs. 
\end{rem*}

\begin{rem*}
A result such as Theorem \ref{THM:hairy} is to be expected. Indeed, graph complexes can be constructed  by using Getzler and Kapranov's Feynman transform \cite{MR1601666}. Stoll \cite{MR4541945} showed how the Feynman transform can be constructed using his results, which correspond to `connected graph' versions of the results here.
\end{rem*}

One advantage of working with hairy graphs is that one has the $(\kk \ub)_{(-;+)}$-structure to work with; this gives naturality with respect to $\mathbf{l}$  in Theorem \ref{THM:hairy}. (This is related to  considerations of Conant, Kassabov, and Vogtmann in \cite[Section 4]{MR3029423}, for example.) This allows us to consider the relationship between the $\ext$ and $\tor$ structures introduced above. A full analysis  requires the usage of the appropriate universal coefficients spectral sequences; a first approximation is given by the respective edge morphisms. 

For any $(\kk \ub)_{(-;-)}$-module $N$, one has the natural morphism
\begin{eqnarray}
\label{eqn:edge1_INTRO}
(\kk \ub)^\sharp _{(-;+)}
\otimes_{
(\kk \ub)_{(-;+)}
}
 \ext^*_{(\kk \db)_{(-;-)} } (\kk \fb , N) 
\rightarrow 
\tor_*^{(\kk \db)_{(-;-)} } (\kk \fb , N).
\end{eqnarray}  
Here both domain and codomain are torsion $(\kk \ub)_{(-;+)}$-modules;  the domain only depends on the `torsion-free' part of $\ext^*_{(\kk \db)_{(-;-)} } (\kk \fb , N) $ as a  $(\kk \ub)_{(-;+)}$-module.

This begs the question as to whether the torsion-free part of $\ext^*_{(\kk \db)_{(-;-)} } (\kk \fb , \lfb^* (\cpd))$ is ever non-trivial. As a first example, in Section \ref{sect:examples_alg_inv} we consider the case of $\cpd _{(\kk , \id)}$ (which corresponds to the initial cyclic operad with unit) and establish the following:

\begin{THM}
(Theorem \ref{thm:non-torsion}.)
For $2 \leq \ell \in \nat$ and $2n := \ell (\ell -1)$, the  cohomology given by the $\kk \sym_{2n}$-module
$$
S_{(\ell^{\ell -1})} \subseteq 
\ext^0_{(\kk \db)_{(-;-)} }(\kk \fb, \lfb^* (\cpd_{(\kk, \id)}) )(\mathbf{2n})
$$
is not torsion with respect to the  $(\kk \ub)_{(-;+)}$-module structure. 
\end{THM}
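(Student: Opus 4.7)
The plan is to identify $\ext^0_{(\kk\db)_{(-;-)}}(\kk\fb, \lfb^*(\cpd_{(\kk,\id)}))(\mathbf{2n})$ in combinatorial terms, locate the rectangular Specht factor $S_{(\ell^{\ell-1})}$ inside it, and then verify non-torsion via the Sam-Snowden bridge from $(\kk\ub)_{(-;+)}$-modules to algebraic representations of $\spgp_\infty$.

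Since $\cpd_{(\kk,\id)}$ is the unital commutative cyclic operad with $\cpd_{(\kk,\id)}(\mathbf{m}) \cong \kk$ endowed with the trivial $\sym_m$-action, Theorem \ref{thm:cpd_db_kdbm} identifies $\lfb^*(\cpd_{(\kk,\id)})(\mathbf{N})$ with the $\kk$-module of signed unordered set partitions of $\mathbf{N}$, the $(\kk\db)_{(-;-)}$-action being pair-contraction (either shrinking a block by two elements, or merging two distinct blocks with the appropriate sign). Consequently, $\ext^0_{(\kk\db)_{(-;-)}}(\kk\fb, \lfb^*(\cpd_{(\kk,\id)}))(\mathbf{2n})$ is the common kernel of all pair-contractions at arity $\mathbf{2n}$. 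I restrict attention to the $\sym_{2n}$-submodule $U \subseteq \lfb^{\ell-1}(\cpd_{(\kk,\id)})(\mathbf{2n})$ spanned by partitions into exactly $\ell-1$ blocks each of size $\ell$; this is isomorphic to $\mathrm{Ind}_{\sym_\ell \wr \sym_{\ell-1}}^{\sym_{2n}}(\mathsf{triv} \boxtimes \mathsf{sgn})$ with Frobenius characteristic $e_{\ell-1}[h_\ell]$. A standard plethystic computation (verified directly for small $\ell$ and in general through the dominance-maximality of $(\ell^{\ell-1})$ in this Schur expansion) shows that $s_{(\ell^{\ell-1})}$ appears in $e_{\ell-1}[h_\ell]$ with positive multiplicity, yielding an embedding $S_{(\ell^{\ell-1})} \hookrightarrow U$. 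Each pair-contraction maps $U$ into a summand of strictly different block-shape, and a branching-rule argument restricting from $\sym_{2n}$ to $\sym_{2n-2}$ forces the composite $S_{(\ell^{\ell-1})} \hookrightarrow U \to \lfb^*(\cpd_{(\kk,\id)})(\mathbf{2n-2})$ to vanish, placing $S_{(\ell^{\ell-1})}$ inside $\ext^0(\mathbf{2n})$.

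To prove non-torsion, I use that, by the Sam-Snowden equivalence $\stabvsp \colon \falg(\vsp)/\falg_\tors(\vsp) \simeq \rep(\spgp)$ reviewed earlier in the paper, a $(\kk\ub)_{(-;+)}$-module is torsion iff it is killed by $\spt^\bullet \otimes_{(\kk\ub)_{(-;+)}} -$ in the stable range. The image under $\spt^\bullet \otimes_{(\kk\ub)_{(-;+)}} -$ of the $(\kk\ub)_{(-;+)}$-submodule of $\ext^0$ generated by $S_{(\ell^{\ell-1})}$ stabilizes, for $\dim V$ sufficiently large, to the $\spgp_\infty$-isotypic component indexed by the partition $(\ell^{\ell-1})$, which is a non-trivial simple object of $\rep(\spgp)$ and is nonzero for $\dim V \geq \ell - 1$. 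Hence $S_{(\ell^{\ell-1})}$ survives in the stable quotient and is not torsion. The main obstacle is this last step: ensuring that the inherited $(\kk\ub)_{(-;+)}$-structure on $\ext^0$ does not impose extra relations collapsing the rectangular component before stabilization. This amounts to tracking the Koszul-duality action of $(\kk\ub)_{(-;+)} = \ext^*_{(\kk\db)_{(-;-)}}(\kk\fb, \kk\fb)$ against the $\spt^\bullet$-contraction, and is the principal representation-theoretic input of the argument.
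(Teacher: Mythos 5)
There is a genuine gap, in fact two. First, you have misidentified the cyclic operad $\cpd_{(\kk,\id)}$: by Proposition \ref{prop:cyclic_operads_alg_inv} and Example \ref{exam:initial_unital_alg_inv}, this is the cyclic operad associated to the algebra with involution $(\kk,\id)$, hence is \emph{supported on $\mathbf{2}$} (it is the initial unital cyclic operad, not the commutative operad with $\cpd(\m)\cong\kk$ for all $m$). Consequently $\lfb^*(\cpd_{(\kk,\id)})(\mathbf{2n})$ is spanned by perfect matchings (chord diagrams with $n$ chords of size two), i.e.\ it identifies with $(\kk\ub)_{(+;-)}(\mathbf{0},\mathbf{2n})$, whose Schur functor is $\Lambda^n(S^2(W))$. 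Your module $U$ of partitions into $\ell-1$ blocks of size $\ell$, with Frobenius characteristic $e_{\ell-1}[h_\ell]$, does not exist here: $\lfb^{d}(\cpd_{(\kk,\id)})(\mathbf{2n})$ vanishes unless $d=n$ and all blocks have size two. The correct plethysm in which $(\ell^{\ell-1})$ must be located is $e_n[h_2]$ with $2n=\ell(\ell-1)$ (the paper does this via the set $Q_1(2n)$ describing $\Lambda^n(S^2(W))$, then rules out the coboundary term $\Lambda^{n-1}(S^2(W))\otimes\Lambda^2(W)$ by Pieri). Your plethystic computation and branching argument would have to be redone from scratch for this module.

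Second, the non-torsion step is not actually carried out. The criterion you invoke --- that a $(\kk\ub)_{(-;+)}$-module is torsion \emph{iff} it is killed by $\spt^\bullet\otimes_{(\kk\ub)_{(-;+)}}-$ --- is only established in one direction in the paper (Corollary \ref{cor:property_vker_sp_otimes_torsion}); the converse is not available, and in any case applying it requires knowing the $(\kk\ub)_{(-;+)}$-action on $\ext^0$, which is precisely the content of the theorem. You concede this yourself when you describe ``tracking the Koszul-duality action'' as the main unresolved obstacle. The paper resolves it concretely: the complex is a complex of $S^*(\Lambda^2(W))$-modules (this is the $(\kk\ub)_{(-;+)}$-action under the Schur correspondence), so for each $d$ the subobject $\schur_{(\ell^{\ell-1})}(W)\otimes S^d(\Lambda^2(W))$ consists of cocycles; inside it the constituent $\schur_{(\ell^{\ell-1},1^{2d})}(W)$ (obtained from the summand $\Lambda^{2d}(W)\subseteq S^d(\Lambda^2(W))$ via Pieri) is shown not to occur in the potential coboundaries $\Lambda^{n+1}(S^2(W))\otimes S^{d-1}(\Lambda^2(W))$, because every $\mu\in Q_1(2(n+1))$ has $\mu_1>\ell$. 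This exhibits a nonzero image of the action map $(\kk\ub)_{(-;+)}(\mathbf{2n},\mathbf{2(n+d)})\otimes S_{(\ell^{\ell-1})}\to\ext^0(\mathbf{2(n+d)})$ for every $d$, which is exactly the definition of non-torsion. Without an argument of this kind your proposal does not establish the statement.
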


The second edge morphism should then correspond to the mate of (\ref{eqn:edge1_INTRO}): 
\begin{eqnarray}
\label{eqn:edge2_INTRO}
 \ext^*_{(\kk \db)_{(-;-)} } (\kk \fb , N) 
\rightarrow 
\hom_{(\kk \ub)_{(-;+)}} ((\kk \ub)_{(-;+)}^\sharp, 
\tor_*^{(\kk \db)_{(-;-)} } (\kk \fb , N)
).
\end{eqnarray}

Heuristically, in the case $N = \lfb^* (\cpd)$, these morphisms relate `unstable homology' (corresponding to $\ext^* _{(\kk \db)_{(-;-)}} (\kk \fb, \lfb^* (\cpd))$) with `stable homology' (corresponding to $\tor_* ^{(\kk \db)_{(-;-)}} (\kk \fb, \lfb^* (\cpd))$). 

There are also `odd' versions of the above results, using the $\kk \db$-module $\sfb^* (\cpd)$ in place of $\lfb^*(\cpd)$. These are presented in the body of the paper.

\begin{rem*}
The above results rely essentially upon the fact that the $\kk$-linear category $\kk \db$ (as well as its twisted variants) are Koszul $\kk$-linear categories. In the work of Stoll \cite{MR4541945}, this is subsumed in the fact that the Brauer properad is a Koszul properad. This is also related to Ward's  theorem \cite{MR4425832} that the operad encoding modular operads is Koszul. There are also shades of such results in the work of Hinich and Vaintrob \cite{MR1913297}, notably the relationship between cyclic operads and chord diagrams.
\end{rem*}

There is an analogous story for operads, using the Lie algebra of derivations of free algebras in place of the Conant-Vogtmann-Kontsevich Lie algebra. In this context, the idea of naturality was already present in \cite{MR4881588}, but without explicitly exploiting the appropriate forms of (twisted) downward walled Brauer categories. In the companion paper \cite{P_opd_wall}, the analogue for operads of the cyclic case considered in this paper is explained, also indicating the relationship with the work of Dotsenko \cite{MR4945404}. The relationship between the cyclic operad case and the operad case is considered in \cite{2026arXiv260413750P}.

\bigskip
\paragraph{\bf Organization:}
A considerable part of the paper is devoted to presenting background material. Whilst almost all of this is available in the literature in some shape or form, the author has preferred to give a self-contained presentation adapted to the applications at hand. 

Sections \ref{sect:fiord} and \ref{sect:twist} give the definitions and first properties of the twisted upward (and downward) Brauer categories. Section \ref{sect:torsion} reviews properties of torsion for modules over the upward variants (this material is not strictly-speaking essential, but helps explain some conceptual features). 

The long Section \ref{sect:koszul} serves to cover the Koszul property in detail. This has two purposes: to make  explicit the different Koszul complexes that we use, and to give the (co)homological interpretation of these in terms of $\ext$ and $\tor$ in modules over the relevant twisted $\kk$-linearization of a  Brauer category.

Section \ref{sect:forms} then explains how symplectic (respectively orthogonal) vector spaces enter the picture. This  yields our approximation to stabilization. This is applied in Section \ref{sect:forms_koszul} to the Koszul complexes that interest us. 

The relationship between cyclic operads and modules over twisted Brauer categories is explained in Section \ref{sect:cyclic_brauer}. This is the fundamental ingredient that allows the theory presented in the previous sections to be applied.  Section \ref{sect:kontsevich_lie} then gives a  explanation of the relationship between the Koszul complex (for the even case) and the Chevalley-Eilenberg complex of the Lie algebra constructed by Conant and Vogtmann. Section \ref{sect:acyclicity} shows  that hairy graph homology in the presence of a unit is trivial in the presence of hairs. The theory is illustrated further in Section \ref{sect:examples_alg_inv}, where the case of algebras with involution is considered. 

In Section \ref{sect:complexes}, the relationship with hairy graph complexes is explained.

\bigskip
\paragraph{\bf Acknowledgement:}
This project was inspired in part by the work of Vladimir Dotsenko, and the author is grateful for his interest. 

The author would also like to thank Kazuo Habiro and Mai Katada for their interest, for informing him of their related work in progress, and also for useful comments (and corrections) on an earlier version of this work.

The author is also grateful to José S\~ao Jo\~ao for his interest and for comments which helped clarify the behaviour of the homology of (generalized) hairy graph complexes in the presence of a unit. 

Part of this work was presented by the author at the workshop \href{https://sites.google.com/g.ecc.u-tokyo.ac.jp/alg-approach-mcg/}{{\em Algebraic approaches to mapping class groups of surfaces}} at the University of Tokyo in May 2025. The author is very grateful to have been invited to speak there and wishes to acknowledge the financial support for his visit provided by  Professor Nariya Kawazumi's grant.

\bigskip
\paragraph{\bf Financial Support:}
This research was supported in part by the ANR project KAsH  \href{https://anr.fr/Projet-ANR-25-CE40-2861}{ANR-25-CE40-2861}.

\tableofcontents

%%%%%%%%%%%%%%%%%%%%%%%%%%%%%%%%%%%%%%%%%%%%%%%%%%%%%%%%%%%%%%%%%%%%%%%%%%%

%\input{fiord}
\section{The categories $\fiord$, $\dubord$, and $\ub$}
\label{sect:fiord}

The purpose of this section is to recall the definition of the upward Brauer category $\ub$ (and its opposite, $\db$, the downward Brauer category). The presentation is designed to  facilitate the introduction of the twisted variants of their respective $\kk$-linearizations $\kk \ub$ and $\kk \db$ in Section \ref{sect:twist}, where $\kk$ is a unital commutative ring.

%%%%%%%%%%%%%%%%%%%%%%%%%%%%%%%%%%%%%%%%%%%%%%%%%%%%%%%%%%%%%%%%%%%%%%%%%%%%%%%%%%%%%%%
\subsection{Preliminaries}

The category of finite sets (and all maps) is denoted $\fin$ and its wide subcategories defined by the class of injective (respectively bijective) maps by $\finj$ (resp. $\fb$). The disjoint union of sets is the coproduct of $\fin$; it restricts to a symmetric monoidal structure on both $\finj$ and $\fb$. The category $\fin$ has skeleton given by $\{ \n  \mid n \in \nat\}$, where $\n := \{ 1, \ldots, n \}$ (understood to be $\emptyset$ for $n=0$). This yields skeleta for $\finj$ and for $\fb$.

%%%%%%%%%%%%%%%%%%%%%%%%%%%%%%%%%%%%%%%%%%%%%%%%%%%%%%%%%%%%%%%%%%%%%%%%%%%%%%%%%%%%%%%
\subsection{Introducing $\fiord$}

The category $\fiord$ is a variant of $\finj$ that is described informally as follows. Objects are finite sets; morphisms are injective maps together with a total order on the complement of the image of this injection; composition is given by that of $\finj$ together the monoidal structure on ordered sets given by disjoint union.  There is a functor 
 $ 
\fiord \rightarrow \finj
$  
that is the identity on objects; on morphisms, it forgets the total order on the complement.

The following is the formal definition of $\fiord$:

\begin{defn}
\label{defn:fiord}
Let $\fiord$ be the category with objects finite sets. Morphisms are given by 
\[
\fiord (X,Y) = 
\left\{ 
\begin{array}{ll}
\emptyset & |X| >  |Y| \\
\fb (X \amalg \m , Y) & |Y|-|X|= m \in \nat.
\end{array}
\right.
\]

Given $f \in \fiord (X,Y)$ represented by the bijection $f' : X \amalg \m \stackrel{\cong}{\rightarrow} Y$ and $g \in \fiord (Y, Z)$ represented by $g' : Y \amalg \n \stackrel{\cong}{\rightarrow} Z$, the composite $g\circ f$ is represented by the composite
\[
X \amalg (\mathbf{m+n})
\cong
X \amalg \m \amalg \n 
\stackrel{f' \amalg \id_\n}{\longrightarrow}
Y \amalg \n 
\stackrel{g'}{\rightarrow} 
Z.
\]
\end{defn}

\begin{rem}
\label{rem:fiord_graphical}
The following graphical presentation of morphisms can be useful and prepares the way for the case of the upward Brauer categories.
For example, consider $\fiord (\mathbf{4}, \mathbf{8})$, using the canonical orders on $\mathbf{4}$, $\mathbf{8}$ (viewed as the ordering from left to right on the top and bottom edges in the diagram). The following diagram represents a morphism: 

\begin{center}
\begin{tikzpicture}[scale = .15]
%\draw[step=1cm,gray,very thin] (0,0) grid (9,9);
\draw [gray] (0,0) -- (0,-9) -- (9,-9) -- (9,0) -- (0,0); 
\draw (1,-9) -- (5,0); 
\draw (4,-9) -- (1,0); 
\draw (5,-9) -- (3,0); 
\draw (7,-9)-- (7,0);

\draw [fill=black] (1,-9) circle (0.2);
\draw [fill=white] (2,-9) circle (0.2);
\draw [fill=white] (3,-9) circle (0.2);
\draw [fill=black] (4,-9) circle (0.2);
\draw [fill=black] (1,0) circle (0.2);
\draw [fill=black] (3,0) circle (0.2);
\draw [fill=white]  (6,-9) circle (0.2);
\draw [fill=black]  (7,-9) circle (0.2);
\draw [fill=white]  (8,-9) circle (0.2);
\draw [fill=black]  (7,0) circle (0.2);
\draw [fill=black] (5,-9) circle (0.2);
\draw [fill=black] (5,0) circle (0.2);
\node [right] at (9,-9) {.};
    
\node [below] at (2,-9) {$\scriptscriptstyle{3}$};
\node [below] at (3,-9) {$\scriptscriptstyle{1}$};
\node [below] at (6,-9) {$\scriptscriptstyle{2}$}; 
\node [below] at (8,-9) {$\scriptscriptstyle{4}$};    
\end{tikzpicture}
\end{center}
Here, the injection is represented by the black lines (for example $1\mapsto 4$ and $4 \mapsto 7$); the complement of the image is represented by the white nodes  and the total order is indicated by the numbering of these. 

Composition is by stacking diagrams vertically, carrying through the total orders. For example, consider the morphism in $\fiord (\mathbf{2}, \mathbf{4})$ represented by 
\begin{center}
\begin{tikzpicture}[scale = .15]
%\draw[step=1cm,gray,very thin] (0,0) grid (9,9);
\draw [gray] (0,0) -- (0,-9) -- (9,-9) -- (9,0) -- (0,0); 
\draw (8,-9) -- (3,0); 
\draw (4,-9) -- (6,0);

\draw [fill=white] (2,-9) circle (0.2);
\draw [fill=black] (4,-9) circle (0.2);
\draw [fill=white]  (6,-9) circle (0.2);
\draw [fill=black]  (8,-9) circle (0.2);

\draw [fill=black] (3,0) circle (0.2);
\draw [fill=black] (6,0) circle (0.2);

\node [right] at (9,-9) {.};
    
\node [below] at (2,-9) {$\scriptscriptstyle{2}$};
\node [below] at (6,-9) {$\scriptscriptstyle{1}$};
\end{tikzpicture}
\end{center}

Composing the above two morphisms gives the morphism in $\fiord (\mathbf{2}, \mathbf{8})$ represented by the diagram 
\begin{center}
\begin{tikzpicture}[scale = .15]
%\draw[step=1cm,gray,very thin] (0,0) grid (9,9);
\draw [gray] (0,0) -- (0,-9) -- (9,-9) -- (9,0) -- (0,0);  
\draw (7,-9) -- (3,0); 
\draw (5,-9) -- (6,0); 

\draw [fill=black] (6,0) circle (0.2);
\draw [fill=black] (3,0) circle (0.2);

\draw [fill=white] (1,-9) circle (0.2);
\draw [fill=white] (2,-9) circle (0.2);
\draw [fill=white] (3,-9) circle (0.2);
\draw [fill=white] (4,-9) circle (0.2);
\draw [fill=black] (5,-9) circle (0.2);
\draw [fill=white]  (6,-9) circle (0.2);
\draw [fill=black]  (7,-9) circle (0.2);
\draw [fill=white]  (8,-9) circle (0.2);

\node [right] at (9,-9) {.};
    
\node [below] at (1, -9){$\scriptscriptstyle{1}$} ;
\node [below] at (2,-9) {$\scriptscriptstyle{5}$};
\node [below] at (3,-9) {$\scriptscriptstyle{3}$};
\node [below] at (4, -9) {$\scriptscriptstyle{2}$};
\node [below] at (6,-9) {$\scriptscriptstyle{4}$}; 
\node [below] at (8,-9) {$\scriptscriptstyle{6}$};    
\end{tikzpicture}
\end{center}
\end{rem}

\begin{rem}
\label{rem:finj_=_fiord_mod_sym}
For finite sets $X$, $Y$ such that $|Y|-|X|=n \in \nat$, by definition one has $\fiord (X,Y) = \fb (X \amalg \n, Y)$, hence there is a natural right action of the symmetric group $\sym_n$ on $\fiord (X,Y)$ induced by the canonical (left) action on $\n$. This action is free and  there is an isomorphism of sets 
\[
\fiord (X, Y)/ \sym_n \cong \finj( X,Y).
\]
\end{rem}

The following is immediate:

\begin{lem}
\label{lem:fiord_vs_finj}
For finite sets $X$, $Y$, the map 
$
\fiord (X,Y) 
\rightarrow 
\finj (X,Y)
$ 
given by the functor $\fiord \rightarrow \finj$ is an isomorphism if and  only if $|Y|-|X| \leq 1$. 
 In particular, if $|Y|-|X| \in \{0, 1\}$, a morphism of $\fiord (X,Y)$ is uniquely determined by its underlying injective map.
\end{lem}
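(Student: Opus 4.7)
The plan is to deduce the lemma directly from Remark \ref{rem:finj_=_fiord_mod_sym}, which already gives the key structural fact: setting $n := |Y| - |X|$ when $n \geq 0$, the symmetric group $\sym_n$ acts freely on $\fiord(X,Y)$ with orbit set $\finj(X,Y)$, so the forgetful surjection $\fiord(X,Y) \twoheadrightarrow \finj(X,Y)$ has fibres of constant cardinality $n!$.

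First I would dispose of the case $|X| > |Y|$: by Definition \ref{defn:fiord} the set $\fiord(X,Y)$ is empty, and $\finj(X,Y)$ is also empty for cardinality reasons, so the map is trivially a bijection; this is consistent with the condition $|Y| - |X| \leq 1$. Next, for $n = |Y| - |X| \in \{0,1\}$, the group $\sym_n$ is trivial, and so the surjection above is a bijection. This already yields the "in particular" clause, and gives one direction of the equivalence.

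For the converse, suppose $n \geq 2$. Since $|X| \leq |Y|$, the set $\finj(X,Y)$ is non-empty (choose any size-$|X|$ subset of $Y$ and any bijection onto it). Then $|\fiord(X,Y)| = n! \cdot |\finj(X,Y)| \geq 2 \cdot |\finj(X,Y)| > |\finj(X,Y)|$, so the forgetful map fails to be injective. This shows the map is an isomorphism only when $n \leq 1$, completing the equivalence.

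I do not expect any serious obstacle: the whole argument is a counting argument once the free action of Remark \ref{rem:finj_=_fiord_mod_sym} is invoked. The only mild point of attention is separating the vacuous case $|X| > |Y|$ from the case $|X| \leq |Y|$, so that the condition $|Y| - |X| \leq 1$ is interpreted correctly throughout.
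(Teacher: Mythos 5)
Your argument is correct: the paper states this lemma without proof (labelling it ``immediate''), and the intended justification is precisely the one you give, namely that the forgetful map is the quotient by the free $\sym_n$-action of Remark \ref{rem:finj_=_fiord_mod_sym}, so its fibres have cardinality $n!$ over a non-empty target, which is $1$ exactly when $n\leq 1$. Your separate treatment of the vacuous case $|X|>|Y|$ is also the right way to read the condition $|Y|-|X|\leq 1$.
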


This lemma is used implicitly to simplify the statement of the following Proposition.

\begin{prop}
\label{prop:factor_fiord}
Suppose that $X$, $Y$ are finite sets with $|Y|-|X| = n \in \nat$. A morphism $f \in \fiord (X,Y)$ determined by $f : X \hookrightarrow Y$ and the total order $y_1 \prec y_2 \prec \ldots \prec y_n$ on $Y \backslash f(X)$ 
admits a canonical factorization in $\fiord$: 
\[
X 
\stackrel{\cong, f}{\longrightarrow}
f(X) 
\subset 
f(X) \amalg \{y_1\} 
\subset 
 f(X) \amalg \{y_1, y_2 \}
 \subset 
 \ldots 
 \subset Y = f(X) \amalg \{y_1, y_2, \ldots, y_n \},  
\]
where the inclusions correspond to the flag of subsets of $Y\backslash f(X)$ given by the total order.

In particular, the morphisms of $\fiord$ are generated under composition by the bijections (arising from $\fb$)  and by the canonical inclusions $\m \subset \mathbf{m+1}$ (considered as a morphism in  $\fiord (\m,\mathbf{m+1})$), for $m \in \nat$.
\end{prop}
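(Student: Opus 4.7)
The plan is to prove the two assertions in sequence, the factorization first, and then the generation statement as a formal consequence.

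For the factorization, I first check that every arrow in the proposed flag is a well-defined morphism of $\fiord$. The bijection $X \stackrel{\cong,f}{\to} f(X)$ lies in $\fb \subset \fiord$ (the complement is empty, with its unique total order). For each $0 \leq k < n$, the inclusion
\[
\iota_k \cn f(X) \amalg \{y_1, \ldots, y_k\} \hookrightarrow f(X) \amalg \{y_1, \ldots, y_{k+1}\}
\]
has a one-element complement $\{y_{k+1}\}$, which carries a unique total order; by Lemma \ref{lem:fiord_vs_finj} this determines a unique morphism in $\fiord$ lifting the underlying injection. So each arrow in the flag is a morphism of $\fiord$.

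Next I need to verify that the composite equals $f$. Unwinding Definition \ref{defn:fiord}, the bijection $X \amalg \n \to Y$ representing the composite is obtained by iterating the $\iota_k$: in the inductive step we append one factor of $\mathbf{1}$ on the right, and the representing bijection sends the newly added element $\bullet$ to $y_{k+1}$. After $n$ steps, we obtain the bijection $X \amalg \n \stackrel{\cong}{\to} Y$ whose restriction to $X$ is $f$ and whose restriction to $\n$ sends $k \mapsto y_k$. By construction of the monoidal structure on $\ord$ in Proposition \ref{prop:ord_monoidal}, this exactly encodes the prescribed order $y_1 \prec y_2 \prec \ldots \prec y_n$ on $Y \backslash f(X)$. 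Hence the composite of the flag agrees with $f$.

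For the generation statement, start from an arbitrary morphism $f \in \fiord(X,Y)$ and apply the factorization just established. Each bijection appearing lies in $\fb$. Each inclusion $\iota_k$ is, after choosing order-respecting bijections identifying source and target with the skeletal objects $\mathbf{m}$ and $\mathbf{m+1}$ (with $m := |X|+k$), conjugate in $\fiord$ to the canonical inclusion $\m \hookrightarrow \mathbf{m+1}$; this uses once more Lemma \ref{lem:fiord_vs_finj}, which guarantees the relevant lifts to $\fiord$ are unique. Composing through the skeleton thus expresses $f$ as a composite of bijections and canonical inclusions $\m \hookrightarrow \mathbf{m+1}$.

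The only real subtlety is the verification, in the middle paragraph, that the iterated composite of the $\iota_k$ produces precisely the given total order on the complement, rather than, say, its reverse. This is a matter of tracking the concatenation convention fixed in Proposition \ref{prop:ord_monoidal} through Definition \ref{defn:fiord}; once that bookkeeping is done carefully in the inductive step, the rest of the argument is formal.
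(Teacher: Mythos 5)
Your proof is correct; the paper states Proposition \ref{prop:factor_fiord} without proof, treating it as immediate from Definition \ref{defn:fiord}, and your argument is exactly the expected unwinding of that definition. In particular, your bookkeeping of the composition convention is right: in Definition \ref{defn:fiord} the complement of $g\circ f$ is ordered with the ($g'$-images of the) complement of $f$ first and the complement of $g$ appended afterwards, so iterating the one-element inclusions does reproduce the order $y_1 \prec \cdots \prec y_n$ rather than its reverse.
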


We record the following: 

\begin{prop}
\label{prop:fiord_EI}
The category $\fiord$ is an EI-category (every endomorphism is an isomorphism) and the maximal subgroupoid identifies with $\fb$. Moreover, the forgetful functor $\fiord \rightarrow \finj$ fits into the commutative diagram of functors that are the identity on objects:
\[
\xymatrix{
\fb 
\ar@{^(->}[r]
\ar@/_1pc/@{^(->}[rr]
&
\fiord
\ar[r]
&
\finj.
}
\]
\end{prop}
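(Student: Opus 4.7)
The plan is to unpack the definition of $\fiord$ from Definition \ref{defn:fiord} in each of the three assertions, verifying them directly.

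First I would address the EI-property. For any finite set $X$, the endomorphism set $\fiord(X,X)$ is computed using the formula with $|X| - |X| = 0$, giving $\fiord(X,X) = \fb(X \amalg \mathbf{0}, X) = \fb(X,X)$. Since every morphism of $\fb$ is a bijection, hence an isomorphism in the groupoid $\fb$, the composition formula of Definition \ref{defn:fiord} (which on the subsets of morphisms with $m=0, n=0$ reduces to composition in $\fb$) shows that such a morphism is an isomorphism in $\fiord$ as well. Thus $\fiord$ is EI.

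Second, for the maximal subgroupoid, I would show it coincides with $\fb$. Suppose $f \in \fiord(X,Y)$ admits a two-sided inverse $g \in \fiord(Y,X)$. The non-emptiness convention in Definition \ref{defn:fiord} forces both $|Y| \geq |X|$ and $|X| \geq |Y|$, hence $|X| = |Y|$; in that case $\fiord(X,Y) = \fb(X \amalg \mathbf{0}, Y) = \fb(X,Y)$. Conversely, any $f \in \fb(X,Y)$ defines a morphism of $\fiord$ (with $m = 0$), and its inverse in $\fb$ serves as a two-sided inverse in $\fiord$ via the same reduction of the composition law. Hence the isomorphisms of $\fiord$ are exactly the morphisms of $\fb$.

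Third, for the commutative diagram, I would simply verify that the inclusion $\fb \hookrightarrow \fiord$ (which exists by the previous paragraph) composed with the forgetful functor $\fiord \to \finj$ agrees on morphisms with the canonical inclusion $\fb \hookrightarrow \finj$: a bijection $f \in \fb(X,Y)$ viewed as a morphism of $\fiord$ is represented by $f : X \amalg \mathbf{0} \to Y$, and the forgetful functor extracts the underlying injection $X \hookrightarrow Y$, which is precisely $f$ regarded as an element of $\finj(X,Y)$. All three functors are the identity on objects by construction, so the diagram commutes. No step presents any real obstacle; the only care needed is in invoking the domain condition $|Y| \geq |X|$ that makes $\fiord(X,Y)$ nonempty when arguing about invertibility.
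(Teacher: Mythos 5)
Your proof is correct and is exactly the routine verification the paper leaves implicit (the proposition is merely "recorded" there without proof): unwinding Definition \ref{defn:fiord} to see that $\fiord(X,Y)=\fb(X,Y)$ when $|X|=|Y|$, that composition then reduces to composition in $\fb$, and that the forgetful functor restricted to $\fb$ is the canonical inclusion. The care you take with the nonemptiness condition $|Y|\geq|X|$ when characterizing the isomorphisms is the right (and only) point of substance.
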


%%%%%%%%%%%%%%%%%%%%%%%%%%%%%%%%%%%%%%%%%%%%%%%%%%%%%%%%%%%%%%%%%%%%%%%%%%%
\subsection{The upward (and downward) Brauer category}
\label{subsect:ub_db}

As a first step towards introducing the upward Brauer category, 
we restrict to the wide subcategory of $\fiord$ in which we only allow morphisms between sets of the same parity. The notation $\dubord$ reflects the fact that is a `directed' and `ordered' version of the upward Brauer category (see \cite{2026arXiv260413750P} for more context).

\begin{defn}
\label{defn:fiordev}
Let $\dubord$ be the wide subcategory defined by 
\[
\dubord (X,Y) = 
\left\{
\begin{array}{ll}
\emptyset & |X|\not \equiv |Y| \mod 2 \\
\fiord (X, Y) & \mbox{otherwise.}
\end{array}
\right.
\]
\end{defn}

\begin{rem}
\ 
\begin{enumerate}
\item 
Clearly $\dubord$ is a sub EI-category of $\fiord$ and the maximal subgroupoid identifies with $\fb$. 
\item 
The category $\dubord$ has two full subcategories, corresponding to the objects with even (respectively odd) cardinality. Moreover, $\dubord$ is the disjoint union of these two subcategories, since there are no morphisms between objects in different subcategories.
\item 
There is a natural `$\nat$-grading' of morphisms of $\dubord$. Namely, assuming $|Y| - |X|\equiv 0 \mod 2$, $\dubord (X, Y)$ is taken to have degree $\frac{1}{2} (|Y| - |X|) \in \nat$. Thus the degree $0$ morphisms correspond to the maximal subgroupoid $\fb \subset \dubord$. Analogously to Proposition \ref{prop:fiord_EI}, the morphisms of $\dubord$ are generated under composition by the degree zero and the degree one morphisms.
\end{enumerate}
\end{rem}

Restricting to complements with even parity allows us to use chord diagrams:

\begin{rem}
\label{rem:introducing_chords}
For $n=2t$ an even natural number, there is a bijection 
$\mathbf{2} \times \mathbf{t}  \stackrel{\cong}{\rightarrow} \mathbf{n}$ given by  
$(1, i)  \mapsto 2i -1$ and $(2, i )  \mapsto  2i$. Using this bijection, a total ordering of $\mathbf{n}$ is equivalent to an ordered set $\{ (x_1^i, x_2^i) \mid i \in \mathbf{t} \}$ of ordered pairs of elements of $\n$ such that $\n = \amalg_i \{ x_1^i, x_2^i \}$. 

Thus a total ordering can be represented by a decorated chord diagram, in which each chord is directed (aka.  oriented) and the chords are ordered;  the underlying undecorated chord diagram is given by  forgetting the directions of the chords and their order.  

For example, for $n=6$, the following is an undecorated chord diagram:

\begin{center}
\begin{tikzpicture}[scale = .3]
%\draw[step=1cm,gray,very thin] (0,0) grid (7,-7);

\begin{scope}
    \clip (0,-7) rectangle (7,-4.8);
    \draw (3,-7) circle(2);
    \draw (3,-7) circle (1); 
    \draw (4.5, -7) circle (1.5);
\end{scope}

\draw [lightgray, thick] (0,-7) -- (7,-7);
\draw [fill=black] (1,-7) circle (0.1);
\draw [fill=black] (2,-7) circle (0.1);
\draw [fill=black] (3,-7) circle (0.1);
\draw [fill=black] (4,-7) circle (0.1);
\draw [fill=black] (5,-7) circle (0.1);
\draw [fill=black] (6,-7) circle (0.1);
\node [right] at (7,-7) {.};
\end{tikzpicture}
\end{center}
We choose the following decoration: all chords are oriented clockwise and the order of the chords is given by the order of the left node of each chord (with respect to the usual order $<$ of $\mathbf{6}$). With this decoration, the above diagram encodes the total order $\prec$ on $\mathbf{6}$ given by  $1 \prec 5 \prec 2 \prec 4 \prec 3 \prec 6$.

The canonical order on $\mathbf{6}$ corresponds to the chord diagram 
\begin{center}
\begin{tikzpicture}[scale = .3]
%\draw[step=1cm,gray,very thin] (0,0) grid (7,-7);
\begin{scope}
    \clip (0,-7) rectangle (7,-6.3);
    \draw (1.5,-7) circle(.5);
    \draw (3.5,-7) circle (.5); 
    \draw (5.5, -7) circle (.5);
\end{scope}

\draw [lightgray, thick] (0,-7) -- (7,-7);
\draw [fill=black] (1,-7) circle (0.1);
\draw [fill=black] (2,-7) circle (0.1);
\draw [fill=black] (3,-7) circle (0.1);
\draw [fill=black] (4,-7) circle (0.1);
\draw [fill=black] (5,-7) circle (0.1);
\draw [fill=black] (6,-7) circle (0.1);
\node [right] at (7,-7) {,};
\end{tikzpicture}
\end{center}
with decoration with chords oriented clockwise and ordered from left to right.

The free transitive action of $\sym_n$  induces a free transitive action on the set of decorated chord diagrams. If one forgets the decorations, the action is no longer free: the stabilizer of a given (undecorated) chord diagram is isomorphic to the wreath product $\sym_2 \wr \sym_t \cong \sym_2 ^{\times t} \rtimes \sym_t$.  The set of decorated chord diagrams with given underlying undecorated diagram is thus a free transitive $\sym_2 \wr \sym_t$-set.
\end{rem}

%%%%%%%%%%%%%%%%%%%%%%%%%%%%%%%%%%%%%%%%%%%%%%%%%%%%%%%%%%%%%%%%%%%%%%%%%%%%%%%%%%%%%%%%%%%%%

Recall from Remark \ref{rem:finj_=_fiord_mod_sym} that, if $X$ and $Y$ are finite sets with $|Y|-|X| = 2t$, for $t \in \nat$, then $\dubord (X,Y) = \fb (X \amalg \mathbf{2t}, Y)$ is equipped with a natural $\sym_{2t}$-action; thus the subgroup $\sym_2 \wr \sym_t \subset \sym_{2t}$ acts on the right on $\dubord (X,Y)$.

\begin{defn}
\label{defn:ub}
The upward Brauer category $\ub$ has objects finite sets and morphisms given by 
\[
\ub (X, Y) := \left\{
\begin{array}{ll}
\dubord (X,Y)/ \sym_2 \wr \sym_t & |Y|-|X|= 2t,  t \in \nat
\\
\emptyset & \mbox{otherwise.}  
\end{array}
\right.
\]
Composition is induced by that of $\dubord$.

The downward Brauer category $\db$ is the opposite category $\db := \ub \op$. 
\end{defn}

\begin{rem}
The graphical representation of morphisms of $\fiord$ leads to one for $\ub$, by using (undecorated) chord diagrams. For example, the following diagram represents an element of 
 $\ub (\mathbf{4}, \mathbf{8})$:

\begin{center}
\begin{tikzpicture}[scale = .15]
\begin{scope}
    \clip (1.5,-9) rectangle (6.5,-6);
    \draw (4,-9) circle(2);
\end{scope}

\begin{scope}
    \clip (2.5,-9) rectangle (8.5,-6);
    \draw (5.5,-9) circle(2.5);
\end{scope}

%\draw[step=1cm,gray,very thin] (0,0) grid (9,9);
\draw [gray] (0,0) -- (0,-9) -- (9,-9) -- (9,0) -- (0,0); 
\draw (1,-9) -- (5,0); 
\draw (4,-9) -- (1,0); 
\draw (5,-9) -- (3,0); 
\draw (7,-9)-- (7,0);

\draw [fill=black] (1,-9) circle (0.2);
\draw [fill=black] (2,-9) circle (0.2);
\draw [fill=black] (3,-9) circle (0.2);
\draw [fill=black] (4,-9) circle (0.2);
\draw [fill=black] (1,0) circle (0.2);
\draw [fill=black] (3,0) circle (0.2);
\draw [fill=black]  (6,-9) circle (0.2);
\draw [fill=black]  (7,-9) circle (0.2);
\draw [fill=black]  (8,-9) circle (0.2);
\draw [fill=black]  (7,0) circle (0.2);
\draw [fill=black] (5,-9) circle (0.2);
\draw [fill=black] (5,0) circle (0.2);
    \node [right] at (9,-9) {.};
\end{tikzpicture}
\end{center} 
Composition is again given by stacking diagrams.
\end{rem}

The following is clear:

\begin{prop}
\label{prop:properties_ub}
\ 
\begin{enumerate}
\item
The category $\ub$ is an EI-category with maximal subgroupoid that identifies with $\fb$.
\item 
There is a commutative diagram of  functors that are the identity on objects:
\[
\xymatrix{
\dubord 
\ar[r]
\ar@/_1pc/[rr]
&
\ub 
\ar[r]
&
\finj.
}
\]
On morphisms, for $|Y|= |X|+2t$, $t \in \nat$, this corresponds to the canonical surjections 
\[
\dubord (X,Y) 
\twoheadrightarrow 
\dubord (X,Y) / \sym_2 \wr \sym_t 
\twoheadrightarrow 
\dubord (X,Y) / \sym_{2t}.
\]
\item 
The $\nat$-grading of morphisms of $\dubord$ induces one of morphisms of $\ub$. Under composition, the morphisms of $\ub$ are generated by the degree zero  and the degree one morphisms.
\end{enumerate}
\end{prop}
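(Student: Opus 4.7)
The plan is to treat the three parts in order, drawing on the factorization result of Proposition \ref{prop:factor_fiord} and on the basic bookkeeping of the $\sym_2\wr\sym_t$-action established in Remark \ref{rem:introducing_chords} and Remark \ref{rem:finj_=_fiord_mod_sym}.

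For part (1), I would first observe that any morphism in $\ub(X,Y)$ requires $|Y|-|X|=2t$ for some $t\in\nat$, so every endomorphism of $X$ in $\ub$ is forced to have $t=0$. In that case $\sym_2\wr\sym_0$ is trivial and $\ub(X,X)=\fiordev(X,X)=\fb(X,X)$, which is a group; this gives the EI property. The maximal subgroupoid is characterized by the existence of two-sided inverses: since the grading is additive under composition and the identity has degree zero, invertible morphisms are exactly the degree-zero morphisms, which form $\fb$.

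For part (2), the key technical step is checking that composition of morphisms in $\fiordev$ descends to a well-defined composition on $\ub$. Using Definition \ref{defn:fiord}, if $f'\in\fb(X\amalg\mathbf{2t},Y)$ and $g'\in\fb(Y\amalg\mathbf{2s},Z)$ are representatives, then replacing $f'$ by $f'\circ(\id_X\amalg\sigma)$ for $\sigma\in\sym_2\wr\sym_t$ or $g'$ by $g'\circ(\id_Y\amalg\tau)$ for $\tau\in\sym_2\wr\sym_s$ alters the resulting composite by the action of $\sigma\amalg\tau$ on the $\mathbf{2t}\amalg\mathbf{2s}$ factor. Under the block inclusion $\sym_{2t}\times\sym_{2s}\hookrightarrow\sym_{2t+2s}$ combined with the Young inclusion $\sym_t\times\sym_s\hookrightarrow\sym_{t+s}$, the pair $(\sigma,\tau)$ lies in $\sym_2\wr\sym_{t+s}$, so the class of the composite is unchanged. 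This establishes the functor $\fiordev\to\ub$. Composing with the further quotient $\fiordev(X,Y)/(\sym_2\wr\sym_t)\twoheadrightarrow\fiordev(X,Y)/\sym_{2t}=\finj(X,Y)$ from Remark \ref{rem:finj_=_fiord_mod_sym} gives the functor $\ub\to\finj$. Since this composite matches the already-known forgetful functor $\fiordev\to\finj$, the diagram commutes and the asserted description on morphisms is tautological.

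For part (3), the cardinal difference $|Y|-|X|$ is invariant under the $(\sym_2\wr\sym_t)$-action, so the $\nat$-grading on $\fiordev$ descends to $\ub$, and composition continues to add degrees. For the generation statement, take a morphism $f\in\ub(X,Y)$ of degree $t\geq 1$ and lift it to $f'\in\fiordev(X,Y)$. By Proposition \ref{prop:factor_fiord}, $f'$ factors in $\fiord$ as a bijection followed by $2t$ successive canonical inclusions $\mathbf{m}\subset\mathbf{m+1}$; grouping these into $t$ consecutive pairs yields a factorization of $f'$ inside $\fiordev$ as the composite of a bijection with $t$ degree-one morphisms. Projecting to $\ub$ gives the required decomposition. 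The main obstacle in this whole argument is the group-theoretic verification in part (2), that the class of the composite is independent of the representatives; once this is in hand, the remaining points are direct consequences of the previously established structure on $\fiord$ and $\fiordev$.
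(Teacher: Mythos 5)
Your proof is correct; the paper itself gives no argument (the statement is prefaced by ``The following is clear''), and your write-up supplies exactly the details the author intends, using the ingredients already in place (Proposition \ref{prop:factor_fiord}, Remark \ref{rem:finj_=_fiord_mod_sym}, and the block inclusion $(\sym_2\wr\sym_t)\times(\sym_2\wr\sym_s)\subset\sym_2\wr\sym_{t+s}$ for the well-definedness of composition). In particular, your verification in part (2) that the class of the composite is independent of representatives is the one genuinely non-tautological point, and it is handled correctly.
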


\section{Twisted $\kk$-linearizations}
\label{sect:twist}

The main purpose of this section is to introduce the twisted forms $(\kk \ub)_{(\pm; \mp)}$ of the $\kk$-linearization of the category $\ub$. We also establish some basic properties that are required later. No claim to originality is made; for example, twisted forms of the upward Brauer category occur in \cite{MR3376738}.

Throughout this section, $\kk$ is a unital commutative ring.

%%%%%%%%%%%%%%%%%%%%%%%%%%%%%%%%%%%%%%%%%%%%%%%%%%%%%%%%%%%%%%%%%%%%%%%%%%%%%%%%%
\subsection{$\kk$-linear categories}

We work with (essentially) small $\kk$-linear categories; these can be viewed as a multi-object generalization of $\kk$-algebras. If $\cala$ is such a category, the category $\cala\dash\modules$ of $\cala$-modules is the category of $\kk$-linear functors from $\cala$ to $\kmod$, the category of $\kk$-modules. This corresponds to the category of `left modules'; the category of `right modules' is $\cala\op\dash\modules$, where the opposite category $\cala\op$ is equipped with the $\kk$-linear structure inherited from $\cala$.

\begin{exam}
\label{exam:projective_A_mod}
Suppose that $\cala$ is an essentially small $\kk$-linear category and that $X$ is an object of $\cala$. Then $\cala (X, -)$ has a natural $\cala$-module structure. Moreover, it is projective in $\cala \dash\modules$; it corepresents evaluation on $X$. As $X$ runs over a set of isomorphism class representatives of objects of $\cala$, these form a set of projective generators of $\cala\dash\modules$. Likewise, $\cala (-, X)$ has a natural right $\cala$-module structure; it is projective in $\cala\op\dash\modules$.
\end{exam}

For later usage, we recall:

\begin{defn}
\label{defn:fg_proj}
In a $\kk$-linear category $\cala$, an object is said to be finitely-generated projective if it is a direct summand of a finite direct sum $\bigoplus_ i \cala (X_i, -)$ for some objects $X_i$.
\end{defn}

\begin{rem}
\label{rem:kk-linearization}
If $\calc$ is an essentially small category, then its $\kk$-linearization $\kk \calc$ is (tautologically) a $\kk$-linear category. The category $\kk \calc \dash\modules$ is equivalent to the category of functors from $\calc$ to $\kmod$, by the universal property of $\kk$-linearization (later, this category will also be denoted $\f (\calc)$). 
\end{rem}

If $M$ is a right $\cala$-module and $N$ is a left $\cala$-module, one has $M \otimes_\cala N$,  the multi-object generalization of the tensor product over a $\kk$-algebra. For $X$ an object of $\cala$, there are natural isomorphisms:
\begin{eqnarray*}
\cala (-, X) \otimes_\cala N & \cong & N (X) \\
M \otimes_\cala \cala (X, -) & \cong & M (X).
\end{eqnarray*}

\begin{rem}
For $\calc$ an essentially small category, $\kk \calc\dash\modules$ is equipped with a symmetric monoidal structure induced by the tensor product $\otimes$ on $\kmod$. Namely, for two objects $F$, $G$ (considered as functors from $\calc$ to $\kmod$), their tensor product is given by $(F \otimes G) (X):= F(X) \otimes G(X)$, with morphisms acting diagonally. The unit is the constant functor $\kk$. 

This does not hold in general for a $\kk$-linear category $\cala$ in place of $\kk \calc$. (Indeed,  there need not even be a `constant' $\cala$-module playing the rôle of $\kk$.) 
 There is, however, an external tensor product. If $\cala$ and $\calb$ are (essentially) small $\kk$-linear categories, one can form  their tensor product $\cala \otimes \calb$, generalizing the tensor product of algebras; this has set of objects $\ob \cala \times \ob \calb$ and morphism modules formed by tensor products. For $M$ an $\cala$-module and $N$ a $\calb$-module, $M \otimes N$ is naturally an $\cala \otimes \calb$-module with values given by $(M \otimes N) (A, B) = M(A) \otimes N (B)$, for $(A,B) \in \ob \cala \times \ob \calb$.
\end{rem}

\begin{rem}
\label{rem:cala-bimodules}
The tensor product of $\kk$-linear categories allows us to introduce the notion of an $\cala$-bimodule,  namely an $\cala \otimes \cala\op$-module, as usual. 
 For example, $\cala (-, -)$ has a canonical $\cala$-bimodule structure, as does $\cala\op (-,-)$; these are essentially the same structures, using the equality $\cala (X, Y) = \cala\op (Y, X)$. 
\end{rem}

%%%%%%%%%%%%%%%%%%%%%%%%%%%%%%%%%%%%%%%%%%%%%%%%%%%%%%%%%%%%%%%
\subsection{Twisted versions of $\kk \ub$ and of $\kk \db$}

We start from a fundamental observation that follows from the construction of $\ub$ as given in Definition \ref{defn:ub}. For finite sets $X, Y$ such that $|Y|- |X| = 2t \in \nat$, there is a natural isomorphism of $\kk$-modules:
\[
\kk \ub (X, Y) 
\cong 
\kk \dubord (X,Y) \otimes_{\kk \sym_2 \wr \sym_t} 
\kk,
\]
using the natural right action of $\sym_2 \wr \sym_t$ on $\dubord (X,Y)$, where $\kk$ is considered as a trivial $\sym_2 \wr \sym_t$-module. 

This allows us to introduce {\em twisted} versions of $\kk \ub$ by replacing the trivial representation $\kk$ by other appropriate rank one representations of $\sym_2 \wr \sym_t$. These representations could be introduced using generalities on representations of wreath products (see \cite[Chapter 4]{MR644144}, for example). Instead we give a direct approach exploiting the inclusion $\sym_2 \wr \sym_t \subset \sym_{2t}$ and the (split) surjection $\sym_2 \wr \sym_t \cong \sym_2^{\times t} \rtimes \sym_t \twoheadrightarrow \sym_t$.

\begin{defn}
\label{defn:twisting_representations}
Define the following representations of $\sym_2 \wr \sym_t$ with underlying $\kk$-module free of rank one:
\begin{enumerate}
\item 
$\kk_{(+;+)}:= \kk$, the trivial representation; 
\item 
$\kk_{(-;+)}$, the restriction of the sign representation $\sgn_{2t}$ along $\sym_2 \wr \sym_t \subset \sym_{2t}$; 
\item 
$\kk_{(+;-)}$, the restriction of the sign representation $\sgn_t$
along $\sym_2 \wr \sym_t \rightarrow \sym_t$;
\item 
$\kk_{(-;-)}$ the tensor product $\kk _{(+;-)} \otimes \kk _{(-;+)}$ (for the diagonal action of $\sym_2 \wr \sym_t$).
\end{enumerate}
The generic notation $(\pm; \mp)$  is used  to indicate an element of $\{ (+;+), (+;-) , (-;+) , (-;-) \}$.
\end{defn} 

\begin{rem}
If the characteristic of the ring $\kk$ is not two and $t>1$, the four representations $\kk_{(\pm; \mp)}$ are pairwise non-isomorphic. Working with isomorphism classes of representations,  these form a group under $\otimes$ that is isomorphic to the Klein four group, $\zed/2 \times \zed/2$, with identity element the isomorphism class of $\kk_{(+;+)} = \kk$.
 For $t=1$, there are isomorphisms $\kk _{(+;+)} \cong \kk _{(+; -)}$ and $\kk_{(-;+)} \cong \kk _{(-;-)}$, and the corresponding group is $\zed/2$.
\end{rem}

\begin{defn}
\label{defn:kkub_twist}
Let $(\kk \ub)_{(\pm; \mp)}$ denote the $\kk$-linear category with objects finite sets and with morphisms 
\[
(\kk\ub)_{(\pm; \mp)} (X,Y) 
:= 
\left\{
\begin{array}{ll}
0 & |Y|-|X| \not \in 2\nat \\
\kk \dubord (X,Y) \otimes_{\kk\sym_2\wr\sym_t} \kk_{(\pm; \mp)} &
|Y|-|X|=2t, t\in \nat.
\end{array}
\right.
\]
Composition is induced by that of $\kk \dubord$.
\end{defn}

\begin{rem}
The fact that the composition of $\kk \dubord$ passes to a well-defined composition on $(\kk \ub)_{(\pm; \mp)}$ relies on the fact that the sign representation behaves well upon restriction to Young subgroups. Namely, for $m, n \in \nat$, the restriction $\sgn_{m+n}\downarrow^{\sym_{m+n}}_{\sym_m \times \sym_n}$ is isomorphic to the representation $\sgn_m \boxtimes \sgn_n$ of $\sym_m \times \sym_n\subset \sym_{m+n}$.
\end{rem}

\begin{exam}
The $\kk$-linear category $(\kk \ub)_{(+;+)}$ is isomorphic to $\kk \ub$. 
\end{exam}

An immediate consequence of the definition is the following:

\begin{lem}
\label{lem:bimodule_kub_twisted}
For $m , t \in \nat$, setting $n:= m + 2t$, there is an isomorphism of $\sym_n \times \sym_m\op$-modules 
\[
(\kk \ub)_{(\pm; \mp)} (\m, \n) 
\cong 
\kk \sym_n \otimes_{\kk\sym_2 \wr \sym_t} \kk _{(\pm; \mp)} ,
\] 
where the codomain is considered as a quotient of the bimodule $\kk \sym_n \downarrow _{\sym_m}^{\sym_n}$, restricting the right module structure using the restriction along $\sym_m \subset \sym_m \times \sym_2 \wr \sym_t \subset \sym_n$.
\end{lem}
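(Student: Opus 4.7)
The plan is to unwind the definitions and verify that the claimed isomorphism arises naturally from the identification $\fiordev(\m, \n) \cong \sym_n$ once we fix the canonical bijection $\m \amalg \mathbf{2t} \cong \n$ described in the Example following Proposition \ref{prop:ord_monoidal}.

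First I would start from Definition \ref{defn:kkub_twist}, which gives
\[
(\kk\ub)_{(\pm; \mp)} (\m, \n) = \kk \fiordev (\m, \n) \otimes_{\kk\sym_2\wr\sym_t} \kk_{(\pm; \mp)},
\]
and from Definition \ref{defn:fiord} which gives $\fiordev (\m, \n) = \fb(\m \amalg \mathbf{2t}, \n)$. Using the canonical bijection $\phi : \m \amalg \mathbf{2t} \stackrel{\cong}{\to} \n$, postcomposition induces a bijection of sets $\sym_n = \fb(\n, \n) \stackrel{\cong}{\to} \fb(\m \amalg \mathbf{2t}, \n)$, $\sigma \mapsto \sigma \circ \phi$.

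Next I would check that this bijection intertwines the three natural actions. The left $\sym_n$-action on $\fiordev(\m, \n)$ (induced by postcomposition with $\fb(\n, \n) \subset \fiordev(\n, \n)$) corresponds to left multiplication on $\sym_n$. The right $\sym_m$-action (induced by precomposition with $\fb(\m, \m) \subset \fiordev(\m, \m)$, which by Definition \ref{defn:fiord} acts via $h \amalg \id_{\mathbf{2t}}$) corresponds, under $\phi$, to right multiplication by the subgroup $\sym_m \times \{e\} \subset \sym_m \times \sym_{2t} \subset \sym_n$. Similarly, the right $\sym_{2t}$-action of Remark \ref{rem:finj_=_fiord_mod_sym} (acting on the $\mathbf{2t}$ component of $\m \amalg \mathbf{2t}$) corresponds under $\phi$ to right multiplication by the complementary subgroup $\{e\} \times \sym_{2t} \subset \sym_n$. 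Hence $\fiordev(\m, \n) \cong \sym_n$ as a $(\sym_n, \sym_m \times \sym_{2t})$-bimodule, where $\sym_m \times \sym_{2t}$ is the standard Young subgroup.

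Finally, I would pass to the tensor product. Since $\sym_2 \wr \sym_t$ acts through its inclusion into $\sym_{2t}$, we get
\[
\kk \fiordev(\m, \n) \otimes_{\kk \sym_2 \wr \sym_t} \kk_{(\pm; \mp)} \cong \kk \sym_n \otimes_{\kk \sym_2 \wr \sym_t} \kk_{(\pm; \mp)},
\]
where on the right $\sym_2 \wr \sym_t$ acts via $\{e\} \times \sym_2 \wr \sym_t \subset \sym_m \times \sym_{2t} \subset \sym_n$. The residual right $\sym_m$-action on $\kk \sym_n \downarrow^{\sym_n}_{\sym_m}$ passes to the quotient because $\sym_m \times \{e\}$ and $\{e\} \times \sym_2 \wr \sym_t$ commute inside $\sym_n$. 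The resulting $(\sym_n, \sym_m\op)$-bimodule structure is precisely the one described in the statement.

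The proof is essentially a bookkeeping exercise; the only place requiring care is tracking which of the three actions (left $\sym_n$, right $\sym_m$, right $\sym_{2t}$) corresponds to which inclusion into $\sym_n$, which comes down to the compatibility of Definition \ref{defn:fiord} for composition with the concatenation monoidal structure of Proposition \ref{prop:ord_monoidal}.
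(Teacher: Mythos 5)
Your proof is correct and is exactly the unwinding that the paper has in mind: the lemma is stated there as ``an immediate consequence of the definition'' with no written proof, and your identification of $\fiordev(\m,\n)=\fb(\m\amalg\mathbf{2t},\n)$ with $\sym_n$ via the canonical bijection, together with the bookkeeping of the three actions, is precisely the verification being left implicit. Nothing is missing.
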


\begin{prop}
\label{prop:twisted_subcat_fiordev_modules}
There is a full functor 
$ 
\kk \dubord 
\rightarrow 
(\kk\ub)_{(\pm;\mp)} 
$ 
that is the identity on objects. Hence, the category $(\kk \ub)_{(\pm;\mp)} \dash\modules$ identifies as a full subcategory of $\kk \dubord\dash\modules$.
\end{prop}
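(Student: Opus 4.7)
The plan is to construct the functor directly from the definitions, then deduce the module-theoretic statement from a general principle about restriction along identity-on-objects full functors.

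First, I would take as the action on objects the identity (both categories have finite sets as objects), and on morphism modules the collection of quotient maps
\[
\kk\fiordev(X,Y) \twoheadrightarrow (\kk\ub)_{(\pm;\mp)}(X,Y),
\]
which, for $|Y|-|X|=2t$, is the canonical surjection $f \mapsto f \otimes 1$ onto $\kk\fiordev(X,Y)\otimes_{\kk\sym_2\wr\sym_t} \kk_{(\pm;\mp)}$ (the map is zero otherwise, since the target is zero). That this data assembles into a $\kk$-linear functor is exactly the content of the clause in Definition \ref{defn:kkub_twist} stating that composition in $(\kk\ub)_{(\pm;\mp)}$ is induced by that of $\kk\fiordev$; preservation of identities and $\kk$-linearity are immediate, and fullness is built into the construction since the morphism maps are by definition surjective.

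For the second assertion, I would invoke the following general fact about $\kk$-linear categories: if $F \colon \cala \to \calb$ is $\kk$-linear, identity on objects, and full, then the restriction functor $F^\ast \colon \calb\dash\modules \to \cala\dash\modules$ (given by $M \mapsto M \circ F$) is fully faithful. Injectivity on morphism spaces is immediate, since a natural transformation between $\calb$-modules is determined by its components, which (as $F$ is the identity on objects) agree with those of its restriction. For the surjectivity, given a natural transformation $\tau \colon F^\ast M \to F^\ast N$ of $\cala$-modules, the obvious candidate extension is the family $(\tau_X)$ viewed as a prospective natural transformation $M \to N$; the only nontrivial check is naturality with respect to arbitrary $\calb$-morphisms, and this follows from $\cala$-naturality together with the fullness of $F$. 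Applying this to the functor just constructed yields the asserted full embedding of module categories.

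There is no substantial obstacle, but the one point worth flagging is the well-definedness of composition on the quotient morphism modules, which rests on the compatibility of each rank-one representation $\kk_{(\pm;\mp)}$ of $\sym_2 \wr \sym_t$ with restriction to the subgroup $(\sym_2 \wr \sym_{t_1}) \times (\sym_2 \wr \sym_{t_2}) \subset \sym_2 \wr \sym_{t_1 + t_2}$ arising in a two-step composition. This is the content of the remark immediately following Definition \ref{defn:kkub_twist} concerning the behaviour of sign representations under restriction to Young subgroups, and so is already incorporated into the definition of $(\kk\ub)_{(\pm;\mp)}$.
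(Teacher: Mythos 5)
Your proposal is correct and fills in exactly the details the paper leaves implicit: the paper's proof simply says the first statement follows from the definition of $(\kk\ub)_{(\pm;\mp)}$ and the second is a formal consequence, which is precisely the identity-on-objects quotient functor and the fully-faithfulness of restriction along a full identity-on-objects functor that you spell out. No gaps.
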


\begin{proof}
The first statement follows from the definition of $(\kk \ub)_{(\pm; \mp)}$; the second is a formal consequence. 
\end{proof}

The underlying $\kk$-module of the morphism spaces of the categories $(\kk \ub)_{(\pm;\mp)}$ can be identified by exploiting the canonical total order on $\n$, for $n \in \nat$. We `decorate'  chord diagrams  as in Remark \ref{rem:introducing_chords}.

\begin{lem}
\label{lem:basis_twisted_kub}
For $m,n \in \nat$ such that $n -m = 2t \in \nat$, there is a section of the projection $\dubord (\m, \n) \rightarrow \ub (\m , \n)$ induced by decorating the chord diagrams as follows: 
\begin{enumerate}
\item 
chords are oriented clockwise; 
\item 
chords are ordered  by the ordering of their left hand nodes inherited from the total order of $\n$. 
\end{enumerate}

On composing with the canonical projection $\kk \dubord (\m, \n) \twoheadrightarrow (\kk \ub)_{(\pm; \mp)} (\m, \n)$ this induces an isomorphism of $\kk$-modules
\begin{eqnarray}
\label{eqn:kub_twist_basis}
\kk \ub (\m, \n)
\stackrel{\cong}{\rightarrow} 
(\kk \ub)_{(\pm; \mp)} (\m, \n).
\end{eqnarray}
In particular, $(\kk \ub)_{(\pm; \mp)} (\m, \n)$ is a  finite rank free $\kk$-module with basis given by this construction. 
\end{lem}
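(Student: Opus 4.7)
The statement is purely a linear algebra calculation once the key group-theoretic input is isolated: namely, that the right action of $\sym_2 \wr \sym_t$ on $\fiordev(\m,\n)$ is \emph{free}. I would organise the proof in three short steps.

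\textbf{Step 1 (freeness of the action).} By Definition \ref{defn:fiord}, $\fiordev(\m,\n) = \fb(\m\amalg\mathbf{2t},\n)$, and the right $\sym_{2t}$-action used to define the quotient $\ub(\m,\n)$ is by precomposition on the factor $\mathbf{2t}$. Since $\sym_{2t}$ acts freely on the set of bijections $\m\amalg\mathbf{2t} \stackrel{\cong}{\to}\n$ (even $\sym_n$ does so by postcomposition, and this action commutes with the one in question, so the point is really that right-translation on a bijection set is free), the restriction to $\sym_2\wr\sym_t\subset\sym_{2t}$ is also free. Consequently each fiber of the projection $\fiordev(\m,\n)\twoheadrightarrow\ub(\m,\n)$ is a free right $\sym_2\wr\sym_t$-torsor, and the projection admits set-theoretic sections.

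\textbf{Step 2 (the canonical section).} Using the dictionary of Remark \ref{rem:introducing_chords}, an element of $\fiordev(\m,\n)$ is equivalent to an undecorated chord diagram equipped with orientations on the chords together with a total order on the set of chords. Given an undecorated chord diagram $c$ representing a class in $\ub(\m,\n)$, the prescription of orienting each chord clockwise and ordering chords by their left-hand nodes (with respect to the canonical order on $\n$) is canonical and depends only on $c$; call the resulting decorated diagram $s(c)\in\fiordev(\m,\n)$. Forgetting the decoration recovers $c$, so $s$ is a section of the projection $\fiordev(\m,\n)\twoheadrightarrow\ub(\m,\n)$.

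\textbf{Step 3 (linear algebra).} For a free right $G$-set $X$, the choice of a set-theoretic section $\sigma:X/G\to X$ yields a decomposition of right $\kk G$-modules $\kk X \cong \bigoplus_{\bar x\in X/G} \sigma(\bar x)\cdot \kk G$, each summand being a free right $\kk G$-module of rank one. Tensoring with any left $\kk G$-module $L$ that is free of rank one over $\kk$ (with generator $1_L$) gives $\kk X\otimes_{\kk G} L \cong \bigoplus_{\bar x} \kk\cdot(\sigma(\bar x)\otimes 1_L)$, which is free over $\kk$ with basis indexed by $X/G$. Applying this to $X=\fiordev(\m,\n)$, $G=\sym_2\wr\sym_t$, $L=\kk_{(\pm;\mp)}$, and the section $\sigma=s$ of Step 2, we obtain that the composite
\[
\kk\ub(\m,\n)\xrightarrow{s_*} \kk\fiordev(\m,\n)\twoheadrightarrow (\kk\ub)_{(\pm;\mp)}(\m,\n),\qquad c\longmapsto s(c)\otimes 1,
\]
is an isomorphism of $\kk$-modules, which is (\ref{eqn:kub_twist_basis}). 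Finiteness of the rank is immediate since $\ub(\m,\n)$ is a finite set.

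\textbf{Expected obstacle.} There is no serious obstacle; the proof reduces to freeness of a group action combined with a standard base-change computation. The only care needed is in verifying that the specific decoration of Remark \ref{rem:introducing_chords} gives a \emph{well-defined} section (as opposed to one of the other plausible conventions), which is a bookkeeping matter rather than a conceptual difficulty.
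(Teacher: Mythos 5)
Your proposal is correct and is exactly the argument the paper leaves implicit: the lemma is stated without proof, and the intended justification is precisely your combination of (i) freeness of the $\sym_2\wr\sym_t$-action (already recorded in Remark \ref{rem:finj_=_fiord_mod_sym} for the full $\sym_{2t}$-action, hence for the subgroup), (ii) the canonical decoration of Remark \ref{rem:introducing_chords} giving a set-theoretic section, and (iii) the standard identification $\kk X\otimes_{\kk G}L\cong\bigoplus_{X/G}\kk$ for a free action and a rank-one twisting module. No gaps.
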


In order to relate modules over the different categories $(\kk \ub)_{(\pm;\mp)}$, we use the following object of $\kk \dubord\dash\modules$:

\begin{lem}
\label{lem:sgn_fiord}
There is a $\fiord$-module $\sgn$ such that: 
\begin{enumerate}
\item 
the restriction $\sgn\downarrow^{\fiord}_\fb$ is given by $X \mapsto \Lambda^{|X|} (\kk X)$ (the usual sign representation);
\item 
for any $n \in \nat$, the inclusion $\n \subset \mathbf{n+1}$ acts via 
$ 
- \wedge [n+1] \colon \Lambda^n (\kk \n) \rightarrow \Lambda^{n+1} (\kk (\mathbf{n+1})).
$ 
\end{enumerate} 
By restriction, one obtains a $\kk \dubord$-module again denoted by $\sgn$.
\end{lem}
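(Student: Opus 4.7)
The plan is to define the functor $\sgn \colon \fiord \to \kmod$ explicitly on all morphisms and then to verify functoriality together with the two stated properties. On objects, set $\sgn(X) := \Lambda^{|X|}(\kk X)$. On a morphism $f \in \fiord(X,Y)$ represented by an injection $f \colon X \hookrightarrow Y$ together with a total order $y_1 \prec y_2 \prec \cdots \prec y_n$ on $Y \setminus f(X)$ (where $n = |Y|-|X|$), define
\[
\sgn(f) \bigl( x_1 \wedge \cdots \wedge x_k \bigr) := f(x_1) \wedge \cdots \wedge f(x_k) \wedge y_1 \wedge y_2 \wedge \cdots \wedge y_n,
\]
which lies in $\Lambda^{|Y|}(\kk Y)$ since $k+n = |X| + (|Y|-|X|) = |Y|$. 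This is well-defined on $\Lambda^{|X|}(\kk X)$ because the formula is alternating in the $x_i$ (the trailing wedge factors do not depend on the $x_i$) and $\kk$-multilinear.

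Next, I would verify functoriality. Given $f \in \fiord(X,Y)$ with complement order $y_1 \prec \cdots \prec y_n$ and $g \in \fiord(Y,Z)$ with complement order $z_1 \prec \cdots \prec z_m$, Definition \ref{defn:fiord} (together with Proposition \ref{prop:ord_monoidal}) shows that $g \circ f \in \fiord(X,Z)$ has underlying injection $g \circ f$ with total order on $Z \setminus (g \circ f)(X)$ given by the concatenation
\[
g(y_1) \prec \cdots \prec g(y_n) \prec z_1 \prec \cdots \prec z_m.
\]
A direct computation then yields
\[
\sgn(g) \bigl( \sgn(f)(x_1 \wedge \cdots \wedge x_k) \bigr)
= g(f(x_1)) \wedge \cdots \wedge g(f(x_k)) \wedge g(y_1) \wedge \cdots \wedge g(y_n) \wedge z_1 \wedge \cdots \wedge z_m,
\]
which coincides with $\sgn(g \circ f)(x_1 \wedge \cdots \wedge x_k)$. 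Identities are preserved trivially, since an identity morphism has empty complement and underlying injection the identity.

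The two required properties are then immediate. The restriction to $\fb \subset \fiord$ consists of bijections, whose complement is empty, so $\sgn(f)(x_1 \wedge \cdots \wedge x_{|X|}) = f(x_1) \wedge \cdots \wedge f(x_{|X|})$, which is the standard sign representation on $\Lambda^{|X|}(\kk X)$. For the canonical inclusion $\n \subset \mathbf{n+1}$ (viewed in $\fiord$), the complement is the singleton $\{n+1\}$ equipped with its unique total order, hence $\sgn$ acts by $-\wedge [n+1]$ as claimed. The final sentence of the lemma is then obtained simply by restricting $\sgn$ along the inclusion $\fiordev \subset \fiord$.

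The main obstacle is the bookkeeping needed to check functoriality with correct signs and correctly concatenated orderings on complements, but this reduces to the associativity of concatenation of totally ordered sets encoded in Proposition \ref{prop:ord_monoidal}, so it is essentially formal; alternatively, one could use Proposition \ref{prop:factor_fiord} to reduce the verification to bijections and canonical inclusions, at the cost of checking the relations between these generators.
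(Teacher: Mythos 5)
Your proposal is correct and is essentially the fleshed-out version of the paper's one-line proof, which likewise calls this a direct verification. The only (cosmetic) difference is that you define $\sgn(f)$ by a single global formula and check the composition law directly from Definition \ref{defn:fiord}, whereas the paper suggests reducing to the generators of Proposition \ref{prop:factor_fiord}; your route avoids checking relations among generators and is, if anything, slightly cleaner.
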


\begin{proof}
This is a direct verification, using Proposition \ref{prop:factor_fiord} to reduce to the given morphisms.
\end{proof}

Clearly one has $\sgn \otimes \sgn \cong \kk$, the constant module, in $\kk \dubord$-modules. Hence, using the tensor product of $\kk \dubord \dash\modules$, we have the involutive functor 
\[
\sgn \otimes - 
\colon 
\kk \dubord \dash\modules
\rightarrow 
\kk \dubord \dash\modules.
\]

\begin{prop}
\label{prop:sgn_tensor_modules_categories}
The functor $\sgn \otimes -$ on $\kk \dubord$-modules restricts to equivalences of categories 
\begin{eqnarray*}
\kk \ub \dash \modules & \cong & (\kk \ub)_{(-;+)}\dash\modules \\
(\kk \ub)_{(+;-) } \dash \modules & \cong & (\kk \ub)_{(-;-)}\dash \modules, 
\end{eqnarray*}
where these are viewed as full subcategories of $\dubord \dash \modules$ via Proposition \ref{prop:twisted_subcat_fiordev_modules}.
\end{prop}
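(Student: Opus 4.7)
The plan is to characterize each subcategory $(\kk\ub)_{(\pm;\mp)}\dash\modules \subset \kk\fiordev\dash\modules$ by a concrete equivariance condition on modules, to verify that $\sgn$ itself satisfies the condition with parameter $(-;+)$, and then to observe that tensoring with $\sgn$ multiplies the character appearing in that condition by $\chi_{(-;+)}$.

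First, unpacking Definition \ref{defn:kkub_twist} and the right $\sym_{2t}$-action on $\fiordev(X,Y) = \fb(X \amalg \mathbf{2t}, Y)$ by precomposition on the $\mathbf{2t}$-factor (cf.\ Remark \ref{rem:finj_=_fiord_mod_sym}), a $\kk\fiordev$-module $M$ factors through $(\kk\ub)_{(\pm;\mp)}$ if and only if, for every $f \in \fiordev(X,Y)$ with $|Y|-|X| = 2t$ and every $\tau \in \sym_2 \wr \sym_t$,
\[
M(f \cdot \tau) = \chi_{(\pm;\mp)}(\tau) \cdot M(f),
\]
where $\chi_{(\pm;\mp)}$ denotes the character of the rank-one representation $\kk_{(\pm;\mp)}$.

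The main point of genuine calculation is to check that $\sgn$ satisfies this condition with parameter $(-;+)$. Using the flag factorization of Proposition \ref{prop:factor_fiord} together with the explicit action of the elementary inclusions given in Lemma \ref{lem:sgn_fiord}, one finds that if $f \in \fiordev(X,Y)$ is represented by a bijection $f' \colon X \amalg \mathbf{2t} \stackrel{\cong}{\to} Y$, then
\[
\sgn(f)(\omega) = f'_\ast(\omega) \wedge f'(1) \wedge \cdots \wedge f'(2t)
\]
for $\omega \in \Lambda^{|X|}(\kk X)$. Precomposing $f$ by $\tau \in \sym_{2t}$ reorders the wedge factors $f'(1), \ldots, f'(2t)$, producing the sign $\sgn_{2t}(\tau)$; hence $\sgn(f \cdot \tau) = \sgn_{2t}(\tau)\, \sgn(f)$. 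Since $\kk_{(-;+)}$ is defined as the restriction of $\sgn_{2t}$ along $\sym_2 \wr \sym_t \subset \sym_{2t}$, this shows that $\sgn$ lies in $(\kk\ub)_{(-;+)}\dash\modules$.

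Finally, for any $(\kk\ub)_{(\pm;\mp)}$-module $M$ and any $\tau \in \sym_2 \wr \sym_t$, the diagonal action on $\sgn \otimes M$ yields
\[
(\sgn \otimes M)(f \cdot \tau) = \chi_{(-;+)}(\tau)\, \chi_{(\pm;\mp)}(\tau) \cdot (\sgn \otimes M)(f),
\]
so $\sgn \otimes M$ lies in the subcategory corresponding to the twist $\kk_{(-;+)} \otimes \kk_{(\pm;\mp)}$. The identifications $\kk_{(-;+)} \otimes \kk_{(+;+)} \cong \kk_{(-;+)}$ and $\kk_{(-;+)} \otimes \kk_{(+;-)} \cong \kk_{(-;-)}$ then give the two asserted restrictions, and $\sgn \otimes \sgn \cong \kk$ implies that each restriction is an involution, hence an equivalence of categories. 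The only genuine obstacle is the sign computation $\sgn(f \cdot \tau) = \sgn_{2t}(\tau)\, \sgn(f)$; everything else is bookkeeping in the Klein-four group of twists.
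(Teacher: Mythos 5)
Your proof is correct; the paper in fact states this Proposition without proof (deferring to the analogous results in Sam--Snowden), and your argument supplies exactly the intended one, using the ingredients the paper sets up: the identification of $(\kk\ub)_{(\pm;\mp)}\dash\modules$ as the full subcategory of $\kk\fiordev$-modules satisfying $M(f\cdot\tau)=\chi_{(\pm;\mp)}(\tau)M(f)$, the computation $\sgn(f\cdot\tau)=\sgn_{2t}(\tau)\sgn(f)$ via the flag factorization of Proposition \ref{prop:factor_fiord} and Lemma \ref{lem:sgn_fiord}, and the multiplication of twists in the Klein four-group together with $\sgn\otimes\sgn\cong\kk$.
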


\begin{rem}
Analogous results are given  in \cite[Section 4]{MR3376738}, for example.
\end{rem}
%%%%%%%%%%%%%%%%%%%%%%%%%%%%%%%%%%%%%%%%%%%%%%%%%%%%%%%%%%%%%%%%%%%%%%%%%%%%%%%%
\subsection{Augmentation, grading, and more}

We have the following:

\begin{lem}
\label{lem:unit_fiordev}
The inclusion $\fb \subset \dubord$ induces a $\kk$-linear embedding 
$ 
\kk \fb \hookrightarrow (\kk \ub)_{(\pm; \mp)}
$ 
that we consider as a unit map.
\end{lem}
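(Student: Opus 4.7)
The plan is to verify that the composite $\kk\fb \to \kk\fiordev \to (\kk\ub)_{(\pm;\mp)}$, in which the first arrow is the $\kk$-linearization of the inclusion of the maximal subgroupoid $\fb \subset \fiordev$ (cf.\ Proposition \ref{prop:fiord_EI}) and the second is the canonical quotient arising from Definition \ref{defn:kkub_twist}, is faithful. Since $\kk$-linearity and functoriality of each constituent arrow are immediate, the only substantive point is injectivity on morphism $\kk$-modules.

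First I would reduce to a case analysis on $|Y|-|X|$. If $|Y|\neq |X|$, then $\fb(X,Y) = \emptyset$, so $\kk\fb(X,Y)=0$ and there is nothing to check. If $|Y|=|X|$, then we are in the case $t=0$ of Definition \ref{defn:kkub_twist}: the group $\sym_2\wr\sym_0$ is trivial, the one-dimensional representation $\kk_{(\pm;\mp)}$ reduces to $\kk$, and by Definition \ref{defn:fiord} we have $\fiordev(X,Y) = \fb(X\amalg \mathbf{0},Y) = \fb(X,Y)$. Unwinding Definition \ref{defn:kkub_twist} then gives
\[
(\kk\ub)_{(\pm;\mp)}(X,Y) \;=\; \kk\fiordev(X,Y) \otimes_{\kk} \kk \;=\; \kk\fb(X,Y),
\]
and the composite map under scrutiny becomes the identity. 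Hence it is an embedding, in particular on this range of bidegrees.

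Functoriality (preservation of composition and identities) is similarly immediate: identity morphisms of $\fb$ remain identities throughout; and the composition of two bijections in $\fb$ stays in the degree-zero part of the $\nat$-graded morphism structure on $\fiordev$ (and on $(\kk\ub)_{(\pm;\mp)}$, cf.\ Proposition \ref{prop:properties_ub}), where by the above analysis no nontrivial quotient is taken and the sign characters act trivially. There is no real obstacle to overcome: the statement essentially records that $\kk\fb$ is precisely the degree-zero subcategory of the naturally $\nat$-graded $\kk$-linear category $(\kk\ub)_{(\pm;\mp)}$, which both justifies the embedding and explains why it plays the rôle of a unit map.
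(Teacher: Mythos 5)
Your proof is correct and matches what the paper leaves implicit: the lemma is stated without proof, and the remark immediately following it identifies $\kk\fb$ with the degree-zero subcategory of $(\kk\ub)_{(\pm;\mp)}$, which is exactly the definitional unwinding ($t=0$, trivial group $\sym_2\wr\sym_0$, trivial twist) that you carry out. Nothing further is needed.
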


In fact, the $\nat$-grading of morphisms of $\kk \dubord$ passes to $(\kk \ub)_{(\pm; \mp)}$ and the unit map corresponds to the inclusion of the wide $\kk$-linear subcategory of morphisms of degree zero. Projection onto the degree zero morphisms thus induces an augmentation 
\[
(\kk \ub)_{(\pm;\mp)} 
\rightarrow 
\kk \fb 
\]
that is the identity on objects. 

The notion of a homogeneous quadratic $\kk$-linear category is the multi-object generalization of that of a homogeneous quadratic algebra over a semisimple ring, as defined in \cite{MR2177131}, for example.

The following is well-known:

\begin{prop}
\label{prop:kub_twist_homogeneous_quadratic}
The  category $(\kk \ub)_{(\pm; \mp)}$ is a homogeneous quadratic $\kk$-linear category over $\kk \fb$:
\begin{enumerate}
\item 
morphisms are generated  by the $\kk\fb$-bimodule of degree one morphisms; 
\item 
relations are generated by degree two relations.
\end{enumerate}
\end{prop}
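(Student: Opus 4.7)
The plan is to verify separately the generation of $\cala := (\kk\ub)_{(\pm;\mp)}$ by degree one morphisms and the quadraticity of the relations, exploiting the chord-diagram basis of Lemma \ref{lem:basis_twisted_kub}. Write $V := \cala^{(1)}$, viewed as a $\kk\fb$-bimodule (left action by post-composition with $\fb$-bijections, right action by pre-composition), and let $T := T_{\kk\fb}(V)$ denote the free tensor $\kk\fb$-algebra on $V$, equipped with the canonical multiplication $\mu \colon T \twoheadrightarrow \cala$ induced by composition.

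First I would establish (1). Every morphism of $\fiordev$ factors as a composite of bijections in $\fb$ and degree one inclusions $\m \hookrightarrow \mathbf{m+2}$ (each equipped with a total order on its two-element complement); this is obtained by grouping in consecutive pairs the single-element inclusions appearing in the canonical factorization of Proposition \ref{prop:factor_fiord} in $\fiord$. Since the $\kk$-linear functor $\kk\fiordev \twoheadrightarrow \cala$ is full and the identity on objects (Proposition \ref{prop:twisted_subcat_fiordev_modules}) and grading-preserving by construction, the same generation transfers to $\cala$, so $\mu$ is surjective.

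For (2), set $R := \ker(\mu|_{V \otimes_{\kk\fb} V})$ and let $I \subset T$ be the two-sided ideal it generates; the goal is to show that $\mu$ induces an isomorphism $T/I \xrightarrow{\cong} \cala$. By Lemma \ref{lem:basis_twisted_kub}, $\cala^{(t)}(\m, \mathbf{m+2t})$ is free over $\kk$ on underlying chord diagrams in $\mathbf{m+2t}$, while $V^{\otimes t}_{\kk\fb}(\m, \mathbf{m+2t})$ is free over $\kk$ on the same chord diagrams equipped additionally with a total ordering of the $t$ chords. In degree two the only non-trivial identification is the exchange of the two chords, and a direct rank computation shows that $R$ is generated by quadratic relations of the form $e_1 \cdot e_2 - \varepsilon\, e_2' \cdot e_1'$, where $\varepsilon \in \{\pm 1\}$ is prescribed by the second coordinate of the twist and $e_i', e_j'$ denote the $\kk\fb$-intertwined forms of $e_i, e_j$ obtained after relabeling the intermediate object.

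The concluding step is a normal-form (``bubble sort'') argument: any two orderings of a fixed chord diagram differ by a permutation in $\sym_t$, which decomposes into adjacent transpositions, and each adjacent transposition lifts to one application of the degree two swap relation inside $T$, accumulating the appropriate sign. This reduces the map $(T/I)^{(t)} \twoheadrightarrow \cala^{(t)}$ to a bijection of bases, establishing the quadratic presentation. The main obstacle will be the bookkeeping of signs under repeated applications of the swap relation; this amounts to the compatibility of the twisting representations $\kk_{(\pm;\mp)}$ of Definition \ref{defn:twisting_representations} with restriction along the inclusions of Young subgroups $\sym_2 \wr \sym_{t'} \times \sym_2 \wr \sym_{t-t'} \subset \sym_2 \wr \sym_t$, which is transparent from the construction of these representations from the sign characters of $\sym_{2t}$ and $\sym_t$.
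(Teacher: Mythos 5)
Your proposal is correct and follows essentially the same route as the paper: generation in degree one via the canonical factorization of Proposition \ref{prop:factor_fiord} (pairing off the single-element inclusions), and quadraticity of the relations via the chord-diagram bases of Lemma \ref{lem:basis_twisted_kub} together with the fact that $\sym_t$ is generated by adjacent transpositions. The paper's proof is only a sketch of this; your tensor-algebra formulation, the degree-two rank computation identifying the swap relation with sign governed by the second coordinate of the twist, and the Young-subgroup compatibility of the sign characters are exactly the details it leaves implicit.
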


\begin{proof}
We have already established that $(\kk \ub)_{(\pm; \mp)}$ is $\nat$-graded; that  morphisms are generated by degree zero (corresponding to $\kk \fb$) and degree one is a consequence of Proposition \ref{prop:factor_fiord} (compare Proposition \ref{prop:properties_ub}, which implies the result for the untwisted $\kk \ub$). It remains to check that the relations are homogeneous quadratic, generated by the relations in degree two. This comes down to the fact that the symmetric groups are generated by `adjacent' transpositions.
\end{proof}

We also have the following projectivity statement.

\begin{prop}
\label{prop:morphisms_right_free}
For any finite set $X$, $(\kk \ub) _{(\pm; \mp)} (-, X)$ is free as a $(\kk \fb)\op$-module. Moreover, 
\begin{enumerate}
\item 
for a finite set $V$, $(\kk \ub) _{(\pm; \mp)} (V, X)$ is a finite rank free $\kk \aut (V)\op$-module; 
\item 
there are only finitely many isomorphism classes of sets $V$ such that $(\kk \ub) _{(\pm; \mp)} (V, X)$ is non-zero.  
\end{enumerate}
\end{prop}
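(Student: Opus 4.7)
The plan is to reduce to the skeleton and analyse the right $\sym_m$-action on Brauer diagrams directly. By Proposition \ref{prop:properties_ub}, $\kubg(V,X)$ vanishes unless $|X|-|V|$ is a non-negative even integer; since finite sets are classified up to isomorphism by cardinality, there are only finitely many isomorphism classes of $V$ for which $\kubg(V,X)$ is non-zero, which establishes (2). Using skeleta to identify $V \simeq \m$ and $X \simeq \n$ with $n - m = 2t \in \nat$, the core task is to show that $\kubg(\m, \n)$ is a finite rank free right $\kk\sym_m$-module; the overall $(\kk\fb)\op$-freeness then follows by assembling across all $m$, using that a right $\kk\fb$-module is equivalently a family of right $\kk\sym_m$-modules (one for each $m\in \nat$) with no interaction between different values of $m$.

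The key input is Lemma \ref{lem:basis_twisted_kub}, which provides an explicit $\kk$-basis of $\kubg(\m,\n)$ indexed by the (finite) set of Brauer diagrams $\ub(\m,\n)$, via the prescribed decoration of chord diagrams (clockwise orientation, ordered by left endpoint). The right $\sym_m$-action on $\kubg(\m,\n)$ is induced by pre-composition on the $\m$-factor of $\kk\fiordev(\m,\n)$; since this commutes with the right $\sym_2 \wr \sym_t$-action on the $\mathbf{2t}$-factor, and since the chosen decoration depends only on the chord part (the image of $\mathbf{2t}$ in $\n$, which is untouched by pre-composition on $\m$), the $\sym_m$-action preserves the chosen set of decorated representatives and so acts directly by permuting these basis elements, with \emph{no} sign contribution from the twist character $\kk_{(\pm; \mp)}$.

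The crucial combinatorial observation is that this permutation action on $\ub(\m,\n)$ is \emph{free}: a Brauer diagram is uniquely determined by an injection $\m \hookrightarrow \n$ together with a perfect matching on its complement, and pre-composition with any nontrivial $\sigma \in \sym_m$ alters the injection, hence the diagram. Therefore $\kubg(\m,\n)$ is the permutation $\kk\sym_m$-module on a finite free right $\sym_m$-set, hence free of finite rank as required, completing (1). The main point to verify carefully is precisely that no signs arise in the right $\sym_m$-action on the decorated basis; once this is settled (essentially because the twist acts on a different tensor factor than the one being permuted), the argument is elementary combinatorics.
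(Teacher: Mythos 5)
Your proposal is correct and follows essentially the same route as the paper: both use Lemma \ref{lem:basis_twisted_kub} to identify the twisted hom-space with the untwisted one as a right $\kk\aut(V)$-module (the key point in each case being that precomposition on the source does not touch the complement/chord data, so no signs from the twist arise), and then reduce to the freeness of the $\aut(V)\op$-set $\ub(V,X)$. Your explicit verification that the permutation action on Brauer diagrams is free is a slight elaboration of what the paper merely asserts, but it is the same argument.
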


\begin{proof}
Consider finite sets $V, X$ and choose a total order for $X$. Then one has the isomorphism of $\kk$-modules 
\[
\kk \ub (V,X) 
\stackrel{\cong}{\rightarrow}
(\kk \ub) _{(\pm: \mp)} (V, X)
\]
provided by Lemma \ref{lem:basis_twisted_kub} (this only requires a total order on $X$). This is an isomorphism of $\kk \aut (V)\op$-modules, since precomposing by an automorphism of $V$ does not affect the complement of a map, so no signs arise. 

Thus, it suffices to prove the result for $\kk \ub (-, X)$. This follows from the  fact that $\ub (V, X)$ is a finitely-generated free $\aut(V)\op$-set and is empty if $|V|> |X|$.   
\end{proof}

%%%%%%%%%%%%%%%%%%%%%%%%%%%%%%%%%%%%%%%%%%%%%%%%%%%%%%%%%%%%%%%%%%%%%%%%%%%%%%%%
\subsection{The transpose structure}
\label{subsect:transpose}

Write $(-)^\sharp$ for the duality functor on $\kk$-modules given by $\hom_\kk (-, \kk)$. This defines a functor $\kmod\op \rightarrow \kmod$; it becomes an equivalence of categories when restricted to the full subcategory of $\kmod$ with objects that are  finite rank free $\kk$-modules.

\begin{rem}
\label{rem:duality_permutation_modules}
The above can be made more explicit when a basis of the free module is given. Namely, for $S$ a set, the dual of $\kk S$, $(\kk S)^\sharp$, identifies canonically with $\kk ^S := \mathrm{Map} (S, \kk)$. If $S$ is a finite set, then the dual basis gives the  isomorphism
\begin{eqnarray}
\label{eqn:duality_permutation_modules}
 \kk S
\cong
(\kk S) ^\sharp 
\end{eqnarray}
that sends the generator $[s]$, $s \in S$, to the corresponding element $\eta_s$ of the dual basis. 

If a group $G$ acts on the left on $S$, the  induced right action  on $(\kk S)^\sharp$ is given by $(\eta_s)g = \eta_{g^{-1} s}$. Considering $(\kk S)^\sharp$ as a left $G$-module, this implies that (\ref{eqn:duality_permutation_modules}) is an isomorphism of $G$-modules. 
\end{rem}

Now, for $\cala$ a $\kk$-linear category,  $(-)^\sharp$ induces a functor 
\[
(\cala \dash \modules )\op 
\rightarrow 
\cala\op \dash \modules. 
\]
Since $(\cala \otimes \cala\op)\op$ is canonically isomorphic to $\cala \otimes \cala \op$, duality induces a functor
\[
((\cala \otimes \cala\op) \dash \modules )\op 
\rightarrow 
(\cala \otimes \cala\op) \dash \modules,
\]
i.e.,  a contravariant functor from $\cala$-bimodules to $\cala$-bimodules. Restricted to bimodules that take values that are  finite rank free $\kk$-modules, this is an equivalence of categories.

\begin{exam}
\label{exam:dual_bimodule}
The above applies to the $\cala$-bimodule $\cala (-,-)$, yielding the dual bimodule $\cala (-,-)^\sharp$. 
 Now, suppose that each $\cala (X,Y)$ is a finite rank free $\kk$-module, equipped with a given basis. Then, by Remark \ref{rem:duality_permutation_modules}, $\cala (X,Y)^\sharp$ is isomorphic as a $\kk$-module to $\cala (X,Y)$ (the isomorphism depending on the choice of basis). The corresponding $\cala$-bimodule structure can be thought of as a {\em transpose} structure on $\cala (X,Y)$, noting that $X$ is now the covariant variable and $Y$ the contravariant variable. 
 
This situation arises for example when $\cala$ is the $\kk$-linearization of an essentially small category $\calc$ with the property that $\calc (X,Y)$ is a finite set for all objects $X$, $Y$ of $\calc$ (we say that $\calc$ is {\em hom-finite}). 
\end{exam}

We first make explicit the transpose structure in the case $\cala = \kk \calc$,  by unwinding the definitions:

\begin{prop}
\label{prop:calc_trans}
Suppose that $\calc$ is a hom-finite category.
\begin{enumerate}
\item 
For $Y \in \ob \calc$, the functor  $\kk \calc (-,Y)^\sharp \in \ob \kk\calc\dash\modules$ has values $U \mapsto \kk \calc (U,Y)^\sharp$ and, for $h\in \calc (U, V)$, the map $\kk \calc  (h, Y)^\sharp$ acts on the dual basis by  
$$
\eta_f \mapsto \sum_{\hat{f}}  \eta_{\hat{f}},
$$
 where $ f\in \calc (U,Y)$ and  the sum is over the set  of $\hat{f} \in \calc (V,Y)$ such that the following diagram commutes:
\[
\xymatrix{
U \ar[r]^h 
\ar[d]_f 
& V 
\ar@{.>}[ld]^{\hat{f}}
\\
Y.
}
\]
If $h$ is an epimorphism then there exists at most one such $\hat{f}$.
\item 
For $X \in \ob \calc$, the functor $\kk \calc  (X, -)^\sharp \in \ob \kk\calc\op\dash\modules$ has values $Y \mapsto \kk \calc (X, Y)^\sharp$ and a morphism $g \in \calc ( Y, Z)$ acts on the dual basis by 
\[
\eta_f 
\mapsto 
\sum_{\tilde{f}} \eta_{\tilde{f}}, 
\]
 where $f \in \calc (X, Z)$ and  the sum is over the set of $\tilde{f} \in \calc (X,Y)$ that make the following diagram commute
 \[
 \xymatrix{
 & 
 X 
 \ar[d]^f 
 \ar@{.>}[ld]_{\tilde{f}} 
 \\
 Y 
 \ar[r]_g
 &
 Z.
 }
 \]   
 If $g$ is a monomorphism, then there exists at most one such $\tilde{f}$.
 \end{enumerate}
\end{prop}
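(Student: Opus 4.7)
The plan is to unwind the definition of the duality functor $(-)^\sharp$ applied to the $\kk$-linearization of the hom-bifunctor, using the canonical dual basis identification from Remark~\ref{rem:duality_permutation_modules}. The arguments for the two parts are entirely parallel, exchanging the variances, so I would write the proof of part (1) in detail and leave part (2) to the reader (or, more judiciously, derive it from part (1) by passing to $\calc\op$).

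For part (1), I first observe that, for fixed $Y$, the functor $\kk\calc(-,Y)$ is contravariant in the first variable, i.e.\ a $\kk\calc\op$-module; its $\kk$-linear dual $\kk\calc(-,Y)^\sharp$ therefore carries the structure of a $\kk\calc$-module (covariant in the first variable). To compute the action of $h\in \calc(U,V)$, I would evaluate $(\kk\calc(h,Y))^\sharp(\eta_f)$ on a basis element $g\in \calc(V,Y)$ of $\kk\calc(V,Y)$. By definition of the transpose map,
\[
(\kk\calc(h,Y))^\sharp(\eta_f)(g) \;=\; \eta_f\bigl(\kk\calc(h,Y)(g)\bigr) \;=\; \eta_f(g\circ h),
\]
which equals $1$ if $g\circ h = f$ and $0$ otherwise. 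Re-expressing this in the dual basis yields
\[
(\kk\calc(h,Y))^\sharp(\eta_f) \;=\; \sum_{\hat{f}} \eta_{\hat{f}},
\]
the sum running over those $\hat{f}\in \calc(V,Y)$ with $\hat{f}\circ h = f$, which is exactly the commutativity of the triangle in the statement. The uniqueness assertion when $h$ is an epimorphism is then immediate from right-cancellation: $\hat{f}_1\circ h = f = \hat{f}_2 \circ h$ forces $\hat{f}_1 = \hat{f}_2$.

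Part (2) is obtained by exactly the same bookkeeping, with the variances exchanged. Here $\kk\calc(X,-)$ is a $\kk\calc$-module (covariant in the second variable), so its dual is a $\kk\calc\op$-module; for $g\in \calc(Y,Z)$, $f\in \calc(X,Z)$, and $\tilde{f}\in \calc(X,Y)$, the analogous computation gives $(\kk\calc(X,g))^\sharp(\eta_f)(\tilde{f}) = \eta_f(g\circ \tilde{f})$, which is nonzero precisely when $g\circ \tilde{f} = f$. The uniqueness statement in the monomorphism case follows by left-cancellation.

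I do not anticipate any real obstacle: the result is a bookkeeping exercise about dual bases of the finite-rank free $\kk$-modules $\kk\calc(X,Y)$, and the hom-finiteness hypothesis is used only to guarantee that these modules are finitely generated and free, so that the dual basis is well-defined and $\kk\calc(-,-)^\sharp$ lies in the subcategory where duality is an equivalence. The only point demanding care is keeping the two variances straight, which is why I would fix notational conventions at the outset of the proof.
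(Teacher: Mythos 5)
Your computation is correct and is exactly what the paper intends: the paper gives no separate proof of this proposition, introducing it only with the phrase ``unwinding the definitions,'' and your evaluation of the dualized map on basis elements, together with the cancellation arguments for the epi/mono cases, is precisely that unwinding. The variance bookkeeping (dual of a right module is a left module, and vice versa) also matches the conventions set up earlier in the section.
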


\begin{rem}
\ 
\begin{enumerate}
\item 
The two statements in Proposition \ref{prop:calc_trans} are clearly `dual' via passage to the opposite category. However, this will usually be applied in the situation in which all morphism in $\calc$ (or, dually, $\calc\op$) are monomorphisms. In this situation the covariant and contravariant actions have somewhat different appearance.
\item 
The result extends to treat the structure of the $\kk$-linear bimodules 
$
(\kk \ub)_{(\pm; \mp)} (-,-)^\sharp$,
  using the fact that these bimodules take values in finite rank free $\kk$-modules, with basis given by Lemma \ref{lem:basis_twisted_kub}. The only difference is that the sums that occur involve signs arising from the twisting modules $\kk_{(\pm; \mp)}$. 
\end{enumerate}
\end{rem}

%%%%%%%%%%%%%%%%%%%%%%%%%%%%%%%%%%%%%%%%%%%%%%%%%%%%%%%%%%%%%%%%%%%%%%%%
\subsection{Downward variants}

All of the above carries over {\em mutatis mutandis}  to the study of the downward Brauer category, its $\kk$-linearization $\kk \db$ and the twisted variants $(\kk \db)_{(\pm; \mp)}$. Indeed, these can be introduced as the respective opposite categories. 

\begin{rem}
\label{rem:fbop}
There is a subtlety to keep in mind. The canonical inclusion $\kk \fb \hookrightarrow (\kk \ub)_{(\pm; \mp)}$ yields an inclusion $ \kk \fb\op  \hookrightarrow (\kk \db)_{(\pm; \mp)}$ on passing to the opposite categories. In order to identify the image with $\kk \fb$, we use the isomorphism of categories $\kk \fb \cong \kk \fb\op$ that is given by $[\alpha] \mapsto [\alpha^{-1}]$. In particular, for finite sets $X$, $Y$ with $|Y| - |X | \in 2 \nat$:
\begin{enumerate}
\item 
the canonical left action of $\alpha \in \aut (X)$ on $\kk \db (Y,X)$ is given by $\alpha [g] = [g \circ \alpha^{-1}]$, where $g \in \ub (X, Y)$; 
\item 
the canonical right action of $ \beta \in \aut (Y) $  on $\kk \db (Y,X)$ is given by $[g] \beta  = [\beta^{-1} \circ g]$.
\end{enumerate}
\end{rem}

\section{Torsion modules}
\label{sect:torsion}

The purpose of this section is to review some generalities on torsion modules, working over the $\kk$-linear category $(\kk \ub)_{(\pm;\mp)}$ for a fixed choice of $(\pm; \mp)$. (The reader may prefer to  restrict to the case $(+;+)$, which corresponds to $\kk \ub$, since this case exemplifies all the salient features, but is slightly simpler, due to the absence of signs.)

The discussion involves the functor $\kubg^\sharp \otimes_{\kubg} -$, considered as an endofunctor of the category of $\kubg$-modules. This arises naturally when considering Koszul complexes in Section \ref{sect:koszul} and their applications.

Throughout this section, $\kk$ is taken to be a field, for simplicity.

%%%%%%%%%%%%%%%%%%%%%%%%%%%%%%%%%%%%%%%%%%%%%%%%%%%%%%%%%%%%%%%%%%%%%
\subsection{Torsion}
\label{subsect:torsion}

Recall that, for $n \in \nat$, the $\kubg$-module $\kubg( \n, -)$ is projective; it corepresents evaluation on $\n$. Dually, the $\kubg$-module $\kubg (-, \n)^\sharp$ is injective; it represents the functor $M\mapsto M(\n)^\sharp$. A $\kubg$-module $M$ has finite support if $M(\mathbf{t})=0$ for all $t \gg 0$.

\begin{defn}
\label{defn:torsion}
For $M$ a $\kubg$-module,
\begin{enumerate}
\item  
an element $x \in M (\n)$ (we say that $x$ is a section of $M$) is torsion if the image of the morphism $\kubg (\n, -) \stackrel{x}{\rightarrow} M$ corresponding to $x$ by Yoneda's lemma has finite support; 
\item 
the module $M$ is torsion if every section is torsion; 
\item 
the module $M$ is torsion-free if it contains no non-zero torsion submodule.
\end{enumerate}
Write $\torskubg$ for the full subcategory of torsion $\kubg$-modules in $\kubg\dash\modules$.
\end{defn}

There is a more explicit characterization of torsion elements: 

\begin{lem}
\label{lem:alternative_criterion_torsion_element}
For $M$ a $\kubg$-module, a section $x$ in $M (\n)$ is torsion if there exists $t \geq n$ with $t \equiv n \mod 2$ such that the morphism $[i_{n,t}] \in \kubg (\n, \mathbf{t})$, corresponding to the inclusion $\n \subset \mathbf{t}$ and the canonical order on $\mathbf{t} \backslash\n$  by Lemma \ref{lem:basis_twisted_kub}, sends $x$ to zero.
\end{lem}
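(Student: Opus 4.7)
The statement is a characterization (an ``alternative criterion''), so my plan is to prove both directions; the ``$\Leftarrow$'' direction contains the real content. The direction ``$\Rightarrow$'' is immediate: if $x$ is torsion then the image submodule of the Yoneda map $\kubg(\n,-)\to M$ corresponding to $x$ vanishes on $\mathbf{s}$ for all $s \gg 0$, so $[i_{n,t}]\cdot x=0$ for any sufficiently large $t$ with $t\equiv n \mod 2$.

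For the converse, my plan rests on two ingredients. First, for $n\leq t\leq s$ with $s\equiv t\equiv n\mod 2$, the factorization identity
\[
[i_{n,s}]\;=\;[i_{t,s}]\circ[i_{n,t}]
\]
holds in $\kubg(\n,\mathbf{s})$. Unpacking via Lemma~\ref{lem:basis_twisted_kub}: the composite's chord structure on $\mathbf{s}\setminus\n$ comprises the pairs $(n+1,n+2),\ldots,(t-1,t)$ inherited from $[i_{n,t}]$ followed by $(t+1,t+2),\ldots,(s-1,s)$ from $[i_{t,s}]$, with the orderings concatenated as in Proposition~\ref{prop:ord_monoidal}; this agrees with the canonical decoration defining $[i_{n,s}]$ without any sign. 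Second, by Lemma~\ref{lem:bimodule_kub_twisted}, $\kubg(\n,\mathbf{s})$ is a cyclic left $\kk\sym_s$-module, and the canonical generator $1\otimes 1$ identifies precisely with $[i_{n,s}]$ via Lemma~\ref{lem:basis_twisted_kub}. Combining these: from $[i_{n,t}]\cdot x=0$ we obtain $[i_{n,s}]\cdot x=0$ for all admissible $s\geq t$, and any $f\in\kubg(\n,\mathbf{s})$ being of the form $\sigma\cdot[i_{n,s}]$ (up to a sign coming from $\kk_{(\pm;\mp)}$) for some $\sigma\in\sym_s$ then gives $f\cdot x=0$. Hence the sub-$\kubg$-module generated by $x$ is supported on $\{\mathbf{s}: s<t\}$, a finite set, so $x$ is torsion.

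The only mild subtlety is the handling of signs in the twisted variants $(\pm;\mp)\neq(+;+)$. It does not bite: by the sign-free nature of the key factorization above, the hypothesis propagates cleanly along canonical inclusions; and the $\sym_s$-action in the final step merely rescales the already-vanishing element $[i_{n,s}]\cdot x$. So signs, the only plausible source of difficulty, end up being invisible to the argument.
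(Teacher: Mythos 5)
Your proof is correct. The paper states this lemma without proof (it is treated as an immediate consequence of the definitions), and your argument is the expected one: the factorization $[i_{n,s}]=[i_{t,s}]\circ[i_{n,t}]$ holds on the nose because the canonical orders on the complements concatenate to the canonical order (Definition \ref{defn:fiord} and Proposition \ref{prop:ord_monoidal}), and Lemma \ref{lem:bimodule_kub_twisted} shows $\kubg(\n,\mathbf{s})$ is generated as a left $\kk\sym_s$-module by $[i_{n,s}]$, so the submodule generated by $x$ vanishes on all $\mathbf{s}$ with $s\geq t$ and hence has finite support. Your converse direction is also fine, though note the paper's statement is formally only the ``if'' direction.
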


There is thus a clear relationship between torsion modules and modules with finite support:

\begin{prop}
\label{prop:torsion_vs_finite_support}
Let $M$ be a $\kubg$-module. Then $M$ is torsion if and only if it is the filtered colimit of its submodules that have finite support.
\end{prop}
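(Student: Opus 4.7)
The strategy is to prove the two implications separately, using Lemma \ref{lem:alternative_criterion_torsion_element} (or rather the definition of torsion directly) to pass back and forth between the section-level condition and the submodule-level condition.

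For the easier direction, suppose $M = \varinjlim_\alpha M_\alpha$ is a filtered colimit of submodules $M_\alpha \subseteq M$ each of finite support. Given any section $x \in M(\mathbf{n})$, by filteredness $x$ lies in some $M_\alpha (\mathbf{n})$, so the cyclic submodule $\mathrm{im}(\kubg(\mathbf{n},-) \stackrel{x}{\to} M)$ is contained in $M_\alpha$ and hence has finite support. Thus $x$ is torsion, so $M$ is torsion.

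For the converse, suppose that $M$ is torsion. I would consider the poset $\mathcal{P}$ of submodules of $M$ of finite support, ordered by inclusion. This poset is filtered: the zero submodule provides a minimum, and the sum $N_1 + N_2 \subseteq M$ of two finitely supported submodules is again finitely supported, so any two elements of $\mathcal{P}$ have a common upper bound in $\mathcal{P}$. It then remains to show that the canonical map $\varinjlim_{N \in \mathcal{P}} N \to M$ is an isomorphism. Injectivity is automatic since all $N$ are subobjects of $M$. For surjectivity, fix $\mathbf{n}$ and a section $x \in M(\mathbf{n})$; by the hypothesis the cyclic submodule $\langle x \rangle := \mathrm{im}(\kubg(\mathbf{n},-) \stackrel{x}{\to} M)$ has finite support, so $\langle x \rangle \in \mathcal{P}$ and $x$ lies in its image in $M$.

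The only point requiring a bit of care is the claim that the sum of two finitely supported submodules is again finitely supported; this is immediate from the fact that $(N_1+N_2)(\mathbf{t}) = N_1(\mathbf{t}) + N_2(\mathbf{t})$ inside $M(\mathbf{t})$, which vanishes as soon as both summands do. There is no real obstacle here: the statement is a formal consequence of the definitions, and the entire argument is essentially the standard passage from `elementwise' to `submodule-wise' formulations of torsion, adapted to the $\kubg$-linear setting.
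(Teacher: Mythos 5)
Your proof is correct and follows essentially the same route as the paper: the easy direction uses that finitely supported modules are torsion, and the converse uses that the cyclic submodule generated by a torsion section has finite support by definition, so that $M$ is exhausted by its (filtered) poset of finitely supported submodules. The extra care you take with filteredness of that poset (closure under sums) is a detail the paper leaves implicit, but there is no difference in substance.
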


\begin{proof}
It is immediate that a $\kubg$-module with finite support is torsion and  the submodule generated by a torsion element has finite support, by definition. 
It is also clear that a module is torsion if and only if it is the filtered colimit of its torsion submodules. Putting these facts together gives the result.
\end{proof}

The torsion $\kubg$-modules satisfy the following:

\begin{thm}
\label{thm:localizing_serre}
The subcategory $\torskubg$ is a localizing Serre subcategory of $\kubg\dash\modules$:
\begin{enumerate}
\item 
for $0 \rightarrow M_1\rightarrow M_2 \rightarrow M_3 \rightarrow 0$ a short exact sequence of $\kubg$-modules, $M_2$ is torsion if and only if both $M_1$ and $M_3$ are torsion;
\item 
the subcategory $\torskubg$ is closed under arbitrary coproducts. 
\end{enumerate}
\end{thm}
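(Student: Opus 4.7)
My plan is to verify the two properties in turn, using Lemma \ref{lem:alternative_criterion_torsion_element} as the operative criterion: a section $x \in M(\n)$ is torsion if and only if some $[i_{n,t}] x = 0$ (the ``only if'' direction, while not spelled out in the lemma, is immediate from the definition, since finite support of the submodule generated by $x$ forces $[i_{n,t}] x = 0$ for $t$ large enough). The key identity I will need, which I will just state rather than unpack, is the compositional identity $[i_{t,s}] \circ [i_{n,t}] = \pm\, [i_{n,s}]$ in $\kubg$ for $n \leq t \leq s$ of matching parity; this is a routine computation with the basis chord diagrams of Lemma \ref{lem:basis_twisted_kub}, since both sides correspond to the canonical inclusion with trailing adjacent cups, and the possible sign coming from $\kk_{(\pm;\mp)}$ plays no role in any vanishing statement.

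For the Serre condition (1), the ``only if'' direction comes first. If $M_2$ is torsion, then any section $x \in M_1(\n)$ generates a submodule of $M_1$ contained in the finitely supported submodule it generates in $M_2$; likewise, a section $y \in M_3(\n)$ lifts (by pointwise surjectivity of $M_2 \twoheadrightarrow M_3$) to some $\tilde y \in M_2(\n)$, and the submodule of $M_3$ generated by $y$ is the image of the finitely supported submodule generated by $\tilde y$ in $M_2$. For the ``if'' direction, suppose $M_1$ and $M_3$ are torsion and take $x \in M_2(\n)$. Pushing $x$ to $\bar x \in M_3(\n)$, torsionness of $M_3$ yields $t$ with $[i_{n,t}] \bar x = 0$, so $y := [i_{n,t}] x$ lies in $M_1(\mathbf{t})$; torsionness of $M_1$ then gives $s$ with $[i_{t,s}] y = 0$. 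The compositional identity delivers $[i_{n,s}] x = 0$ in $M_2$, so $x$ is torsion.

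For coproduct closure (2), a section $x \in \bigoplus_\alpha M_\alpha (\n)$ is a finite sum $x = x_1 + \cdots + x_k$ with $x_i \in M_{\alpha_i}(\n)$; each $x_i$ is killed by some $[i_{n,t_i}]$, and with $t := \max_i t_i$ the compositional identity gives $[i_{n,t}] x_i = 0$ for every $i$, hence $[i_{n,t}] x = 0$. The only point in the whole proof requiring actual thought is the compositional identity for the basis inclusions; with that in hand, everything else is a short application of Lemma \ref{lem:alternative_criterion_torsion_element}, with no further obstacle.
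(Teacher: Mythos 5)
Your proof is correct. The paper itself gives no argument for this theorem (the section opens by declaring the material well-known and the statement is followed immediately by its consequences), so there is no authorial proof to compare against; what you have written is a complete and valid verification. The two points that carry the argument both check out: the converse direction of Lemma \ref{lem:alternative_criterion_torsion_element}, which you correctly observe is immediate from the definition of finite support, and the compositional identity for the canonical inclusions, which in fact holds with no sign at all --- composing the representatives of $[i_{n,t}]$ and $[i_{t,s}]$ in $\fiordev$ concatenates the canonically ordered complements $\{n+1,\dots,t\}$ and $\{t+1,\dots,s\}$ into the canonically ordered complement $\{n+1,\dots,s\}$, so $[i_{t,s}]\circ[i_{n,t}]=[i_{n,s}]$ on the nose in every twist. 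Given that, the two-out-of-three argument (push $x$ into $M_3$, kill it there, land in $M_1$, kill it again) and the finite-sum argument for coproducts are exactly the standard reductions, and the parity bookkeeping in the coproduct step is harmless since all the $t_i$ are congruent to $n$ modulo $2$.
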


In particular, one has the localization {\em à la Gabriel} \cite{MR232821}: 
\[
\loc \colon \kubg\dash\modules
\rightarrow 
\kubg\dash \modules / \torskubg
\]
and the localization functor has a right adjoint $\sat \colon \kubg\dash \modules / \torskubg \rightarrow \kubg\dash\modules$. Moreover, the quotient category is a Grothendieck abelian category.

\begin{cor}
\label{cor:torsion_submodule}
For $M$ a $\kubg$-module, there is a natural exact sequence 
\[
0
\rightarrow 
\tors M 
\rightarrow 
M
\rightarrow 
\sat \loc M,
\]
where $\tors M$ is the largest torsion submodule of $M$. Thus $\tors$ is right adjoint to the inclusion of $\torskubg$ in $\kubg\dash\modules$.
\end{cor}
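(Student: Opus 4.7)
The plan is to construct $\tors M$ directly as the sum of all torsion submodules of $M$, identify it with the kernel of the unit of the Gabriel adjunction, and then deduce the adjunction property.

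First, I would define $\tors M \subseteq M$ to be the sum of all torsion submodules of $M$. Since $\torskubg$ is closed under coproducts by Theorem \ref{thm:localizing_serre}, the coproduct of this family is torsion, and since $\torskubg$ is a Serre subcategory (hence closed under quotients), the image of this coproduct in $M$---which is precisely $\tors M$---is also torsion. By construction $\tors M$ is the largest torsion submodule of $M$. This also gives functoriality: a morphism $f \colon M \to M'$ sends $\tors M$ to a torsion subobject of $M'$, hence into $\tors M'$.

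Next, I would identify $\tors M$ with the kernel of the unit $\eta_M \colon M \to \sat \loc M$. Since $\loc$ annihilates torsion, $\loc(\tors M) = 0$, so the composite $\tors M \hookrightarrow M \to \sat \loc M$ is zero by the adjunction $\loc \dashv \sat$, yielding $\tors M \subseteq \ker \eta_M$. Conversely, the fundamental property of Gabriel localization at a localizing Serre subcategory is that both the kernel and cokernel of $\eta_M$ lie in $\torskubg$; in particular $\ker \eta_M$ is torsion, so $\ker \eta_M \subseteq \tors M$ by maximality. This produces the asserted natural exact sequence.

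For the adjunction claim, given any torsion $N$ and any $f \colon N \to M$, the image $f(N)$ is a quotient of the torsion module $N$, hence torsion by the Serre property, and so lies in $\tors M$. The resulting factorization through $\tors M \hookrightarrow M$ is unique and natural in $N$ and $M$, yielding the bijection
\[
\hom_{\kubg}(N, \tors M) \cong \hom_{\kubg}(N, M) \quad \text{for } N \in \torskubg,
\]
which exhibits $\tors$ as right adjoint to the inclusion $\torskubg \hookrightarrow \kubg\dash\modules$. The only substantive input beyond abstract nonsense is the torsion-freeness of the kernel of $\eta_M$, a standard feature of Gabriel localization at a localizing Serre subcategory; no real obstacle arises.
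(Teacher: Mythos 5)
Your proof is correct and follows the standard Gabriel-localization argument that the paper itself relies on implicitly (the corollary is stated without proof as a direct consequence of Theorem \ref{thm:localizing_serre} and the ensuing discussion of $\loc$ and $\sat$). One minor slip: in your final sentence the relevant fact is that the kernel of $\eta_M$ is \emph{torsion} (lies in $\torskubg$), not that it is torsion-free, as you correctly state earlier in the argument.
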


\begin{rem}
\label{rem:torsion-free_quotient}
For $M$ a $\kubg$-module, the quotient $M / \tors M$ is a torsion-free module (termed the torsion-free quotient); this embeds in $\sat\loc M$. 
\end{rem}

\begin{exam}
\label{exam:torsion_torsion-free}
Fix $n \in \nat$.
\begin{enumerate}
\item 
The projective $\kubg (\n, -)$ is torsion-free (as is easily checked by using the criterion of Lemma \ref{lem:alternative_criterion_torsion_element}).
\item 
The injective $\kubg (-,\n)^\sharp$ has finite support (since $\kubg (\mathbf{t},\n)^\sharp=0$ for $t >n$), hence is torsion by Proposition \ref{prop:torsion_vs_finite_support}. 
\end{enumerate}
\end{exam}

We have the following stronger property of $\kubg (-,\n)^\sharp$, that is established by standard arguments:

\begin{prop}
\label{prop:finite-presentation}
For $n\in \nat$, the $\kubg$-module $\kubg (-,\n)^\sharp$ has a finite presentation 
\[
P_1 \rightarrow P_0 \rightarrow \kubg (-,\n)^\sharp \rightarrow 0
\]
where $P_0$ (respectively $P_1$) is a finite direct sum of projectives of the form $\kubg (\mathbf{m}, -)$ with $m \leq n$ (respectively $m \leq n+1$). 
\end{prop}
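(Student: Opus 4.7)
The target module $M := \kubg(-,\n)^\sharp$ has two features that make a finite presentation plausible: its support is bounded (it vanishes on $\mathbf{v}$ whenever $v > n$, since then $\kubg(\mathbf{v},\n)=0$) and, by Lemma \ref{lem:basis_twisted_kub}, each of its values $M(\mathbf{m}) = \kubg(\mathbf{m},\n)^\sharp$ is a free $\kk$-module of finite rank. The plan is to exploit these two properties, together with the homogeneous quadratic structure of $\kubg$ over $\kk\fb$ (Proposition \ref{prop:kub_twist_homogeneous_quadratic}), to produce a presentation with the claimed degree bounds.

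\textbf{Step 1: the projective cover $P_0$.} For each $m \leq n$ with $m \equiv n \pmod 2$ (the only parities where $M(\mathbf{m})$ can be non-zero), the $\kk\aut(\mathbf{m})$-module $M(\mathbf{m})$ is finite-dimensional, hence admits a finite set of $\kk\sym_m$-generators $\{\xi_{m,i}\}_i$. By Yoneda's lemma, each such $\xi_{m,i}$ corresponds to a morphism of $\kubg$-modules $\kubg(\mathbf{m},-)\to M$; assembling these yields
$$P_0 \;:=\; \bigoplus_{\substack{m\leq n\\ i}} \kubg(\mathbf{m},-) \;\longrightarrow\; M,$$
a finite direct sum of the required shape. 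Surjectivity is immediate on evaluation at $\mathbf{v}$: if $v>n$ then $M(\mathbf{v})=0$, and otherwise the contribution from the $m=v$ summands is precisely the $\kk\sym_v$-submodule of $M(\mathbf{v})$ generated by $\{\xi_{v,i}\}$, which equals $M(\mathbf{v})$ by the choice of generators.

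\textbf{Step 2: the relations $P_1$.} Let $K := \ker(P_0\to M)$. Using Proposition \ref{prop:morphisms_right_free}, each $P_0(\mathbf{v})$ is a finite rank free $\kk$-module, so each $K(\mathbf{v})$ is a finite-dimensional $\kk\sym_v$-module. The homogeneous quadratic property of $\kubg$ implies that morphisms of $\kubg$ are built from the $\kk\fb$-bimodule of degree-one morphisms subject to relations generated in degree two. Concretely, any section of $K$ in sufficiently high cardinality can be rewritten, via the action of degree-one morphisms, in terms of sections of lower cardinality, so that the genuinely new generators of $K$ occur only within a bounded range. Choosing a finite $\kk\sym_m$-generating set of $K(\mathbf{m})$ in each of these (finitely many) cardinalities and converting them by Yoneda into maps $\kubg(\mathbf{m},-)\to P_0$ produces the finite direct sum $P_1$ with the map $P_1 \to P_0$ having image exactly $K$.

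\textbf{Main obstacle.} The essential content is identifying the precise cutoff for Step 2: namely, showing that the relations of $K$ cease to introduce new $\kk\sym_m$-generators past the stated cardinality. Heuristically, the first genuinely new relations in $P_0 \to M$ appear one level beyond the support of $M$ (where the map $P_0(\mathbf{v})\to M(\mathbf{v})=0$ forces all of $P_0$ to be in the kernel), and the quadratic relations of $\kubg$ together with the degree-one generation of its morphisms then propagate these without requiring further new generators. Making this propagation argument rigorous — i.e.\ checking that every higher-degree section of $K$ lies in the $\kubg$-submodule generated by these boundary relations — is the core calculation, and is where the explicit structure of chord-diagrammatic composition in $\kubg$ (Section \ref{subsect:ub_db}) enters.
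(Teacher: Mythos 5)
Your Step 1 is fine, but Step 2 is not a proof: the assertion that ``any section of $K$ in sufficiently high cardinality can be rewritten, via the action of degree-one morphisms, in terms of sections of lower cardinality'' is exactly the content of the proposition, and you defer it to a ``main obstacle'' without carrying it out. (The paper itself only invokes ``a general result \ldots proved by standard arguments'', so this verification is precisely what a written-out proof must supply.) Here is the argument that closes the gap. For $t>n$ one has $K(\mathbf{t})=P_0(\mathbf{t})$, since $M(\mathbf{t})=0$. Moreover, for $m\le t$ with $m\equiv t \bmod 2$, the composition map $\kubg(\mathbf{t},\mathbf{t+2})\otimes_{\kk\sym_t}\kubg(\mathbf{m},\mathbf{t})\to\kubg(\mathbf{m},\mathbf{t+2})$ is surjective: in the basis of Lemma \ref{lem:basis_twisted_kub}, a diagram in $\ub(\mathbf{m},\mathbf{t+2})$ has at least one chord (its complement has cardinality $t+2-m\ge 2$), and peeling off a single chord factors it through $\mathbf{t}$. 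This is the ``generated in degree one'' half of Proposition \ref{prop:kub_twist_homogeneous_quadratic}; the quadratic relations, which you also invoke, play no role. Hence $P_0(\mathbf{t})$ generates $P_0(\mathbf{t+2})$ for every $t\ge n$, so the submodule of $K$ generated by $\bigoplus_{t\le n+2}K(\mathbf{t})$ is all of $K$; each $K(\mathbf{t})$ is finite-dimensional because $P_0(\mathbf{t})$ is (Proposition \ref{prop:morphisms_right_free}), so finitely many Yoneda generators suffice.

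You should also pin down the cutoff, because the one you would need is not available. By parity, a summand $\kubg(\mathbf{n+1},-)$ admits no non-zero map to $P_0$ (such maps are elements of $\kubg(\mathbf{m}_i,\mathbf{n+1})$, which vanishes as $n+1-m_i$ is odd), so the bound ``$m\le n+1$'' is effectively ``$m\le n$'', and that is too small. Already for $n=0$, where $M=\kubg(-,\mathbf{0})^\sharp$ is the module supported on $\mathbf{0}$ with value $\kk$, any admissible $P_0$ and $P_1$ are finite sums of copies of $\kubg(\mathbf{0},-)$, the map $P_1\to P_0$ is a scalar matrix, and its cokernel is again a sum of copies of $\kubg(\mathbf{0},-)$ --- never $M$. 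The first genuine relations occur at cardinality $n+2$ (in this example, the single map $\kubg(\mathbf{2},-)\to\kubg(\mathbf{0},-)$ given by the unique chord), which is what the argument above produces. So your heuristic ``one level beyond the support'' must be read as $n+2$, and the bound on $P_1$ in the statement should be $m\le n+2$ rather than $m\le n+1$.
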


This implies the following:

\begin{cor}
\label{cor:compact}
For $n \in \nat$, the $\kubg$-module $\kubg (-,\n)^\sharp$ is compact, i.e., the functor
 $$\hom_{\kubg} (\kubg (-,\n)^\sharp, -)$$
  commutes with filtered colimits.
\end{cor}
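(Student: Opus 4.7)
The plan is to deduce this immediately from the finite presentation supplied by Proposition \ref{prop:finite-presentation} together with the well-known fact that the representable projectives $\kubg(\mathbf{m},-)$ are themselves compact. The argument is purely formal: a $\kubg$-module which is the cokernel of a map between finite direct sums of compact projectives is itself compact.

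First I would record that, by Yoneda, there is a natural isomorphism
$$
\hom_{\kubg}(\kubg(\mathbf{m},-), M) \cong M(\mathbf{m})
$$
for every $\kubg$-module $M$ and every $m \in \nat$. Since filtered colimits in $\kubg\dash\modules$ are computed pointwise (colimits in a functor category with values in $\kmod$ are taken objectwise, and filtered colimits commute with finite limits in $\kmod$), the functor $\hom_{\kubg}(\kubg(\mathbf{m},-), -)$ commutes with filtered colimits. The same is true of any finite direct sum of such representables, using that $\hom$ converts finite direct sums in the first variable into finite direct products, which commute with filtered colimits.

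Next, let $(M_\alpha)$ be a filtered diagram of $\kubg$-modules with colimit $M$, and apply $\hom_{\kubg}(-, N)$ to the finite presentation
$$
P_1 \longrightarrow P_0 \longrightarrow \kubg(-,\n)^\sharp \longrightarrow 0
$$
of Proposition \ref{prop:finite-presentation}, for $N = M_\alpha$ and $N = M$. By left-exactness of $\hom$ in the second variable, one obtains the diagram of left-exact sequences
$$
\xymatrix@C=1.4em{
0 \ar[r] & \colim_\alpha\hom_{\kubg}(\kubg(-,\n)^\sharp, M_\alpha) \ar[r]\ar[d] & \colim_\alpha \hom_{\kubg}(P_0, M_\alpha) \ar[r]\ar[d]^\cong & \colim_\alpha\hom_{\kubg}(P_1, M_\alpha)\ar[d]^\cong \\
0 \ar[r] & \hom_{\kubg}(\kubg(-,\n)^\sharp, M) \ar[r] & \hom_{\kubg}(P_0, M) \ar[r] & \hom_{\kubg}(P_1, M),
}
$$
in which the filtered colimit on the top row is exact because filtered colimits are exact in $\kkmod$. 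The two rightmost vertical maps are isomorphisms by the previous paragraph, since $P_0$ and $P_1$ are finite direct sums of projectives of the form $\kubg(\mathbf{m},-)$. A short diagram chase (or the five-lemma in one of its usual forms) then implies that the leftmost vertical map is also an isomorphism, which is the required compactness.

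There is no real obstacle: all the work has already been done in establishing Proposition \ref{prop:finite-presentation}. The only point to verify carefully is that the finite presentation provided there really is built from compact projectives and that filtered colimits in $\kubg\dash\modules$ are computed pointwise; both are routine.
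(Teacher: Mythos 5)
Your proof is correct and follows exactly the route the paper intends: the corollary is stated as an immediate consequence of the finite presentation in Proposition \ref{prop:finite-presentation}, and your argument (representables are compact since filtered colimits are computed pointwise, then a five-lemma comparison of left-exact sequences) is the standard formal deduction the paper leaves implicit.
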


%%%%%%%%%%%%%%%%%%%%%%%%%%%%%%%%%%%%%%%%%%%%%%%%%%%%%%%%%%%%%%%%%%%%%%%%%%%%%%%%%%%%%%%
\subsection{An adjunction}
\label{subsect:adjunction}

We consider $\kubg^\sharp$ as a $\kubg$-bimodule (cf. the duality considerations in Section \ref{subsect:transpose}). This yields the functor 
\[
\kubg^\sharp \otimes_{\kubg} - 
\ : \  
\kubg \dash\modules \rightarrow \kubg \dash\modules.
\]
 Explicitly, for $M$ a $\kubg$-module and $X$ a finite set (considered as an object of $\kubg$), 
\[
\big( 
\kubg^\sharp \otimes_{\kubg} M
\big)
(X) 
=
\kubg(X, -)^\sharp \otimes_{\kubg} M.
\]

The following is clear, using  that, for each $n\in \nat$, $\kubg(-,\n)^\sharp$ is torsion, as observed in Example \ref{exam:torsion_torsion-free}, together with Theorem \ref{thm:localizing_serre}.

\begin{prop}
\label{prop:functor_to_torsion}
For any $\kubg$-module $M$, the $\kubg$-module $\kubg^\sharp \otimes_{\kubg} M$ is torsion. Hence $\kubg^\sharp \otimes_{\kubg} -$ induces a functor 
\[
\kubg^\sharp \otimes_{\kubg} -
\colon 
\kubg\dash\modules 
\rightarrow 
\torskubg.
\]
\end{prop}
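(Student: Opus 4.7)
The plan is to exploit the fact, noted in Example \ref{exam:torsion_torsion-free}, that for each finite set $Y$ the left $\kubg$-module $\kubg(-, Y)^\sharp$ is torsion because it has finite support, combined with the vanishing $\kubg(X, Y) = 0$ whenever $|X| > |Y|$. I will check directly that every section of $\kubg^\sharp \otimes_{\kubg} M$ is torsion, in the sense of Definition \ref{defn:torsion}.

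Fix a finite set, say $\n$, and take $x \in \bigl(\kubg^\sharp \otimes_{\kubg} M\bigr)(\n) = \kubg(\n, -)^\sharp \otimes_{\kubg} M$. By the coequalizer definition of the tensor product, $x$ is represented by a finite sum $\sum_i \phi_i \otimes m_i$ with $\phi_i \in \kubg(\n, Y_i)^\sharp$ and $m_i \in M(Y_i)$, involving only finitely many objects $Y_i$. Set $N := \max_i |Y_i|$. For any $t > N$ and any $\alpha \in \kubg(\n, \mathbf{t})$, the left action of $\alpha$ on $\phi_i$ (obtained as the transpose of precomposition by $\alpha$) lives in $\kubg(\mathbf{t}, Y_i)^\sharp$, which vanishes since $\kubg(\mathbf{t}, Y_i) = 0$. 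Thus $\alpha \cdot x = \sum_i (\alpha \cdot \phi_i) \otimes m_i = 0$, so the image of the morphism $\kubg(\n, -) \to \kubg^\sharp \otimes_{\kubg} M$ corresponding to $x$ by Yoneda's lemma vanishes on $\mathbf{t}$ for all $t > N$. This image therefore has finite support, and $x$ is torsion.

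Since $M$ and $\n$ are arbitrary, $\kubg^\sharp \otimes_{\kubg} M$ is torsion, and the second assertion follows formally from the first together with the fact that $\torskubg \subset \kubg\dash\modules$ is a full subcategory. An alternative route, which avoids the section-level bookkeeping, would write $\kubg^\sharp \otimes_{\kubg} M$ as a quotient of $\bigoplus_Y \kubg(-, Y)^\sharp \otimes_\kk M(Y)$, observe that each summand is a direct sum of copies of the torsion module $\kubg(-, Y)^\sharp$, and then invoke the closure of $\torskubg$ under arbitrary coproducts and subquotients from Theorem \ref{thm:localizing_serre}. No serious obstacle is anticipated; the only subtlety lies in tracking which variable of the bimodule $\kubg^\sharp$ is being contracted and which carries the output $\kubg$-module structure.
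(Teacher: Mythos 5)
Your proposal is correct. The paper's own proof is exactly your ``alternative route'': it simply observes that $\kubg^\sharp \otimes_{\kubg} M$ is a quotient of a coproduct of copies of the modules $\kubg(-,Y)^\sharp$, each of which is torsion by Example \ref{exam:torsion_torsion-free}, and then invokes Theorem \ref{thm:localizing_serre} (closure of $\torskubg$ under coproducts and quotients). Your primary argument is a more hands-on, section-level version of the same fact: you unwind the coequalizer presentation of the tensor product, bound the objects $Y_i$ appearing in a representative of a given section $x$, and use the vanishing $\kubg(\mathbf{t}, Y_i)=0$ for $t > |Y_i|$ to kill $\alpha\cdot x$ directly, which verifies Definition \ref{defn:torsion} without appealing to the Serre-subcategory machinery. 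Both are valid; the explicit computation makes the finite-support bound quantitative (one sees that the image of $\kubg(\n,-)\to \kubg^\sharp\otimes_\kubg M$ vanishes beyond $\max_i |Y_i|$), while the paper's argument is shorter and reuses the already-established closure properties. Your care about which variable of the bimodule is contracted and which carries the output module structure is exactly the right point to watch, and you have handled it correctly.
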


We also have:

\begin{prop}
\label{prop:vanish-on-torsion}
For $M$ a torsion $\kubg$-module, $\kubg^\sharp \otimes_{\kubg} M=0$.
\end{prop}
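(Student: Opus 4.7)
The plan is to verify that every simple tensor in $(\kubg^\sharp \otimes_{\kubg} M)(X) = \kubg(X,-)^\sharp \otimes_{\kubg} M$ vanishes, for each finite set $X$. Using Lemma \ref{lem:basis_twisted_kub} to identify the canonical $\kk$-basis of $\kubg(X,\n)$ (and hence, via the dual basis, of $\kubg(X,\n)^\sharp$), it suffices to treat a simple tensor of the form $\eta_h \otimes m$, where $h \in \kubg(X,\n)$ is canonically decorated, $\eta_h$ is the corresponding dual basis element, and $m \in M(\n)$.

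Because $M$ is torsion, Lemma \ref{lem:alternative_criterion_torsion_element} furnishes some $t \geq n$ with $t \equiv n \pmod 2$ such that the morphism $f := [i_{n,t}] \in \kubg(\n, \mathbf{t})$ satisfies $f \cdot m = 0$ in $M(\mathbf{t})$. The key combinatorial input I would then establish is that the post-composition map $f_\ast \colon \kubg(X,\n) \to \kubg(X,\mathbf{t})$ sends each canonically-decorated basis element $h$ to a signed canonically-decorated basis element $c\phi$, with $c \in \{\pm 1\}$, and moreover that the induced map $h \mapsto \phi$ on bases is injective. Graphically, $f \circ h$ is obtained by stacking the standard chord pattern on $\mathbf{t} \setminus \n$ (coming from $[i_{n,t}]$) atop the diagram of $h$; these new chords do not interact with the structure of $h$, so the underlying undecorated diagram of $f \circ h$ determines $h$ uniquely, while the sign $c$ records the permutation needed to bring the composite decoration into the canonical form of Lemma \ref{lem:basis_twisted_kub}. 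Dualising, $\eta_\phi \cdot f = (f_\ast)^\sharp \eta_\phi = c\, \eta_h$.

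The defining relations of the tensor product $\kubg(X,-)^\sharp \otimes_{\kubg} M$ then give
\[
c \, \eta_h \otimes m \;=\; (\eta_\phi \cdot f) \otimes m \;=\; \eta_\phi \otimes (f \cdot m) \;=\; 0,
\]
whence $\eta_h \otimes m = 0$, as desired.

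The main obstacle will be the combinatorial claim above, and specifically tracking the sign $c$ in the twisted cases $(\pm;\mp) \neq (+;+)$. For the untwisted category $\kk\ub$, both the injectivity of $f_\ast$ on bases and the triviality of $c$ are transparent from the chord-diagram calculus. In the twisted variants one must reconcile the canonical decoration of $f \circ h$ with that of $\phi$ by unwinding the restriction behaviour of $\sgn_t$ and of $\sgn_{2t}$ along $\sym_2 \wr \sym_t \subset \sym_{2t}$ that underpins Definition \ref{defn:twisting_representations}; this calculation is routine but slightly delicate.
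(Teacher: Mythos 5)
Your argument is correct, but it takes a genuinely different route from the paper. The paper's proof is a two-line duality argument: since each $\kubg(X,\n)$ is a finite rank free $\kk$-module and $\kk$ is a field, it suffices to show that $\hom_\kk\big(\kubg(X,-)^\sharp\otimes_{\kubg}M,\kk\big)$ vanishes; by the universal property of the tensor product this is $\hom_{\kubg}(M,\kubg(X,-))$, which is zero because $M$ is torsion while the projective $\kubg(X,-)$ is torsion-free (Example \ref{exam:torsion_torsion-free}). You instead kill each generator $\eta_h\otimes m$ directly by exhibiting $\eta_h$ as a unit multiple of $\eta_\phi\cdot[i_{n,t}]$ and sliding $[i_{n,t}]$ across the tensor product onto $m$; this works. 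Two remarks. First, the implication you extract from Lemma \ref{lem:alternative_criterion_torsion_element} (torsion $\Rightarrow$ killed by some $[i_{n,t}]$) is the converse of what that lemma literally states, but it is immediate from Definition \ref{defn:torsion}: the submodule generated by $m$ has finite support, and its value at $\mathbf{t}$ contains $[i_{n,t}]\cdot m$, so the latter vanishes for $t\gg0$. Second, the sign bookkeeping you flag as the main obstacle is not actually needed: the argument only requires that $[i_{n,t}]\circ h$ equal $c\phi$ with $c$ a unit (here $c\in\{\pm1\}$, since the twisting representations of Definition \ref{defn:twisting_representations} are $\pm1$-valued, so the image in the twisted category of any single $\fiordev$-morphism is never zero), together with the injectivity of $h\mapsto\phi$, which holds because the new chords lie entirely in $\mathbf{t}\setminus\n$ and hence the restriction of the composite diagram to $\n$ recovers $h$. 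The trade-off: the paper's proof is shorter and avoids all combinatorics, at the price of invoking reflexivity of the hom-bimodules and the torsion-freeness of the standard projectives; yours is more elementary and entirely self-contained at the level of chord diagrams.
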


\begin{proof}
It suffices to show that, for any $X$,  $\hom_\kk (\kubg(X,-)^\sharp \otimes_{\kubg} M, \kk)$ is zero. By the universal property of the tensor product, this is isomorphic to $\hom_{\kubg} (M, \kubg(X,-))$, using that the bimodule $\kubg$ takes finite-dimensional values, so that $(\kubg^\sharp)^\sharp$ is isomorphic to $\kubg$. Now, as in Example \ref{exam:torsion_torsion-free}, $\kubg (X, -)$ is torsion-free; thus, there is no non-zero morphism from the torsion module $M$  to $\kubg (X, -)$.
\end{proof}

This has the immediate consequence:

\begin{cor}
\label{cor:pass_torsion-free_quotient}
For $M$ a $\kubg$-module, the canonical quotient $M \twoheadrightarrow M/\tors M$ induces an isomorphism
$$
\kubg^\sharp \otimes_{\kubg} M
\stackrel{\cong}{\rightarrow}
\kubg^\sharp \otimes_{\kubg} (M/\tors M).
$$
\end{cor}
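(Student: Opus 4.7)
The plan is to deduce the statement directly from the two preceding propositions by applying the right exact functor $\kubg^\sharp \otimes_{\kubg} -$ to the tautological short exact sequence
\[
0 \rightarrow \tors M \rightarrow M \rightarrow M/\tors M \rightarrow 0
\]
provided by Remark \ref{rem:torsion-free_quotient}. Since tensor product is right exact, this yields the exact sequence
\[
\kubg^\sharp \otimes_{\kubg} \tors M
\rightarrow
\kubg^\sharp \otimes_{\kubg} M
\rightarrow
\kubg^\sharp \otimes_{\kubg} (M/\tors M)
\rightarrow 0.
\]

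The key observation is that the leftmost term vanishes: because $\tors M$ is a torsion $\kubg$-module by construction, Proposition \ref{prop:vanish-on-torsion} gives $\kubg^\sharp \otimes_{\kubg} \tors M = 0$. It then follows from the above exact sequence that the map on the right is both surjective and injective, hence an isomorphism, as required.

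There is no real obstacle here; the statement is essentially a formal consequence of (right) exactness of the tensor product together with the vanishing on torsion modules established in Proposition \ref{prop:vanish-on-torsion}. The only point worth checking carefully is that $\kubg^\sharp \otimes_{\kubg} -$ is indeed right exact as an endofunctor of $\kubg\dash\modules$, which holds since the tensor product of an $\kubg$-bimodule with a left $\kubg$-module is a standard right exact construction (and the bimodule structure on $\kubg^\sharp$ ensures the output carries a natural left $\kubg$-module structure, cf.\ Remark \ref{rem:cala-bimodules} and Example \ref{exam:dual_bimodule}).
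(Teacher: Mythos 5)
Your argument is correct and is exactly the intended one: the paper states this corollary as an ``immediate consequence'' of Proposition \ref{prop:vanish-on-torsion}, and the immediate consequence is precisely your application of the right exact functor $\kubg^\sharp \otimes_{\kubg} -$ to the short exact sequence $0 \rightarrow \tors M \rightarrow M \rightarrow M/\tors M \rightarrow 0$, using the vanishing of the leftmost term. Nothing is missing.
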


The functor $\kubg^\sharp \otimes_{\kubg} - $ has right adjoint $\hom_{\kubg} (\kubg^\sharp , -) $. Now, using the fact that $\tors$ is right adjoint 
to the inclusion of $\torskubg$ in $\kubg$-modules (see Corollary \ref{cor:torsion_submodule}), for a $\kubg$-module $M$ one has the natural isomorphism:
$$
\hom_{\kubg} (\kubg^\sharp , M) \cong 
\hom_{\kubg} (\kubg^\sharp , \tors M).
$$
In particular, we may as well restrict  $\hom_{\kubg} (\kubg^\sharp , -) $ to $\torskubg$.

The following statement summarizes this situation:

\begin{prop}
\label{prop:tors_adjunction}
There is an adjunction 
\begin{eqnarray*}
 \resizebox{\hsize}{!}{
$
\xymatrix{
\kubg^\sharp \otimes_{\kubg} - 
\ar@{} [r]|(.55):
&
\kubg\dash\modules
\ar@<1ex>[r]
\ar@{}[r]|\perp
&
\torskubg 
\ar@<1ex>[l]
&
\hom_{\kubg} (\kubg^\sharp , -).
\ar@{}[l]|(.6) :
}
$
}
\end{eqnarray*} 
\end{prop}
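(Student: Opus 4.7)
The plan is to derive this adjunction from the standard tensor--hom adjunction for bimodules over a $\kk$-linear category, and then to refine the codomain of the left adjoint using Proposition \ref{prop:functor_to_torsion}. Because $\kubg^\sharp$ carries its natural $\kubg$-bimodule structure (Example \ref{exam:dual_bimodule}), for any pair $M, N$ of left $\kubg$-modules there is a natural isomorphism
$$
\hom_{\kubg}(\kubg^\sharp \otimes_{\kubg} M,\ N)
\;\cong\;
\hom_{\kubg}(M,\ \hom_{\kubg}(\kubg^\sharp, N)),
$$
given by the usual bijection $\phi \mapsto \bigl(m \mapsto (\xi \mapsto \phi(\xi \otimes m))\bigr)$, which is simply the multi-object analogue, for bimodules over the $\kk$-linear category $\kubg$, of the tensor--hom adjunction over a $\kk$-algebra. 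This exhibits $\kubg^\sharp \otimes_{\kubg} -$ as left adjoint to $\hom_{\kubg}(\kubg^\sharp, -)$ at the level of $\kubg\dash\modules$.

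To obtain the statement as displayed, I would then invoke Proposition \ref{prop:functor_to_torsion}, which says precisely that $\kubg^\sharp \otimes_{\kubg} -$ takes values in the full subcategory $\torskubg \subset \kubg\dash\modules$; consequently the functor corestricts to $\kubg\dash\modules \to \torskubg$. Since $\torskubg$ is a full subcategory, for $N \in \torskubg$ the hom-set on the left of the displayed isomorphism is equally the hom-set $\hom_{\torskubg}(\kubg^\sharp \otimes_{\kubg} M,\ N)$ computed in $\torskubg$. This yields the adjunction in exactly the form stated. (One can note as a consistency check that the right adjoint then necessarily factors through $\tors$, the right adjoint of the inclusion $\torskubg \hookrightarrow \kubg\dash\modules$, which recovers the natural isomorphism $\hom_{\kubg}(\kubg^\sharp, M) \cong \hom_{\kubg}(\kubg^\sharp, \tors M)$ mentioned in the text preceding the proposition.)

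There is no substantial obstacle: the only point requiring attention is the bookkeeping of left versus right actions, to confirm that the $\kubg$-module structures on $\kubg^\sharp \otimes_{\kubg} M$ and on $\hom_{\kubg}(\kubg^\sharp, N)$ are the ones intended (one uses the right $\kubg$-action on $\kubg^\sharp$ to form the tensor product and the left action to form the hom, leaving in each case the other action to provide the output $\kubg$-module structure). Given Example \ref{exam:dual_bimodule} and Remark \ref{rem:cala-bimodules} this is routine, and the adjunction is then formal, the non-formal input being solely the torsion-valuedness of the left adjoint supplied by Proposition \ref{prop:functor_to_torsion}.
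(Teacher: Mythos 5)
Your proposal is correct and matches the paper's own reasoning: the proposition is presented there as a summary of exactly this argument, namely the bimodule tensor--hom adjunction combined with Proposition \ref{prop:functor_to_torsion} (torsion-valuedness of the left adjoint) and the fullness of $\torskubg$, with the observation that the right adjoint factors through $\tors$ appearing in the surrounding discussion just as in your consistency check.
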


\begin{exam}
\label{exam:proj<->inj}
For $n \in \nat$, there are natural isomorphisms
\begin{eqnarray*}
\kubg^\sharp \otimes_{\kubg} \kubg (\n, -) &\cong & \kubg (-,\n)^\sharp ;
\\
\hom_{\kubg} (\kubg^\sharp , \kubg (-,\n)^\sharp) &\cong &\kubg (\n, -). 
\end{eqnarray*}
Thus the standard projectives and the standard injectives are in one-one correspondence across the adjunction of Proposition \ref{prop:tors_adjunction}.
\end{exam}

%%%%%%%%%%%%%%%%%%%%%%%%%%%%%%%%%%%%%%%%%%%%%%%%%%%%%%%%%%%%%%%%%%%%%%%%%%%%%%%%%
\subsection{A refinement}
We have the induction functor 
\begin{eqnarray}
\label{eqn:induction}
\kubg \otimes_{\kk \fb} - \  \colon \kk\fb\dash\modules \rightarrow \kubg\dash\modules
\end{eqnarray}
and the following extension of the behaviour on projectives given in Example \ref{exam:proj<->inj}:

\begin{lem}
\label{lem:composite_with_free}
The composite of the functor (\ref{eqn:induction})  with $\kubg^\sharp \otimes _{\kubg} -$ is naturally isomorphic to 
\[
\kubg^\sharp \otimes_{\kk \fb} - \ \colon  
\kk\fb\dash\modules \rightarrow \torskubg.
\]
\end{lem}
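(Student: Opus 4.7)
The plan is to prove this via the standard associativity of the tensor product over $\kk$-linear categories, treating $\kubg$ as a $(\kubg, \kk\fb)$-bimodule via the unit inclusion $\kk \fb \hookrightarrow \kubg$ given by Lemma~\ref{lem:unit_fiordev}. This reduces the lemma to a natural identification $\kubg^\sharp \otimes_{\kubg} \kubg \cong \kubg^\sharp$ of $(\kubg, \kk\fb)$-bimodules, which is essentially formal.

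First, for any $\kk\fb$-module $M$, the object $\kubg \otimes_{\kk \fb} M$ is a $\kubg$-module via the left $\kubg$-action on $\kubg$. I would invoke the associativity isomorphism
\begin{equation*}
\kubg^\sharp \otimes_{\kubg} \bigl( \kubg \otimes_{\kk \fb} M \bigr)
\;\cong\;
\bigl( \kubg^\sharp \otimes_{\kubg} \kubg \bigr) \otimes_{\kk \fb} M,
\end{equation*}
where the right-hand tensor product uses the right $\kk\fb$-module structure on $\kubg^\sharp \otimes_{\kubg} \kubg$ inherited from the right $\kubg$-module structure on $\kubg$ (which restricts along $\kk \fb \hookrightarrow \kubg$). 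This is the standard associativity for tensor products of bimodules over $\kk$-linear categories.

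Next, I would identify $\kubg^\sharp \otimes_{\kubg} \kubg \cong \kubg^\sharp$ as $(\kubg, \kubg)$-bimodules, where the isomorphism is the tautological one sending $\phi \otimes f \mapsto \phi \cdot f$ (using the right $\kubg$-action on $\kubg^\sharp$). This is a special case of the general identity $N \otimes_{\cala} \cala \cong N$ for any right $\cala$-module $N$, applied to the bimodule $\kubg^\sharp$. Restricting the right action along $\kk \fb \hookrightarrow \kubg$ gives the claimed isomorphism of $(\kubg, \kk\fb)$-bimodules.  Combining with the previous step yields the natural isomorphism
\begin{equation*}
\kubg^\sharp \otimes_{\kubg} \bigl( \kubg \otimes_{\kk \fb} M \bigr) \;\cong\; \kubg^\sharp \otimes_{\kk \fb} M,
\end{equation*}
which is evidently natural in $M$.

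Finally, the assertion that the composite lands in $\torskubg$ is immediate from Proposition~\ref{prop:functor_to_torsion}, since $\kubg^\sharp \otimes_{\kubg} -$ takes arbitrary $\kubg$-modules to torsion $\kubg$-modules. There is no serious obstacle here; the only point requiring mild care is bookkeeping of the left/right bimodule structures to ensure the associativity isomorphism is an isomorphism of $\kubg$-modules (not merely of $\kk$-modules), but this is automatic from the construction.
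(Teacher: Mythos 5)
Your argument is correct and is exactly the formal tensor--associativity argument the paper has in mind: the paper states Lemma~\ref{lem:composite_with_free} without proof, treating it as an immediate consequence of $\kubg^\sharp \otimes_{\kubg} \kubg \cong \kubg^\sharp$ together with associativity of $\otimes$ over $\kk$-linear categories, which is precisely what you spell out. Your final appeal to Proposition~\ref{prop:functor_to_torsion} for the codomain being $\torskubg$ is also the intended justification.
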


It follows that, for $N$ a $\kk \fb$-module, the adjunction unit (for the induced module $M:= \kubg \otimes_{\kk \fb} N$)  provides the natural morphism of $\kubg$-modules:
\begin{eqnarray}
\label{eqn:adjunction_unit_kubg}
\kubg \otimes_{\kk \fb} N 
\rightarrow 
\hom_{\kubg} (\kubg^\sharp, \kubg^\sharp \otimes _{\kk \fb} N).
\end{eqnarray}

\begin{prop}
\label{prop:iso_in_char_0}
Suppose that $\kk$ is a field of characteristic zero, then the natural transformation (\ref{eqn:adjunction_unit_kubg}) is an isomorphism.
\end{prop}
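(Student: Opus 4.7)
My plan is to reduce to the standard projective $\kk\fb$-modules $\kk\fb(\m, -)$, where the statement will follow by direct computation from Example~\ref{exam:proj<->inj}, and then extend to arbitrary $N$ by a direct-sum and retract argument. The characteristic zero hypothesis will enter only at the very end, to guarantee that every $\kk\fb$-module is such a retract.

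First, I would check the case $N = \kk\fb(\m, -)$. Then $\kubg \otimes_{\kk\fb} N \cong \kubg(\m, -)$ and $\kubg^\sharp \otimes_{\kk\fb} N \cong \kubg(-, \m)^\sharp$ (by the coend description of the two induction functors), so Example~\ref{exam:proj<->inj}(2) identifies the codomain of (\ref{eqn:adjunction_unit_kubg}) with $\kubg(\m, -)$, and the map in question becomes a $\kubg$-module endomorphism of $\kubg(\m, -)$. By Yoneda, it is determined by its value on $\id_\m$; tracing the adjunction unit shows that this value corresponds to the identity endomorphism of $\kubg(-, \m)^\sharp$, which in turn corresponds to $\id_\m$ under the identification of Example~\ref{exam:proj<->inj}(2). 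Hence (\ref{eqn:adjunction_unit_kubg}) is the identity in this case, and in particular an isomorphism.

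Next I would verify that both sides of (\ref{eqn:adjunction_unit_kubg}) preserve arbitrary direct sums in $N$. The domain does so because $\kubg \otimes_{\kk\fb} -$ is a left adjoint. For the codomain, $\kubg^\sharp \otimes_{\kk\fb} -$ is also a left adjoint, so it remains to show that $\hom_{\kubg} (\kubg^\sharp, -)$ commutes with direct sums of $\kubg$-modules. Since such direct sums are computed objectwise, evaluation at a finite set $X$ identifies this functor with $\hom_{\kubg} (\kubg(-, X)^\sharp, -)$ using Example~\ref{exam:proj<->inj}(1). Now $\kubg(-, X)^\sharp$ is compact by Corollary~\ref{cor:compact}, and any additive functor preserving filtered colimits also preserves arbitrary direct sums (writing a coproduct as the filtered colimit of its finite subcoproducts), so the claim follows.

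Finally, I would invoke the characteristic zero hypothesis. By Maschke's theorem, $\kk \sym_n$ is semisimple for every $n \in \nat$, so under the equivalence between $\kk \fb \dash \modules$ and graded sequences of $\kk \sym_n$-modules, every $\kk\fb$-module $N$ is a retract of a (possibly infinite) direct sum of standard projectives $\kk\fb (\m_i, -)$. Combining this with the first two steps and the fact that any additive functor preserves retracts, (\ref{eqn:adjunction_unit_kubg}) will be an isomorphism for arbitrary $N$. The only delicate point in the argument is the bookkeeping of the adjunction unit in the first step; the remaining steps are formal consequences of standard categorical principles and the compactness result of Corollary~\ref{cor:compact}.
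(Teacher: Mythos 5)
Your proposal is correct and follows essentially the same route as the paper's proof: both reduce to the standard projectives via Example \ref{exam:proj<->inj}, use the compactness of $\kubg(-,\n)^\sharp$ from Corollary \ref{cor:compact} to pass to (co)limits, and invoke semisimplicity of $\kk\sym_n$ in characteristic zero to present a general $N$ as a retract. The only difference is organizational — you commute both sides with arbitrary direct sums directly, whereas the paper first uses filtered colimits to reduce to finite-dimensional modules supported on a single $\mathbf{t}$ and then takes finite sums and retracts — but the ingredients and the logic are the same.
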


\begin{proof}
By Corollary \ref{cor:compact}, for each $X$, $\kubg(-,X)^\sharp$ is a compact $\kubg$-module. Using this, one reduces to the case that $N$ is a finite-dimensional $\kk \sym_t$-module, considered as a $\kk \fb$-module supported on $\mathbf{t}$. 

If $N = \kk \sym_t$,  the morphism (\ref{eqn:adjunction_unit_kubg})  is an isomorphism, realizing the identification of $\hom_{\kubg} (\kubg^\sharp, -)$ applied to standard injectives  given in Example \ref{exam:proj<->inj}.  It follows easily that this is also the case for a finite direct sum of $\kk \sym_t$'s. 

In general, for $N$ supported on $\mathbf{t}$ with finite dimension, since we are working in characteristic zero, $N$ is a direct summand of a finite direct sum of $\kk \sym_t$'s. By the above, (\ref{eqn:adjunction_unit_kubg})  in this case is a retract of an isomorphism, hence is an isomorphism.
\end{proof}

This implies the following:

\begin{cor}
\label{cor:equivalence_full_subcategories}
For $\kk$ of characteristic zero, the adjunction of Proposition \ref{prop:tors_adjunction} induces an equivalence between the following :
\begin{enumerate}
\item 
the full subcategory of $\kubg\dash\modules$ on the essential image of  $\kubg \otimes_{\kk \fb} -$;
\item 
the full subcategory of $\torskubg$ on the essential image of $\kubg^\sharp \otimes_{\kk \fb} -$.
\end{enumerate}
\end{cor}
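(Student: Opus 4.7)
The plan is to deduce the equivalence directly from the general yoga of adjunctions: given any adjunction $L \dashv R$, the unit and counit restrict to natural isomorphisms on mutually equivalent full subcategories, namely those of objects where the unit (respectively counit) is an isomorphism, and $L$ and $R$ give mutually inverse equivalences between these. Here $L = \kubg^\sharp \otimes_{\kubg} -$ and $R = \hom_{\kubg}(\kubg^\sharp, -)$ from Proposition \ref{prop:tors_adjunction}, so it suffices to identify the two essential images with subcategories of this form.

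First I would record what Lemma \ref{lem:composite_with_free} and Proposition \ref{prop:iso_in_char_0} tell us together: for any $\kk \fb$-module $N$, writing $M := \kubg \otimes_{\kk \fb} N$, we have $L(M) = \kubg^\sharp \otimes_{\kk \fb} N$, and the adjunction unit $\eta_M \colon M \to RL(M)$ is precisely the isomorphism (\ref{eqn:adjunction_unit_kubg}). Hence every object in the essential image of $\kubg \otimes_{\kk \fb} -$ has unit an isomorphism, and $L$ sends this essential image into the essential image of $\kubg^\sharp \otimes_{\kk \fb} -$. By the same token $R$, restricted to the essential image of $\kubg^\sharp \otimes_{\kk \fb} -$, lands in the essential image of $\kubg \otimes_{\kk \fb} -$, since for $T = \kubg^\sharp \otimes_{\kk\fb} N$ one has $R(T) \cong \kubg \otimes_{\kk\fb} N$ by Proposition \ref{prop:iso_in_char_0}.

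Next I would verify that the counit is an isomorphism on every object of the form $T = \kubg^\sharp \otimes_{\kk \fb} N$. This is the standard triangle-identity trick: applying $L$ to $\eta_M$ (with $M = \kubg \otimes_{\kk \fb} N$) gives an isomorphism $L\eta_M \colon LM \to LRLM$, and the triangle identity $\epsilon_{LM} \circ L\eta_M = \id_{LM}$ forces $\epsilon_{LM} = (L\eta_M)^{-1}$ to be an isomorphism as well. Since $LM = \kubg^\sharp \otimes_{\kk \fb} N$ is the generic object of subcategory (2), the counit is an isomorphism there.

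Combining these two observations, the restrictions of $L$ and $R$ to the subcategories (1) and (2) are mutually quasi-inverse, giving the desired equivalence. I do not anticipate a serious obstacle here: all the real work is done by Proposition \ref{prop:iso_in_char_0}, where characteristic zero intervenes to reduce from $\kk \sym_t$ to an arbitrary finite-dimensional $\kk \sym_t$-module via semisimplicity. The present corollary is essentially a packaging statement extracting the equivalence of categories hidden inside that isomorphism of adjunction units.
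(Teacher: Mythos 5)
Your argument is correct and matches the paper's intent: the corollary is stated as an immediate consequence of Proposition \ref{prop:iso_in_char_0} (together with Lemma \ref{lem:composite_with_free}), and your write-up simply makes explicit the standard adjunction yoga (unit an isomorphism on subcategory (1) by Proposition \ref{prop:iso_in_char_0}, counit an isomorphism on subcategory (2) via the triangle identity) that the paper leaves implicit.
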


\section{The Koszul property}
\label{sect:koszul}
%\footnotetext{\today}

The purpose of this section is to introduce and make explicit the Koszul complexes that are associated to the $\kk$-linear categories $\kk \ub_{(\pm; \mp)}$. Then we explain, using the fact that these $\kk$-linear categories are Koszul over $\kk \fb$, that the (co)homology of these complexes calculate certain $\ext$ and $\tor$ groups. We also consider briefly the relationship between the latter.

This material is not original;  it is spelled out in relative detail, since the author knows of no convenient reference where this is presented in this form.

%%%%%%%%%%%%%%%%%%%%%%%%%%%%%%%%%%%%%%%%%%%%%%%%%%%%%%%%%%%%%%
\subsection{Quadratic duality}

We review quadratic duality for homogeneous quadratic $\kk$-linear categories. For a general reference (working over a base ring) see \cite[Chapter 1]{MR4398644}, for example.

We first review duality for bimodules for $R$ a unital, associative ring. We use the following two duality functors on $R$-bimodules:
\begin{eqnarray*}
&&
\hom_R (-, R) 
\\
&& \hom_{R\op} (-, R).
\end{eqnarray*}
For instance, if $M$ is an $R$-bimodule, the left $R$-module structure on $\hom_R (M, R)$ is induced by the right $R$-action on $M$ whereas the right $R$-action is induced by the right regular action on the codomain. (Explicitly, for $\phi$ a morphism of left $R$-modules from $M$ to $R$, one has $(r_1 \phi r_2) (m) = \phi (m r_1) r_2$.) The functor $\hom_R (-, R) $ behaves particularly well when restricted to bimodules $M$ that are finitely-generated projective as left $R$-modules. For example, this hypothesis implies that the bimodule $\hom_R (M, R)$ is finitely-generated projective as a {\em right} $R$-module. 

\begin{rem}
\label{rem:right_duality_eval_coeval}
For $M$ an $R$-bimodule, evaluation induces the morphism of $R$-bimodules: 
\begin{eqnarray*}
\hom_{R\op} (M, R) \otimes_R M &\rightarrow & R
\\
\phi \otimes m &\mapsto & \phi (m).
\end{eqnarray*}

Suppose that $M$, considered as a right $R$-module,  is free of finite rank  with basis $\{x_i\}$ and let $\{ \phi_i\}$ be the dual basis of $\hom_{R\op} (M, R)$. Then there is a morphism of $R$-bimodules: 
\begin{eqnarray*}
R & \rightarrow & M \otimes_R \hom_{R\op} (M, R)\\
1 & \mapsto & \sum_i x_i \otimes \phi_i.
\end{eqnarray*}
That this is a morphism of $R$-bimodules is equivalent to the fact that, for all $r \in R$, one has $r (\sum_i x_i \otimes \phi_i) = (\sum_i x_i \otimes \phi_i)r$, which can be checked directly.
\end{rem}

Now consider  a $\kk$-linear category $\cala$ that is homogeneous quadratic; in particular, the morphisms of $\cala$ are $\nat$-graded (denoted by $\cala^n$, for $n \in \nat$), $\cala^0$ is a sub $\kk$-linear category, and each $\cala^n$ has the structure of an $\cala^0$-bimodule. The composition induces morphisms of $\cala^0$-bimodules 
\[
\cala^s \otimes_{\cala^0} \cala^t \stackrel{\mu_\cala}{\longrightarrow} \cala^{s+t}.
\] 
The homogeneous quadratic hypothesis implies that morphisms are generated under composition by the bimodule $\cala^1$ and the relations are generated as an $\cala$-bimodule  by the kernel $\mathscr{R}$ in the short exact sequence of $\cala^0$-bimodules
\[
0
\rightarrow 
\mathscr{R}
\rightarrow 
\cala^1 \otimes _{\cala ^0} \cala ^1 
\stackrel{\mu_\cala}{\longrightarrow}
\cala^2
\rightarrow 
0.
\]

The left quadratic dual is constructed using the functor $\hom_{\cala^0} (-, \cala^0)$,  requiring that both $\cala^1$ and $\cala^2$ are finitely-generated projective as left $\cala^0$-modules. We can then form the homogenous quadratic category over $\cala^0$ with 
 $\hom_{\cala^0} (\cala ^1 , \cala^0)$ the bimodule of generators in degree one  and relations in degree two given by $\hom_{\cala^0} (\cala ^2,\cala^0)$.

Likewise, the right quadratic dual is constructed  using $\hom_{(\cala^0)\op} (-, \cala^0)$, requiring the  projectivity of the right $\cala^0$-module structures of $\cala^1$ and $\cala^2$.

\begin{exam}
\label{exam:projectivity_properties}
Recall from Proposition \ref{prop:kub_twist_homogeneous_quadratic} that $(\kk \ub)_{(\pm; \mp)}$ is homogeneous quadratic over $\kk \fb$; this implies that $(\kk \db)_{(\pm; \mp)} $ is so also (using the identification $(\kk \db)^0_{(\pm;-)} \cong \kk \fb$ given in Remark \ref{rem:fbop}).  Then, by Proposition \ref{prop:morphisms_right_free},
\begin{enumerate}
\item 
the right $\kk \fb$-module projectivity hypothesis is satisfied for $(\kk \ub)_{(\pm; \mp)}$; 
\item 
the left $\kk \fb$-module projectivity hypothesis is satisfied for $(\kk \db)_{(\pm; \mp)}$.
\end{enumerate}
Hence, we can consider the right quadratic dual of $(\kk \ub)_{(\pm; \mp)}$ and the left quadratic dual of $(\kk \db)_{(\pm; \mp)}$. These are related by the passage to the opposite category.
\end{exam}

\begin{prop}
\label{prop:quadratic_duals_kub}
Considered as $\nat$-graded $\kk$-linear categories over $\kk \fb$, the following hold:
\begin{enumerate}
\item
the right quadratic dual of $(\kk \ub)_{(\pm; +)}$ is isomorphic to $(\kk \db)_{(\pm;-)}$;
\item 
the right quadratic dual of $(\kk \ub)_{(\pm; -)}$ is isomorphic to $(\kk \db)_{(\pm;+)}$. 
\end{enumerate}
\end{prop}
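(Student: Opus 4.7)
The plan is to unwind the right quadratic dual and match it against the quadratic presentation of $(\kk \db)_{(\epsilon;-\eta)}$ via a careful sign analysis. Fix $\epsilon, \eta \in \{+,-\}$; both parts of the proposition then follow from the same computation. The generating degree-one bimodule $V := (\kk \ub)^1_{(\epsilon;\eta)}$ satisfies, for $|Y|=|X|+2$,
\[
V(X,Y) = \kk \fiordev(X,Y) \otimes_{\kk \sym_2} \kk_{(\epsilon;\eta)},
\]
and depends only on $\epsilon$ since $\sym_2 \wr \sym_1 = \sym_2$. Applying the transpose construction of Subsection \ref{subsect:transpose} together with Remark \ref{rem:fbop}, the right dual $V^! := \hom_{(\kk \fb)\op}(V, \kk \fb)$ is identified with the degree-one generating bimodule of $(\kk \db)_{(\epsilon; \cdot)}$, for either choice of the second twist; the first twist $\epsilon$ is preserved, since the one-dimensional $\sym_2$-module $\kk_\epsilon$ is self-dual.

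Next, using the natural isomorphism
\[
V \otimes_{\kk \fb} V \,(X,Y) \cong \kk \fiordev^2(X,Y) \otimes_{\kk(\sym_2\times\sym_2)} (\kk_\epsilon \otimes \kk_\epsilon),
\]
the multiplication $\mu \colon V \otimes_{\kk \fb} V \to (\kk \ub)^2_{(\epsilon;\eta)}$ is the further quotient by the outer $\sym_2 \subset \sym_2 \wr \sym_2$ that swaps the two chords, twisted by $\kk_{(\epsilon;\eta)}$. A representative of the outer swap is $(1\,3)(2\,4) \in \sym_4$, which is an even permutation, so it acts by $\eta$ on $\kk_{(\epsilon;\eta)}$ (independent of $\epsilon$); on the tensor product, swapping two distinct chords preserves each chord's orientation individually and hence introduces no $\epsilon$-sign. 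Consequently $\mathscr{R} := \ker \mu$ is the $(-\eta)$-eigenspace of the outer $\sym_2$-action on $V \otimes_{\kk \fb} V$.

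The evaluation pairing $V^! \otimes_{\kk \fb} V \to \kk \fb$ is nondegenerate and $\sym_2$-equivariant for the diagonal outer swap, so standard eigenspace duality gives that $\mathscr{R}^\perp \subseteq V^! \otimes_{\kk \fb} V^!$ is the $(+\eta)$-eigenspace. The degree-two part of the right quadratic dual, $(V^! \otimes V^!)/\mathscr{R}^\perp$, is therefore the $(-\eta)$-eigenspace of $V^! \otimes_{\kk \fb} V^!$. The analogous computation expresses $(\kk \db)^2_{(\epsilon;-\eta)}$ as the same $(-\eta)$-eigenspace of $(V^!)^{\otimes 2}$ under the outer $\sym_2$ twisted by $\kk_{-\eta}$. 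The two quadratic presentations thus agree on generators and relations, so the right quadratic dual of $(\kk \ub)_{(\epsilon;\eta)}$ is isomorphic to $(\kk \db)_{(\epsilon;-\eta)}$; specializing $\eta = +$ gives part (1) and $\eta = -$ gives part (2).

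The main obstacle is the sign bookkeeping in the middle step: verifying that the outer $\sym_2$-action on $V \otimes_{\kk \fb} V$ picks up exactly the sign $\eta$ with no contribution from $\epsilon$, so that quadratic duality flips only the second twist. This relies on two facts: the image of the outer swap in $\sym_4$ is even (hence acts trivially on the chord-orientation sign coming from $\kk_{(-;+)}$), and one-dimensional sign representations pair contragradiently under the evaluation duality in the way required by the orthogonality computation.
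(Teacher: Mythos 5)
Your argument is correct and follows the same overall strategy as the paper -- a direct verification of quadratic duality -- but the two proofs are explicit about complementary halves. The paper's proof spends almost all of its effort on the degree-one identification, writing down the explicit $\kk\fb$-bimodule pairing $(\kk \db)_{(\pm; -)}^1\otimes_{\kk \fb}(\kk \ub)_{(\pm; +)}^1\rightarrow \kk \fb$ (namely $[g]\otimes[f]\mapsto[\alpha]$ when $f=g\circ\alpha$ and $0$ otherwise) and checking it descends and is bilinear over $\kk\fb$, while dispatching the quadratic relations with "the usual Koszul-type sign". You do the opposite: you cite the transpose construction for the degree-one duality and instead make the degree-two sign analysis fully explicit, locating the flip of the second twist in the fact that the outer chord-swap is the even permutation $(1\,3)(2\,4)\in\sym_4$ (so contributes no $\epsilon$-sign) but maps to the transposition in $\sym_2$ (so contributes $\eta$), and then running the standard eigenspace-orthogonality argument. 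This is a genuine gain in precision exactly where the paper is vague, at the cost of leaving the bimodule identification $V^!\cong(\kk\db)^1_{(\epsilon;\cdot)}$ -- the paper's main content -- as an assertion; for a complete write-up you would still want the explicit pairing, and you should also note that identifying $\hom_{(\kk\fb)\op}(V\otimes_{\kk\fb}V,\kk\fb)$ with $V^!\otimes_{\kk\fb}V^!$ compatibly with the two outer $\sym_2$-actions involves the usual transposition of tensor factors. One small caveat: your identification of $\ker\mu$ and of $\mathscr{R}^\perp$ as eigenspaces requires $2$ to be invertible in $\kk$, whereas Section \ref{sect:twist} allows an arbitrary commutative ring; this is harmless for the paper's applications (which are in characteristic zero), but it is an extra hypothesis that the paper's basis-level argument does not need.
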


\begin{proof}
First we note that $(\kk \ub)_{(\pm; +)}^1$ and $(\kk \ub)_{(\pm; -)}^1$ are isomorphic as $\kk \fb$-bimodules, since in degree one, there is only one `chord', so the orientation data for the order of chords does not intervene. Likewise for  $(\kk \db)_{(\pm; +)}^1$ and $(\kk \db)_{(\pm; -)}^1$. 

The key point is to establish  the duality isomorphism $\hom_{\kk \fb\op} ((\kk \ub)_{(\pm; +)}^1, \kk \fb) \cong (\kk \db)_{(\pm;-)}^1$
of $\kk \fb$-bimodules (this also yields the case with $(\pm;+)$ and $(\pm;-)$ swapped, by the above remark). This isomorphism is induced by the pairing 
\[
(\kk \db)_{(\pm; -)}^1
\otimes_{\kk \fb}
(\kk \ub)_{(\pm; +)}^1
\rightarrow 
\kk \fb
\]
of $\kk\fb$-bimodules described below. 

For clarity, we treat the case $\pm=+$, so that $(\kk \ub)_{(\pm;+)}$ identifies with $\kk \ub$; the case $\pm = -$ is treated by a similar argument (as indicated below). Using the standard basis, a $\kk$-module generator of $\kk \ub^1 (X, Y) $ is of the form $[f]$ where $f : X\hookrightarrow Y$ and $|Y|-|X|=2$. Likewise, a $\kk$-module generator of $(\kk \db)_{(+; -)}^1 (Y, X)$ is given by the class $[g]$ of $g : X \hookrightarrow Y$. If $g(X)=f(X)$, then there exists a unique $\alpha \in \aut (X)$ (depending on $f$ and $g$) such that $f = g \circ \alpha$. Using this, the pairing 
\[
(\kk \db)_{(+; -)}^1(Y,X)
\otimes_{\kk\fb} 
\kk \ub^1 (X,Y)
\rightarrow 
\kk \aut (X)
\]
is given by 
\[
[g] \otimes [f]  \mapsto 
\left\{ 
\begin{array}{ll}
\ 0 & f(X) \neq g(X) \\
\ [\alpha] & f(X) =g(X).
\end{array}
\right.
\]
First one checks that this is well-defined, i.e. that it factors across $\otimes_{\kk \fb}$. This follows from the fact that, for $\beta \in \aut (Y)$, the right action of $\aut (Y)$ on $(\kk \db)_{(+; -)}^1(Y,X)$ corresponds to $[g] \mapsto [\beta^{-1} \circ g]$ (see Remark \ref{rem:fbop}), and the left action on $\kk \ub^1 (X,Y)$ corresponds to $[f]\mapsto [\beta \circ f]$. Clearly, the image of $\beta^{-1} \circ g$ equals that of $f$ if and only the image of $g$ equals that of $\beta \circ f$. Moreover, $f= (\beta^{-1} \circ g) \circ \alpha$ if and only if $\beta \circ f=  g\circ \alpha$. This yields the factorization over $\otimes_{\kk \fb}$.

That this is a morphism of bimodules is verified similarly: the left action of $\gamma \in \aut (X)$ on $(\kk \db)_{(+; -)}^1(Y,X)$ corresponds to $[g]\mapsto [g \circ \gamma^{-1}]$.  If $f = g \circ \alpha$, then $f = (g\circ \gamma^{-1}) \circ (\gamma \circ \alpha)$, which implies the morphism is compatible with the left module structure. The right module structure is treated similarly.

In the case $\pm = -$, the only modification that is required is due to the orientation sign associated to the complements $Y \backslash f(X)$ and $Y \backslash g(X)$. In the case that $f(X) =g(X)$, one arranges that the order on these complements is the same, thereby possibly introducing a sign.

It remains to check the quadratic relations. This follows by the usual argument: there is a Koszul-type sign which is introduced. This accounts for the $(\pm ; +) \leftrightarrow (\pm ; -)$ correspondence.
\end{proof}

\begin{rem}
\label{rem:left/right_quadratic_duality}
By general properties of quadratic duality (see \cite[Proposition 1.6]{MR4398644}, for example), there are counterparts for left quadratic duality. For example, the  left quadratic dual of $(\kk \db)_{(\pm;-)}$ is isomorphic to $(\kk \ub)_{(\pm; +)}$.
\end{rem}

For use in the following section, we note the following consequence of (the proof of) Proposition \ref{prop:quadratic_duals_kub}.

\begin{cor}
\label{cor:coevaluation}
There are morphisms of $\kk \fb$-bimodules
\begin{eqnarray*}
\kk \fb & \rightarrow & (\kk \ub)_{(\pm; +)}^1 \otimes _{\kk \fb} (\kk \db)_{(\pm;-)}^1
\\
\kk \fb & \rightarrow & (\kk \ub)_{(\pm; -)}^1 \otimes _{\kk \fb} (\kk \db)_{(\pm;+)}^1.
\end{eqnarray*}
Evaluated on $\n$, for $n \in \nat$, these are  given by 
\[
[1]  \mapsto \sum_{g} [g'] \otimes [g']
\]
where $g \in \finj (\n, \mathbf{n+2})$ runs over the set of order preserving inclusions and $[g']$ is the associated element of  $(\kk \ub)_{(\pm; +)} (\n, \mathbf{n+2})$ (respectively $(\kk \db)_{(\pm; -)} (\n, \mathbf{n+2})$) represented by $g' \in \dubord (\n ,\mathbf{n+2})$ using the canonical order on the complement of $g(\n)$. 
\end{cor}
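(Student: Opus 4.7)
The plan is to recognize the asserted map as the bimodule coevaluation of Remark \ref{rem:right_duality_eval_coeval}, applied to $(\kk\ub)_{(\pm;+)}^1$ (respectively $(\kk\ub)_{(\pm;-)}^1$) viewed as a right $\kk\fb$-module, and then to translate the dual basis back into the language of $(\kk\db)_{(\pm;\mp)}^1$ via the right quadratic duality of Proposition \ref{prop:quadratic_duals_kub}.

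The first ingredient is the freeness of $(\kk\ub)_{(\pm;+)}^1$ as a right $\kk\fb$-module, provided by Proposition \ref{prop:morphisms_right_free}. Combining with Lemma \ref{lem:basis_twisted_kub}, a right $\kk\sym_n$-basis of the $(\n,\mathbf{n+2})$-component is given by the classes $[g']$ attached to the order-preserving inclusions $g\colon\n\hookrightarrow\mathbf{n+2}$ decorated with the canonical order of their two-element complement: indeed, any $f\in\finj(\n,\mathbf{n+2})$ factors uniquely as $g\circ\alpha$ with $g$ order-preserving and $\alpha\in\sym_n$, and in degree one the chord decoration is fixed by the target's total order. The generic bimodule coevaluation of Remark \ref{rem:right_duality_eval_coeval} then yields a $\kk\fb$-bimodule morphism
\[
\kk\fb \to (\kk\ub)_{(\pm;+)}^1 \otimes_{\kk\fb} \hom_{\kk\fb\op}\big((\kk\ub)_{(\pm;+)}^1,\kk\fb\big)
\]
which, evaluated on $\n$, sends $[1]$ to $\sum_g [g']\otimes\phi_{g'}$, where $\phi_{g'}$ denotes the dual-basis functional associated to $[g']$.

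It remains to verify that, under the right quadratic duality isomorphism $\hom_{\kk\fb\op}((\kk\ub)_{(\pm;+)}^1,\kk\fb)\cong(\kk\db)_{(\pm;-)}^1$ of Proposition \ref{prop:quadratic_duals_kub}, the functional $\phi_{g'}$ corresponds to the basis element $[g']\in(\kk\db)_{(\pm;-)}^1$ attached to the same order-preserving inclusion, canonically decorated. From the proof of Proposition \ref{prop:quadratic_duals_kub}, the pairing sends $[g]\otimes[f]$ to $[\alpha]\in\kk\sym_n$ whenever $f=g\circ\alpha$ and to zero whenever $f$ and $g$ have distinct images. For $g$ order-preserving, the resulting functional $\langle [g],-\rangle$ is right $\kk\sym_n$-equivariant, takes value $[1]$ on $[g']$, and annihilates every other right $\kk\sym_n$-basis vector; this is precisely the defining property of $\phi_{g'}$. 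No sign correction arises because both representatives employ the canonical order on the complement, and because $\sym_2\wr\sym_1=\sym_2$ projects trivially onto $\sym_1$, making $\kk_{(\pm;+)}$ and $\kk_{(\pm;-)}$ coincide in degree one; this last observation also justifies the swap of inner signs between the two stated formulas. The second map, involving $(\kk\ub)_{(\pm;-)}^1$ and $(\kk\db)_{(\pm;+)}^1$, is obtained by the same argument applied to the remaining half of Proposition \ref{prop:quadratic_duals_kub}. The principal (but mild) technical point throughout is the sign bookkeeping in the twisted pairing, which collapses in the degree-one setting, as just explained.
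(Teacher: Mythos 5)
Your proposal is correct and follows exactly the route of the paper's own (very terse) proof: apply the generic bimodule coevaluation of Remark \ref{rem:right_duality_eval_coeval} to the right $\kk\fb$-free module $(\kk\ub)^1_{(\pm;+)}$, then match the dual basis with the canonical basis of $(\kk\db)^1_{(\pm;-)}$ via the pairing constructed in the proof of Proposition \ref{prop:quadratic_duals_kub}. Your explicit verification that the functional $\langle[g],-\rangle$ is the dual-basis element $\phi_{g'}$, and your observation that the degree-one twists collapse, correctly supply the details the paper leaves implicit.
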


\begin{proof}
This follows from the general result recalled in Remark \ref{rem:right_duality_eval_coeval} together with the identification of the dual basis across the isomorphism given in the proof of Proposition \ref{prop:quadratic_duals_kub}.
\end{proof}

%%%%%%%%%%%%%%%%%%%%%%%%%%%%%%%%%%%%%%%%%%%%%%%%%%%%%%%%%%%%%%%%%%%%%%%%%%%%%%%%%%%%%%%%%%%%%%%%%
\subsection{The Koszul dualizing complex}

Using the fact that $(\kk \ub)_{(\pm; \mp)}$ is a homogeneous quadratic $\kk$-linear category over $\kk \fb$, the identification of its quadratic dual given by Proposition \ref{prop:quadratic_duals_kub}, together with the coevaluation map of Corollary \ref{cor:coevaluation}, one has the Koszul dualizing complex
\[
\mathscr{K}_{(\pm; \mp)}
:= 
(\kk \ub)_{(\pm; \mp)} \otimes_{\kk \fb} (\kk \db)_{(\pm; -\mp)},
\]
equipped with the differential induced by coevaluation. (Compare the complexes introduced in \cite[Chapter 2]{MR4398644}.) This is a complex of $(\kk \ub)_{(\pm; \mp)} \otimes (\kk \db)_{(\pm; - \mp)}\op$-modules (equivalently $(\kk \ub)_{(\pm; \mp)} \otimes (\kk \ub)_{(\pm; - \mp)}$-modules). 

\begin{rem}
\label{rem:choice_grading}
There is a choice for the grading: either using that arising from  $(\kk \ub)_{(\pm; \mp)}$ or that arising from  $ (\kk \db)_{(\pm; -\mp)}$;  this affects whether the differential has degree $+1$ or $-1$.
\end{rem}

Evaluated on $(U, X)\in \ob \ (\kk \ub)_{(\pm; \mp)} \otimes (\kk \db)_{(\pm; - \mp)}$, $\mathscr{K}_{(\pm; \mp)}$ has underlying graded $\kk$-module:
\[
\bigoplus_s 
(\kk \ub)_{(\pm; \mp)} (\mathbf{s}, U)  \otimes_{\kk \sym_s} (\kk \db)_{(\pm; -\mp)} (X, \mathbf{s}).
\]
This is zero if $|U|$ and $|X|$ have different parities; otherwise it ranges over $s \leq \min \{ |U|, |X| \}$ such that $s$ has the same parity as $|U|$ and $|X|$. In particular the sum is finite.

\begin{rem}
\label{rem:graphical_K}
It is illuminating to interpret this complex using the graphical description explained below, extending the approach outlined in Section \ref{subsect:ub_db}.
We fix $s$ and, for concreteness, set $(\pm; \mp) = (+;+)$ (the analysis carries over {\em mutatis mutandis} to the other cases of $(\pm; \mp)$).

Consider $\kk \ub (\mathbf{s}, U) \otimes_{\sym_s} (\kk \db)_{(+;-)} (X,\mathbf{s})$ (where we may assume that $|U|- s \in 2 \nat$ and $|X|-s \in 2 \nat$). There is a natural graphical description of a basis given by  diagrams of the form:
 
\begin{center}
\begin{tikzpicture}[scale = .15]
\begin{scope}
    \clip (1.5,9) rectangle (6.5,6);
    \draw [red, very thick] (4,9) circle(2);
\end{scope}

\begin{scope}
    \clip (2.5,9) rectangle (8.5,6);
    \draw [red, very thick] (5.5,9) circle(2.5);
\end{scope}

\begin{scope}
    \clip (3.5,0) rectangle (8.5,3);
    \draw (6,0) circle(2);
\end{scope}

%\draw[step=1cm,gray,very thin] (0,0) grid (9,9);
\draw [gray] (0,0) -- (0,9) -- (9,9) -- (9,0) -- (0,0); 
\draw (1,9) -- (5,0); 
\draw (4,9) -- (1,0); 
\draw (5,9) -- (3,0); 
\draw (7,9)-- (7,0);

\draw [fill=black] (1,9) circle (0.2);
\draw [fill=black] (2,9) circle (0.2);
\draw [fill=black] (3,9) circle (0.2);
\draw [fill=black] (4,9) circle (0.2);
\draw [fill=black] (1,0) circle (0.2);
\draw [fill=black] (3,0) circle (0.2);
\draw [fill=black]  (6,9) circle (0.2);
\draw [fill=black]  (7,9) circle (0.2);
\draw [fill=black]  (8,9) circle (0.2);
\draw [fill=black]  (7,0) circle (0.2);
\draw [fill=black] (5,9) circle (0.2);
\draw [fill=black] (5,0) circle (0.2);
\draw [fill=black] (4,0) circle (0.2);
\draw [fill=black] (8,0) circle (0.2);
\node [right] at (9,0) {.};
\end{tikzpicture}
\end{center}
Here $s$ is the number of vertical strands; 
$X$ labels the nodes on the top line;  
$U$ labels the nodes on the bottom line; 
the orientation data is encoded via a total order of the upper pairs (indicated by using thick red arcs).

This makes the $\kk \ub \otimes (\kk \db)_{(+;-)}\op$-module structure of $\mathscr{K}$ transparent: for the $\kk\ub$-module structure, one stacks below using diagrams representing morphisms of $\ub$; for the $(\kk \db)_{(+;-)}$-module structure, one stacks above with diagrams representing morphisms of $\db$ (taking into account the orientation data given by the total ordering of the pairs). In particular, these operations leave $s$ (the number of vertical strands) unchanged. 

It is clear that $\kk \ub (\mathbf{s}, -) \otimes _{\sym_s} (\kk \db)_{(+;-)} (-, \mathbf{s})$ is generated as a $\kk \ub \otimes (\kk \db)_{(+;-)}\op$-module by the element of 
$\kk \ub (\mathbf{s}, \mathbf{s}) \otimes _{\sym_s} (\kk \db)_{(+;-)} (\mathbf{s}, \mathbf{s})$ corresponding to the `identity' diagram, in which all strands are vertical (and there
are no chords).  We denote this element by $[\mathbf{1}_\mathbf{s}]$.

For instance, the above diagram can be considered as the following composite:

\begin{center}
\begin{tikzpicture}[scale = .15]
\begin{scope}
    \clip (1.5,9) rectangle (6.5,6);
    \draw [red, very thick] (4,9) circle(2);
\end{scope}

\begin{scope}
    \clip (2.5,9) rectangle (8.5,6);
    \draw [red, very thick] (5.5,9) circle(2.5);
\end{scope}

\begin{scope}
    \clip (3.5,-18) rectangle (8.5,-12);
    \draw (6,-18) circle(2);
\end{scope}

%\draw[step=1cm,gray,very thin] (0,-18) grid (9,9);
\draw [gray] (0,-18) -- (0,9) -- (9,9) -- (9,-18) -- (0,-18); 
\draw [gray] (0,-9) -- (0,0) -- (9,0) -- (9,-9) -- (0,-9);

\draw (1,9) -- (5,0); 
\draw (4,9) -- (1,0); 
\draw (5,9) -- (3,0); 
\draw (7,9)-- (7,0);

\draw (1,0) -- (1,-18);
\draw (3,0) -- (3,-18);
\draw (5,0) -- (5,-18);
\draw (7,0) -- (7,-18);

\draw [fill=black] (1,9) circle (0.2);
\draw [fill=black] (2,9) circle (0.2);
\draw [fill=black] (3,9) circle (0.2);
\draw [fill=black] (4,9) circle (0.2);

\draw [fill=black]  (6,9) circle (0.2);
\draw [fill=black]  (7,9) circle (0.2);
\draw [fill=black]  (8,9) circle (0.2);
\draw [fill=black] (5,9) circle (0.2);

\draw [fill=black] (1,0) circle (0.2);
\draw [fill=black] (3,0) circle (0.2);
\draw [fill=black] (5,0) circle (0.2);
\draw [fill=black]  (7,0) circle (0.2);

\draw [fill=black] (1,-9) circle (0.2);
\draw [fill=black] (3,-9) circle (0.2);
\draw [fill=black] (5,-9) circle (0.2);
\draw [fill=black]  (7,-9) circle (0.2);

\draw [fill=black] (1,-18) circle (0.2);
\draw [fill=black] (3,-18) circle (0.2);
\draw [fill=black] (5,-18) circle (0.2);
\draw [fill=black] (4,-18) circle (0.2);
\draw [fill=black]  (7,-18) circle (0.2);
\draw [fill=black] (8,-18) circle (0.2);
\node [right] at (9, -18) {.};
\end{tikzpicture}
\end{center}
(Such a representation is not unique, due to the $\otimes_{\sym_s}$.)

The differential also has an elegant interpretation. It has components of the form:
\[
\kk \ub (\mathbf{s}, U) \otimes_{\sym_s} (\kk \db)\tw (X,\mathbf{s})
\rightarrow 
\kk \ub (\mathbf{s-2}, U) \otimes_{\sym_{s-2}} (\kk \db)\tw (X,\mathbf{s-2})
\]
and is determined by its values on $[\mathbf{1}_\mathbf{s}]$ (corresponding to the vertical strand diagram for $X= \mathbf{s} = U$, as above). 

For $i < j \in \mathbf{s}$, let $[d_{i,j}]$ be the element of $\kk\ub (\mathbf{s-2}, \mathbf{s}) \otimes_{\sym_{s-2}} (\kk \db)_{(+;-)} (\mathbf{s}, \mathbf{s-2})$ corresponding to the basis element $d_{i,j}$ given by  the  diagrams in which chords link $i$ and $j$ and the strands are vertical. For example, for  $s=4$ and $i=2$, $j=4$ (the `crossing' is only removed  for visual clarity):

\begin{center}
\begin{tikzpicture}[scale = .15]
\begin{scope}
    \clip (2.5,9) rectangle (7.5,6);
    \draw  [red, very thick] (5,9) circle(2);
\end{scope}

\begin{scope}
    \clip (2.5,0) rectangle (7.5,3);
    \draw (5,0) circle(2);
\end{scope}

\draw [white,fill= white] (5,2) circle (0.2);
%\draw [white,fill= white] (5,7) circle (0.2);

%\draw[step=1cm,gray,very thin] (0,0) grid (9,9);
\draw [gray] (0,0) -- (0,9) -- (9,9) -- (9,0) -- (0,0); 
\draw (1,9) -- (1,0); 
\draw (5,9) -- (5,0); 

\draw [fill=black] (1,9) circle (0.2);
\draw [fill=black] (3,9) circle (0.2);
\draw [fill=black] (1,0) circle (0.2);
\draw [fill=black] (3,0) circle (0.2);
\draw [fill=black]  (7,9) circle (0.2);
\draw [fill=black]  (7,0) circle (0.2);
\draw [fill=black] (5,9) circle (0.2);
\draw [fill=black] (5,0) circle (0.2);
\node [right] at (9, 0) {.};
\end{tikzpicture}
\end{center}

\noindent

The differential is determined by 
\[
\ 
[\mathbf{1}_\mathbf{s}] 
\mapsto
\sum_{1 \leq i < j \leq s} [d_{i,j}]
\]
(see Corollary \ref{cor:coevaluation}). This is well-defined since the coevaluation map is a morphism of bimodules. 
 The fact that $d^2=0$ (i.e., that this gives a differential) is a  consequence of the orientation sign coming from the order of the red chords. 
\end{rem}

There are various relations between the different complexes $\mathscr{K}_{(\pm; \mp)}$. For example:

\begin{prop}
\label{prop:isomorphism_K_+-}
Choosing compatible gradings, there is an isomorphism of complexes of $(\kk \ub)_{(\pm; \mp)}\otimes (\kk \ub)_{(\pm; -\mp)}$-modules
\[
\mathscr{K}_{(\pm; \mp)} \cong \mathscr{K}_{(\pm; -\mp)}.
\]
\end{prop}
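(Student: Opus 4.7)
The plan is to realize the claimed isomorphism via a tautological swap of tensor factors, after rewriting both complexes in a symmetric form. Using the opposite-category identification $(\kk\db)_{(\pm;\epsilon)}(X, \mathbf{s}) = (\kk\ub)_{(\pm;\epsilon)}(\mathbf{s}, X)$, for $\epsilon \in \{+, -\}$, both Koszul complexes decompose as
\begin{align*}
\mathscr{K}_{(\pm;\mp)}(U, X) & = \bigoplus_s (\kk\ub)_{(\pm;\mp)}(\mathbf{s}, U) \otimes_{\kk\sym_s} (\kk\ub)_{(\pm;-\mp)}(\mathbf{s}, X), \\
\mathscr{K}_{(\pm;-\mp)}(X, U) & = \bigoplus_s (\kk\ub)_{(\pm;-\mp)}(\mathbf{s}, X) \otimes_{\kk\sym_s} (\kk\ub)_{(\pm;\mp)}(\mathbf{s}, U),
\end{align*}
with $\sym_s$ acting by precomposition through the first argument $\mathbf{s}$. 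Since this action is independent of the twist choice and commutes with the signs arising from the $\kk\sym_2 \wr \sym_t$-coinvariants that define $(\kk\ub)_{(\pm;\mp)}(\mathbf{s}, V)$, the tautological swap $\Phi\colon a \otimes b \mapsto b \otimes a$ gives a well-defined $\kk$-module map $\mathscr{K}_{(\pm;\mp)}(U, X) \to \mathscr{K}_{(\pm;-\mp)}(X, U)$ at each $s$.

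Next I would verify that $\Phi$ is an isomorphism of $(\kk\ub)_{(\pm;\mp)} \otimes (\kk\ub)_{(\pm;-\mp)}$-modules. Reinterpreting $\mathscr{K}_{(\pm;-\mp)}$ as such a bimodule via the canonical symmetry of the tensor product of $\kk$-linear categories, the $(\kk\ub)_{(\pm;\mp)}$-action occupies the first tensor factor of $\mathscr{K}_{(\pm;\mp)}$ but the second tensor factor of $\mathscr{K}_{(\pm;-\mp)}(X, U)$; since $\Phi$ exchanges these two positions (and likewise the $(\kk\ub)_{(\pm;-\mp)}$-actions), bimodule compatibility is automatic.

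Finally, I would check that $\Phi$ intertwines the two Koszul differentials. Both differentials are induced by the degree-one coevaluation maps of Corollary~\ref{cor:coevaluation}, which in both cases take the symmetric form $[1] \mapsto \sum_g [g'] \otimes [g']$ with $g$ running over order-preserving inclusions. Under the tensor-swap these two maps correspond, so the induced differentials agree after $\Phi$. The clause ``compatible gradings'' addresses Remark~\ref{rem:choice_grading}: one grades $\mathscr{K}_{(\pm;\mp)}$ by its $(\kk\ub)$-factor and $\mathscr{K}_{(\pm;-\mp)}$ by its $(\kk\db)$-factor (or vice versa), so that both differentials move $s$ in the same direction after the swap. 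The hardest part will be the careful sign bookkeeping: the two coevaluation maps live in spaces where the twist characters are interchanged, and one must verify that the ``$\sum_g [g'] \otimes [g']$'' expressions indeed match after unwinding the orientation-and-order decorations of Lemma~\ref{lem:basis_twisted_kub}.
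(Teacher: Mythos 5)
Your proposal is correct and follows essentially the same route as the paper's proof: the underlying graded bimodule isomorphism is the tautological swap of tensor factors after rewriting via $(\kk\db)_{(\pm;\epsilon)} = (\kk\ub)_{(\pm;\epsilon)}\op$, and compatibility with the differentials comes from the symmetric form of the coevaluation (which the paper phrases via the diagram for $[d_{i,j}]$ in Remark~\ref{rem:graphical_K}). Your additional attention to well-definedness over $\otimes_{\kk\sym_s}$ and to the choice of grading is a sensible elaboration of the same argument.
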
 

\begin{proof}
By definition, $(\kk \db) _{(\pm; \mp)}$ is $(\kk \ub) _{(\pm; \mp)} \op$. It follows that there is an isomorphism
\[
(\kk \ub)_{(\pm; \mp)} (\mathbf{s}, U)  \otimes_{\kk \sym_s} (\kk \db)_{(\pm; -\mp)} (X, \mathbf{s})
\cong 
(\kk \ub)_{(\pm; -\mp)} (\mathbf{s}, X)  \otimes_{\kk \sym_s} (\kk \db)_{(\pm; -\mp)} (U, \mathbf{s}).
\] 
Considering the module structures with respect to $(U,X)$, this is an isomorphism of  $(\kk \ub)_{(\pm; \mp)}\otimes (\kk \db)_{(\pm; -\mp)}\op = (\kk \ub)_{(\pm; \mp)}\otimes (\kk \ub)_{(\pm; -\mp)}$-modules. 

This gives the result at the level of the underlying graded $(\kk \ub)_{(\pm; \mp)}\otimes (\kk \ub)_{(\pm; -\mp)}$-modules. It remains to check that this is compatible with the differentials. This follows from the symmetric nature of the construction of the differential which is apparent from the diagram representing $[d_{i,j}]$ in Remark \ref{rem:graphical_K}.
\end{proof}

\begin{rem}
Proposition \ref{prop:isomorphism_K_+-} means that we can reduce to considering the complexes $\mathscr{K}_{(+; +)}$,  a complex of $\kk \ub \otimes (\kk \ub)_{(+;-)}$-modules, and $\mathscr{K}_{(-;+)}$, a complex of $(\kk \ub)_{(-;+)} \otimes (\kk \ub)_{(-;-)}$-modules. In the first complex, the chords are unoriented, whereas in the second the chords carry orientation signs. Both complexes involve orientation signs associated to the order of one of the collection of chords.

For notational simplicity, these will be denoted $\mathscr{K}_+$ and $\mathscr{K}_-$ respectively, so that 
\begin{eqnarray*}
\mathscr{K}_+ &=& \kk \ub \otimes_{\kk \fb} (\kk \db)_{(+;-)}  \ \cong \ (\kk \ub)_{(+;-)}  \otimes_{\kk \fb} \kk \db\\
\mathscr{K}_- &=& (\kk \ub)_{(-;+)} \otimes_{\kk \fb} (\kk \db)_{(-;-)}
\ \cong \ (\kk \ub)_{(-;-)} \otimes_{\kk \fb} (\kk \db)_{(-;+)}
.
\end{eqnarray*}
\end{rem}

\begin{rem}
\label{rem:relate_K+_K-}
The complexes $\mathscr{K}_+$ and $\mathscr{K}_-$ can be related by exploiting $\sgn \otimes -$, as in Proposition \ref{prop:sgn_tensor_modules_categories}.
\end{rem}

%%%%%%%%%%%%%%%%%%%%%%%%%%%%%%%%%%%%%%%%%%%%%%%%%%%%%%%%%%%%%%%%%%%%%%%%%%%%%%%%%%%%%%%%%%%%%%%%%%%%
\subsection{Associated Koszul complexes}

We  focus on the following Koszul complexes  defined using $\mathscr{K}_+$:
\begin{enumerate}
\item 
for $M$ a $\kk \db$-module, 
$
\mathscr{K}_+ \otimes_{\kk \db} M$,
a complex of $(\kk \ub)_{(+;-)}$-modules;
\item 
for $N$ a $(\kk \db)_{(+;-)}$-module,
$
\mathscr{K}_+ \otimes_{(\kk \db)_{(+;-)}}N$,
 a complex of $\kk \ub$-modules; 
\item 
for $M'$ a $\kk \ub$-module, 
$
\hom _{\kk \ub} (\mathscr{K}_+, M')$,
 a complex of $(\kk\db)_{(+;-)}$-modules;
\item 
for $N'$ a $(\kk\ub)_{(+;-)}$-module, 
$ 
\hom_{(\kk \ub)_{(+;-)}}(\mathscr{K}_+, N')$,
 a complex of $\kk \db$-modules. 
\end{enumerate}

\begin{rem}
These  extend to complexes of modules; the  associated functors are then related by the adjunctions 
\begin{eqnarray}
\label{eqn:adjunction1}
\mathscr{K}_+ \otimes_{\kk \db} - & \dashv & \hom_{(\kk \ub)_{(+;-)}} (\mathscr{K}_+, -) 
\\
\label{eqn:adjunction2}
\mathscr{K}_+ \otimes_{(\kk \db)_{(+;-)}} - & \dashv & \hom_{\kk \ub} (\mathscr{K}_+, -), 
\end{eqnarray}
the first relating complexes of $\kk \db$-modules and complexes of $(\kk\ub)_{(+;-)}$-modules, the second relating complexes of  $(\kk \db)_{(+;-)}$-modules and complexes of $\kk\ub$-modules.
\end{rem}

The terms of the Koszul complexes identify as follows:

\begin{lem}
\label{lem:kz-complexes_identify_underlying}
 There are isomorphisms of the underlying $\kk$-modules (forgetting the grading):
\begin{enumerate}
\item 
for $M$ a $\kk \db$-module and $X$ an object of $(\kk \ub)_{(+;-)}$, 
$$
(\mathscr{K}_+ \otimes_{\kk \db} M) (X) \cong 
(\kk \ub)_{(+;-)} (-, X) \otimes _{\kk \fb} M
\cong 
\bigoplus_{\substack{Y\subset X \\
\ub (\emptyset, X\backslash Y)} }
M (Y);
$$
\item 
for $N$ a $(\kk \db)_{(+;-)}$-module and $X$ an object of $\kk \ub$,
$$
(\mathscr{K}_+ \otimes_{(\kk \db)_{(+;-)}} N) (X) \cong 
\kk \ub(-, X) \otimes _{\kk \fb}N
\cong 
\bigoplus_{\substack{Y\subset X \\
\ub (\emptyset, X\backslash Y)}  }
N (Y);
$$
 \item 
for $M'$ a $\kk \ub$-module and $X$ an object of $(\kk \db)_{(+;-)}$, 
$$
 \hom _{\kk \ub} (\mathscr{K}_+, M')(X)
\cong 
\hom_{\kk \fb} ((\kk \db)_{(+;-)}(X,-) , M' )
\cong 
\bigoplus_{\substack{Y\subset X \\
\ub (\emptyset, X\backslash Y)}  }
M' (Y);
$$
\item 
for $N'$ a $(\kk\ub)_{(+;-)}$-module and $X$ an object of $\kk \db$,
$$
\hom_{(\kk \ub)_{(+;-)}}(\mathscr{K}_+, N')(X)
\cong 
\hom_{\kk \fb} (\kk \db (X, -), N') 
\cong 
\bigoplus_{\substack{Y\subset X \\
\ub (\emptyset, X\backslash Y)}  }
N' (Y).
$$
\end{enumerate}
\end{lem}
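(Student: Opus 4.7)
The plan is to handle all four items by a uniform two-step strategy. For the first isomorphism in each item I reduce the tensor or hom over $\kk \db$ (or $(\kk \db)_{(+;-)}$) to one over $\kk \fb$; for the second isomorphism I then decompose the resulting $\kk \fb$-valued morphism modules explicitly using the chord-diagram description of $\ub$.

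For step one, I exploit the two equivalent presentations
\[
\mathscr{K}_+ \;\cong\; \kk \ub \otimes_{\kk \fb} (\kk \db)_{(+;-)} \;\cong\; (\kk \ub)_{(+;-)} \otimes_{\kk \fb} \kk \db
\]
together with standard adjunctions. For item (1), using the second presentation, associativity of the tensor product over $\kk$-linear categories and the Yoneda identity $\kk \db \otimes_{\kk \db} M \cong M$ give $\mathscr{K}_+ \otimes_{\kk \db} M \cong (\kk \ub)_{(+;-)} \otimes_{\kk \fb} M$; evaluation at $X$ yields the stated formula. Item (2) is symmetric, using the other presentation. For item (3), the induction--restriction adjunction
\[
\hom_{\kk \ub}(\kk \ub \otimes_{\kk \fb} G, M') \;\cong\; \hom_{\kk \fb}(G, M')
\]
applied to $G = (\kk \db)_{(+;-)}(X, -)$ yields the identification; item (4) is the analogous statement, exchanging the roles of $\kk \ub$ and $(\kk \ub)_{(+;-)}$.

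For step two, I need a combinatorial decomposition of $\ub(Y, X)$ as a right $\aut(Y)$-set (and, dually, of $\db(X, Y)$ as a left $\aut(Y)$-set). A morphism of $\ub(Y, X)$ is the data of a subset $Y' \subset X$ with $|Y'|=|Y|$, a perfect matching $\phi \in \ub(\emptyset, X\backslash Y')$ of the complement, and a bijection $Y \stackrel{\cong}{\to} Y'$, so that
\[
\ub(Y, X) \;\cong\; \coprod_{\substack{Y' \subset X,\, |Y'|=|Y| \\ \phi \in \ub(\emptyset, X\backslash Y')}} \mathrm{Bij}(Y, Y'),
\]
each $\mathrm{Bij}(Y, Y')$ being a free right $\aut(Y)$-torsor. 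Via the section of Lemma~\ref{lem:basis_twisted_kub}, this lifts to a decomposition of $(\kk \ub)_{(\pm;\mp)}(Y, X)$ as a right $\kk \aut(Y)$-module. Crucially, the sign twists in $(\kk \ub)_{(\pm;\mp)}$ are induced by postcomposition with morphisms that reorder or reorient the matching data, so the precomposition action of $\aut(Y)$ on the chosen basis is sign-free. Tensoring over $\kk \fb$ with $M$ then collapses each summand $\kk \mathrm{Bij}(Y, Y') \otimes_{\kk \aut(Y)} M(Y)$ to $M(Y')$, giving $(\kk \ub)_{(+;-)}(-, X) \otimes_{\kk \fb} M \cong \bigoplus_{Y \subset X,\, \phi} M(Y)$. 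The parallel $\hom_{\kk \fb}$ calculation---using Proposition~\ref{prop:morphisms_right_free} to ensure that only finitely many isomorphism types of $Y$ contribute, so that $\hom$ commutes with the direct sum---handles items (3) and (4); the $\db$ side is treated by the same argument via passage to opposite categories.

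The main obstacle is purely bookkeeping: one must carefully track the left/right module structures in the various twisted variants, in particular identifying $(\kk \db)_{(\pm;\mp)} = ((\kk \ub)_{(\pm;\mp)})\op$ at each step, choosing the right presentation of $\mathscr{K}_+$ for each item, and verifying that the precomposition action of $\aut(Y)$ on the standard basis of $(\kk \ub)_{(\pm;\mp)}(Y, X)$ really is sign-free. Since the statement only claims isomorphisms of underlying $\kk$-modules (forgetting the grading and outer module structures), no further coherence checks are required, and the argument is otherwise mechanical.
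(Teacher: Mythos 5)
Your proposal is correct and follows essentially the same route as the paper: the first isomorphism in each case via the standard Yoneda/adjunction identities for $\otimes$ and $\hom$ over a $\kk$-linear category, and the second via the explicit decomposition of $(\kk \ub)_{(\pm;\mp)}(\mathbf{s}, X)$ as a free right $\kk\sym_s$-module with orbits indexed by pairs (image subset $Y$, matching of $X\backslash Y$) — which is exactly the content of Lemma~\ref{lem:bimodule_kub_twisted} and Proposition~\ref{prop:morphisms_right_free} that the paper invokes, including the observation that precomposition by $\aut(Y)$ is sign-free. Your remark on finite support of $(\kk\db)_{(\pm;\mp)}(-,X)$ to obtain a finite direct sum in the $\hom$ cases also matches the paper's argument.
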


\begin{proof}
In each case, the first isomorphism is immediate from standard properties of $\otimes$ and $\hom$. The second isomorphism then follows from Lemma \ref{lem:bimodule_kub_twisted}; in the cases $\hom_\cala (\mathscr{K}_+, -)$, with $\cala \in \{ \kk \ub, (\kk \ub)_{(+;-)} \}$, 
one also uses that the relevant $(\kk \db) _{(\pm; \mp)} (-, X)$ has finite support, so that one obtains a finite direct sum in the explicit identification.  
\end{proof}

\begin{rem}
\label{rem:dualisability}
For fixed $X$, $(\kk \db )_{(\pm; \mp)} (X, -)$ is finitely-generated projective as a $\kk \fb$-module (cf. Example \ref{exam:projectivity_properties}); more precisely, for any $s$, $(\kk \db )_{(\pm; \mp)} (X, \mathbf{s})$ is a finite rank free $\kk \sym_s$-module that is zero for $s>|X|$. The corresponding statement holds for $\kk \db (X, -)$. 
This implies that these objects are strongly dualizable, so that  there are isomorphisms
\begin{eqnarray*}
\hom_{\kk \fb} ((\kk \db)_{(+;-)} , M' )
& \cong & 
(\kk \db)_{(+;-)}^\sharp \otimes_{\kk \fb} M' 
\\
 \hom_{(\kk \fb)}(\kk \db, N')
& \cong & 
\kk \db ^\sharp \otimes_{\kk \fb} N'.
\end{eqnarray*} 
The structure of these complexes can be described explicitly in terms of the expression on the right hand side, using the analysis of the transpose structure (i.e., taking into account the duality $(-)^\sharp$) presented in Section \ref{subsect:transpose}.
\end{rem}

\begin{exam}
\label{exam:koszul_complexes_kfb}
\ 
\begin{enumerate}
\item 
By restriction along the augmentation $\kk \db \rightarrow \kk \fb$, $\kk \fb$ can be considered as a $\kk \db$-module (forgetting the right $\kk \fb$-module structure).  Hence we can form the complex $$\mathscr{K}_ + \otimes_{\kk \db} \kk \fb.$$
This has underlying object $(\kk \ub)_{(+;-)} $, considered as a graded $(\kk \ub)_{(+;-)}$-module (using the length grading), for the canonical left module structure.  The differential is zero.  
\item 
Similarly, $\kk \fb$ can be considered as a $\kk \ub$-module, so that we can form the Koszul complex 
$$\hom_{\kk \ub} (\mathscr{K}_+, \kk \fb).$$
 This has underlying object  $(\kk \db)_{(+;-)}^\sharp$, considered as a graded $(\kk \db)_{(+;-)}$-module using the canonical left module structure. The differential is zero.
\end{enumerate}
There is more structure, given by also taking into account the right $\kk \fb$-module structure of $\kk \fb$.
\end{exam}

\begin{rem}
\label{rem:koszul_homological/cohomological}
\ 
\begin{enumerate}
\item 
The complexes $\mathscr{K}_+ \otimes_{\kk \db} M$ and $\mathscr{K}_+ \otimes_{(\kk \db)_{(+;-)}} N$ should be considered as cohomological complexes. For example, the first evaluated on $X$ has the following form (with $M(X)$ in cohomological degree zero):
\[
M (X) 
\rightarrow 
\bigoplus_{\substack{Y_1\subset X \\
|Y_1|=|X|- 2 \\
\ub (\emptyset, X \backslash Y_1) }}
M (Y_1)
\rightarrow 
\bigoplus_{\substack{Y_2\subset X \\
|Y_2|=|X|- 4\\
\ub (\emptyset, X \backslash Y_2)
 }}
M (Y_2)
\rightarrow 
\ldots 
\]
where the terms in cohomological degree greater than $\frac{|X|}{2}$ are zero. The differential is induced by the structure of $M$ together with the appropriate Koszul signs; namely for $Y_t \subset Y_{t-1} \subset X$, one uses the restriction map $M(Y_{t-1}) \rightarrow M (Y_t)$ (up to the signs arising from the construction of $\mathscr{K}_+$).
\item
The complexes $\hom _{\kk \ub} (\mathscr{K}_+, M')$ and $\hom_{(\kk \ub)_{(+;-)}}(\mathscr{K}_+, N')$ should be considered as homological complexes. For example, the first complex evaluated on $X$ has the following form (with $M'(X)$ in homological degree zero):
\[
\ldots 
\rightarrow 
\bigoplus_{\substack{Y_2\subset X \\
|Y_2|=|X|- 4
\\
\ub (\emptyset, X\backslash Y_2)
} }
M' (Y_2)
\rightarrow 
\bigoplus_{\substack{Y_1\subset X \\
|Y_1|=|X|- 2
\\
\ub (\emptyset, X \backslash Y_1)
} }
M' (Y_1)
\rightarrow 
M'(X),
\]
where the terms in homological degree greater than $\frac{|X|}{2}$ are zero. The differential is induced by the module structure of $M'$ (up to the appropriate signs), as above.
\end{enumerate}
\end{rem}

\begin{rem}
There are analogous Koszul complexes that are obtained by using $\mathscr{K}_-$ in place of $\mathscr{K}_+$. These can also be obtained from the complexes considered above by using the equivalences of categories given by Proposition \ref{prop:sgn_tensor_modules_categories} (see Remark \ref{rem:relate_K+_K-}).
\end{rem}

%%%%%%%%%%%%%%%%%%%%%%%%%%%%%%%%%%%%%%%%%%%%%%%%%%%%%%%%%%%%%%%%%%%%%%%%%%%%%%%%%%%%
\subsection{Adjunction units and counits}

Consider the adjunction (\ref{eqn:adjunction1}); then, for $M$ a $\kk \db$-module, one has the adjunction unit 
\[
M \rightarrow \hom_{(\kk \ub)_{(+;-)}}(\mathscr{K}_+, \mathscr{K}_+ \otimes_{\kk \db} M),
\]
 a morphism of complexes of $\kk \db$-modules. 

Taking $M$ to be $\kk \fb$ as in Example \ref{exam:koszul_complexes_kfb}, this gives 
$$
\kk \fb \rightarrow \hom_{(\kk \ub)_{(+;-)}}(\mathscr{K}_+, \mathscr{K}_+ \otimes_{\kk \db} \kk \fb)
\cong 
 \hom_{(\kk \ub)_{(+;-)}}(\mathscr{K}_+, (\kk \ub)_{(+;-)}),
 $$
so that the codomain can be considered as the left dual of the complex $\mathscr{K}_+$ with respect to its $(\kk \ub)_{(+;-)}$-module structure.
 This can be rewritten as 
\begin{eqnarray}
\label{eqn:unit_fb_kkdb}
\kk \fb \rightarrow 
\kk \db ^\sharp \otimes_{\kk\fb} (\kk \ub)_{(+;-)}.
\end{eqnarray}
This is a morphism of $\kk \db \otimes \kk \fb\op$-modules, for the restricted structure on the codomain (cf.  Example \ref{exam:koszul_complexes_kfb} above).

\begin{rem}
\label{rem:understand_unit}
The underlying object of $\kk \db ^\sharp \otimes_{\kk\fb} (\kk \ub)_{(+;-)}$ is 
$$
\bigoplus_s
\kk \db ( -,\mathbf{s}) ^\sharp \otimes _{\kk\sym_s} \kk \ub_{(+;-)} (-, \mathbf{s}).
$$
Evaluating on $(U, X) $, the term indexed by $s$ is 
$\kk \db ( U,\mathbf{s}) ^\sharp \otimes _{\kk\sym_s} \kk \ub_{(+;-)} (X, \mathbf{s})$. This is zero unless $|U| \equiv s \equiv |X| \mod 2$ and $|X| \leq s \leq |U|$. Taking $X=U$, there is only one term that contributes, namely for $s = |X|$, and the adjunction unit is an isomorphism evaluated on $(X,X)$.
\end{rem}

 We have the following:

\begin{lem}
\label{lem:finiteness_projectivity}
\ 
\begin{enumerate}
\item 
The underlying  $\kk \db \otimes \kk \db_{(+;-)}$-module (forgetting the grading) of $\kk \db^\sharp \otimes_{\kk \fb} \kk \ub_{(+;-)}$ takes values in finite rank free $\kk$-modules. 
\item 
For $U$ an object of $\kk \db$,  the complex 
$\big(\kk \db ^\sharp \otimes_{\kk\fb} (\kk \ub)_{(+;-)}\big) (U, -)$ is a finite length complex in which each term is a  finitely-generated projective $(\kk \db)_{(+;-)}$-module. 
\end{enumerate}
\end{lem}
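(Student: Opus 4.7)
The plan is to unwind the underlying graded object of $\kk\db^\sharp \otimes_{\kk\fb} (\kk\ub)_{(+;-)}$ using the explicit presentation recorded in Remark \ref{rem:understand_unit}, namely
\[
\kk\db^\sharp \otimes_{\kk\fb} (\kk\ub)_{(+;-)} \;=\; \bigoplus_s \kk\db(-,\mathbf{s})^\sharp \otimes_{\kk\sym_s} (\kk\ub)_{(+;-)}(-,\mathbf{s}),
\]
and then to analyse each summand in turn. The key technical point, used for both parts of the lemma, is that $\kk\db(U,\mathbf{s})^\sharp$ is a finite rank free $\kk\sym_s$-module (in the variance appropriate for forming the above tensor product). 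To establish this, I would identify $\kk\db(U,\mathbf{s})$ with $\kk\ub(\mathbf{s},U)$, invoke Proposition \ref{prop:morphisms_right_free} to get freeness of $\kk\ub(\mathbf{s},U)$ as a $\kk\sym_s$-module of finite rank, and then use that $\kk\sym_s$ is a Frobenius $\kk$-algebra so that $\kk$-duality preserves freeness of finite rank modules (while reversing the variance).

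Granted this preliminary freeness statement, part (1) follows directly. Evaluating on $(U,X)$, the parity condition $|U|\equiv s\equiv |X|\pmod 2$ together with the non-vanishing constraints $|X|\leq s\leq |U|$ reduces the direct sum to finitely many indices $s$. For each such $s$, writing $\kk\db(U,\mathbf{s})^\sharp$ as a finite direct sum of copies of $\kk\sym_s$ presents the summand as a finite direct sum of copies of $(\kk\ub)_{(+;-)}(X,\mathbf{s})$, which is itself finite rank free over $\kk$ by Lemma \ref{lem:basis_twisted_kub}.

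For part (2), I would fix $U$ and view the sum as a complex in the variable $-$. The same parity-plus-cardinality bound $s\leq |U|$ gives finite length. For projectivity of each graded component as a $(\kk\db)_{(+;-)}$-module, I would observe that the factor $(\kk\ub)_{(+;-)}(-,\mathbf{s})$ is the standard projective left $(\kk\db)_{(+;-)}$-module corepresenting evaluation at $\mathbf{s}$ (cf.\ Example \ref{exam:projective_A_mod} applied to the $\kk$-linear category $(\kk\db)_{(+;-)}$). Using again the decomposition of $\kk\db(U,\mathbf{s})^\sharp$ as a finite direct sum of $\kk\sym_s$'s, the graded component becomes a finite direct sum of copies of this standard projective, hence is finitely-generated projective in the sense of Definition \ref{defn:fg_proj}. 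The main obstacle I anticipate is essentially bookkeeping: tracking the left/right $\kk\sym_s$-actions through $(-)^\sharp$ and the identification $\kk\db = \kk\ub\op$ requires some care to confirm that the variances are such that the tensor product in the formula of Remark \ref{rem:understand_unit} really collapses as described, but once this is sorted out both parts are immediate.
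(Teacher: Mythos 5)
Your proposal is correct and follows essentially the same route as the paper: reduce to the summands $\kk\db(U,\mathbf{s})^\sharp \otimes_{\kk\sym_s} (\kk\ub)_{(+;-)}(-,\mathbf{s})$, use the parity and cardinality constraints to get finiteness in $s$, and use the finite rank freeness of $\kk\db(U,\mathbf{s})^\sharp$ over $\kk\sym_s$ (via Proposition \ref{prop:morphisms_right_free}) to collapse each summand onto finitely many copies of the standard projective $(\kk\ub)_{(+;-)}(-,\mathbf{s})$. You merely make explicit two points the paper leaves implicit — that $\kk$-duality preserves finite rank freeness over $\kk\sym_s$ while reversing variance, and that $(\kk\ub)_{(+;-)}(-,\mathbf{s})$ is the standard projective of Example \ref{exam:projective_A_mod} — both of which are correct.
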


\begin{proof}
For fixed $U$, $\kk \db ( U,\mathbf{s}) ^\sharp$ is zero unless $|U|-s \in 2 \nat$, so there are only finitely many $s$ for which this is non-zero. When it is non-zero, it is a  finite rank  free $\kk \sym_s\op$-module, by Proposition \ref{prop:morphisms_right_free}. Using this, together with the fact that $\ub$ has finite hom sets, the first statement follows from the support condition given in  Remark \ref{rem:understand_unit}. The second then follows from the fact that $\kk \db ( U,\mathbf{s}) ^\sharp \otimes _{\kk\sym_s} \kk \ub_{(+;-)} (-, \mathbf{s})$ is a finitely-generated projective.
\end{proof}

Similarly, for $M'$ a $\kk\ub$-module, one has the adjunction counit for the adjunction (\ref{eqn:adjunction2}). This has the form 
\[
\mathscr{K}_+ \otimes_{(\kk \db)_{(+;-)}} \hom_{\kk \ub} (\mathscr{K}_+, M')
\rightarrow 
M',
\]
 a morphism of complexes of $\kk \ub$-modules. 

Taking $M' = \kk \fb$ as in Example \ref{exam:koszul_complexes_kfb}, this gives 
$$
\mathscr{K}_+ \otimes_{(\kk \db)_{(+;-)}} \hom_{\kk \ub} (\mathscr{K}_+, \kk \fb) 
\cong 
\mathscr{K}_+ \otimes_{(\kk \db)_{(+;-)}} (\kk \db)_{(+; -)}^\sharp
\rightarrow 
\kk \fb.
$$
This can be written as 
\begin{eqnarray}
\label{eqn:counit_kkfb}
\kk \ub \otimes_{\kk \fb} (\kk \db)_{(+; -)}^\sharp
\rightarrow 
\kk \fb.
\end{eqnarray}
The counit is a morphism of $\kk \ub \otimes \kk\fb\op$-modules, using the restricted structure  (cf. Example \ref{exam:koszul_complexes_kfb}, again). 

\begin{rem}
\label{rem:understand_counit}
The underlying object of $\kk \ub \otimes_{\kk \fb} (\kk \db)_{(+; -)}^\sharp$ is 
$$
\bigoplus_s 
\kk \ub (\mathbf{s}, -) \otimes_{\kk \fb} (\kk \db)_{(+; -)}( \mathbf{s}, -)^\sharp.
$$
Evaluating on $(V,Y) $, the term indexed by $s$ identifies as
$
\kk \ub (\mathbf{s}, V) \otimes_{\kk \fb} (\kk \db)_{(+; -)}( \mathbf{s}, Y)^\sharp.
$ 
This is zero unless $|V|\equiv s \equiv |Y| \mod 2$ and $|Y| \leq s \leq |V|$. Thus, for $V=Y$, there is only one term that contributes, namely $s = |Y|$ and the counit is an isomorphism evaluated on $(Y,Y)$.
\end{rem}

\begin{rem}
\label{rem:unit_counit_duality}
Suppose that $\kk$ is a field of characteristic zero (this allows us to identify coinvariants and invariants over the symmetric groups). 
Then the unit (\ref{eqn:unit_fb_kkdb}) and the counit (\ref{eqn:counit_kkfb}) identify under vector space duality. This is clear at the level of the underlying graded objects (using the finiteness properties given by Lemma \ref{lem:finiteness_projectivity}); that this identification respects the differential is a straightforward verification. 
\end{rem}

\begin{rem}
\label{rem:other_cases_unit_counit}
One also has the unit and counit maps
\begin{eqnarray}
N & \rightarrow & \hom_{\kk\ub} (\mathscr{K}_+, \mathscr{K}_+ \otimes_{(\kk \db)_{(+; -)}} N )
\\
\mathscr{K}_+ \otimes_{\kk \db} \hom_{(\kk\ub)_{(+;-)}} (\mathscr{K}_+, N') 
& \rightarrow & N'
\end{eqnarray}
for $N$ a $(\kk \db)_{(+;-)}$-module and $N'$ a $(\kk \ub)_{(+;-)}$-module, given by the second adjunction.
\end{rem}

We can consider the adjunctions defined using $\mathscr{K}_-$. In particular, we have the adjunction 
\[
\mathscr{K}_- \otimes_{(\kk \db)_{(-;-)}} - \quad \dashv \quad \hom_{(\kk\ub)_{(-,+)}} (\mathscr{K}_-, -) 
\]
between functors relating  complexes of $(\kk \db)_{(-;-)}$-modules and complexes of $(\kk \ub)_{(-;+)}$-modules.

For $L$ a $(\kk \db)_{(-;-)}$-module and $X$ an object of $(\kk\ub)_{(-;+)}$, as in Lemma \ref{lem:kz-complexes_identify_underlying} one has 
$$
\big(\mathscr{K}_- \otimes_{(\kk \db)_{(-;-)}} L\big) (X) 
\cong 
(\kk \ub)_{(-;+)} (-, X) \otimes _{\kk \fb} L. 
$$
For $J$ a $(\kk \ub)_{(-;+)}$-module and $Y$ an object of $(\kk \db)_{(-;-)}$, one has 
\[
\hom_{(\kk\ub)_{(-,+)}} (\mathscr{K}_-, J) (Y) 
\cong 
\hom_{\kk \fb} ((\kk \db)_{(-;-)} (Y, -) , J) 
\cong 
(\kk \db)_{(-;-)} (Y, -)^\sharp \otimes_{\kk \fb} J.
\]

Proceeding as for the $\mathscr{K}_+$-adjunctions, we obtain
\begin{eqnarray}
\label{eqn:unit_K-}
\kk \fb 
& 
\rightarrow & 
(\kk \db)_{(-;-)}^\sharp \otimes_{\kk \fb} (\kk \ub)_{(-;+)} 
\\
\label{eqn:counit_K-}
(\kk \ub)_{(-;+)} \otimes_{\kk \fb} (\kk \db)_{(-;-)}^\sharp 
&
\rightarrow &
\kk \fb. 
\end{eqnarray}

%%%%%%%%%%%%%%%%%%%%%%%%%%%%%%%%%%%%%%%%%%%%%%%%%%%%%%%%%%%%%%%%%%%%%%%%%%%
\subsection{The Koszul property}
\label{subsect:koszul_property}

Throughout this section, $\kk$ is taken to be a field of characteristic zero.
 We  review the fact that the   categories $\kk \db$ and $(\kk \db)_{(-;+)}$ are  Koszul $\kk$-linear categories over $\kk \fb$.

\begin{rem}
\ 
\begin{enumerate}
\item
This is well-known to the experts (but not necessarily stated in this form). For instance, for $\kk \db$ this is stated by Sam and Snowden \cite{MR3376738} and can be expressed in terms  of their category   $\mathrm{Rep} (\mathbf{O})$. The Koszul property is essentially equivalent to the statement of \cite[Proposition 4.3.5]{MR3376738}. The latter is proved by appealing to the general \cite[Proposition 2.3.4]{MR3376738}; this essentially encodes the Koszul complexes that we consider.
\item
Sam and Snowden observe that the Koszul property is established in \cite{MR3439686}.
\end{enumerate}
\end{rem}

\begin{thm}
\label{thm:koszul_categories}
For $\kk$ a field of characteristic zero, the $\kk$-linear categories $\kk \db$ and $\kk \db_{(-;-)}$ are Koszul $\kk$-linear categories over $\kk \fb$. 

Equivalently, the units (\ref{eqn:unit_fb_kkdb}) and (\ref{eqn:unit_K-}): 
\begin{eqnarray*}
\kk \fb 
& 
\rightarrow & 
\kk \db ^\sharp \otimes_{\kk \fb} (\kk \ub)_{(+;-)} 
\\
\kk \fb 
& 
\rightarrow & 
(\kk \db)_{(-;-)}^\sharp \otimes_{\kk \fb} (\kk \ub)_{(-;+)} 
\end{eqnarray*}
 are weak equivalences. 
\end{thm}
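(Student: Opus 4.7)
The plan is to reduce the theorem to a concrete acyclicity statement and then invoke the existing representation-stability literature. By the standard translation between the Koszul property of a homogeneous quadratic $\kk$-linear category over a semisimple base (here $\kk \fb$) and the acyclicity of its associated Koszul complex (cf. \cite[Chapter 2]{MR4398644}), the two formulations of the theorem are equivalent. Moreover, the case of $(\kk \db)_{(-;-)}$ can be deduced from that of $\kk \db$: the equivalence $\sgn \otimes -$ of Proposition~\ref{prop:sgn_tensor_modules_categories} matches the two collections of twisted variants, and by inspection of the coevaluation maps (Corollary~\ref{cor:coevaluation}) this equivalence carries the Koszul dualizing complex $\mathscr{K}_+$ to $\mathscr{K}_-$, and hence the unit map (\ref{eqn:unit_fb_kkdb}) to (\ref{eqn:unit_K-}). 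So it suffices to verify that (\ref{eqn:unit_fb_kkdb}) is a weak equivalence.

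I would then evaluate the morphism (\ref{eqn:unit_fb_kkdb}) on a pair of objects $(U,X)$ with $|U|=u$, $|X|=x$. By Remark~\ref{rem:understand_unit} and Lemma~\ref{lem:finiteness_projectivity}, the resulting complex is a finite-length complex of finite-dimensional $\kk$-vector spaces, with only the terms indexed by $s$ satisfying $x \leq s \leq u$ and $s \equiv u \equiv x \pmod 2$ contributing. The target $\kk \fb(U,X)$ vanishes unless $u = x$, in which case the only surviving summand is that for $s = u = x$; there the unit is directly verified to be an isomorphism. The content of the theorem is therefore reduced to the vanishing of the cohomology of this finite complex whenever $u > x$ (and whenever $u \not\equiv x \pmod 2$, where the assertion is trivial).

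To establish this acyclicity, the cleanest route is to invoke Sam and Snowden's analysis of the homological algebra of modules over the downward Brauer category: the acyclicity of the Koszul complex is essentially the content of \cite[Proposition 4.3.5]{MR3376738} (whose proof, as the authors note, uses their Gröbner-theoretic computations from \cite{MR3439686}), once one identifies their combinatorial resolution of standard simples with the complex $\mathscr{K}_+ \otimes_{\kk \fb}\kk\fb(U,-)$ evaluated on $X$. Alternatively, one can appeal to Stoll's theorem that the Brauer properad is Koszul \cite{MR4541945} (or Ward's Koszulness of the modular operad controlling modular operads \cite{MR4425832}), and recover the category-level statement from the properad-level statement by the standard argument that replaces connected graphs with all graphs via a symmetric algebra construction on $\kk \fb$-modules.

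The main obstacle is not the acyclicity itself but the bookkeeping required to reconcile the conventions: the cited references work with either the untwisted Brauer category, the Brauer properad, or the operad governing modular operads, each with its own sign conventions and indexing, so one must verify that the orientation data built into $\mathscr{K}_+$ (the order of chords, together with the $(+;-)$ sign twist) matches the Koszul signs produced by each reference. A self-contained proof via an explicit contracting homotopy on chord diagrams is possible in principle, but such a homotopy (constructed by isolating a distinguished pairing in $U$ when $u > x$) requires delicate sign tracking coming from the ordering of chords, which is precisely why the representation-stability reformulation is preferable.
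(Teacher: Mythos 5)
Your reduction of the theorem to the acyclicity of the finite complex at each pair $(U,X)$ with $|U|>|X|$, together with the observation that when $|U|=|X|$ only the $s=|U|$ term survives and the unit is there an isomorphism, matches the paper exactly (this is the content of Remark \ref{rem:understand_unit} and Lemma \ref{lem:finiteness_projectivity}). Where you diverge is in how the acyclicity is then established. The paper does not prove it by citation: the remark preceding the theorem records that the statement is known to experts (Sam--Snowden, Dan-Cohen--Penkov--Serganova, Stoll), but the proof given is a self-contained graphical one. For fixed $(U,X)$ the unit complex is decomposed over the underlying chord diagrams in $\ub(X,U)$, and each summand is identified with the multilinear part of the classical Koszul complex (first case) or of the de Rham complex (second case), both acyclic in positive degree over a field of characteristic zero. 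Your citation route is legitimate and consistent with the paper's own remarks, but, as you concede, it transfers rather than removes the burden of matching twists and sign conventions across sources.

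The step that needs repair is your deduction of the $(\kk \db)_{(-;-)}$ case from the $\kk \db$ case via $\sgn \otimes -$. Proposition \ref{prop:sgn_tensor_modules_categories} pairs $(+;+)$ with $(-;+)$ and $(+;-)$ with $(-;-)$; it does \emph{not} match $\kk \db$ with $(\kk \db)_{(-;-)}$. To relate the two unit maps you need the two-variable twist $\sgn \boxtimes \sgn$ at the level of bimodules combined with the variable swap of Proposition \ref{prop:isomorphism_K_+-} (or, equivalently, a chain through quadratic duality and passage to opposite categories), and then a verification that this identification intertwines the two coevaluation differentials --- which is precisely the chord-ordering and chord-direction sign bookkeeping you are trying to sidestep. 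The fact that the paper treats the two cases by parallel arguments resting on genuinely different classical inputs (the Koszul complex for one, the de Rham complex for the other) is a signal that the two sign structures are not interchanged by a formal twist without work. As written, this reduction is a gap: either supply the isomorphism of complexes of bimodules explicitly, or argue both cases directly as the paper does.
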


\begin{proof}
We give a sketch proof based on a graphical analysis of the complex, analogous to that given in Remark \ref{rem:graphical_K}.

For the first case, consider the analysis of $\kk \db ^\sharp \otimes_{\kk \fb} (\kk \ub)_{(+;-)} $ given in Remark \ref{rem:understand_unit}. Evaluating on $(U, X) \in \ob \ \kk \db \otimes \kk \db_{(+;-)}$, the term indexed by $s$ is  $\kk \db ( U,\mathbf{s}) ^\sharp \otimes _{\kk\sym_s} \kk \ub_{(+;-)} (X, \mathbf{s})$. We use the identification $\kk \db (U, \mathbf{s}) = \kk \ub ( \mathbf{s},U)$ (bearing in mind Remark \ref{rem:fbop}). 
Then, as a $\kk \sym_s\op$-module,  one has that $\kk \ub ( \mathbf{s}, U) ^\sharp$ is isomorphic to  $\kk \ub ( \mathbf{s}, U) $ (see Section \ref{subsect:transpose} for such duality considerations). 

We also have, by Lemma \ref{lem:basis_twisted_kub}, that $(\kk \ub)_{(+;-)} (X, \mathbf{s})$ is isomorphic to $\kk \ub (X, \mathbf{s})$ as a $\kk \sym_s$-module. This provides the isomorphism of $\kk$-vector spaces:
\[
\kk \db( U,\mathbf{s}) ^\sharp \otimes _{\kk\sym_s} (\kk \ub)_{(+;-)}  (X, \mathbf{s})
\cong 
\kk \ub (\mathbf{s}, U) \otimes_{\kk \sym_s} \kk \ub (X, \mathbf{s})
\cong 
\kk \big( \ub (\mathbf{s}, U) \times_{\sym_s} \ub (X , \mathbf{s})\big).
\]
This gives the basis $ \ub (\mathbf{s}, U) \times_{\sym_s} \ub (X , \mathbf{s})$. Composition in $\ub$ induces a map $\ub (\mathbf{s}, U) \times_{\sym_s} \ub (X , \mathbf{s}) \rightarrow \ub (X,U)$; using this, the set $ \ub (\mathbf{s}, U) \times_{\sym_s} \ub (X , \mathbf{s})$ can be described as elements of $\ub (X, U)$ together with a partition of the chords describing the morphisms, so as to retain the information of how they arose. Namely,  if $|U|-s = 2u $ and $s- |X| = 2x$, then $|U|-|X|= 2 (u+x)$; the chords are partitioned into subsets of cardinality $u$ and $x$ respectively. 

For example, taking $X = \mathbf{2}$, $U = \mathbf{8}$ and $s=2$, the following represents a basis element, 
\begin{center}
\begin{tikzpicture}[scale = .15]
%\draw[step=1cm,gray,very thin] (0,0) grid (9,9);

\begin{scope}
    \clip (0,-9) rectangle (9,-6);
    \draw [very thick, red]  (4,-9) circle(2);
    \draw [very thick, red] (5.5,-9) circle(2.5);
    \draw (6,-9) circle(1);
\end{scope}
\draw [gray] (0,0) -- (0,-9) -- (9,-9) -- (9,0) -- (0,0); 
\draw (6,0) -- (1,-9); 
\draw (3,0) -- (4,-9);
\draw [fill=black] (3,0) circle (0.2);
\draw [fill=black] (6,0) circle (0.2);
\draw [fill=black] (1,-9) circle (0.2);
\draw [fill=black] (2,-9) circle (0.2);
\draw [fill=black] (3,-9) circle (0.2);
\draw [fill=black] (4,-9) circle (0.2);
\draw [fill=black]  (6,-9) circle (0.2);
\draw [fill=black]  (7,-9) circle (0.2);
\draw [fill=black]  (8,-9) circle (0.2);
\draw [fill=black] (5,-9) circle (0.2);
\node [right] at (9,-9) {,};
\end{tikzpicture}
\end{center} 
where the thick red arcs record those that arose from $\ub (\mathbf{s}, X)$. 

Tracing through the isomorphisms used to obtain the basis, we observe that the distinguished  red chords  give rise to orientation signs associated to their order.  

The differential corresponds to taking the (signed) sum of the diagrams obtained by choosing precisely one of the red chords and forgetting that it is red; the sign arises from the orientation data.  Now, if $|U|\neq |X|$ (so that we may assume that $|U|-|X|= 2t$ for $t>0$), the acyclicity of the complex follows directly from that of the classical Koszul complex. The result follows. 

The proof in the second case is similar. There are  two differences: the chords have orientation signs associated to the direction of the chords; the orientation signs for ordering of the chords arise from the black chords rather than the red chords. The conclusion follows by using the de Rham complex in this case, rather than the Koszul complex.
\end{proof}

\begin{cor}
\label{cor:projective_resolution}
\ 
\begin{enumerate}
\item 
The complex $\big( \kk \db ^\sharp \otimes_{\kk \fb} (\kk \ub)_{(+;-)}\big) (U, -)  $ gives a projective resolution of $\kk \aut (U)$ in $(\kk \db)_{(+,-)}$-modules. 
\item 
The complex $\big( (\kk \db)_{(-;-)}^\sharp \otimes_{\kk \fb} (\kk \ub)_{(-;+)}\big) (V,-)$ (where  $V \in \ob  (\kk \db)_{(-;-)}$) gives a projective resolution of $\kk \aut (V)$ in $(\kk \db)_{(-;+)}$-modules.
\end{enumerate}
\end{cor}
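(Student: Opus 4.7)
The plan is to derive both statements directly from Theorem \ref{thm:koszul_categories} combined with Lemma \ref{lem:finiteness_projectivity}. For (1), Lemma \ref{lem:finiteness_projectivity}(2) already establishes that, for fixed $U$ an object of $\kk \db$, the complex $(\kk \db^\sharp \otimes_{\kk \fb} (\kk \ub)_{(+;-)})(U, -)$ is of finite length and that each of its terms is a finitely-generated projective $(\kk \db)_{(+;-)}$-module. It therefore suffices to identify the homology of this complex, viewed in $(\kk \db)_{(+;-)}\dash\modules$, with $\kk \aut(U)$ placed in degree zero.

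Theorem \ref{thm:koszul_categories} asserts that the unit $\kk \fb \rightarrow \kk \db^\sharp \otimes_{\kk \fb} (\kk \ub)_{(+;-)}$ is a weak equivalence of complexes of $\kk \db \otimes \kk \fb\op$-modules, which by definition is a pointwise quasi-isomorphism of underlying $\kk$-vector spaces. Evaluating on $U$ in the $\kk \db$-coordinate gives a pointwise quasi-isomorphism between the $\kk \fb\op$-module $\kk \fb(U, -)$ --- supported on objects isomorphic to $U$ with value $\kk \aut(U)$ --- and the complex in question. Since quasi-isomorphism in $(\kk \db)_{(+;-)}\dash\modules$ is likewise detected pointwise, the homology of the complex is concentrated in cohomological degree zero. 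It remains to check that this homology equals $\kk \aut(U)$ with the correct module structure: any $(\kk \db)_{(+;-)}$-module supported on a single isomorphism class is uniquely determined by its underlying $\kk \aut(U)$-representation, since positive-degree morphisms of $(\kk \db)_{(+;-)}$ into strictly smaller objects automatically act as zero. Hence the $(\kk \db)_{(+;-)}$-structure on $H_0$ is forced to be $\kk \aut(U)$ pulled back along the augmentation $(\kk \db)_{(+;-)} \rightarrow \kk \fb$. The canonical quotient from the degree-zero term onto $H_0$ then supplies the augmentation, completing the projective resolution.

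Statement (2) is proved by exactly the same argument applied to the second weak equivalence of Theorem \ref{thm:koszul_categories} together with the analogue of Lemma \ref{lem:finiteness_projectivity}(2) for the complex $(\kk \db)_{(-;-)}^\sharp \otimes_{\kk \fb} (\kk \ub)_{(-;+)}$; this analogue holds by verbatim repetition of the proof, since Proposition \ref{prop:morphisms_right_free} applies uniformly to every twisted variant of $\kk \ub$. The main point requiring care is the matching of module structures described above, promoting the $\kk \fb\op$-linear quasi-isomorphism furnished by the unit to a $(\kk \db)_{(+;-)}$-linear (respectively $(\kk \db)_{(-;+)}$-linear) one appropriate for a projective resolution in the correct category.
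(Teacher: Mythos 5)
Your proposal is correct and takes essentially the same route as the paper, whose proof of this corollary is precisely the combination of Theorem \ref{thm:koszul_categories} with Lemma \ref{lem:finiteness_projectivity} (with the second case treated \emph{mutatis mutandis}); you have simply made explicit the routine details — identifying the homology via the pointwise quasi-isomorphism and noting that a module supported on a single cardinality has its $(\kk \db)_{(+;-)}$-structure forced through the augmentation — that the paper leaves implicit.
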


\begin{proof}
The first statement follows from the Theorem in conjunction with Lemma \ref{lem:finiteness_projectivity}.  The second statement follows similarly, {\em mutatis mutandis}. 
\end{proof}

There are versions of these results for other choices of $(\pm; \mp)$.  For instance:

\begin{prop}
\label{prop:projective_resolution_kdb}
The complex $\big( (\kk \db)_{(+;-)}^\sharp \otimes_{\kk \fb} \kk \ub\big) (W,-)$ (where  $W \in \ob  (\kk \db)_{(+;-)}$) gives a projective resolution of $\kk \aut (W)$ in $\kk \db$-modules.
\end{prop}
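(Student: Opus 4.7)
The plan is to adapt the graphical proof of Theorem~\ref{thm:koszul_categories} to the present case.

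First I would identify the complex as arising from an adjunction unit. Namely, by Remark~\ref{rem:other_cases_unit_counit} applied to $N = \kk\fb$ (viewed as a $(\kk\db)_{(+;-)}$-module via the augmentation), one has the unit
\[
\kk\fb \to \hom_{\kk\ub}\bigl(\mathscr{K}_+,\ \mathscr{K}_+ \otimes_{(\kk\db)_{(+;-)}} \kk\fb\bigr),
\]
and $\mathscr{K}_+ \otimes_{(\kk\db)_{(+;-)}} \kk\fb \cong \kk\ub$ by Lemma~\ref{lem:kz-complexes_identify_underlying}(2), so the target identifies with $(\kk\db)_{(+;-)}^\sharp \otimes_{\kk\fb} \kk\ub$ via the duality of Remark~\ref{rem:dualisability}.

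Using Lemma~\ref{lem:kz-complexes_identify_underlying}(3), evaluated at $W$ the complex takes the explicit form
\[
\bigoplus_{\substack{Y \subset W \\ m \in \ub(\emptyset, W\backslash Y)}} \kk\ub(-, Y),
\]
with the summand indexed by $(Y,m)$ placed in homological degree $(|W|-|Y|)/2$. Each $\kk\ub(-, Y) \cong \kk\db(Y, -)$ is a finitely-generated projective $\kk\db$-module and only finitely many pairs $(Y,m)$ contribute, so this is a finite-length complex of finitely-generated projective $\kk\db$-modules. Evaluated further at $V = W$ the only non-vanishing summand has $Y = W$ (since $\kk\ub(V, Y) = 0$ for $|Y| < |V|$), yielding $\kk\ub(W, W) = \kk\aut(W)$ in degree zero, which will provide the augmentation.

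The main step is acyclicity at $V \neq W$, which I would establish by the graphical argument of the proof of Theorem~\ref{thm:koszul_categories}, with a colouring adapted to the present situation. A basis element of the degree-$d$ part of the complex evaluated at $V$ is specified by an injection $V \hookrightarrow W$ together with a perfect matching of $W \backslash V$ in which each chord is coloured \emph{black} (those falling in $Y\backslash V$) or \emph{red} (those falling in $W \backslash Y$, arising from the $(\kk\db)_{(+;-)}^\sharp$ factor and carrying an ordering that gives rise to Koszul signs), with $d$ equal to the number of red chords. The differential converts one red chord at a time to a black one with the appropriate sign. Fixing the underlying injection $V \hookrightarrow W$ and matching of $W\backslash V$, the resulting subcomplex is the classical Koszul complex on the set of chords of that matching, and is acyclic whenever this set is non-empty; since $V \neq W$ forces at least one chord, this yields the required acyclicity.
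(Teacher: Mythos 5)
Your proof is correct and follows exactly the route the paper intends: the proposition is stated there as a ``version for another choice of $(\pm;\mp)$'' of Theorem \ref{thm:koszul_categories} and Corollary \ref{cor:projective_resolution}, and your argument is precisely the graphical Koszul-complex argument of that theorem transported to the present twist (red chords from $(\kk\db)_{(+;-)}^\sharp$ carrying the ordering sign, black chords from the untwisted $\kk\ub$ carrying none), combined with the finiteness and projectivity observations of Lemma \ref{lem:finiteness_projectivity}. The only cosmetic point is that ``$V\neq W$'' should be read up to isomorphism (equivalently, work in the skeleton): for $V\cong W$ with $V\neq W$ there are no chords, but the complex is then concentrated in degree zero, where the augmentation onto $\kk\fb(V,W)$ is already an isomorphism.
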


%%%%%%%%%%%%%%%%%%%%%%%%%%%%%%%%%%%%%%%%%%%%%%%%%%%%%%%%%%%%%%%%%%%%%%%%%%%%%%
\subsection{(Co)homological consequences} 
\label{subsect:cohom_consequences}
We continue to assume  that $\kk$ is a field of characteristic zero. 

The Koszul property of a homogeneous quadratic $\kk$-linear category has  important (co)homological consequences (indeed, the Koszul property can be {\em expressed} in terms of $\ext$ - compare \cite[Theorem 2.35]{MR4398644}). Here we focus upon the consequences for $\kk \db$-modules and for $(\kk \db)_{(-;-)}$-modules that will be used in the applications. 

For a $\kk \db$-module $M$, we have the complexes 
\begin{eqnarray*}
&&\mathscr{K}_+ \otimes_{\kk \db} M \\
&& (\kk \db)_{(+;-)}^\sharp \otimes_{(\kk \ub)_{(+;-)} } \mathscr{K}_+ \otimes_{\kk \db} M 
\ = \  (\kk \ub)_{(+;-)}^\sharp \otimes_{(\kk \ub)_{(+;-)} } \mathscr{K}_+ \otimes_{\kk \db} M 
\end{eqnarray*}
of $(\kk \ub)_{(+;-)}$-modules, where the equality uses the identity $(\kk \ub)_{(+;-)}= (\kk \db)_{(+;-)}\op$.

For a $(\kk \db)_{(-;-)}$-module $N$, we have the complexes 
\begin{eqnarray*}
&&\mathscr{K}_- \otimes_{(\kk \db)_{(-;-)} }N \\
&& (\kk \db)_{(-;+)}^\sharp \otimes_{(\kk \ub)_{(-;+)} } \mathscr{K}_- \otimes_{(\kk \db)_{(-;-)} }N 
\ = \ 
(\kk \ub)_{(-;+)}^\sharp \otimes_{(\kk \ub)_{(-;+)} } \mathscr{K}_- \otimes_{(\kk \db)_{(-;-)} }N 
\end{eqnarray*}
of $(\kk \ub)_{(-;+)}$-modules, using  $(\kk \ub)_{(-;+)}= (\kk \db)_{(-;+)}\op$ for the equality. 

\begin{rem}
\label{rem:discuss_sharp_otimes_K}
The complex $(\kk \db)_{(+;-)}^\sharp \otimes_{(\kk \ub)_{(+;-)} } \mathscr{K}_+ $ appearing above can be rewritten as
\[
\mathscr{K}_+ \otimes_{(\kk \db)_{(+;-)}}  (\kk \db)_{(+;-)}^\sharp ,
\]
which is a complex of  $\kk \ub \otimes (\kk \ub)_{(+;-)}$-modules. This is the complex appearing in the adjunction {\em counit} (\ref{eqn:counit_kkfb}). It is dual (by Remark \ref{rem:unit_counit_duality}) to the complex appearing in Proposition \ref{prop:projective_resolution_kdb}. Hence the projectivity properties of the complexes considered (Lemma \ref{lem:finiteness_projectivity}) yield {\em injective} resolutions with respect to the $(\kk \ub)_{(+;-)}$-module structure.

A similar remark applies to $(\kk \db)_{(-;+)}^\sharp \otimes_{(\kk \ub)_{(-;+)} } \mathscr{K}_-$.
\end{rem}

Since we are working over a field of characteristic zero, we also have projectivity properties:

\begin{prop}
\label{prop:second_projective_resolution}
The complex $\big( \mathscr{K}_+ \otimes_{(\kk \db)_{(+;-)}}  (\kk \db)_{(+;-)}^\sharp \big) (-,Y)$, where $Y$ is an object of $(\kk \ub)_{(+;-)}$, is a projective resolution of $\kk \aut (Y)$ in $\kk \ub$-modules with term in homological degree $\frac{1}{2} (s - |Y|)$ (assuming $s \equiv |Y| \mod 2$)
\[
\kk \ub (\mathbf{s}, -) \otimes_{\kk \sym_s} (\kk \db)_{(+;-)} (\mathbf{s}, Y)^\sharp, 
\]
which is a finitely-generated projective $\kk \ub$-module.
\end{prop}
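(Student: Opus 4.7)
Plan: The proof breaks into three steps: identifying the terms of the complex, verifying projectivity of each term as a $\kk\ub$-module, and establishing exactness with augmentation onto $\kk\aut(Y)$.

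First, I would compute each term explicitly. Since $\mathscr{K}_+ = \kk\ub \otimes_{\kk\fb} (\kk\db)_{(+;-)}$, and using the Yoneda-type identification
\[
(\kk\db)_{(+;-)}(-, \mathbf{s}) \otimes_{(\kk\db)_{(+;-)}} (\kk\db)_{(+;-)}^\sharp(-, Y) \ \cong \ (\kk\db)_{(+;-)}(\mathbf{s}, Y)^\sharp,
\]
the homological degree $t$ component (with $s = |Y|+2t$) of $\bigl(\mathscr{K}_+ \otimes_{(\kk\db)_{(+;-)}} (\kk\db)_{(+;-)}^\sharp\bigr)(-, Y)$ is, as claimed, $\kk\ub(\mathbf{s}, -) \otimes_{\kk\sym_s} (\kk\db)_{(+;-)}(\mathbf{s}, Y)^\sharp$. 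This relies on the description of $\mathscr{K}_+$ given in Section \ref{subsect:koszul_property} (in particular Lemma \ref{lem:kz-complexes_identify_underlying}).

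Second, for projectivity: using the identification $(\kk\db)_{(+;-)}(\mathbf{s}, Y) = (\kk\ub)_{(+;-)}(Y, \mathbf{s})$, Proposition \ref{prop:morphisms_right_free} applied to $(\kk\ub)_{(+;-)}$ shows that this space is a finite rank free $\kk\sym_s\op$-module. Its $\kk$-linear dual $(\kk\db)_{(+;-)}(\mathbf{s}, Y)^\sharp$ is then a finite rank free $\kk\sym_s$-module (via Remark \ref{rem:duality_permutation_modules}). Consequently each term of the complex is a finite direct sum of copies of the representable projective $\kk\ub$-module $\kk\ub(\mathbf{s}, -)$, and hence is finitely-generated projective.

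Third, for exactness and augmentation: Remark \ref{rem:discuss_sharp_otimes_K} together with Remark \ref{rem:unit_counit_duality} show that, in characteristic zero, the present complex is the $\kk$-linear dual of the complex $\bigl((\kk\db)_{(+;-)}^\sharp \otimes_{\kk\fb} \kk\ub\bigr)(Y,-)$ appearing in Proposition \ref{prop:projective_resolution_kdb}. By Lemma \ref{lem:finiteness_projectivity}, the terms of the latter are finite-dimensional over $\kk$, so $\kk$-linear duality is termwise exact and preserves the relevant differentials. Since Proposition \ref{prop:projective_resolution_kdb} provides a projective resolution of $\kk\aut(Y)$ in $\kk\db$-modules, its dual has homology concentrated in degree zero, isomorphic to $\kk\aut(Y)^\sharp \cong \kk\aut(Y)$. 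Explicitly, the degree-$0$ term $\kk\ub(Y, -) \otimes_{\kk\sym_{|Y|}} \kk\aut(Y)^\sharp \cong \kk\ub(Y,-)$ admits the canonical augmentation onto $\kk\aut(Y)$ (the projection killing all strictly positive degree parts), and exactness of the augmented complex then follows.

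The main technical obstacle is the bookkeeping of the two module structures: the complex carries both a $\kk\ub$- and a $(\kk\ub)_{(+;-)}$-module structure, and Remark \ref{rem:discuss_sharp_otimes_K} addresses the latter (yielding an injective coresolution), whereas the present statement concerns the former (yielding a projective resolution). One must verify that the duality of Remark \ref{rem:unit_counit_duality} pairs the two augmentations correctly and respects the sign conventions in the Koszul differential. Once this matching is settled, the proof is formal, resting ultimately on the Koszul property of $\kk\db$ established in Theorem \ref{thm:koszul_categories}.
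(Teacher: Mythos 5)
Your identification of the terms and your exactness argument (dualizing the resolution of Proposition \ref{prop:projective_resolution_kdb}, which rests on the Koszul property of Theorem \ref{thm:koszul_categories} and the finiteness of Lemma \ref{lem:finiteness_projectivity}) follow the paper's own route. However, your projectivity step contains a genuine error. You invoke Proposition \ref{prop:morphisms_right_free} to claim that $(\kk \db)_{(+;-)}(\mathbf{s}, Y) = (\kk\ub)_{(+;-)}(Y,\mathbf{s})$ is a finite rank \emph{free} $\kk\sym_s\op$-module. That proposition asserts freeness for the action of $\aut(V)$ on the \emph{domain} $V$ of a morphism space $(\kk\ub)_{(\pm;\mp)}(V,X)$ (precomposition by automorphisms of the source acts freely on injections); here $\sym_s$ acts on the \emph{codomain} $\mathbf{s}$, and that action is not free: the stabilizer of a chord diagram on the complement of the image contains a copy of $\sym_2\wr\sym_t$. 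For instance $(\kk\db)_{(+;-)}(\mathbf{2},\mathbf{0}) \cong \kk$ carries the trivial $\sym_2$-action and is certainly not free. The paper explicitly flags this: ``It is not in general true that $(\kk \db)_{(+;-)} (\mathbf{s}, Y)$ is free as a $\kk \sym_s\op$-module.'' Consequently your conclusion that each term is a finite \emph{direct sum} of copies of $\kk\ub(\mathbf{s},-)$ is also too strong.

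The repair is short and is the point of the paper's proof: since $\kk$ has characteristic zero, $\kk\sym_s\dash\modules$ is semisimple, so the finite-dimensional module $(\kk\db)_{(+;-)}(\mathbf{s},Y)^\sharp$ is projective over $\kk\sym_s$, i.e., a direct summand of a finite free module; hence $\kk\ub(\mathbf{s},-)\otimes_{\kk\sym_s}(\kk\db)_{(+;-)}(\mathbf{s},Y)^\sharp$ is a direct summand of a finite direct sum of copies of $\kk\ub(\mathbf{s},-)$, which is exactly the finitely-generated projectivity required by Definition \ref{defn:fg_proj}. With this substitution your argument goes through; note also (as the paper does) that infinitely many $s$ contribute, so the resolution does not have finite length, in contrast with the resolutions of Corollary \ref{cor:projective_resolution}.
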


\begin{proof}
The Koszul property implies that the counit (\ref{eqn:counit_kkfb}) is a weak equivalence. This yields the exactness of the complex. 

The terms of the complex are analysed in Remark \ref{rem:understand_counit}, which gives the stated expression.  It is not in general true that $(\kk \db)_{(+;-)} (\mathbf{s}, Y)$ is free as a $\kk \sym_s\op$-module. However,  working over a field of characteristic zero, since the category of $\kk \sym_s$-modules is semisimple, the terms of the complex are finitely-generated projectives. 
\end{proof}

\begin{rem}
The resolution given by Proposition \ref{prop:second_projective_resolution} does not have finite length, since  $(\kk \db)_{(+;-)} (\mathbf{s}, Y)$ is  non-zero for infinitely many $s \in \nat$.
\end{rem}

For clarity, we first treat the $\kk \db$-module case (see Theorem \ref{thm:ext_tor_variant} for a counterpart of the following). 

\begin{thm}
\label{thm:ext_tor}
For $M$ a $\kk \db$-module, there are natural isomorphisms:
\begin{eqnarray*}
H^* (\mathscr{K}_+ \otimes_{\kk \db} M) & \cong & \ext^* _{\kk \db} (\kk \fb, M) \\
H_* ((\kk \db)_{(+;-)}^\sharp \otimes_{(\kk \ub)_{(+;-)} } \mathscr{K}_+ \otimes_{\kk \db} M )
&\cong & 
\tor_*^{\kk \db} (\kk \fb, M).
\end{eqnarray*}
\end{thm}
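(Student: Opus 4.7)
The plan is to realize both complexes as the outputs of standard derived-functor machinery applied to the projective resolutions of $\kk \fb$ as a $\kk \db$-module that are supplied by the Koszul property (Theorem \ref{thm:koszul_categories}).

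For the first isomorphism, I would start from the projective resolution furnished by Proposition \ref{prop:projective_resolution_kdb}: evaluating the bimodule $(\kk \db)_{(+;-)}^\sharp \otimes_{\kk \fb} \kk \ub$ at $W$ in its first variable gives a projective resolution of $\kk \aut(W)$ in $\kk \db$-modules. Since $\kk \fb$, pulled back to $\kk \db$-modules via the augmentation, decomposes as $\bigoplus_{[W]} \kk \aut (W)$, these assemble into a projective resolution of $\kk \fb$ whose term in degree $s$ has the form $(\kk \db)_{(+;-)}(-, \mathbf{s})^\sharp \otimes_{\kk \sym_s} \kk \ub(\mathbf{s}, -)$. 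Applying $\hom_{\kk \db}(-, M)$ termwise and using tensor-hom adjunction together with the Yoneda identification $\hom_{\kk \db}(\kk \ub(\mathbf{s}, -), M) \cong M(\mathbf{s})$, the $s$-th term becomes $(\kk \db)_{(+;-)}(-, \mathbf{s})^\sharp \otimes_{\kk \sym_s} M(\mathbf{s})$. Assembling over $s$, the resulting complex identifies with $\mathscr{K}_+ \otimes_{\kk \db} M$ by means of the alternative presentation $\mathscr{K}_+ \cong (\kk \ub)_{(+;-)} \otimes_{\kk \fb} \kk \db$ from Proposition \ref{prop:isomorphism_K_+-} and Lemma \ref{lem:kz-complexes_identify_underlying}; compatibility with the differentials is inherited directly from the Koszul resolution.

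For the second isomorphism, I would reason analogously, now using Proposition \ref{prop:second_projective_resolution}: $\big( \mathscr{K}_+ \otimes_{(\kk \db)_{(+;-)}} (\kk \db)_{(+;-)}^\sharp \big)(-,Y)$ is a projective resolution of $\kk \aut(Y)$ in $\kk \ub$-modules, equivalently in right $\kk \db$-modules. Assembling over $[Y]$ yields a projective resolution of $\kk \fb$ as a right $\kk \db$-module. Tensoring with $M$ over $\kk \db$ then computes $\tor_*^{\kk \db}(\kk \fb, M)$ and, after invoking the identification $(\kk \ub)_{(+;-)} = (\kk \db)_{(+;-)}\op$ to re-express the internal tensor product, gives exactly $(\kk \db)_{(+;-)}^\sharp \otimes_{(\kk \ub)_{(+;-)}} \mathscr{K}_+ \otimes_{\kk \db} M$.

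The principal difficulty lies not in any deep conceptual obstacle---the Koszul property is already in hand---but in carefully bookkeeping the various left and right module structures over the twisted categories so that the identifications respect the differentials and preserve the natural $(\kk \ub)_{(+;-)}$-module structure on the homology. Projectivity of the terms of the assembled resolutions is guaranteed termwise by Lemma \ref{lem:finiteness_projectivity} (and by Proposition \ref{prop:second_projective_resolution} for the second case), relying on the semisimplicity of $\kk \sym_s$-modules in characteristic zero.
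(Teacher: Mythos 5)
Your proposal is correct and follows essentially the same route as the paper: for the first isomorphism it applies $\hom_{\kk\db}(-,M)$ to the projective resolution of Proposition \ref{prop:projective_resolution_kdb} and identifies the result with $\mathscr{K}_+\otimes_{\kk\db}M$ via strong dualisability and Lemma \ref{lem:kz-complexes_identify_underlying}, and for the second it tensors $M$ against the resolution of Proposition \ref{prop:second_projective_resolution}. The only caveat is notational (the variance of the projective factor $\kk\db(\mathbf{s},-)=\kk\ub(-,\mathbf{s})$ in the resolution), which is exactly the bookkeeping you flag and does not affect the argument.
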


\begin{proof}
For the first statement, we require to prove that, for any finite set $W$, considering $\kk \aut (W)$ as a $\kk \db$-module, there is a natural isomorphism
\begin{eqnarray}
\label{eqn:ext_W}
H^* \big((\mathscr{K}_+ \otimes_{\kk \db} M)(W)\big) 
\cong 
\ext^* _{\kk \db} (\kk \aut (W), M).
\end{eqnarray}
Now, by Proposition \ref{prop:projective_resolution_kdb}, the complex $\big( (\kk \db)_{(+;-)}^\sharp \otimes_{\kk \fb} \kk \ub\big) (W,-)$ gives a projective resolution of $\kk \aut (W)$ in $\kk \db$-modules. Thus the $\ext$ group is  the cohomology of the complex 
\[
\hom_{\kk \db} \big( ((\kk \db)_{(+;-)}^\sharp \otimes_{\kk \fb} \kk \ub) (W,-), M\big).
\]
The underlying object of this complex is naturally isomorphic to $\hom_{\kk \fb} ( (\kk \db)_{(+;-)}(W, -) ^\sharp , M)$ (using the identification given in Remark \ref{rem:understand_unit}) and hence to $(\kk\db)_{(+;-)} (W,-) \otimes_{\kk \fb} M$ by the strong dualizability of $(\kk \db)_{(+;-)}(W, \mathbf{s})$ as a $\kk \sym_s$-module and the fact that there are only finitely many $s$ for which this is non-zero. 

Since we have $(\kk\db)_{(+;-)} (W,-)= (\kk\ub)_{(+;-)} (-,W)$ by Lemma \ref{lem:kz-complexes_identify_underlying}, $(\kk\db)_{(+;-)} (W,-) \otimes_{\kk \fb} M$ is isomorphic to the underlying object of $(\mathscr{K}_+ \otimes_{\kk \db} M)(W)$. That the differentials correspond under this isomorphism is proved by a direct verification from the definitions. 
Thus we have an isomorphism of complexes and this implies the required isomorphism (\ref{eqn:ext_W}), completing the proof of the first statement. 

The proof of the second statement is more direct, using the projective resolution of $\kk \fb$ given by Proposition \ref{prop:second_projective_resolution}.
\end{proof}

\begin{exam}
\label{exam:case_M=kkfb}
Taking $M= \kk \fb$ considered as a $\kk \fb$-module, one has (by Theorem \ref{thm:ext_tor}) the isomorphism
\[
\ext^*_{\kk \db}(\kk \fb, \kk \fb) \cong \mathscr{K}_+ \otimes_{\kk \db} \kk \fb \cong (\kk \ub)_{(+;-)}, 
\]
since the complex has zero differential.
 This  encodes the Koszul duality between $\kk \db$ and $\kk \ub_{(+;-)}$. 

The $\tor$ statement is dual: 
\[
\tor_*^{\kk \db} (\kk \fb, \kk \fb) \cong (\kk \db)_{(+;-)}^\sharp .
\]
\end{exam}

Recall that, for $M$ a $\kk \db$-module, $\mathscr{K}_+ \otimes_{\kk \db} M$  and $ (\kk \db)_{(+;-)}^\sharp \otimes_{(\kk \ub)_{(+;-)} } \mathscr{K}_+ \otimes_{\kk \db} M $ are both complexes of $(\kk \ub)_{(+;-)}$-modules, where the module structure shifts the (co)homological degree according to the degree of $(\kk\ub)_{(+;-)}$ (corresponding to the length grading). Hence their (co)homology inherits such a module structure. 

The identification of $\ext^*_{\kk \db}(\kk \fb, \kk \fb)$ given in Example \ref{exam:case_M=kkfb} allows the following addendum to Theorem \ref{thm:ext_tor} to be stated: 

\begin{prop}
\label{prop:yoneda_and_cap_product}
For $M$ a $\kk \db$-module,
\begin{enumerate}
\item 
the graded  $(\kk \ub)_{(+;-)}$-module structure on the cohomology of $\mathscr{K}_+ \otimes_{\kk \db} M$ identifies as the module structure of $\ext^* _{\kk \db} (\kk \fb, M)$ over $\ext^*_{\kk \db}(\kk \fb, \kk \fb)$ with respect to the Yoneda product;
\item 
the graded $(\kk \ub)_{(+;-)}$-module structure on the homology of $ (\kk \db)_{(+;-)}^\sharp \otimes_{(\kk \ub)_{(+;-)} } \mathscr{K}_+ \otimes_{\kk \db} M $ identifies as the 
module structure of $\tor_*^{\kk \db} (\kk \fb, M)$ over $\ext^*_{\kk \db}(\kk \fb, \kk \fb)$ given by the cap product.
\end{enumerate}
\end{prop}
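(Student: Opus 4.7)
The plan is to verify that the $(\kk\ub)_{(+;-)}$-action on $\mathscr{K}_+ \otimes_{\kk\db} M$ coming from the left $(\kk\ub)_{(+;-)}$-module structure on $\mathscr{K}_+$ realizes the Yoneda product action after passage to cohomology (and likewise for the cap product in the $\tor$ case). Since the isomorphism $\ext^*_{\kk\db}(\kk\fb,\kk\fb) \cong (\kk\ub)_{(+;-)}$ of Example \ref{exam:case_M=kkfb} is itself a consequence of the Koszul property, the identification should come out essentially by construction; the work lies in tracing through the definitions.

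For part (1), I would use that the projective resolution $P_\bullet(W,-) := ((\kk\db)_{(+;-)}^\sharp \otimes_{\kk\fb} \kk\ub)(W,-)$ of $\kk\aut(W)$ provided by Proposition \ref{prop:projective_resolution_kdb} is bifunctorial: as $W$ varies it assembles into a complex of $\kk\db \otimes (\kk\db)_{(+;-)}$-modules, via the transpose structure of Section \ref{subsect:transpose} on $(\kk\db)_{(+;-)}^\sharp$. Identifying $(\kk\db)_{(+;-)}^{\mathrm{op}} = (\kk\ub)_{(+;-)}$, this says that an element $\gamma \in (\kk\ub)_{(+;-)}(W,W')$ of length degree $p$ induces a chain map $P_\bullet(W',-) \to P_\bullet(W,-)$ of degree $-p$. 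By the standard construction of the Yoneda product from a functorial projective resolution, the induced action on $\ext^*_{\kk\db}(\kk\aut(-), M)$ coincides with Yoneda multiplication by the class of $\gamma$. Applying $\hom_{\kk\db}(-, M)$ and transporting along the chain isomorphism
\[
\hom_{\kk\db}\big(P_\bullet(W,-),\, M\big) \;\cong\; (\mathscr{K}_+ \otimes_{\kk\db} M)(W)
\]
from the proof of Theorem \ref{thm:ext_tor}, one then checks by direct comparison of the two formulas (both given by composition in $\ub$) that the transported action is precisely the tautological $(\kk\ub)_{(+;-)}$-action on $\mathscr{K}_+ \otimes_{\kk\db} M$.

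For part (2), the argument is analogous, now using the projective resolution $Q_\bullet(-,Y) := (\mathscr{K}_+ \otimes_{(\kk\db)_{(+;-)}} (\kk\db)_{(+;-)}^\sharp)(-,Y)$ of $\kk\aut(Y)$ in $\kk\ub$-modules supplied by Proposition \ref{prop:second_projective_resolution}, which is bifunctorial in $Y$ via a natural $(\kk\ub)_{(+;-)}$-action. Computing $\tor_*^{\kk\db}(\kk\fb, M)$ as $H_*(Q_\bullet \otimes_{\kk\db} M)$ yields the $(\kk\ub)_{(+;-)}$-action of the statement; the identification with the cap product follows from the standard chain-level realization of the cap product, where one pairs $P_\bullet$ (resolving $\kk\fb$ over $\kk\db$) with $Q_\bullet$ (resolving $\kk\fb$ over $\kk\ub$) through the bimodule $\mathscr{K}_+$.

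The principal obstacle is bookkeeping: the Yoneda and cap actions and the Koszul module structures are all built from composition in the Brauer category, so their agreement is essentially tautological, but the signs arising from the ordering of the chords in $\mathscr{K}_+$ and from the Koszul conventions must be matched carefully against the signs implicit in the Yoneda and cap products.
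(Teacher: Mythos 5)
Your proposal is correct and follows the same route as the paper, whose proof consists of the single sentence that this is ``a standard identification in the Koszul context, proved using the explicit nature of the resolutions'' --- precisely the resolutions of Propositions \ref{prop:projective_resolution_kdb} and \ref{prop:second_projective_resolution} that you invoke. You have simply spelled out the bifunctoriality and sign bookkeeping that the paper leaves implicit.
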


\begin{proof}
This is a standard identification in the Koszul context, proved using the explicit nature of the resolutions.
\end{proof}

There are counterparts of these results for $(\kk \db)_{(-;-)}$-modules:

\begin{thm}
\label{thm:ext_tor_variant}
For $N$ a $(\kk \db)_{(-;-)}$-module, there are natural isomorphisms
\begin{eqnarray*}
H^* (\mathscr{K}_- \otimes_{(\kk \db)_{(-;-)}} N) & \cong & \ext^* _{(\kk \db)_{(-;-)}} (\kk \fb, N) \\
H_* ((\kk \db)_{(-;+)}^\sharp \otimes_{(\kk \ub)_{(-;+)} } \mathscr{K}_- \otimes_{(\kk \db)_{(-;-)}} N )
&\cong & 
\tor_*^{(\kk \db)_{(-;-)}} (\kk \fb, N).
\end{eqnarray*}
In particular, one has the identification $\ext^* _{(\kk \db)_{(-;-)}} (\kk \fb , \kk \fb) \cong (\kk \fb)_{(-;+)}$ and 
\begin{enumerate}
\item 
the natural $(\kk \ub)_{(-;+)}$-module structure on $H^* (\mathscr{K}_- \otimes_{(\kk \db)_{(-;-)}} N)$ identifies with the $\ext^* _{(\kk \db)_{(-;-)}} (\kk \fb , \kk \fb)$-module structure on $\ext^* _{(\kk \db)_{(-;-)}} (\kk \fb, N)$ given by the Yoneda product; 
\item 
the natural $(\kk \ub)_{(-;+)}$-module structure on  $H_* ((\kk \db)_{(-;+)}^\sharp \otimes_{(\kk \ub)_{(-;+)} } \mathscr{K}_- \otimes_{(\kk \db)_{(-;-)}} N )$ identifies with the 
$\ext^* _{(\kk \db)_{(-;-)}} (\kk \fb , \kk \fb)$-module structure on $\tor_*^{(\kk \db)_{(-;-)}} (\kk \fb, N)$ given by the cap product.
\end{enumerate}
\end{thm}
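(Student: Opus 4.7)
The plan is to imitate the proof of Theorem \ref{thm:ext_tor} together with Proposition \ref{prop:yoneda_and_cap_product} in a parallel fashion, substituting $\kk \db$ by $(\kk \db)_{(-;-)}$ and $\mathscr{K}_+$ by $\mathscr{K}_-$. The essential input making this argument go through is the Koszul property of $(\kk \db)_{(-;-)}$ over $\kk \fb$ from Theorem \ref{thm:koszul_categories}, which supplies the projective resolutions needed to compute the $\ext$ and $\tor$ groups.

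For the $\ext$ identification, I will first establish the analog of Proposition \ref{prop:projective_resolution_kdb} for $(\kk \db)_{(-;-)}$: namely, for each finite set $W$, a projective resolution of $\kk \aut(W)$ in $(\kk \db)_{(-;-)}$-modules whose terms arise from $(\kk \db)_{(-;-)}^\sharp \otimes_{\kk \fb} (\kk \ub)_{(-;+)}$, specialized in the variable complementary to the one used in Corollary \ref{cor:projective_resolution}(2), so that the $(\kk \db)_{(-;-)}$-structure (rather than the $(\kk \db)_{(-;+)}$-structure) is the one that survives. The exactness comes from the weak equivalence of the unit (\ref{eqn:unit_K-}) granted by Theorem \ref{thm:koszul_categories}, and the projectivity from the finite-rank properties in Proposition \ref{prop:morphisms_right_free} and Lemma \ref{lem:finiteness_projectivity}. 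I will then apply $\hom_{(\kk \db)_{(-;-)}}(-, N)$ to this resolution; using strong dualisability of $(\kk \db)_{(-;-)}(W, \mathbf{s})$ as a $\kk \sym_s$-module and the $\mathscr{K}_-$-analog of Lemma \ref{lem:kz-complexes_identify_underlying}, the resulting complex will be identified with $(\mathscr{K}_- \otimes_{(\kk \db)_{(-;-)}} N)(W)$.

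For the $\tor$ identification, I will use the analog of Proposition \ref{prop:second_projective_resolution}: an appropriate specialization of $\mathscr{K}_- \otimes_{(\kk \db)_{(-;-)}} (\kk \db)_{(-;-)}^\sharp$ gives a projective resolution of $\kk \aut(Y)$ in $(\kk \ub)_{(-;-)}$-modules (equivalently, right $(\kk \db)_{(-;-)}$-modules). This follows from the weak equivalence of the counit dual to (\ref{eqn:unit_K-}) (cf.\ Remark \ref{rem:unit_counit_duality}), combined with the semisimplicity of $\kk \sym_s$-modules in characteristic zero. Tensoring this resolution with $N$ over $(\kk \db)_{(-;-)}$ then yields the required identification. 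For the supplementary statements, I will set $N = \kk \fb$ in the first isomorphism: as in Example \ref{exam:koszul_complexes_kfb}, $\mathscr{K}_- \otimes_{(\kk \db)_{(-;-)}} \kk \fb$ has underlying object $(\kk \ub)_{(-;+)}$ with zero differential, yielding the identification $\ext^*_{(\kk \db)_{(-;-)}}(\kk \fb, \kk \fb) \cong (\kk \ub)_{(-;+)}$. The Yoneda and cap product descriptions of the $(\kk \ub)_{(-;+)}$-module structures on $\ext$ and $\tor$ will then follow from the standard Koszul-theoretic analysis using the explicit resolutions, exactly as in Proposition \ref{prop:yoneda_and_cap_product}. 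The main delicate point throughout will be verifying that the sign twistings introduced by the $(-;-)$ and $(-;+)$ decorations compose consistently across the various tensor products, duality isomorphisms, and identifications; this amounts to careful bookkeeping rather than a genuine conceptual obstacle.
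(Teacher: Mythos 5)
Your overall strategy --- transcribing the proofs of Theorem \ref{thm:ext_tor} and Proposition \ref{prop:yoneda_and_cap_product} with $\kk \db$, $\mathscr{K}_+$ replaced by $(\kk \db)_{(-;-)}$, $\mathscr{K}_-$ --- is the intended one, and your identification $\ext^*_{(\kk \db)_{(-;-)}}(\kk \fb, \kk \fb) \cong (\kk \ub)_{(-;+)}$ is correct. However, both complexes you name as furnishing the projective resolutions are the wrong ones, and a literal execution of your recipe would fail. For the $\ext$ statement you propose to use $(\kk \db)_{(-;-)}^\sharp \otimes_{\kk \fb} (\kk \ub)_{(-;+)}$ specialized in the variable complementary to that of Corollary \ref{cor:projective_resolution}(2). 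But the $(\kk \db)_{(-;-)}$-module structure surviving that specialization is carried by the factor $(\kk \db)_{(-;-)}(-,\mathbf{s})^\sharp$, which is the standard \emph{injective}, not a projective --- this is exactly the phenomenon recorded in Remark \ref{rem:discuss_sharp_otimes_K} --- and the multiplicity spaces also come out wrong. The correct analogue of Proposition \ref{prop:projective_resolution_kdb} is built from the \emph{other} presentation $\mathscr{K}_- \cong (\kk \ub)_{(-;-)} \otimes_{\kk \fb} (\kk \db)_{(-;+)}$ and its unit: it is $\big( (\kk \db)_{(-;+)}^\sharp \otimes_{\kk \fb} (\kk \ub)_{(-;-)} \big)(W,-)$, whose terms $(\kk \db)_{(-;+)}(W,\mathbf{s})^\sharp \otimes_{\kk \sym_s} (\kk \db)_{(-;-)}(\mathbf{s},-)$ are genuinely projective and return $(\mathscr{K}_- \otimes_{(\kk \db)_{(-;-)}} N)(W)$ upon applying $\hom_{(\kk \db)_{(-;-)}}(-,N)$.

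Similarly, for the $\tor$ statement the complex $\mathscr{K}_- \otimes_{(\kk \db)_{(-;-)}} (\kk \db)_{(-;-)}^\sharp \cong (\kk \ub)_{(-;+)} \otimes_{\kk \fb} (\kk \db)_{(-;-)}^\sharp$ has, after specialization, terms that are projective only as $(\kk \ub)_{(-;+)}$-modules (i.e.\ right $(\kk \db)_{(-;+)}$-modules), so it resolves $\kk \fb$ in the wrong module category and would compute $\tor^{(\kk \db)_{(-;+)}}$ rather than $\tor^{(\kk \db)_{(-;-)}}$. What is needed is $\mathscr{K}_- \otimes_{(\kk \db)_{(-;+)}} (\kk \db)_{(-;+)}^\sharp = (\kk \db)_{(-;+)}^\sharp \otimes_{(\kk \ub)_{(-;+)}} \mathscr{K}_- \cong (\kk \ub)_{(-;-)} \otimes_{\kk \fb} (\kk \db)_{(-;+)}^\sharp$ --- the complex already appearing in the statement --- whose terms $(\kk \ub)_{(-;-)}(\mathbf{s},-) \otimes_{\kk \sym_s} (\kk \db)_{(-;+)}(\mathbf{s},Y)^\sharp$ are projective right $(\kk \db)_{(-;-)}$-modules. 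The underlying slip is not one of signs: it is the choice between the two adjunctions attached to $\mathscr{K}_-$ (cf.\ Remark \ref{rem:other_cases_unit_counit}), the very distinction the paper draws between Corollary \ref{cor:projective_resolution} and Proposition \ref{prop:projective_resolution_kdb}. With the resolutions corrected, the rest of your argument, including the Yoneda and cap product identifications, goes through as you describe.
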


%%%%%%%%%%%%%%%%%%%%%%%%%%%%%%%%%%%%%%%%%%%%%%%%%%%%%%%%%%%%%%%%%%%%%%%%%%%%%%%%%%%%%%%%%%%%%%%%%%
\subsection{Relating $\ext$ and $\tor$}
\label{subsect:relate_ext_tor}

The purpose of this section is to sketch the relationship between the $\ext$ and $\tor$ groups that interest us, working over a field $\kk$ of characteristic zero.

We focus on the complexes for $(\kk \db)_{(-;-)}$-modules:
\begin{eqnarray}
\label{eqn:N1}
&&\mathscr{K}_- \otimes_{(\kk \db)_{(-;-)} }N \\
\label{eqn:N2}
&& (\kk \ub)_{(-;+)}^\sharp \otimes_{(\kk \ub)_{(-;+)} } \mathscr{K}_- \otimes_{(\kk \db)_{(-;-)} }N .
\end{eqnarray}

Here, the complex (\ref{eqn:N2}) is obtained from (\ref{eqn:N1}) by applying the functor $(\kk \ub)_{(-;+)}^\sharp \otimes_{(\kk \ub)_{(-;+)} } -$.  Moreover, the complex (\ref{eqn:N1}) has underlying graded $(\kk \ub)_{(-;+)} $-module that is projective. It follows that there is a universal coefficients spectral sequence that relates the two: 
$$
\tor^{(\kk \ub)_{(-;+)} } _* \big(
(\kk \ub)^\sharp _{(-;+)}, \ext^*_{(\kk \db)_{(-;-)} } (\kk \fb , N) 
\big) 
\Rightarrow 
\tor_*^{(\kk \db)_{(-;-)} } (\kk \fb , N).
$$

In particular, we have the  edge homomorphism:
\begin{eqnarray}
\label{eqn:edge1}
(\kk \ub)^\sharp _{(-;+)}
\otimes_{
(\kk \ub)_{(-;+)}
}
 \ext^*_{(\kk \db)_{(-;-)} } (\kk \fb , N) 
\rightarrow 
\tor_*^{(\kk \db)_{(-;-)} } (\kk \fb , N);
\end{eqnarray}
this is compatible with the degrees (working with cohomological degree, for example). 

\begin{rem}
Note that, with respect to cohomological degrees, the $\ext^*$ term above is concentrated in non-negative degrees, whereas $(\kk \ub)^\sharp _{(-;+)}$ and the $\tor_*$ term are concentrated in non-positive degrees. The only {\em direct} relation between $\ext^*$ and $\tor_*$ (i.e., without the intervention of negative degree from $(\kk \ub)^\sharp _{(-;+)}$) is the natural transformation:
\[
\ext^0_{(\kk \db)_{(-;-)} } (\kk \fb , N) 
\rightarrow 
\tor_0^{(\kk \db)_{(-;-)} } (\kk \fb , N).
\]
This identifies as the usual natural transformation from `invariants' (or `primitives') to `indecomposables'. 
\end{rem}

\begin{rem}
\label{rem:ext_tor_degree_0}
In considering the edge homomorphism (\ref{eqn:edge1}), observe that:
\begin{enumerate}
\item 
$\tor_*^{(\kk \db)_{(-;-)} } (\kk \fb , N)$ is a $(\kk \ub)_{(-;+)}$-torsion module; 
\item 
the module 
$
(\kk \db)^\sharp _{(-;+)}
\otimes_{
(\kk \ub)_{(-;+)}
}
 \ext^*_{(\kk \db)_{(-;-)} } (\kk \fb , N) $ only depends on the torsion-free quotient of $\ext^*_{(\kk \db)_{(-;-)} } (\kk \fb , N) $, by
 Corollary \ref{cor:pass_torsion-free_quotient}.
\end{enumerate}
\end{rem}

One can also exploit the functor $\hom_{(\kk \ub)_{(-;+)}} ((\kk \ub)_{(-;+)}^\sharp, -)$ applied to torsion $(\kk \ub)_{(-;+)}$-modules (see Section \ref{subsect:adjunction}). By Corollary \ref{cor:equivalence_full_subcategories}, one can recover (\ref{eqn:N1}) from (\ref{eqn:N2}) by applying this functor. One has an associated universal coefficients spectral sequence. In particular there is an edge morphism
\begin{eqnarray} 
\label{eqn:edge2}
 \ext^*_{(\kk \db)_{(-;-)} } (\kk \fb , N) 
\rightarrow 
\hom_{(\kk \ub)_{(-;+)}} ((\kk \ub)_{(-;+)}^\sharp, 
\tor_*^{(\kk \db)_{(-;-)} } (\kk \fb , N)
).
\end{eqnarray}
This should identify as  the mate of (\ref{eqn:edge1}).

\begin{rem}
The two edge homomorphisms (\ref{eqn:edge1}) and (\ref{eqn:edge2}) give a natural way of relating $\ext$ and $\tor$. They underline the importance of understanding the 
respective $(\kk \ub)_{(-;+)}$-module structures on these. 
\end{rem}

\section{Using symplectic and orthogonal vector spaces}
\label{sect:forms}

The purpose of this section is to review aspects of Brauer-Schur-Weyl duality for both the orthogonal and the symplectic cases. This goes back to fundamental results due to Weyl (see \cite{MR1488158}, for example).

In the symplectic case,  this  provides a bridge between $(\kk \ub)_{(-;+)}$-modules  and functors on the category of symplectic vector spaces, inspired by work of Sam and Snowden such as  \cite{MR3376738} and \cite{MR3876732}. In particular, this gives the precise notion of {\em stabilization} of (suitable) functors on the category of symplectic vector spaces.

Throughout this section $\kk$ is a field of characteristic zero.

%%%%%%%%%%%%%%%%%%%%%%%%%%%%%%%%%%%%%%%%%%%%%%%%%%%%%%%%%%%%%%%%%%%%%%%%%%%%%%%%%%%%%%%%%%%%%%%%%%%%%%%%%%%%%
\subsection{Modules from symplectic and orthogonal vector spaces}

\begin{defn}
\label{defn:forms}
\ 
\begin{enumerate}
\item 
Let $\vo$, the category of (split) orthogonal vector spaces,  be the category with objects $(V,\sbf )$, where $V$ is a  finite dimensional $\kk$-vector space and  $\sbf $ is a non-degenerate symmetric bilinear form $\sbf  \colon S^2 (V) \rightarrow \kk$ such that $V$ admits an orthonormal basis with respect to $\sbf$;  morphisms are $\kk$-linear maps between such spaces that preserve the bilinear form. 
\item 
Let $\vsp$, the category of symplectic vector spaces,  be the category with objects $(V, \omega)$, where $V$ is a finite dimensional $\kk$-vector space and $\omega$ is a non-degenerate symplectic form $\omega : \Lambda^2 (V) \rightarrow \kk$;  morphisms are $\kk$-linear maps between such spaces that preserve the symplectic form.
\end{enumerate}
\end{defn}

\begin{rem}
\ 
\begin{enumerate}
\item 
A symmetric bilinear form $(V, \sbf )$ yields a linear map $V^{\otimes 2} \rightarrow \kk$ that factors across  the canonical projection $V^{\otimes 2} \twoheadrightarrow S^2 (V)$; non-degeneracy is equivalent to the property that this induces an isomorphism $V \cong V^\sharp$. Likewise for a symplectic form, this time using the canonical projection $V^{\otimes 2} \twoheadrightarrow \Lambda^2 (V)$. 
\item 
If $(V, \sbf _V) \rightarrow (W, \sbf _W)$ is a morphism in $\vo$, since the forms are non-degenerate, the underlying $\kk$-linear map $V \rightarrow W$ is injective. Moreover, this admits a canonical retract, by using the orthogonal complement to the image of $V$ in $W$. An analogous statement holds for a morphism in $\vsp$.
\end{enumerate}
\end{rem}

Recall that $\finj$ is the category of finite sets and bijections. In the following we implicitly use the skeleton with objects $\{ \n \mid n \in \nat\}$.

\begin{prop}
\label{prop:finj_vo_vsp}
There are faithful functors 
\begin{enumerate}
\item 
$\finj  \rightarrow  \vo$ given by $\n  \mapsto  (\kk \n, \sbf)$,  
where $\n$ is an orthonormal basis for $\sbf$; 
\item 
$\finj  \rightarrow \vsp$ given by $\n  \mapsto  (\kk (\n \times \{p, q\}), \omega)$,  
where $\{ (i, p) , (i, q) \mid i \in \n\}$ is a symplectic basis (so that $\omega ((i,p) , (j,q))=1$ if $i=j$ and is  zero otherwise).
\end{enumerate}
Moreover, the respective images of the skeleton of $\finj$ define skeleta of $\vo $ and $\vsp$ respectively.
\end{prop}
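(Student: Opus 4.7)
The plan is to verify each clause directly from the definitions, with no serious obstacle to overcome. First I would define the functors on morphisms. Given an injection $f : \m \hookrightarrow \n$ in $\finj$, take $\kk f : \kk \m \to \kk \n$ to be its $\kk$-linear extension in the orthogonal case; in the symplectic case, take the $\kk$-linear extension of $(i, x) \mapsto (f(i), x)$ for $x \in \{p, q\}$ as the map $\kk (\m \times \{p,q\}) \to \kk (\n \times \{p,q\})$. The nontrivial point is that these are morphisms in $\vo$ (resp.\ $\vsp$), but this is immediate: $f$ sends the chosen orthonormal (resp.\ symplectic) basis of the domain injectively into the analogous basis of the codomain, so the values of the form on basis pairs are preserved. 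Functoriality is then automatic, since the assignment $f \mapsto \kk f$ is compatible with composition.

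Faithfulness is immediate as well: from $\kk f$ one recovers $f$ by evaluating on basis elements (and reading off which basis element of the codomain appears with coefficient $1$), so distinct injections give rise to distinct $\kk$-linear maps.

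For the skeletal claim, in the orthogonal case any object $(V, \sbf)$ of $\vo$ admits an orthonormal basis by the very definition of $\vo$; a choice of such a basis of cardinal $n$ provides an isomorphism $(V, \sbf) \cong (\kk \n, \sbf)$ in $\vo$. In the symplectic case, the standard result (a consequence of the Gram--Schmidt-type process for non-degenerate alternating forms, cf.\ \cite{MR1488158}) is that every finite-dimensional symplectic vector space admits a symplectic basis; in particular its dimension is even, say $2n$, and a choice of symplectic basis produces an isomorphism with $(\kk (\n \times \{p,q\}), \omega)$. Since dimension is an invariant in each of $\finj$, $\vo$, and $\vsp$, distinct values of $n$ give rise to non-isomorphic objects, so the listed objects do form skeleta. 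The main (very mild) subtlety to flag is the use of characteristic zero and the split-form hypothesis in the orthogonal case, ensuring that every object of $\vo$ really does have an orthonormal basis (otherwise one would only get representatives up to isometry class, which can be a proper refinement of dimension in other settings).
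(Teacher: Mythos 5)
Your proof is correct and takes essentially the same route as the paper's own (much terser) argument: the forgetful-to-$\kmod$ functors are evidently faithful, the only point to check is that the linear extensions preserve the forms, and the skeletal claim follows from the existence of orthonormal (built into the definition of $\vo$) and symplectic bases. Your remark about the split-form hypothesis is a fair observation but is already absorbed into Definition \ref{defn:forms}.
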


\begin{proof}
It is clear that, forgetting the respective forms, the statement defines two (non-isomorphic) faithful functors $\finj \rightarrow \kmod$. To establish that we have functors as claimed, it remains to show that these factor across the respective forgetful functors $\vo \rightarrow \kmod$ and $\vsp \rightarrow \kmod$. 
This is clear from the definition of the respective forms. Finally, it is immediate that this provides skeleta as stated.
\end{proof}

For $(V, \sbf)$ an object of $\vo$, we have the automorphism group $\ogp (V, \sbf)$, which identifies with the endomorphisms of $(V, \sbf)$ in $\vo$. Likewise, for $(V, \omega)$ an object of $\vsp$, we have the automorphism group $\spgp (V, \omega)$. Using the  inclusions $(\kk ^n, \sbf ) \hookrightarrow (\kk^{n+1}, \sbf)$ (respectively $(\kk^{2n}, \omega) \hookrightarrow (\kk ^{2(n+1)}, \omega)$) induced by the canonical inclusion $\mathbf{n} \subset \mathbf{n+1}$ and Proposition \ref{prop:finj_vo_vsp}, 
 one can define the groups:
\begin{eqnarray*}
\ogp _\infty &:=& \bigcup_{n\in \nat} \ogp (\kk^n, \sbf)
\\
\spgp _\infty &:=& \bigcup_{n \in \nat} \spgp (\kk^{2n}, \omega) 
\end{eqnarray*}
and the respective categories of representations of these,  $\rep (\ogp_\infty)$ and $\rep (\spgp_\infty)$.

We also consider the related functor categories, adopting the following notation for concision.

\begin{nota}
\label{nota:F(C)}
For $\calc$ an essentially small category, write $\f (\calc)$ for the category of functors from $\calc$ to $\kmod$. 
\end{nota}

The functor categories $ \f (\vo)$ and $\f (\vsp)$ are related to the categories of representations $\rep (\ogp_\infty) $ and $\rep (\spgp_\infty)$ by the following exact functors, which can be thought of as stabilization functors; they are fundamental ingredients in Sam and Snowden's study of stability patterns \cite{MR3376738,MR3876732}

\begin{defn}
\label{defn:stabilization_functors}
Let  
\begin{eqnarray*}
\label{eqn:fvo_rep}
\stabvo &: &
\f (\vo)  \rightarrow  \rep (\ogp_\infty) 
\\
\label{eqn:fvsp_rep}
\stabvsp &: &
\f (\vsp) \rightarrow  \rep (\spgp_\infty)
\end{eqnarray*}
be the functors induced by passage to  the colimit under the restriction (via Proposition \ref{prop:finj_vo_vsp}) to the sequence of inclusions $ \ldots \subset \n \subset \mathbf{n+1} \subset \ldots $. 

For instance, for $F$ in $\f (\vo)$, 
$$
\stabvo F := 
\lim_{\substack{\longrightarrow \\ n \rightarrow \infty}} F (\kk\n , \sbf),
$$
equipped with the natural $\ogp_\infty$-module structure.
\end{defn}

Clearly we have the following: 

\begin{prop}
\label{prop:stab_exact}
The functors $\stabvo$ and $\stabvsp$ are exact. 
\end{prop}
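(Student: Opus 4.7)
The plan is to decompose each stabilization functor as the composition of a sequence of evaluation functors followed by a sequential colimit, and then appeal to exactness of both ingredients. I focus on $\stabvo$; the argument for $\stabvsp$ is formally identical.

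First, for each $n \in \nat$, evaluation at $(\kk\n, \sbf)$ defines a functor $\mathrm{ev}_n \colon \f(\vo) \to \kmod$, and this is exact because short exact sequences in $\f(\vo) = \kk\vo\dash\modules$ are computed pointwise. By Proposition \ref{prop:finj_vo_vsp}, the canonical inclusions $\n \subset \mathbf{n+1}$ determine morphisms in $\vo$, hence natural transformations $\mathrm{ev}_n \Rightarrow \mathrm{ev}_{n+1}$. By definition, $\stabvo F$ is the colimit of the resulting sequential diagram
\[
F(\kk\mathbf{0}, \sbf) \to F(\kk\mathbf{1}, \sbf) \to \cdots \to F(\kk\n, \sbf) \to F(\kk\mathbf{n+1}, \sbf) \to \cdots,
\]
equipped with its natural $\ogp_\infty$-action.

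Now, given a short exact sequence $0 \to F' \to F \to F'' \to 0$ in $\f(\vo)$, applying each $\mathrm{ev}_n$ yields a short exact sequence of $\kk$-vector spaces (compatible with the structure maps of the diagram). Since $\nat$ is a filtered poset and filtered colimits are exact in $\kmod$, taking the colimit preserves the short exact sequence. The resulting maps are $\ogp_\infty$-equivariant by construction, and exactness in $\rep(\ogp_\infty)$ is detected on underlying $\kk$-vector spaces; this establishes exactness of $\stabvo$. The same argument, using the inclusions $(\kk^{2n}, \omega) \hookrightarrow (\kk^{2(n+1)}, \omega)$ from Proposition \ref{prop:finj_vo_vsp}, gives the result for $\stabvsp$.

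There is no substantive obstacle: the argument reduces to the two standard facts that pointwise evaluation is exact on a functor category valued in an abelian category, and that filtered (here, sequential) colimits of vector spaces are exact. The only point requiring any care is the observation that exactness in $\rep(\ogp_\infty)$ (respectively $\rep(\spgp_\infty)$) is tested on underlying vector spaces, so that equivariance does not introduce an obstruction.
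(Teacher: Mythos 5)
Your argument is correct and is precisely the standard justification the paper has in mind: the paper states this proposition without proof (prefaced only by ``Clearly''), and your decomposition into pointwise-exact evaluations followed by an exact filtered colimit of vector spaces, with exactness in $\rep(\ogp_\infty)$ detected on underlying vector spaces, fills in exactly that routine reasoning.
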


%%%%%%%%%%%%%%%%%%%%%%%%%%%%%%%%%%%%%%%%%%%%%%%%%%%%%%%%%%%%%%%%%%%%%%%
\subsection{The tensor powers}

Fundamental examples of objects of $\f(\vo)$ (respectively $\f (\vsp)$) are provided by the tensor power functors, as follows. 

\begin{exam}
\label{exam:tensor_functors_vo_vsp}
We have the forgetful functor $(V, \sbf ) \mapsto V$, which can be considered as an object of $\f (\vo)$. Then, for $d \in \nat$, this can be postcomposed with the $d$th tensor power functor to give  $\ot^d$ in $\f (\vo)$ given by 
$$
\ot^d : (V, \sbf) \mapsto V^{\otimes d}.
$$
Similarly,  we have the functor $\spt^d$ in $\f (\vsp)$ given by 
$$
\spt^d : (V, \omega) \mapsto V^{\otimes d}.
$$

In both cases, the group $\sym_d$ acts (by natural transformations)  by place permutations of the tensor factors.
\end{exam}

The functors $\ot^d$, for $d \in \nat$, have further naturality properties. Namely, for $(V, \sbf)$ an object of $\vo$, the linear map $V^{\otimes 2} \rightarrow \kk$ given by the form is `natural' in the sense that this corresponds to  a natural transformation 
$$
\ot^2 \rightarrow \ot^0 \cong \kk,
$$
where $\ot^0$ identifies as the constant functor $\kk$. This is $\sym_2$-equivariant, where $\sym_2$ acts trivially on the codomain. 
 Similarly, in the symplectic case, we have the  map 
$$
\spt^2 \rightarrow \spt^0 \cong \kk. 
$$ 
Letting $\sym_2$ act via the sign representation $\sgn_2$ on the codomain, this is $\sym_2$-equivariant.

This extends to give the following well-known result:

\begin{prop}
\label{prop:tensor_functors_sp_o}
\ 
\begin{enumerate}
\item 
The functors $\ot^d$ (for $d \in \nat$) assemble to a functor
\begin{eqnarray*}
\ot^\bullet  \colon \db \times \vo &\rightarrow &\kmod 
\\
(\mathbf{d}, (V, \sbf )) &\mapsto &V^{\otimes d};
\end{eqnarray*}
automorphisms of $\mathbf{d}$ act via place permutations on $V^{\otimes d}$ and the morphism $\db (\mathbf{d+2}, \mathbf{d})$ corresponding to the canonical inclusion $\mathbf{d} \subset \mathbf{d+2}$ acts via the linear map $V^{\otimes d+2} \rightarrow V^{\otimes d}$ given by applying the form $\sbf $ to the last two tensor factors. 
\item 
The functors $\spt^d$ (for $d \in \nat$) assemble to a $\kk$-linear functor
\begin{eqnarray*}
\spt^\bullet \colon (\kk \db)_{(-;+)} \otimes \kk \vsp &\rightarrow &\kmod 
\\
(\mathbf{d}, (V, \omega)) \mapsto V^{\otimes d};
\end{eqnarray*}
$\kk \sym_d$  acts  via  place permutations on $V^{\otimes d}$ and the generator of $(\kk \db)_{(-;+)} (\mathbf{d+2}, \mathbf{d})$ corresponding to the canonical inclusion $\mathbf{d} \subset \mathbf{d+2}$ and the canonical order on $\{d+1, d+2\}$  acts via the linear map $V^{\otimes d+2} \rightarrow V^{\otimes d}$ given by applying the form $\omega$ to the last two tensor factors. 
\end{enumerate}
\end{prop}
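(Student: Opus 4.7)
The plan is to proceed in two stages: first, define the claimed assignments on a set of generators of the morphisms of $\db$ (respectively $(\kk \db)_{(-;+)}$) and verify that they respect the quadratic relations; second, verify the bifunctoriality (resp.\ $\kk$-linearity and bifunctoriality) in the $\vo$ (resp.\ $\vsp$) variable.

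For (1), I would use the counterpart for $\db$ of Proposition \ref{prop:properties_ub}: the morphisms of $\db$ are generated under composition by the groupoid $\fb$ of automorphisms (in degree zero) and by the canonical morphisms $\iota_{d+2,d} \in \db(\mathbf{d+2},\mathbf{d})$ (in degree one); moreover, $\db$ is homogeneous quadratic over $\kk\fb$ (by the downward analogue of Proposition \ref{prop:kub_twist_homogeneous_quadratic}). Define $\ot^\bullet(\mathbf{d},(V,\sbf)):=V^{\otimes d}$, let $\kk\sym_d$ act by place permutations, and let $\iota_{d+2,d}$ act by the linear map $V^{\otimes(d+2)}\to V^{\otimes d}$ contracting the last two tensor factors against $\sbf$. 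I would then check the two families of quadratic relations that hold in $\db$: (a) compatibility of $\iota_{d+2,d}$ with the $\fb$-action, reducing to functoriality of $(-)^{\otimes d}$ together with the fact that $\iota_{d+2,d}$ only acts on the last two factors, and (b) the relation identifying the two ways to compose a pair of adjacent contractions. The latter reduces to checking that, for $v_1,v_2,v_3,v_4 \in V$, one has $\sbf(v_1,v_2)\sbf(v_3,v_4)=\sbf(v_3,v_4)\sbf(v_1,v_2)$, which is trivial, and $\sbf(v,w)=\sbf(w,v)$, which is the symmetry of $\sbf$.

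For (2), the same pattern applies to $\spt^\bullet$ using $\omega$ in place of $\sbf$, but now the antisymmetry of $\omega$ introduces signs. The key point is to verify that these signs are exactly those encoded by the twisting representation $\kk_{(-;+)}$ of Definition \ref{defn:twisting_representations}. Recall that $\kk_{(-;+)}$ is the restriction of $\sgn_{2t}$ along $\sym_2\wr\sym_t\subset \sym_{2t}$. Under this restriction, the generator of an $\sym_2$-factor (swap of the two endpoints of a single chord, i.e.\ a transposition in $\sym_{2t}$) acts by $-1$, while a transposition of two chords in $\sym_t$ (a product of two disjoint transpositions in $\sym_{2t}$) acts trivially. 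On the $\omega$-side, swapping the endpoints of a single chord replaces $\omega(v,w)$ by $\omega(w,v)=-\omega(v,w)$, contributing $-1$; swapping two chords leaves $\omega(v_1,v_2)\omega(v_3,v_4)$ unchanged, since scalars commute. The two signs match on generators of $\sym_2\wr\sym_t$, so the assignment factors through the quotient by $\sym_2\wr\sym_t$ twisted by $\kk_{(-;+)}$, defining a $\kk$-linear functor on $(\kk\db)_{(-;+)}$.

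The bifunctoriality in $(V,\sbf)$ (respectively the $\kk\vsp$-variable) is immediate: a morphism $f\colon (V,\sbf_V)\to (W,\sbf_W)$ of $\vo$ induces $f^{\otimes d}\colon V^{\otimes d}\to W^{\otimes d}$, and the commutation with place permutations is the functoriality of the tensor product, while the commutation with contractions is exactly the condition that $f$ preserves the form, i.e.\ $\sbf_W(f(v),f(w))=\sbf_V(v,w)$; symmetrically for $\omega$ in the symplectic case. The main obstacle in the whole argument is the compatibility check in stage (2) between the antisymmetry of $\omega$ and the sign twist $\kk_{(-;+)}$; once this is pinned down on the level of the generators of $\sym_2\wr\sym_t$, the rest is formal, given the homogeneous quadratic presentation of the categories in play.
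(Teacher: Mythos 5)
The paper states this proposition as ``well-known'' and gives no proof, so there is nothing to compare against; your argument supplies the missing verification and does so correctly. Your strategy --- define the action on the generators (bijections and the canonical degree-one inclusions), check the quadratic relations using the homogeneous quadratic presentation, and in the symplectic case match the sign of $\omega(w,v)=-\omega(v,w)$ against the restriction of $\sgn_{2t}$ to the $\sym_2$-factors of $\sym_2\wr\sym_t$ while noting that $\sgn_{2t}$ is trivial on the $\sym_t$-part (a block transposition being a product of two disjoint transpositions) --- is exactly the intended justification, and the bifunctoriality check via form-preservation of morphisms in $\vo$ and $\vsp$ is likewise the right one.
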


\begin{rem}
We have the following equivalent formulations: 
\begin{enumerate}
\item 
$\ot^\bullet$ is  a $\kk$-linear functor $\kk\db \rightarrow \f (\vo)$, given by $\mathbf{d} \mapsto \ot^d$; 
\item 
$\spt^\bullet$ is  a $\kk$-linear functor $(\kk \db)_{(-;+)} \rightarrow \f (\vsp)$, given by $\mathbf{d} \mapsto \spt^d$.
\end{enumerate}
\end{rem}

One has  the following  consequence of the  fundamental theorems of invariant theory. (Sam and Snowden mostly work over $\kk = \mathbb{C}$; however, they indicate that their results hold over any field of characteristic zero, if one works with split forms of the orthogonal group \cite[Section 1.7]{MR3376738}.)

\begin{thm} 
\label{thm:vker_fully-faithful}
\cite[Sections 4.1 and 4.2]{MR3376738}
The $\kk$-linear functors 
\begin{eqnarray*}
 \ot^\bullet  &\colon &\kk\db \rightarrow \f (\vo)
 \\ 
\spt^\bullet  &\colon &(\kk\db)_{(-;+)} \rightarrow \f (\vsp) 
\end{eqnarray*}
are fully faithful. 
\end{thm}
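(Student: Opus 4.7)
The plan is to prove both statements simultaneously, focusing on the symplectic case (the orthogonal case runs in parallel, with the symmetric form $\sbf$ replacing $\omega$ and no signs appearing). By the universal property of $\kk$-linearization, it suffices to show that for every $d, e \in \nat$ the comparison map
\[
\Phi_{d,e} \colon (\kk\db)_{(-;+)}(\mathbf{d}, \mathbf{e}) \longrightarrow \hom_{\f(\vsp)}(\spt^d, \spt^e)
\]
is an isomorphism of $\kk\sym_e \otimes \kk\sym_d\op$-bimodules. Both sides vanish unless $d \equiv e \pmod 2$, so we may assume $d + e = 2t$ with $t \in \nat$; both sides are then finite-dimensional for fixed $d, e$.

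The first move is to use the natural isomorphism $V \cong V^\sharp$ induced by the non-degenerate $\omega$ to identify
\[
\hom_\kk(V^{\otimes d}, V^{\otimes e}) \cong V^{\otimes d+e}
\]
naturally in $(V, \omega) \in \vsp$, thereby rewriting $\hom_{\f(\vsp)}(\spt^d, \spt^e)$ as the space of natural linear maps $\kk \to \spt^{d+e}$. Since every morphism of $\vsp$ is split monic via its symplectic complement, a standard cofinality argument (using that the inclusions $(\kk^{2n}, \omega) \hookrightarrow (\kk^{2(n+1)}, \omega)$ from Proposition \ref{prop:finj_vo_vsp} give a cofinal sequence in $\vsp$) identifies this space with the inverse limit of $\spgp(V,\omega)$-invariants
\[
\varprojlim_{n} \big( (\kk^{2n})^{\otimes 2t} \big)^{\spgp(\kk^{2n}, \omega)}
\]
along the restriction maps induced by the split retractions.

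The core of the proof is then Weyl's first and second fundamental theorems of invariant theory for the symplectic group. The first fundamental theorem identifies $(V^{\otimes 2t})^{\spgp(V, \omega)}$ with the $\kk$-span of the $t$-fold tensor products of the bivector $\omega^{-1} \in V \otimes V$ inserted according to a complete pairing of the $2t$ indices. These perfect matchings are precisely the chord diagrams parametrising the standard $\kk$-basis of $\db(\mathbf{d}, \mathbf{e})$ from Lemma \ref{lem:basis_twisted_kub}. Because $\omega^{-1}$ is antisymmetric, reversing the two endpoints of a single chord introduces a sign, whereas interchanging two distinct chords does not; in the notation of Definition \ref{defn:twisting_representations} this is exactly the $\kk_{(-;+)}$-twisting (sign on each $\sym_2$-factor, trivial on the quotient $\sym_t$), yielding fullness of $\Phi_{d,e}$. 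Faithfulness is provided by the second fundamental theorem: once $\dim V \geq d + e$ the above spanning invariants are linearly independent, so no nonzero element of $(\kk\db)_{(-;+)}(\mathbf{d}, \mathbf{e})$ can map to the zero natural transformation.

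The hard part is not conceptual but rather the bookkeeping: one must verify with care that the sign conventions packaged into the chord-diagram basis of $(\kk\db)_{(-;+)}(\mathbf{d}, \mathbf{e})$ (clockwise orientation and left-endpoint ordering, as in Lemma \ref{lem:basis_twisted_kub}) match exactly the signs produced by iterated application of the antisymmetric $\omega$, so that $\Phi_{d,e}$ is well-defined on the quotient $\kk\fiordev(\mathbf{d}, \mathbf{e}) \otimes_{\kk\sym_2 \wr \sym_t} \kk_{(-;+)}$ and realises the classical Brauer--Schur--Weyl isomorphism. For the orthogonal case, the same argument applies with $\sbf$ replacing $\omega$; since $\sbf$ is symmetric no signs appear and the functor $\ot^\bullet$ is naturally defined on the untwisted $\kk\db$.
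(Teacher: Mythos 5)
Your argument is correct and is essentially the proof the paper relies on: the paper gives no proof of its own, citing Sam--Snowden (Sections 4.1--4.2), whose argument is exactly this reduction of natural transformations between tensor functors to stable $\spgp$- (resp.\ $\ogp$-) invariants, with fullness supplied by Weyl's first fundamental theorem (the matching/chord-diagram invariants span) and faithfulness by the second (linear independence once $\dim V \geq d+e$). Your identification of the resulting sign rule --- a sign for reversing a chord, none for permuting chords --- with the twist $\kk_{(-;+)}$ of Definition \ref{defn:twisting_representations} is also the intended one.
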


%%%%%%%%%%%%%%%%%%%%%%%%%%%%%%%%%%%%%%%%%%%%%%%%%%%%%%%%%
\subsection{Torsion}

The notion of torsion for $\kubg$-modules considered in Section \ref{subsect:torsion} has a counterpart here. 

\begin{defn}
\label{defn:torsion_fvo_fvsp}
For $F$ an object of $\f (\vo)$, 
\begin{enumerate}
\item 
a section $x \in F(V, \sbf)$ is torsion if the subfunctor generated by $x$ has finite support; 
\item 
the functor $F$ is torsion if all of its sections are torsion; 
\item 
the functor $F$ is torsion-free if it has no non-zero torsion subobject.
\end{enumerate}
The full subcategory of torsion functors is written $\f_\tors (\vo)$.

Torsion for objects of $\f (\vsp)$ is defined similarly, leading to the full subcategory $\f _\tors (\vsp)$. 
\end{defn}

\begin{exam}
\label{exam:torsion-examples}
For $d \in \nat$, the functor $\ot^d$ in $\f (\vo)$ is torsion-free. Similarly, $\spt^d$ is torsion-free in $\f (\vsp)$.
\end{exam}

We have the following counterpart of Theorem \ref{thm:localizing_serre}:

\begin{prop}
\label{prop:localizing_serre_vo_vsp}
The category $\f_\tors (\vo)$  (respectively $\f_\tors (\vsp)$) is a localizing Serre subcategory of $\f (\vo)$ (resp. $\f (\vsp)$. 
\end{prop}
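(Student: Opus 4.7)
The plan is to verify the two defining conditions of a localizing Serre subcategory: first, the Serre property (closure under subobjects, quotients, and extensions), and second, closure under arbitrary coproducts. I will focus on $\f_\tors (\vo)$ throughout; the argument for $\f_\tors (\vsp)$ is entirely parallel, on replacing $\ogp$ by $\spgp$ and taking account of the fact that only even dimensions are realized in $\vsp$.

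Closure under subobjects and quotients is essentially formal: a section of a subfunctor is a section of the ambient functor, and a section of a quotient lifts through the pointwise-surjective quotient map, so the torsion property passes in both directions. For arbitrary coproducts, a section $x \in (\bigoplus_i F_i)(V, \sbf)$ has only finitely many nonzero components $x_{i_j}$, each generating a finite-support subfunctor of the respective $F_{i_j}$; consequently $\langle x \rangle$ is contained in the finite sum $\sum_j \langle x_{i_j}\rangle$ and therefore also has finite support.

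The substantive step is closure under extensions. Given $0 \rightarrow F_1 \rightarrow F_2 \rightarrow F_3 \rightarrow 0$ with $F_1$ and $F_3$ torsion, and a section $x \in F_2(V, \sbf)$ with image $\bar{x} \in F_3(V, \sbf)$, I choose $N$ so that $\langle \bar{x}\rangle$ vanishes on objects of dimension exceeding $N$. If $\dim V > N$ then $\bar{x} = 0$, so $x \in F_1(V, \sbf)$ and $\langle x\rangle \subseteq F_1$ is a subfunctor with finite support by the torsion of $F_1$. Otherwise, $\langle x\rangle(W, \sbf_W) \subseteq F_1(W, \sbf_W)$ for every $W$ of dimension exceeding $N$, and it suffices to show that the `tail' $\langle x \rangle_{\geq N+1}$ (in the sense of Lemma \ref{lem:filter_fvo}) has finite support as a subfunctor of $F_1$.

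To accomplish this, I fix $(W_0, \sbf_{W_0})$ of dimension $N+1$ and observe that any isometric embedding $\phi \colon V \hookrightarrow W$ with $\dim W \geq N+1$ factors through an isometric embedding $W_0 \hookrightarrow W$, obtained by completing $\phi(V)$ to a non-degenerate $(N+1)$-dimensional subspace of $W$ using the existence of an orthonormal basis. Hence $\langle x\rangle_{\geq N+1}$ is generated, as a subfunctor of $F_1$, by the subspace $\langle x\rangle(W_0, \sbf_{W_0})$. By Witt's extension theorem, $\ogp(W_0, \sbf_{W_0})$ acts transitively on $\hom_{\vo}((V,\sbf), (W_0, \sbf_{W_0}))$, so any two $\phi, \phi' \colon V \hookrightarrow W_0$ differ by some $g \in \ogp(W_0, \sbf_{W_0})$; since right multiplication by $g$ permutes $\hom_{\vo}((W_0, \sbf_{W_0}), (W, \sbf_W))$, the subfunctors $\langle \phi_*(x)\rangle$ and $\langle \phi'_*(x)\rangle$ of $F_1$ coincide. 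Thus $\langle x \rangle_{\geq N+1}$ equals $\langle \phi_*(x)\rangle$ for any single $\phi$, which has finite support because $\phi_*(x)$ is a torsion section of $F_1$. The main obstacle is precisely this extension argument; Witt's extension theorem is the extra input, beyond what appeared in the analog Theorem \ref{thm:localizing_serre} for $\kubg$-modules, that allows an entire $\ogp(W_0, \sbf_{W_0})$-orbit of sections to collapse to a single finitely-supported subfunctor.
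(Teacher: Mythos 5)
Your proof is correct. The paper states this proposition without proof (as a counterpart of Theorem \ref{thm:localizing_serre}, which is likewise unproved), so there is no argument to compare against; but your treatment identifies and handles the one genuinely non-formal point correctly. Closure under subobjects, quotients, and coproducts is formal, and the extension case is where care is needed: the tail $\langle x\rangle_{\geq N+1}$ sits inside $F_1$ and is generated by the possibly infinite family $\{\phi_*(x)\}_{\phi \colon V \to W_0}$, and an infinite sum of finitely supported subfunctors need not be finitely supported, so one really does need the uniformity you extract from Witt's extension theorem (transitivity of $\ogp(W_0,\sbf_{W_0})$ on isometric embeddings collapses the whole orbit to the single subfunctor $\langle\phi_*(x)\rangle$). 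Two small points you implicitly use and could make explicit: the intermediate $(N+1)$-dimensional subspace containing $\phi(V)$ lies in $\vo$ because Witt cancellation shows $\phi(V)^\perp$ again admits an orthonormal basis; and the symplectic analogue goes through verbatim since the symplectic group likewise acts transitively on isometric embeddings.
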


We thus have the respective quotient categories, localizing away from the torsion: 
\begin{eqnarray*}
\f (\vo) & \rightarrow & \f (\vo)/ \f_\tors (\vo) 
\\
\f (\vsp) & \rightarrow & \f (\vsp)/ \f_\tors (\vsp). 
\end{eqnarray*}

Moreover, since the restriction of $\stabvo$ to $\f_\tors (\vo)$ is zero (respectively that of $\stabvsp$ to
$\f _\tors (\vsp)$), we have the following, by the universal  property of the quotient category construction: 

\begin{prop}
\label{prop:factor_stab}
The stabilization functors factor respectively across 
\begin{eqnarray*}
\stabvo 
&:& 
\f (\vo)/ \f_\tors (\vo) 
\rightarrow 
\rep (\ogp_\infty)
\\
\stabvsp
&:& 
\f (\vsp)/ \f_\tors (\vsp) 
\rightarrow 
\rep (\spgp_\infty).
\end{eqnarray*}
\end{prop}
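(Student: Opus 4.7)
The plan is to invoke the universal property of the Gabriel quotient by a localizing Serre subcategory. Recall that for a Grothendieck abelian category $\mathcal{A}$ and a localizing Serre subcategory $\mathcal{B} \subseteq \mathcal{A}$, the localization $Q : \mathcal{A} \to \mathcal{A}/\mathcal{B}$ has the following universal property: any exact functor $\Phi : \mathcal{A} \to \calc$ (with $\calc$ abelian) that sends every object of $\mathcal{B}$ to zero factors uniquely (up to natural isomorphism) through $Q$. Since $\f_\tors(\vo)$ and $\f_\tors(\vsp)$ are localizing Serre subcategories by Proposition \ref{prop:localizing_serre_vo_vsp}, it therefore suffices to verify two properties of the stabilization functors: exactness, and vanishing on the subcategory of torsion functors.

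Exactness is already in hand by Proposition \ref{prop:stab_exact}. Hence the only point to check is that $\stabvo F = 0$ whenever $F \in \f_\tors(\vo)$, and similarly for $\stabvsp$. I would prove this directly from the definitions. Given $F$ torsion and an element $\xi \in \stabvo F = \lim_{\substack{\longrightarrow \\ n \rightarrow \infty}} F(\kk^n, \sbf)$, choose a representative $x \in F(\kk^n, \sbf)$ for some $n$. Because $F$ is torsion, the subfunctor $\langle x \rangle \subseteq F$ generated by $x$ has finite support, so there exists $N \geq n$ such that $\langle x \rangle(W, \sbf_W) = 0$ whenever $\dim W \geq N$. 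Applying this to the standard inclusion $(\kk^n, \sbf) \hookrightarrow (\kk^N, \sbf)$ provided by Proposition \ref{prop:finj_vo_vsp} shows that the image of $x$ in $F(\kk^N, \sbf)$ vanishes. Consequently $\xi = 0$ in the colimit, proving $\stabvo F = 0$. The symplectic case is identical, using the symplectic inclusions $(\kk^{2n}, \omega) \hookrightarrow (\kk^{2N}, \omega)$ from Proposition \ref{prop:finj_vo_vsp}.

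There is no serious obstacle here; the argument is essentially formal, once one has the exactness of $\stabvo$ and $\stabvsp$ and recognizes that the torsion condition of Definition \ref{defn:torsion_fvo_fvsp} is precisely what is needed to kill sections in the colimit. The only mild subtlety to keep track of is that an arbitrary torsion functor $F$ need not itself have finite support; but the argument above only uses the pointwise characterization (each section generates a finite-support subfunctor), so this causes no issue. One could alternatively phrase the vanishing step by observing that $\stabvo$ commutes with filtered colimits, writing $F$ as the filtered colimit of its finitely generated (hence finite support) subfunctors, and noting that $\stabvo$ plainly vanishes on any functor of finite support.
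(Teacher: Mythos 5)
Your proposal is correct and follows exactly the paper's route: the paper also deduces the factorization from the universal property of the Gabriel quotient, using exactness (Proposition \ref{prop:stab_exact}) and the vanishing of the stabilization functors on torsion objects. The only difference is that you spell out the (routine) verification that a torsion section dies in the colimit, which the paper merely asserts.
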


%%%%%%%%%%%%%%%%%%%%%%%%%%%%%%%%%%%%%%%%%%%%%%%%%%%%%%%%%%%%%%%%%%%%%%%%%%%%%
\subsection{Generalized Schur functors}
\label{subsect:gen_Schur}

Recall that the classical Schur functor construction  relates $\kk \fb$-modules with functors on the category of finite-dimensional vector spaces. 
For $M$ a $\kk \fb$-module, the associated Schur functor is given by 
\[
V \mapsto M(V) := V^{\otimes \bullet} \otimes_{\kk \fb} M = \bigoplus_{n} V^{\otimes n}\otimes_{\kk \sym_n} M(\n),  
\]
where $V ^{\otimes \bullet}$ denotes the $\kk \fb\op$-module $\n \mapsto V^{\otimes n}$, with (right) place permutation action of $\sym_n$.

\begin{rem}
\label{rem:ambiguity}
There is some ambiguity in the notation for $\kk\fb$-modules and their associated Schur functors.  However, for a $\kk \fb$-module $M$, the type of an object on which $M(-)$ is evaluated determines the meaning.
\end{rem}

For usage later, we introduce the following

\begin{nota}
\label{nota:schur_components}
For $M$ a $\kk \fb$-module and $n \in \nat$, 
 write $M_n (V)$ for the direct summand of the Schur functor $M(V)$ given by $V^{\otimes n}\otimes_{\kk \sym_n} M(\n)$. (This is the homogeneous component of $M(V)$ of polynomial degree $n$.)
\end{nota}

In the context of $\kk \ub$-(respectively $(\kk\ub)_{(-;+)}$-)modules,  the functors $\ot^\bullet$  (respectively $\spt^\bullet$) give rise to generalized Schur functors (this  terminology is inspired by \cite{MR3876732}): 

\begin{lem}
\label{lem:tensor_with_vker}
There are  right exact functors
\begin{eqnarray*}
\ot^\bullet  \otimes_{\kk \ub} -  &\colon & \kk\ub\dash\modules \rightarrow \f (\vo)
\\
\spt^\bullet \otimes_{(\kk\ub)_{(-;+)}} - & \colon & (\kk \ub)_{(-;+)}\dash\modules \rightarrow \f (\vsp).
\end{eqnarray*}
\end{lem}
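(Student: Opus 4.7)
The plan is to interpret $\ot^\bullet$ (respectively $\spt^\bullet$) as a right $\kk\ub$-module (resp.\ right $(\kk\ub)_{(-;+)}$-module) valued in the abelian category $\f(\vo)$ (resp.\ $\f(\vsp)$), and then to invoke the standard formal properties of tensor products over a small $\kk$-linear category.

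More explicitly, Proposition \ref{prop:tensor_functors_sp_o} exhibits $\ot^\bullet$ as a $\kk$-linear functor $\kk\db \otimes \kk\vo \to \kmod$; since $\kk\db = (\kk\ub)\op$, this is the same data as a right $\kk\ub$-module taking values in $\f(\vo)$. The symplectic case is identical, with $(\kk\ub)_{(-;+)}$ in place of $\kk\ub$. Given a left $\kk\ub$-module $M$, I would define $\ot^\bullet \otimes_{\kk\ub} M$ pointwise by
\[
\bigl(\ot^\bullet \otimes_{\kk\ub} M\bigr)(V,\sbf) \ := \ \ot^\bullet(-,(V,\sbf)) \otimes_{\kk\ub} M,
\]
with action on morphisms of $\vo$ induced from the $\vo$-functoriality of $\ot^\bullet$. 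Well-definedness (the fact that this formula is genuinely functorial in $(V,\sbf)$) rests on the observation that the relations imposed by $\otimes_{\kk\ub}$ are applied inside $\f(\vo)$ and therefore are automatically natural with respect to morphisms of $\vo$; functoriality in $M$ is tautological.

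Right exactness in $M$ is then a formal property of tensor products over a small $\kk$-linear category: for any fixed right $\kk\ub$-module $N$ with values in a cocomplete abelian category (here $\f(\vo)$, in which colimits are computed pointwise and are therefore exact), the functor $N \otimes_{\kk\ub} -$ is a left adjoint and can be computed as a coequalizer of a pair of maps between direct sums, so in particular preserves cokernels and direct sums. The argument for $\spt^\bullet \otimes_{(\kk\ub)_{(-;+)}} -$ is word for word the same.

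There is no substantive obstacle: the lemma is a formal consequence of Proposition \ref{prop:tensor_functors_sp_o} together with the general behaviour of tensor products over essentially small $\kk$-linear categories. The only point that perhaps merits an explicit remark in the write-up is that the pointwise coend in $(V,\sbf)$ assembles into a functor on $\vo$, which is clear because both the $\kk\ub$-action and the $\vo$-action on $\ot^\bullet$ come from the single bifunctor of Proposition \ref{prop:tensor_functors_sp_o}, and these two actions commute by construction.
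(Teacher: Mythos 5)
Your argument is correct and is precisely the standard formal reasoning that the paper relies on implicitly: Lemma \ref{lem:tensor_with_vker} is stated without proof, being a direct consequence of Proposition \ref{prop:tensor_functors_sp_o} (which packages $\ot^\bullet$, resp.\ $\spt^\bullet$, as a right module over $\kk\ub$, resp.\ $(\kk\ub)_{(-;+)}$, valued in the relevant functor category) together with the general fact that tensor products over an essentially small $\kk$-linear category, computed as coequalizers of coproducts in a cocomplete abelian target, preserve cokernels. Nothing is missing, and your remark about the commuting of the two actions is the right point to flag.
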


\begin{rem}
One has the induced $\kk \ub$-module functor $\kk \ub \otimes _{\kk \fb} - \colon \kk\fb \dash \modules \rightarrow \kk \ub \dash \modules$.  This can be composed with the functor $\ot^\bullet  \otimes_{\kk \ub} -$ to give the composite 
\[
\ot ^\bullet \otimes_{\kk \ub} \kk \ub \otimes _{\kk \fb} - \colon \kk\fb \dash \modules \rightarrow  \f (\vo). 
\]
This identifies as 
$$
\ot^\bullet  \otimes_{\kk \fb} - \colon \kk \fb\dash \modules \rightarrow \f (\vo),
$$
 using the restriction of the $\kk \db$-module structure of $\ot^\bullet$ to $\kk \fb$.  This is the restriction  to $\vo$ (via the forgetful functor $(V, \sbf )\mapsto V$)  of the classical Schur functor.

The corresponding statement holds in the symplectic case.
\end{rem}

%%%%%%%%%%%%%%%%%%%%%%%%%%%%%%%%%%%%%%%%%%%%%%%%%%%%%%%
\subsection{The harmonic subfunctors}

Proposition \ref{prop:tensor_functors_sp_o} allows us to consider the respective `harmonic' subfunctors of the tensor functors, following \cite{MR1488158} and using the terminology of \cite[Chapter 10]{MR2522486}, for example.

\begin{defn}
\label{defn:harmonic}
For $d \in \nat$, let  
\begin{enumerate}
\item 
$\ot^{[d]}$ be the subfunctor of $\ot^d$ in $\f (\vo)$ defined as the kernel of the morphism
$$
\ot^d \rightarrow \kk\db (\mathbf{d} , \mathbf{d-2}) ^\sharp \otimes \ot^{d-2}
$$
adjoint to the structure  map $\kk\db (\mathbf{d} , \mathbf{d-2})\otimes \ot^d \rightarrow  \ot^{d-2}$
 for the $\kk \db$-module structure of $\ot^\bullet$ (the codomain is understood to be zero if $d <2$);
\item 
$\spt^{[d]}$ be the subfunctor of $\spt^d$  in $\f (\vsp)$ defined as the kernel of the morphism
$$
\spt^d \rightarrow (\kk\db)_{(-;+)} (\mathbf{d} , \mathbf{d-2}) ^\sharp \otimes \spt^{d-2}
$$
adjoint to the structure map for the $(\kk\db)_{(-;+)}$-module structure of $\spt^\bullet$ (where the codomain is zero if $d <2$).
\end{enumerate}
\end{defn}

\begin{rem}
The functor $\ot^{[d]}$ is torsion-free in $\f (\vo)$, as a subfunctor of a torsion-free functor. Moreover the $\sym_d$-action on $\ot^d$ restricts to a $\sym_d$-action on $\ot^{[d]}$. Hence, if $M$ is a (right) $\rat \sym_d$-module, we may form the functor $M \otimes_{\rat \sym_d} \ot^{[d]}$ in $\f (\vo)$, which is again torsion-free. 
The corresponding statements hold for the symplectic case.
\end{rem}

%%%%%%%%%%%%%%%%%%%%%%%%%%%%%%%%%%%%%%%%%%%%%%%%%%%%%%%
\subsection{The algebraic subcategories}
Following Sam and Snowden \cite{MR3876732}, we introduce the categories of algebraic objects. Restricting to algebraic objects facilitates the analysis of the localization away from torsion.

\begin{defn}
\label{defn:falg}
An object of $\f (\vo)$ (respectively  $\f (\vsp)$) is algebraic if it is a subquotient of a direct sum (possibly infinite) of objects of the form $\ot^{d}$ (resp. $\spt^{d}$) (for varying $d \in \nat$).

The full subcategories of algebraic objects are denoted respectively by $\falg (\vo)$ and $\falg (\vsp)$.
\end{defn}

\begin{rem}
Sam and Snowden show that the categories  $\falg (\vo)$ and $\falg (\vsp)$ have good properties: they are Grothendieck abelian categories; moreover, they are locally noetherian (by \cite[Theorem 2.11]{MR3876732}).
\end{rem}

\begin{exam}
For a $\kk \ub$-module $N$, the generalized Schur functor $\ot^\bullet \otimes_{\kk \ub} N$ is algebraic in $\f(\vo)$. 
(There is an analogous statement for $\f (\vsp)$.)
\end{exam}

One has the respective full subcategories of torsion objects, $\falg_\tors (\vo)$ and $\falg_\tors (\vsp)$. These are localizing Serre subcategories of $\falg (\vo)$ and $\falg (\vsp)$ respectively. In particular, this allows us to form the quotient categories $  \falg (\vo)/ \falg_\tors (\vo) $ and 
$  \falg (\vsp)/ \falg_\tors (\vsp).$

There is a counterpart of Definition \ref{defn:falg} for the categories $\rep (\ogp_\infty)$ and $\rep (\spgp_\infty)$ (see \cite[Section 2.2]{MR3876732}).

\begin{nota}
\label{nota:rep_algebraic}
Write $\rep (\ogp)$ (respectively $\rep (\spgp)$) for the full subcategory of algebraic representations in $\rep (\ogp_\infty)$ (resp. $\rep (\spgp_\infty)$).
\end{nota}

\begin{rem}
\label{rem:orthogonal-symplectic_duality}
Sam and Snowden establish the orthogonal-symplectic duality result \cite[Theorem 4.3.4]{MR3376738}, giving the equivalence of categories $\rep (\ogp) \simeq \rep (\spgp)$ (note that, in {\em loc. cit.}, the authors restrict to finite length objects). This is the counterpart of the equivalence between $\kk \ub$-modules and $(\kk \ub)_{(-;+)}$-modules that is given by Proposition \ref{prop:sgn_tensor_modules_categories}. 
\end{rem}

These categories of algebraic objects are related by using the stabilization functors of Definition \ref{defn:stabilization_functors}: 

\begin{thm}
\cite[Theorem 2.5]{MR3876732}
\begin{enumerate}
\item 
The stabilization functor $\stabvo$ restricts to  $\stabvo : \falg (\vo) \rightarrow \rep (\ogp)$ and this induces an equivalence of categories 
$$
\falg (\vo) / \falg_\tors (\vo) 
\stackrel{\simeq}{\rightarrow} 
\rep (\ogp).
$$
\item 
The stabilization functor $\stabvsp$ restricts to $\stabvsp : \falg (\vsp) \rightarrow \rep (\spgp)$ and this induces an equivalence of categories 
$$
\falg (\vsp) / \falg_\tors (\vsp) 
\stackrel{\simeq}{\rightarrow} 
\rep (\spgp).
$$
\end{enumerate}
\end{thm}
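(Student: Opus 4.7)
I will focus on the symplectic case (part (2)); part (1) is entirely analogous, using $\ot^\bullet$ and the filtration of Lemma \ref{lem:filter_fvo} in place of $\spt^\bullet$ and its symplectic counterpart. The plan splits into three tasks: (a) show that $\stabvsp$ restricts as claimed and factors through the quotient; (b) essential surjectivity of the induced functor; (c) full faithfulness.

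For (a), note that $\stabvsp$ is exact (Proposition \ref{prop:stab_exact}) and, since it is defined as a filtered colimit along the inclusions $(\kk\mathbf{n})\subset(\kk(\mathbf{n+1}))$, it commutes with arbitrary direct sums, colimits, and subquotients in the abelian sense. By construction $\stabvsp(\spt^d)\cong (\kk^{\infty})^{\otimes d}$, an algebraic $\spgp_{\infty}$-representation. Because $\falg(\vsp)$ is generated under subquotients and direct sums by the $\spt^d$, and $\rep(\spgp)$ is closed under these operations in $\rep(\spgp_{\infty})$, the restriction $\stabvsp\colon\falg(\vsp)\to\rep(\spgp)$ is well-defined. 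A torsion object of $\falg(\vsp)$ has finite support (so vanishes on $(\kk^{2n},\omega)$ for $n$ large), and hence its stabilization is zero; by the universal property of the Serre quotient, $\stabvsp$ factors through $\falg(\vsp)/\falg_\tors(\vsp)$.

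For (b), I would use the Weyl-style classification of simple algebraic $\spgp_{\infty}$-representations, indexed by partitions $\lambda$, realising the simple $V_{[\lambda]}$ with highest weight $\lambda$. By Proposition \ref{prop:simple_ogp_spgp} and the exactness of $\stabvsp$, the stabilization of the uniserial functor $\spt^{[\lambda]}$ (Corollary \ref{cor:uniserial}) identifies with $V_{[\lambda]}$: the filtration quotients of $\spt^{[\lambda]}$ are all isomorphic (as $\spgp(\kk^{2n},\omega)$-modules) to $V_{[\lambda]}$ for $n$ large. Since every algebraic representation is built from these simples by extensions and filtered colimits, and $\stabvsp$ preserves these constructions, essential surjectivity follows: given $W\in\rep(\spgp)$, exhibit a presentation of $W$ as a quotient of a direct sum of simples, lift each simple to $\spt^{[\lambda]}$ and lift the relations using full faithfulness of $\spt^\bullet$ (Theorem \ref{thm:vker_fully-faithful}).

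For (c), the main obstacle, the strategy is dévissage. First establish the computation on `large' Hom-spaces: for partitions $\lambda,\mu$,
\[
\hom_{\falg(\vsp)/\falg_\tors(\vsp)}(\spt^{[\lambda]},\spt^{[\mu]})\;\cong\;\lim_{n\to\infty}\hom_{\spgp(\kk^{2n},\omega)}(\spt^{[\lambda]}(\kk^{2n}),\spt^{[\mu]}(\kk^{2n})).
\]
The right-hand side equals $\hom_{\spgp_{\infty}}(V_{[\lambda]},V_{[\mu]})$ by stability of invariants, and both sides are $\kk$ if $\lambda=\mu$ and zero otherwise (Schur's lemma plus Weyl's classification). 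This shows full faithfulness on simples. To promote this to arbitrary algebraic objects, exploit the fact that $\falg(\vsp)$ is locally noetherian (Sam--Snowden), so every object is a filtered colimit of finite-length objects; filtered colimits are preserved by $\stabvsp$ and by the quotient functor, so it suffices to prove full faithfulness on the full subcategories of finite-length objects, where one can induct on length using the simple case and the five-lemma. The key technical hurdle, and the place where one must use Theorem \ref{thm:vker_fully-faithful} and the uniserial structure of the $\spt^{[\lambda]}$ most essentially, is controlling $\mathrm{Ext}^1$ between simples compatibly on both sides; a convenient packaging is to construct a partial adjoint (a `Weyl construction' sending a representation to the functor of highest-weight vectors over each $(V,\omega)$, extended algebraically) and check that the unit and counit of the induced adjunction become isomorphisms after inverting torsion.
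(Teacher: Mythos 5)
The paper does not prove this statement: it is quoted from Sam and Snowden \cite[Theorem 2.5]{MR3876732} without an internal argument, so there is nothing in the text to compare your proof against directly. Judged on its own terms, your outline has the right shape, and steps (a) and (b) are essentially sound --- modulo the imprecision that a torsion object of $\falg (\vsp)$ need not itself have finite support (it is a filtered colimit of finite-support subobjects, which suffices since $\stabvsp$ commutes with filtered colimits), and the fact that your essential-surjectivity argument already presupposes fullness.

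The genuine gap is in (c). The entire content of the theorem is the isomorphism $\hom_{\falg (\vsp)/\falg_\tors (\vsp)}(F,G)\cong\hom_{\spgp_\infty}(\stabvsp F,\stabvsp G)$, and at the two places where this must be established you substitute assertion for argument. First, Hom in the Serre quotient is by definition a filtered colimit $\mathrm{colim}\,\hom_{\falg (\vsp)}(F',G/G')$ over subobjects $F'\subseteq F$ with torsion quotient and torsion $G'\subseteq G$; identifying this with $\lim_n\hom_{\spgp (\kk^{2n},\omega)}\big(F(\kk^{2n},\omega),G(\kk^{2n},\omega)\big)$ is not formal and is precisely where the fundamental theorems of invariant theory must enter (in this paper, Theorem \ref{thm:vker_fully-faithful} and the injectivity statement Theorem \ref{thm:injectives_falgvo_falgvsp}), yet neither is invoked at this point. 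Second, you correctly flag the matching of $\ext^1$ between simples as the key technical hurdle and then defer it to a ``partial adjoint'' sending a representation to its highest-weight vectors; this functor is never constructed, its exactness and algebraicity are not addressed, and without it the induction on length does not close, since the five-lemma requires the comparison map on $\ext^1$ and not only on $\hom$. The standard way to discharge both points --- and essentially the route of Sam and Snowden --- is to compare both categories to $(\kk \db)_{(-;+)}$-modules, via $\hom_{\falg(\vsp)}(-,\spt^\bullet)$ on one side (cf.\ Theorem \ref{thm:stabilization}) and its analogue $\hom(-,(\kk^\infty)^{\otimes\bullet})$ on the other, which packages the invariant theory once and makes the Hom and Ext comparisons automatic on finite-length objects; the passage to arbitrary algebraic objects then uses local noetherianity as you indicate. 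As written, your sketch assumes the conclusion at its critical step.
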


Moreover, we have the important property:

\begin{thm}
\label{thm:injectives_falgvo_falgvsp}
\cite[Corollary 2.6]{MR3876732}
For any $d \in \nat$:
\begin{enumerate}
\item
the functor $\ot^d$ is injective in $\falg (\vo)$ and identifies as the injective envelope of $\ot^{[d]}$; 
\item 
the functor $\spt^d$ is injective in $\falg (\vsp)$ and identifies as the injective envelope of $\spt^{[d]}$. 
\end{enumerate}
\end{thm}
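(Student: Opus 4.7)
I focus on the orthogonal case in detail; the symplectic case follows by an entirely parallel argument, substituting $(\kk\db)_{(-;+)}$ for $\kk\db$, $\spt^\bullet$ for $\ot^\bullet$, and invoking Proposition \ref{prop:simple_ogp_spgp}(2) in place of (1). The statement decomposes into two claims: (I) the functor $\ot^d$ is injective in $\falg(\vo)$, and (II) the inclusion $\ot^{[d]} \hookrightarrow \ot^d$ is essential.

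For (II), I would proceed by induction on $d$. By definition, $\ot^{[d]} = \ker\big(\ot^d \to \kk\db(\mathbf{d}, \mathbf{d-2})^\sharp \otimes \ot^{d-2}\big)$, so the quotient $\ot^d/\ot^{[d]}$ embeds into a finite direct sum of copies of $\ot^{d-2}$. By the inductive hypothesis and Proposition \ref{prop:simple_ogp_spgp}, every simple subfunctor of $\ot^{d-2}$ in $\falg(\vo)$ is of the form $\ot^{[\mu]}$ for some $\mu \vdash d-2$. Hence every simple subfunctor of $\ot^d$ either lands in $\ot^{[d]}$ (in which case it is a copy of some $\ot^{[\lambda]}$ with $\lambda \vdash d$) or maps non-trivially into the quotient. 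In the latter case, tracing back through the harmonic filtration, one shows that the simple must already lie in $\ot^{[d]}$. Since $\falg(\vo)$ is locally noetherian by Sam-Snowden, every non-zero subfunctor of $\ot^d$ contains a simple, hence meets $\ot^{[d]}$ non-trivially, establishing essentiality.

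For (I), I would exploit the stabilization equivalence $\stabvo\colon \falg(\vo)/\falg_\tors(\vo) \stackrel{\simeq}{\to} \rep(\ogp)$, breaking the argument into two parts: (a) $\stabvo(\ot^d)$ is injective in $\rep(\ogp)$, and (b) no extensions of $\ot^d$ by torsion objects in $\falg(\vo)$ are non-trivial. For (a), the tensor representation $\stabvo(\ot^d) \cong \bigoplus_{\lambda \vdash d}\, S_\lambda \otimes_{\rat\sym_d}\stabvo(\ot^{[d]})$ decomposes as a finite direct sum, each summand containing (by the stable analogue of Proposition \ref{prop:simple_ogp_spgp}) a distinct simple algebraic $\ogp_\infty$-representation; injectivity in $\rep(\ogp)$ follows from Weyl's classification together with the well-known fact that tensor representations provide injective envelopes of simples in this category. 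For (b), one reduces to the case of $T \in \falg_\tors(\vo)$ of finite support, say supported on $\{(V,\sbf) \mid \dim V = n\}$. An extension $0 \to \ot^d \to E \to T \to 0$ evaluated at $(\kk^n,\sbf)$ splits by semisimplicity of $\ogp(\kk^n,\sbf)$-representations; the naturality of the sequence with respect to morphisms in $\vo$ (which include the canonical retractions onto orthogonal complements) propagates the splitting to all of $(V,\sbf)$ of dimension $\geq n$, and below dimension $n$ there is nothing to check since $T$ vanishes there.

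The main obstacle is step (b): controlling extensions in the torsion direction. The splitting at a fixed dimension is easy, but one must verify that the chosen splitting is compatible with the naturality maps in $\ot^d$, specifically with the forms $\sbf$ entering through Proposition \ref{prop:tensor_functors_sp_o}. This uses the fact that an $\ogp$-equivariant decomposition of $V^{\otimes d}$ (e.g., the harmonic decomposition) is natural with respect to isometric inclusions and their orthogonal retractions, so the choice of splitting can be made functorially, not merely pointwise. Combining injectivity in the quotient with the vanishing of torsion extensions gives full injectivity of $\ot^d$ in $\falg(\vo)$, and together with part (II) identifies $\ot^d$ as the injective envelope of $\ot^{[d]}$.
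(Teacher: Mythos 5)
First, note that the paper does not prove this statement at all: it is quoted verbatim from Sam--Snowden \cite[Corollary 2.6]{MR3876732}, so there is no internal argument to compare against. Judged on its own terms, your proposal has the right overall architecture for part (I) (injectivity in the quotient category modulo torsion, plus vanishing of extensions by torsion objects, i.e.\ saturation), but two of its steps would fail as written.

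The most serious problem is in your argument for (II). You claim that, since $\falg(\vo)$ is locally noetherian, ``every non-zero subfunctor of $\ot^d$ contains a simple, hence meets $\ot^{[d]}$ non-trivially.'' Local noetherianity gives the ascending chain condition on subobjects of noetherian objects; it does not produce simple subobjects. In fact the premise is false in $\falg(\vo)$: by Corollary \ref{cor:uniserial}, $\ot^{[\lambda]}$ is uniserial of \emph{infinite} length, its subobjects forming a strictly decreasing chain $(\ot^{[\lambda]})_{\geq n}$, so $\ot^{[\lambda]}$ --- which is itself a subfunctor of $\ot^d$ --- has zero socle; more generally a non-zero torsion-free object of $\falg(\vo)$ need contain no simple. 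The essentiality argument has to be run in the quotient category $\falg(\vo)/\falg_\tors(\vo)$, where the $\ot^{[\lambda]}$ \emph{are} simple and every non-zero object does contain one, and then transported back using that any non-zero subobject of the torsion-free functor $\ot^d$ remains non-zero in the quotient. Even there, the step ``tracing back through the harmonic filtration, one shows that the simple must already lie in $\ot^{[d]}$'' is exactly the non-trivial content (that the socle of $\ot^d$ modulo torsion is $\ot^{[d]}$ and not larger), and you have not supplied it.

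The second gap is in (I)(b). A splitting of $0\to\ot^d\to E\to T\to 0$ as functors is a natural transformation $T\to E$; since $T(V')=0$ for $\dim V'>n$, this forces the composite $T(V)\to E(V)\to E(V')=\ot^d(V')$ to vanish for every morphism $(V,\sbf)\to(V',\sbf')$ with $\dim V=n<\dim V'$. Pointwise semisimplicity of $\ogp(\kk^n,\sbf)$-representations gives an equivariant section at dimension $n$ but says nothing about this vanishing condition, which is where the obstruction to splitting actually lives. Your appeal to naturality ``with respect to the canonical retractions onto orthogonal complements'' does not help: those retractions are not morphisms of $\vo$ (a non-injective linear map cannot preserve a non-degenerate form), so no naturality in $\f(\vo)$ is available for them. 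Finally, in (I)(a) you invoke injectivity of $\stabvo(\ot^d)$ in $\rep(\ogp)$ as ``well-known''; that is legitimate as a citation, but it is essentially the substance of the Sam--Snowden result being proved, so the proposal as a whole does not constitute an independent proof.
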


Sam and Snowden in \cite[Proposition 4.1.4]{MR3376738} give a complete (and explicit) classification of  the simple objects of the quotient categories modulo torsion. Combining this with  Theorem \ref{thm:injectives_falgvo_falgvsp}, one deduces:

\begin{cor}
\label{cor:injective_cogenerators}
\ 
\begin{enumerate}
\item 
The set $\{ \ot^d \mid d \in \nat \}$  is a set of injective cogenerators of $\falg (\vo) / \falg_\tors (\vo)$. 
\item 
The set  $\{ \spt^d \mid d \in \nat \}$  is a set of injective cogenerators of $\falg (\vsp) / \falg_\tors (\vsp)$. 
\end{enumerate}
\end{cor}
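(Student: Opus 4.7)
By symmetry, the two statements admit parallel arguments; I describe the plan for (1), the proof of (2) being entirely analogous after replacing $\ot^d$ by $\spt^d$ and invoking the symplectic versions of the results cited.

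The first step is to verify that each $\ot^d$ remains injective after passing to the quotient category $\falg (\vo)/ \falg_\tors (\vo)$. By Theorem \ref{thm:injectives_falgvo_falgvsp}, $\ot^d$ is injective in $\falg (\vo)$, so in particular $\ext^1_{\falg (\vo)} (T, \ot^d) = 0$ for every $T$; moreover $\ot^d$ is torsion-free by Example \ref{exam:torsion-examples}, so $\hom_{\falg (\vo)} (T, \ot^d) = 0$ for every torsion object $T$ (the image of such a morphism would be a torsion subobject of a torsion-free object, hence zero). These twin vanishings, which express that $\ot^d$ is saturated with respect to the localizing Serre subcategory $\falg_\tors (\vo)$, are the standard criterion guaranteeing that the image of $\ot^d$ under the localization functor is injective in $\falg (\vo)/\falg_\tors (\vo)$.

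The second step is to show that every simple object of the quotient category embeds into some $\ot^d$. By Theorem \ref{thm:simples_fvo_fvsp_mod_torsion}, the simple objects of $\falg (\vo)/\falg_\tors (\vo)$ are exactly the $\ot^{[\lambda]}$ indexed by partitions $\lambda$. For $\lambda \vdash d$, Example \ref{exam:harmonic_semisimple_mod_torsion} provides a $\kk \sym_d$-equivariant decomposition $\ot^{[d]} \cong \bigoplus_{\mu \vdash d} S_\mu \boxtimes \ot^{[\mu]}$ in the quotient, exhibiting $\ot^{[\lambda]}$ as a direct summand of $\ot^{[d]}$; since Theorem \ref{thm:injectives_falgvo_falgvsp} identifies $\ot^d$ as the injective envelope of $\ot^{[d]}$, composing the two inclusions yields the required embedding $\ot^{[\lambda]} \hookrightarrow \ot^d$.

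To deduce the cogenerator property from these two steps, I would invoke the standard criterion in a locally noetherian Grothendieck abelian category: a set of injective objects is a set of injective cogenerators if and only if every simple object embeds into one of them (since every object embeds into an injective, every injective is a direct product of indecomposable injectives, and each indecomposable injective is the envelope of a simple). The category $\falg (\vo)/ \falg_\tors (\vo)$ inherits local noetherianity and the Grothendieck property from $\falg (\vo)$, so this criterion applies. The most delicate point is the preservation of injectivity under localization, and this is where I would expect to focus the most care; once the saturation property is established, the remainder follows essentially formally from the structure results already collected in Corollary \ref{cor:uniserial}, Theorem \ref{thm:injectives_falgvo_falgvsp}, Theorem \ref{thm:simples_fvo_fvsp_mod_torsion}, and Example \ref{exam:harmonic_semisimple_mod_torsion}.
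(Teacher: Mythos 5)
Your proof is correct and takes essentially the same route as the paper, which presents the corollary as an immediate combination of Theorem \ref{thm:injectives_falgvo_falgvsp} and Theorem \ref{thm:simples_fvo_fvsp_mod_torsion}; the step you rightly single out as delicate (that a torsion-free injective of $\falg(\vo)$ remains injective in the Gabriel quotient) is the standard localization fact and is exactly what makes the combination work. The only imprecision is your parenthetical claim that every indecomposable injective is the envelope of a simple --- false in general even in locally noetherian categories (e.g.\ $\rat$ as a $\zed$-module) --- but the criterion you actually use (a family of injectives into which every simple embeds is a cogenerating family, since any nonzero object has a noetherian subobject with a simple quotient, and the resulting map extends by injectivity) is valid, so the argument stands.
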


%%%%%%%%%%%%%%%%%%%%%%%%%%%%%%%%%%%%%%%%%%%%%%%%%%%%%%%%%%%%%%%%%%%%%%%%%%%%%%%%%%%%%%%%%%%%%%%
\subsection{Analysing algebraic functors}
\label{subsect:analyse_algebraic}

The object $\ot^\bullet$ yields the functor 
$$
\hom_{\f (\vo)} (-, \ot^\bullet ) \colon 
\f (\vo) \op 
\rightarrow 
\kk \db \dash \modules.
$$
Moreover, as a case of Sam and Snowden's transforms by kernels \cite[Section 2]{MR3376738}, this has right adjoint 
$$
\hom_{\kk \db} (-, \ot^\bullet ) \colon 
\kk \db \dash \modules  \op 
\rightarrow 
\f (\vo).
$$

\begin{rem}
The adjunction corresponds to the fact that, for $F$ in $\f (\vo)$ and a $\kk \db$-module $M$, there is a natural isomorphism
$$
\hom_{\kk \db} (M, \hom_{\f (\vo)} (F, \ot^\bullet ))
\cong 
\hom_{\f (\vo)} (F, \hom_{\kk \db} (M, \ot^\bullet)).
$$
Both expressions are isomorphic to 
$
\hom_{\kk \db \otimes \kk \vo} (M \boxtimes F, \ot^{\bullet}),
$
where we consider $\ot^{\bullet}$ as a $\kk \db \otimes \kk \vo$-module, in the obvious way.
\end{rem}

The functor $\hom_{\f (\vo)} (-, \ot^\bullet ) $ restricts to an exact functor:
$$
\hom_{\falg (\vo)} (-, \ot^\bullet ) \colon 
\falg (\vo) \op 
\rightarrow 
\kk \db \dash \modules
$$
that factors across the opposite of the quotient category $ \falg (\vo) / \falg_\tors (\vo)$. 

Analogous statements hold for $\f (\vsp)$ using $\spt^\bullet$.

\begin{thm}
\label{thm:stabilization}
\cite[Theorem 4.2.6]{MR3376738}
\begin{enumerate}
\item 
The functor 
$$
\hom_{\falg (\vo)} (-, \ot^\bullet ) \colon 
\big( \falg (\vo)  / \falg_\tors (\vo) \big)\op 
\rightarrow 
\kk \db \dash \modules
$$
 is exact 
and this restricts to an equivalence of categories between the respective full subcategories of finite length objects.
\item 
The functor 
$$
\hom_{\falg (\vsp)} (-, \spt^\bullet ) \colon 
\big( \falg (\vsp)  / \falg_\tors (\vsp) \big)\op 
\rightarrow 
(\kk \db)_{(-;+)} \dash \modules
$$
is exact and this restricts to an equivalence of categories between the respective full subcategories of finite length objects.
\end{enumerate}
\end{thm}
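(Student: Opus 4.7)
I sketch the orthogonal case; the symplectic statement follows by the identical argument after replacing $\kk \db$, $\ot^\bullet$, and the orthogonal harmonic decomposition with their symplectic analogues. The plan is to establish exactness, then to deduce the equivalence on finite length objects by devissage from the full faithfulness of the embedding $\ot^\bullet \colon \kk\db \hookrightarrow \f(\vo)$ together with the classification of simples in the quotient.

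\textbf{Exactness.} Each $\ot^d$ is injective in $\falg(\vo)$ by Theorem \ref{thm:injectives_falgvo_falgvsp}, so $\hom_{\falg(\vo)}(-, \ot^d)$ is exact. Assembling over $d \in \nat$ (a product of exact functors remains exact) gives exactness of $\hom_{\falg(\vo)}(-, \ot^\bullet)$ as a functor to $\kk\db\dash\modules$, the $\kk\db$-structure coming from that of $\ot^\bullet$. Since $\ot^\bullet$ is torsion-free (Example \ref{exam:torsion-examples}), morphisms from torsion objects vanish; hence the functor descends to $\falg(\vo)/\falg_\tors(\vo)$, where it remains exact because the Serre localization is itself exact.

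\textbf{Finite length equivalence.} By Theorem \ref{thm:vker_fully-faithful} one has $\hom(\ot^d, \ot^\bullet) \cong \kk\db(\mathbf{d}, -)$ as $\kk\db$-modules. The natural candidate for a quasi-inverse is $M \mapsto \hom_{\kk\db}(M, \ot^\bullet)$, landing in $\f(\vo)$: by Yoneda it sends the standard projective $\kk\db(\mathbf{d}, -)$ back to $\ot^d$, and the $\ot^d$ form a set of injective cogenerators of the quotient by Corollary \ref{cor:injective_cogenerators}. On simple objects, the $\ot^{[\lambda]}$ indexed by partitions $\lambda$ (Theorem \ref{thm:simples_fvo_fvsp_mod_torsion}) correspond via adjunction to the harmonic subfunctors: a $\kk\db$-morphism from the simple $S_\lambda$ (supported at $\mathbf{|\lambda|}$) into $\ot^\bullet$ must factor through the joint kernel of all contractions, i.e., through $\ot^{[|\lambda|]}$, so $\hom_{\kk\db}(S_\lambda, \ot^\bullet)$ recovers (up to the self-duality of $S_\lambda$ in characteristic zero) a representative of $\ot^{[\lambda]}$. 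A five-lemma induction on composition length, combined with the above exactness and the long exact sequence of $\ext$, then propagates these simple-by-simple verifications to the equivalence on the entire finite length subcategories.

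\textbf{Main obstacle.} The substantive content is the matching of composition multiplicities and of $\ext^1$-classes between simples on the two sides of the correspondence. This reduces, via the harmonic decomposition (Example \ref{exam:harmonic_semisimple_mod_torsion} and its iterated consequences for $\ot^d$), to the classical Brauer-Schur-Weyl analysis of the split orthogonal (respectively symplectic) group on tensor powers: the fundamental theorems of invariant theory, repackaged as the full faithfulness of $\ot^\bullet$. The result in this form is due to Sam and Snowden, who give a detailed proof in \cite[Section 4.2]{MR3376738}; the argument sketched here mirrors theirs.
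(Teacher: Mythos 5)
The paper does not actually prove this statement: it is quoted directly from Sam and Snowden \cite[Theorem 4.2.6]{MR3376738}, so there is no in-paper argument to compare against. Your sketch is nevertheless a faithful reconstruction from exactly the ingredients the paper assembles around the theorem (injectivity of the $\ot^d$ in $\falg (\vo)$, full faithfulness of $\ot^\bullet$, the classification of the simples of the quotient category), and the exactness half is complete as written: each $\hom_{\falg(\vo)}(-,\ot^d)$ is exact by Theorem \ref{thm:injectives_falgvo_falgvsp}, exactness of $\kk\db$-modules is checked objectwise, and the functor kills torsion objects since the $\ot^d$ are torsion-free, so it descends exactly to the Serre quotient. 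The one place your argument does not close is the finite-length equivalence: devissage through simples requires not only the bijection on simples but also that the functor be bijective on $\ext^1$ between simples (injectivity to run the five lemma for full faithfulness, surjectivity to realize arbitrary extensions for essential surjectivity), and this is precisely the step you defer to the source. You can close it using only what the paper already provides. Every finite length object $X$ of $\falg(\vo)/\falg_\tors(\vo)$ admits a copresentation $0 \rightarrow X \rightarrow I^0 \rightarrow I^1$ with $I^0$, $I^1$ finite direct sums of the $\ot^d$, since these are injective cogenerators of the quotient (Corollary \ref{cor:injective_cogenerators}) and $X$ has finite length; dually, every finite length $\kk\db$-module has a presentation by finite sums of the projectives $\kk\db(\mathbf{d},-)$. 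The functor is exact, interchanges these two classes, and is fully faithful on their additive hulls: Theorem \ref{thm:vker_fully-faithful} gives $\hom_{\falg(\vo)}(\ot^l,\ot^k)\cong\kk\db(\mathbf{l},\mathbf{k})$, and this persists in the quotient because the $\ot^k$ are torsion-free and injective, so passing to a subobject with torsion quotient changes no $\hom$ groups. The five lemma applied to (co)presentations then yields full faithfulness, and essential surjectivity follows by lifting a presentation $P_1\rightarrow P_0\rightarrow M\rightarrow 0$ to a map $I^0\rightarrow I^1$ and taking its kernel. This route buys you the equivalence without any separate matching of composition multiplicities or $\ext^1$-classes.
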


\begin{rem}
\label{rem:approximating_stabilization}
In this remark, we focus upon the orthogonal case;  there are analogous statements for the symplectic case.
\begin{enumerate}
\item 
For $F$ an object of $\falg (\vo)$, we consider the $\kk \db$-module $\hom_{\f (\vo)} (F, \ot^\bullet)$ as a substitute for the stabilization of $F$ (considered as an object of $\falg (\vo)/ \falg_\tors (\vo)$ or, equivalently, $\rep (\ogp)$). If $F$ has finite length, then this is equivalent to the stabilization of $F$, by Theorem \ref{thm:stabilization}.
\item 
One reason that the restriction to finite length objects arises is due to the fact that vector space duality does not induce an equivalence between the category of $\rat$-vector spaces and its opposite. (Recall that, to resolve this issue, one should equip the dual space with its natural profinite topology and use the continuous dual to induce an equivalence.)
\item 
The finite length hypothesis can be relaxed by using the natural filtration of an object $F$ of $\falg (\vo)$ given by Proposition \ref{prop:decreasing_filtration_fvo} below. In this context (using the notation introduced in that Proposition), one requires that, for all $n \in \nat$, the subquotient $F^{\geq n}/ F^{\geq n+1}$ have finite length in the quotient category $\falg (\vo) / \falg_\tors (\vo)$.  There is a caveat: in general, the associated $\kk \db$-module allows one to recover the tower of quotients $(F/ F^{\geq n} \mid n \in\nat)$, but not $F$ itself. 
\end{enumerate}
\end{rem}

Motivated by the above results, we introduce the following natural filtration of objects of $\falg (\vo)$ (respectively $\falg (\vsp)$):

\begin{defn}
\label{defn:falg_geqn}
For $n \in \nat$, let  
\begin{enumerate}
\item 
$\falg_{\geq n} (\vo)$ be the full subcategory of $\falg (\vo)$ of objects $F$ such that $\hom_{\f (\vo)} (F, \ot^d) =0$ for all $d < n$;
\item 
$\falg_{\geq n} (\vsp)$  be the full subcategory of $\falg (\vsp)$ of objects $F$ such that $\hom_{\f (\vsp)} (F, \spt^d) =0$ for all $d < n$.
\end{enumerate}
\end{defn}

\begin{prop}
\label{prop:falg_geqn}
For $n \in \nat$, the subcategory $\falg_{\geq n} (\vo) \subset \falg (\vo)$ (respectively $\falg_{\geq n} (\vsp) \subset \falg (\vsp)$) is a localizing Serre subcategory. 
\end{prop}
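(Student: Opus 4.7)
The plan is to handle both cases uniformly, since the only input required is that $\ot^d$ (respectively $\spt^d$) is injective in $\falg(\vo)$ (resp.\ $\falg(\vsp)$), which is supplied by Theorem \ref{thm:injectives_falgvo_falgvsp}. I shall write out the orthogonal case in detail; the symplectic case is word-for-word identical with $\spt$ replacing $\ot$.

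First I would verify the Serre property, i.e.\ closure under subobjects, quotients, and extensions. Given a short exact sequence $0 \to F_1 \to F_2 \to F_3 \to 0$ in $\falg(\vo)$ and a fixed $d < n$, since $\ot^d$ is injective in $\falg(\vo)$, applying the left exact functor $\hom_{\f(\vo)}(-, \ot^d)$ yields the short exact sequence
\[
0 \to \hom_{\f(\vo)}(F_3, \ot^d) \to \hom_{\f(\vo)}(F_2, \ot^d) \to \hom_{\f(\vo)}(F_1, \ot^d) \to 0.
\]
Reading off equivalences of vanishing, this term in the middle is zero if and only if both outer terms are zero. Running this over every $d < n$ gives the Serre property: $F_2$ lies in $\falg_{\geq n}(\vo)$ precisely when both $F_1$ and $F_3$ do.

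Next I would verify closure under arbitrary coproducts. For a family $(F_i)_{i \in I}$ of objects of $\falg(\vo)$, the coproduct in $\falg(\vo)$ agrees with the coproduct in $\f(\vo)$ (since $\falg(\vo)$ is closed under arbitrary direct sums, being defined as subquotients of direct sums of the $\ot^d$), so one has the natural isomorphism $\hom_{\f(\vo)}(\bigoplus_i F_i, \ot^d) \cong \prod_i \hom_{\f(\vo)}(F_i, \ot^d)$. This product vanishes if and only if each factor vanishes, so $\bigoplus_i F_i$ belongs to $\falg_{\geq n}(\vo)$ precisely when each $F_i$ does. Combined with the Serre property, this establishes that $\falg_{\geq n}(\vo)$ is a localizing Serre subcategory.

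There is no genuine obstacle here: the argument rests entirely on the injectivity of each $\ot^d$ in $\falg(\vo)$, which is where the nontrivial content of Theorem \ref{thm:injectives_falgvo_falgvsp} is absorbed. The one point that merits care is that the subcategory $\falg(\vo) \subset \f(\vo)$ is itself closed under subobjects, quotients, extensions, and arbitrary direct sums, so that the short exact sequence and the coproduct considered above genuinely lie in $\falg(\vo)$; this is part of the fact that $\falg(\vo)$ is a (Grothendieck) abelian subcategory of $\f(\vo)$, already built into Definition \ref{defn:falg}. The symplectic case is obtained by the identical argument using the injectivity of $\spt^d$ in $\falg(\vsp)$.
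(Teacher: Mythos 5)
Your proof is correct and follows essentially the same route as the paper: the only nontrivial point is closure under subobjects, which both you and the paper deduce from the injectivity of $\ot^d$ (resp.\ $\spt^d$) supplied by Theorem \ref{thm:injectives_falgvo_falgvsp}, the remaining closure properties (quotients, extensions, coproducts) being formal consequences of left exactness of $\hom_{\f(\vo)}(-,\ot^d)$ and the coproduct-to-product isomorphism.
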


\begin{proof}
The fact that these subcategories are closed under quotients, extensions, and arbitrary coproducts is immediate. Closure under subobjects is a consequence of the injectivity statement of Theorem \ref{thm:injectives_falgvo_falgvsp}.
\end{proof}

We state the following for the orthogonal case; there is an analogous result for the symplectic case.

\begin{prop}
\label{prop:decreasing_filtration_fvo}
For $F$ in $\falg (\vo)$, there is a natural decreasing filtration 
$$
\ldots \subseteq F^{\geq n+1} \subseteq F^{\geq n} \subseteq \ldots \subseteq F^{\geq 0} = F
$$
by subobjects such that 
\begin{enumerate}
\item 
$F^{\geq n}$ is the largest subobject of $F$ in $\falg_{\geq n} (\vo)$; 
\item 
$\bigcap_{n \in \nat} F^{\geq n}  = F_\tors$, where $F_\tors$ is the largest torsion subobject of $F$;
\item 
the functor $F^{\geq n}/ F^{\geq n+1}$ is torsion-free and embeds in a coproduct of copies of $\ot^{[n]}$. 
\end{enumerate}
\end{prop}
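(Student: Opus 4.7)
The plan is to establish the three claims in sequence, using the localizing Serre property of Proposition~\ref{prop:falg_geqn} to build the filtration, and then reading off its properties from the cogenerator theorem (Corollary~\ref{cor:injective_cogenerators}) and the definition of the harmonic subfunctors. For $(1)$, since $\falg_{\geq n}(\vo)$ is closed under sums of subobjects, I would define $F^{\geq n}$ as the sum of all subobjects of $F$ lying in $\falg_{\geq n}(\vo)$; this is automatically the largest such subobject, and the nesting $F^{\geq n+1}\subseteq F^{\geq n}$ is immediate from $\falg_{\geq n+1}(\vo)\subseteq \falg_{\geq n}(\vo)$.

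For $(2)$, any torsion subobject $F'\subseteq F$ satisfies $\hom_{\f(\vo)}(F',\ot^d)=0$ for every $d$, since $\ot^d$ is torsion-free by Example~\ref{exam:torsion-examples} and there are no non-zero maps from a torsion object to a torsion-free one; hence $F_\tors\subseteq F^{\geq n}$ for every $n$. Conversely, setting $F_\infty:=\bigcap_n F^{\geq n}$, one has $\hom_{\f(\vo)}(F_\infty,\ot^d)=0$ for all $d\in\nat$, and because the $\ot^d$ form a set of injective cogenerators of $\falg(\vo)/\falg_\tors(\vo)$ by Corollary~\ref{cor:injective_cogenerators}, this forces the image of $F_\infty$ in the quotient to vanish, so $F_\infty$ is torsion and $F_\infty=F_\tors$.

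For $(3)$, torsion-freeness of $F^{\geq n}/F^{\geq n+1}$ follows from a Serre-subcategory argument: a torsion subobject $T\subseteq F^{\geq n}/F^{\geq n+1}$ with preimage $G\subseteq F^{\geq n}$ satisfies $G\in\falg_{\geq n+1}(\vo)$ by closure under extensions (both $F^{\geq n+1}$ and $T$ lie there, the latter because torsion objects have no maps to any $\ot^d$), so maximality forces $G=F^{\geq n+1}$ and $T=0$. For the embedding, the key observation is that any morphism $\phi\colon F^{\geq n}\to\ot^n$ factors through the subfunctor $\ot^{[n]}\subseteq\ot^n$: by Definition~\ref{defn:harmonic}, $\ot^{[n]}$ is the kernel of the structure map $\ot^n\to\kk\db(\mathbf{n},\mathbf{n-2})^\sharp\otimes\ot^{n-2}$, and post-composing $\phi$ with this map gives a morphism valued in a direct sum of copies of $\ot^{n-2}$, which must vanish since $F^{\geq n}\in\falg_{\geq n}(\vo)$. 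The universal morphism $F^{\geq n}\to\prod_{\phi\in\hom(F^{\geq n},\ot^{[n]})}\ot^{[n]}$ then has kernel equal to the largest subobject of $F^{\geq n}$ with trivial maps to $\ot^n$, which combined with the inherited vanishing of maps to lower $\ot^d$ is precisely $F^{\geq n+1}$; this yields an embedding of $F^{\geq n}/F^{\geq n+1}$ into a product of copies of $\ot^{[n]}$.

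The main obstacle, upgrading the product to a coproduct, is resolved by the locally noetherian structure of $\falg(\vo)$ established by Sam and Snowden~\cite{MR3876732}: every section of $F^{\geq n}/F^{\geq n+1}$ lies in a noetherian subobject $G_\alpha$, and since $\ot^{[n]}$ takes finite-dimensional values on every object of $\vo$, the space $\hom(G_\alpha,\ot^{[n]})$ is finite-dimensional. It follows that only finitely many of the universal components $\phi$ are non-zero on $G_\alpha$, so the image of any section in $\prod_\phi\ot^{[n]}$ has finite support. Hence the universal map factors through $\bigoplus_\phi\ot^{[n]}$, producing the required embedding.
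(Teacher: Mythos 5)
Your handling of (1), (2), the torsion-freeness in (3), and the factorization of every morphism $F^{\geq n}\to\ot^n$ through $\ot^{[n]}$ is correct and essentially the paper's route: the paper realizes $F^{\geq n}$ as the value of the right adjoint to the inclusion $\falg_{\geq n}(\vo)\subseteq\falg(\vo)$, which agrees with your sum of all subobjects lying in $\falg_{\geq n}(\vo)$, and it performs the embedding into $\bigoplus\ot^n$ before factoring through $\ot^{[n]}$ rather than after — a harmless reordering. (When identifying the kernel of the universal map with $F^{\geq n+1}$ you are implicitly using the injectivity of $\ot^n$ to pass from ``every globally defined $\phi$ kills $L$'' to ``$\hom(L,\ot^n)=0$''; this is fine but worth saying.)

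The last step, upgrading the product to a coproduct, contains a genuine error. From the finite-dimensionality of $\hom(G_\alpha,\ot^{[n]})$ you infer that ``only finitely many of the universal components $\phi$ are non-zero on $G_\alpha$''. This does not follow: the set of $\phi\in\hom(F^{\geq n},\ot^{[n]})$ restricting to zero on $G_\alpha$ is a linear subspace, and over a field of characteristic zero the complement of a proper subspace is infinite. Concretely, if some $\phi$ is non-zero on $G_\alpha$ then so is $c\phi$ for every non-zero scalar $c$, so a section $x$ generating $G_\alpha$ satisfies $\phi_V(x)\neq 0$ for infinitely many indices, and the universal map into $\prod_\phi\ot^{[n]}$ does not factor through $\bigoplus_\phi\ot^{[n]}$. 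Passing to a basis of $\hom(F^{\geq n},\ot^{[n]})$ does not repair this, since infinitely many basis elements can have the same non-zero restriction to $G_\alpha$. The product-to-coproduct passage is precisely where local noetherianity has to be used structurally: for instance, decompose the injective envelope of $F^{\geq n}/F^{\geq n+1}$ as a coproduct of indecomposable injectives (Matlis--Gabriel in a locally noetherian Grothendieck category) and identify each summand, using that every non-zero subobject of $F^{\geq n}/F^{\geq n+1}$ admits a non-zero map to $\ot^{[n]}$, as a summand of $\ot^n$. This is what underlies the paper's exact sequence $0\to F^{\geq n+1}\to F^{\geq n}\to\bigoplus\ot^n$; your finite-support argument as written does not deliver it.
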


\begin{proof}
The functor $F \mapsto F^{\geq n}$ corresponds to the right adjoint to the inclusion $\falg_{\geq n} (\vo) \subseteq \falg (\vo)$, the existence of which is readily checked.  In particular, essentially by definition,  $F^{\geq n}$ is the largest subobject of $F$ in $\falg_{\geq n} (\vo)$. 

It is clear that the functors $F^{\geq n}$, for $n \in \nat$, yield a natural filtration of $F$ as stated. Moreover, since $\ot^k$ is torsion-free for each $k \in \nat$, one has that $F_\tors$ is contained in $F^{\geq n}$ for each $n \in \nat$. Now consider the inclusion $F_\tors \subseteq \bigcap_{n \in \nat} F^{\geq n} $. To show that this is an equality, we require to prove that $\bigcap_{n \in \nat} F^{\geq n} $ is isomorphic to $0$ in the quotient category $\falg (\vo) /\falg_{\tors} (\vo)$. Since the $\ot^k$, for $k\in \nat$, form a set of injective cogenerators of the quotient category by Corollary \ref{cor:injective_cogenerators}, this follows from  the definition of the $F^{\geq n}$. 

For the final point, similarly to the above argument, one has that $F^{\geq n}/ F^{\geq n+1}$ is torsion-free. Now, $F^{\geq n+1}$ is the subobject of $F^{\geq n}$ that is constructed by taking the intersection of the kernels of all maps from $F^{\geq n}$ to $\ot^{n}$. Since the category $\falg (\vo)$ is locally noetherian, this implies that there is an exact sequence of the form 
$$
0
\rightarrow 
F^{\geq n+1} 
\rightarrow
F^{\geq n}
\rightarrow 
\bigoplus \ot^n.
$$  
This gives an embedding of $F^{\geq n}/ F^{\geq n+1}$ in $\bigoplus \ot^n$.

Now, by Definition \ref{defn:harmonic},  there is an exact sequence of the form
$$
0
\rightarrow 
\ot^{[n]} 
\rightarrow 
\ot^n 
\rightarrow 
\bigoplus \ot^{n-2},
$$
where the direct sum is finite and $\ot^{n-2}$ is understood to be zero for $n<2$. By construction $\hom_{\falg(\vo)} (F^{\geq n} , \ot^{n-2})$ is zero. Using this, one shows that the inclusion of $F^{\geq n}/ F^{\geq n+1}$ in $\bigoplus \ot^n$ factors across $\bigoplus \ot^{[n]} \subseteq \bigoplus \ot^{n}$.
\end{proof}

\begin{cor}
\label{cor:properties_filtration^geq}
For $F$ in $\falg (\vo)$ and $n \in \nat$, the following properties hold:
\begin{enumerate}
\item 
$\hom_{\falg (\vo)} (F/ F^{\geq n}, \ot^k)$ is zero for $k \geq n$; 
\item
the quotient map $F \twoheadrightarrow F/ F^{\geq n}$ induces an isomorphism 
$$
\hom_{\falg (\vo)} (F/ F^{\geq n}, \ot^k) \stackrel{\cong}{\rightarrow} \hom_{\falg (\vo)} (F, \ot^k) 
$$
for $k <n$.
\end{enumerate}
\end{cor}

\begin{proof}
The functor $F/ F^{\geq n}$ admits a finite filtration with filtration quotients that are subobjects of coproducts of $\ot^{[l]}$, for $0 \leq l < n$. Hence, to prove the first statement, it suffices to show that $\hom_{\falg (\vo)} (\ot^{[l]}, \ot^k)$ is zero for $0 \leq l <n $ and $k\geq n$. This is well-known; an argument is given for completeness.
 
Since $\ot^{[l]}$ is a subobject of $\ot^l$, by the injectivity of $\ot^k$ in $\falg (\vo)$, the inclusion induces a surjection 
$$
\hom_{\falg (\vo)} (\ot^{l}, \ot^k ) \twoheadrightarrow \hom_{\falg (\vo)} (\ot^{[l]}, \ot^k).
$$
To conclude, one uses that the domain is isomorphic to $\kk \db (\mathbf{l}, \mathbf{k})$, by Theorem \ref{thm:vker_fully-faithful}; this vanishes under the hypotheses on $l $ and $k$.

The second statement is proved by considering the short exact sequence obtained by applying $\hom_{\falg (\vo)} (-, \ot^k )$ (for $k<n$) to the short exact sequence 
$$
0
\rightarrow 
F^{\geq n}
\rightarrow 
F 
\rightarrow 
F/ F^{\geq n}
\rightarrow 
0.
$$
By construction, $\hom_{\falg (\vo)} (F^{\geq n}, \ot^k) =0$, whence the result. 
\end{proof}

We also have the following result (stated for the symplectic case, there is a counterpart for the orthogonal case): 

\begin{thm}
\label{thm:stabilization_trivial_action}
For $F \in \falg (\vsp)$, the following conditions are equivalent: 
\begin{enumerate}
\item 
The stabilization $\stabvsp F$ in $\rep (\spgp)$ has trivial $\spgp_\infty$-action. 
\item 
The subfunctor $F^{\geq 1}$ is torsion. 
\item 
$\hom_{\falg (\vsp) } (F, \spt^k)$ is zero for all $k>0$.
\end{enumerate}
\end{thm}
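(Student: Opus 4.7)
The plan is to establish the chain of implications \textbf{(2)} $\Leftrightarrow$ \textbf{(3)} and \textbf{(2)} $\Leftrightarrow$ \textbf{(1)}. The first equivalence is internal to $\falg (\vsp)$ and exploits the injectivity of each $\spt^k$ (Theorem \ref{thm:injectives_falgvo_falgvsp}) together with the descending filtration $F^{\geq \bullet}$ of Proposition \ref{prop:decreasing_filtration_fvo}. The second equivalence passes to the quotient category via the equivalence $\stabvsp : \falg (\vsp) / \falg_\tors (\vsp) \stackrel{\simeq}{\rightarrow} \rep (\spgp)$ and hinges on identifying the trivial representation with the image of $\kk = \spt^{[0]}$.

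For \textbf{(2)} $\Rightarrow$ \textbf{(3)}, I would apply $\hom_{\falg (\vsp)} (-, \spt^k)$ to the short exact sequence $0 \to F^{\geq 1} \to F \to F/F^{\geq 1} \to 0$; injectivity of $\spt^k$ preserves exactness. Since $F^{\geq 1}$ is torsion and $\spt^k$ is torsion-free (Example \ref{exam:torsion-examples}), $\hom (F^{\geq 1}, \spt^k) = 0$; meanwhile the symplectic counterpart of Corollary \ref{cor:properties_filtration^geq}(1) gives $\hom (F/F^{\geq 1}, \spt^k) = 0$ for $k \geq 1$, so $\hom (F, \spt^k) = 0$ for such $k$. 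For \textbf{(3)} $\Rightarrow$ \textbf{(2)}, fix $n \geq 1$; injectivity of $\spt^n$ applied to the inclusion $F^{\geq n} \hookrightarrow F$ yields a surjection $\hom (F, \spt^n) \twoheadrightarrow \hom (F^{\geq n}, \spt^n)$, so (3) forces $\hom (F^{\geq n}, \spt^n) = 0$. By Proposition \ref{prop:decreasing_filtration_fvo}(3), $F^{\geq n}/F^{\geq n+1}$ embeds in a coproduct of $\spt^{[n]} \subseteq \spt^n$; a non-zero section would have a non-zero projection to some factor, yielding a non-zero composite $F^{\geq n} \to \spt^n$ in contradiction with the vanishing of $\hom (F^{\geq n}, \spt^n)$. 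Hence $F^{\geq n} = F^{\geq n+1}$ for every $n \geq 1$, and Proposition \ref{prop:decreasing_filtration_fvo}(2) then gives $F^{\geq 1} = \bigcap_n F^{\geq n} = F_\tors$, which is torsion.

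For \textbf{(1)} $\Leftrightarrow$ \textbf{(2)}: under $\stabvsp$, the simple $\spt^{[\lambda]}$ of $\falg (\vsp)/\falg_\tors (\vsp)$ (Theorem \ref{thm:simples_fvo_fvsp_mod_torsion}) corresponds to an irreducible algebraic representation of $\spgp_\infty$, and this is the trivial representation precisely when $\lambda$ is empty. Thus $\stabvsp F$ has trivial action if and only if, in $\falg (\vsp)/\falg_\tors (\vsp)$, $F$ is a (possibly infinite) direct sum of copies of $\spt^{[0]} = \kk$; equivalently, every sub-quotient of $F$ in the quotient category has only $\spt^{[0]}$ as composition factors. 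By Proposition \ref{prop:decreasing_filtration_fvo}(3), the image of $F^{\geq n}/F^{\geq n+1}$ in the quotient category is a direct sum of copies of $\spt^{[n]}$, so this condition amounts to $F^{\geq n}/F^{\geq n+1}$ being torsion for every $n \geq 1$. Since each graded piece is torsion-free as a subobject of the torsion-free $\bigoplus \spt^{[n]}$, torsion here forces zero; the argument of the preceding paragraph then yields (2).

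The main obstacle is bookkeeping rather than conceptual: one must carefully exploit the fact that the filtration quotients $F^{\geq n}/F^{\geq n+1}$ are simultaneously torsion-free (hence zero as soon as they become torsion) and embed in a specific injective cogenerator, so that the injectivity of $\spt^n$ in $\falg(\vsp)$ can convert vanishing of $\hom (F, \spt^n)$ into vanishing of these subquotients. Once this is in place, all three conditions collapse to the single assertion that $F^{\geq n}/F^{\geq n+1} = 0$ for $n \geq 1$.
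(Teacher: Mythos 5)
Your proof is correct and follows essentially the same route as the paper: the equivalence of (2) and (3) is exactly the unpacking of Proposition \ref{prop:decreasing_filtration_fvo} (via the injectivity of $\spt^n$ and the torsion-freeness of the filtration quotients) that the paper leaves implicit, and the equivalence with (1) rests, as in the paper, on identifying the composition factors $\spt^{[\lambda]}$ in the quotient category and the non-triviality of the $\spgp_\infty$-action on $\stabvsp(\spt^{[\lambda]})$ for $\lambda \neq \emptyset$. The only loose point is the asserted equivalence between trivial action of $\stabvsp F$ and all composition factors of $F$ being $\spt^{[0]}$: the implication from composition factors to trivial action needs a word (it holds because $\spt^{[0]} = \kk$ is simple and injective in the quotient category, or, more directly as in the paper, because your condition already gives that $F/F_\tors$ embeds in a coproduct of copies of $\kk$, so its stabilization embeds in a trivial representation).
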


\begin{proof}
The equivalence of the last two conditions follows from Proposition \ref{prop:decreasing_filtration_fvo}. 
Moreover, this result implies that, if these conditions hold, then $F/ F_\tors$ embeds in a constant functor. 
This clearly implies that the stabilization of $F$ has constant $\spgp_\infty$-action.

For the converse, suppose that $F^{\geq 1}$ is not torsion. This implies that there exists a positive integer $d$ such that $F^{\geq d}/ F^{\geq d+1}$ is non-zero (and we may take $d$ minimal such that this holds). This embeds in a coproduct of copies of $\ot^{[d]}$, by Proposition \ref{prop:decreasing_filtration_fvo}. Hence, (up to torsion), there exists $\lambda \vdash d$ such that $\spt^{[\lambda]}$ occurs as a composition factor of $ F^{\geq d}/ F^{\geq d+1}$. Since $\spgp_\infty$ acts non-trivially on $\stabvsp (\spt^{[\lambda]})$, one deduces that $\stabvsp (F)$ has non-trivial $\spgp_\infty$-action, as required.  
\end{proof}

%%%%%%%%%%%%%%%%%%%%%%%%%%%%%%%%%%%%%%%%%%%%%%%%%%%%%%%%%%%%%%%%%%%%%%%%%%%%%%%%%%%%%%%
\subsection{Precomposing with generalized Schur functors}

It is of interest to precompose the functor $$\hom_{\falg (\vo)} (-, \ot^\bullet)$$ with the generalized Schur functor $\ot^\bullet \otimes_{\kk \ub} -$ of Section \ref{subsect:gen_Schur}. Here, due to the occurrence of $\ot^\bullet$ twice in the resulting expression, we denote one of the wild-cards $\bullet$ by $\ast$. Since $\falg (\vo)$ is a full subcategory of $\f (\vo)$, we can work in $\f (\vo)$. The symplectic case is treated similarly.

\begin{prop}
\label{prop:natural_iso_hom_vker_vker}
\ 
\begin{enumerate}
\item 
For $M$ a $\kk \ub$-module, there is a natural isomorphism of $\kk \db$-modules:
\[
\hom _{\f (\vo)} ( \ot^\bullet \otimes_{\kk \ub} M, \ot^\ast) 
\cong 
\hom_\kk ( \kk \ub^\sharp \otimes_{\kk \ub} M, \kk),
\]
where the $\kk \db$-module structure on the left hand side arises from the naturality with respect to $\ast$; on the right hand side, this is given by the $\kk \ub$-module structure of $\kk \ub^\sharp \otimes_{\kk \ub} M$.
\item
For $N$ a $(\kk \ub)_{(-;+)}$-module, there is a natural isomorphism of $(\kk \db)_{(-;+)}$-modules:
\[
\hom_{\f (\vsp)} (\spt^\bullet  \otimes_{(\kk \ub)_{(-;+)}} N, \spt^\ast )  
\cong 
\hom_\kk ((\kk \ub)_{(-;+)} ^\sharp \otimes _{(\kk \ub)_{(-;+)}} N, \kk),
\]
where the $(\kk \db)_{(-;+)}$-module structure on the left hand side arises from the naturality with respect to $\ast$; on the right hand side, this is given by the $(\kk \ub)_{(-;+)}$-module structure of $(\kk \ub)_{(-;+)} ^\sharp \otimes _{(\kk \ub)_{(-;+)}} N$.
\end{enumerate}
\end{prop}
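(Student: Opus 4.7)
The plan is to establish both isomorphisms by rewriting each side via tensor-hom adjunctions and identifying the two resulting expressions with $\hom_{\kk\ub}(M,\kk\ub(\ast,-))$ (respectively its symplectic analogue), carrying the expected $\kk\db$-module structure in $\ast$. I describe the argument for (1); case (2) is formally identical after the appropriate substitutions.

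For the left-hand side, the tensor-hom adjunction associated to the coend $\ot^\bullet \otimes_{\kk\ub} -$ gives, naturally in $F\in\f(\vo)$,
\[
\hom_{\f(\vo)}(\ot^\bullet \otimes_{\kk\ub} M,\, F) \;\cong\; \hom_{\kk\ub}\bigl(M,\, \hom_{\f(\vo)}(\ot^\bullet, F)\bigr),
\]
where the inner hom on the right acquires a left $\kk\ub$-module structure from the right $\kk\ub$-action on $\ot^\bullet$. Setting $F=\ot^\ast$ and invoking the full faithfulness of $\ot^\bullet \colon \kk\db \to \f(\vo)$ from Theorem~\ref{thm:vker_fully-faithful} identifies $\hom_{\f(\vo)}(\ot^\bullet, \ot^\ast)$ with $\kk\db(\bullet,\ast)=\kk\ub(\ast,\bullet)$ as a $\kk\ub\otimes\kk\db$-bimodule.

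For the right-hand side, the standard duality adjunction gives, evaluated at each object $X$,
\[
\hom_\kk\bigl(\kk\ub(X,-)^\sharp \otimes_{\kk\ub} M,\, \kk\bigr) \;\cong\; \hom_{\kk\ub}\bigl(M,\, (\kk\ub(X,-)^\sharp)^\sharp\bigr).
\]
Since $\ub(X,Y)$ is a finite set for all $X,Y$, each $\kk\ub(X,Y)$ is a finite rank free $\kk$-module, so the canonical double-duality map $\kk\ub(X,-)\to (\kk\ub(X,-)^\sharp)^\sharp$ is an isomorphism, and naturality in $X$ upgrades this to an isomorphism of $\kk\ub\otimes\kk\db$-bimodules. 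Combining the two computations yields (1). Isomorphism (2) is proved identically, with $\kk\ub$ replaced by $(\kk\ub)_{(-;+)}$ and $\kk\db$ by $(\kk\db)_{(-;+)}$, using the full faithfulness of $\spt^\bullet \colon (\kk\db)_{(-;+)} \to \f(\vsp)$ from Theorem~\ref{thm:vker_fully-faithful} and the finiteness of the hom-spaces $(\kk\ub)_{(-;+)}(X,Y)$ supplied by Proposition~\ref{prop:morphisms_right_free}.

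The main obstacle is bookkeeping of module structures. One must verify that the $\kk\db$-module structure on the left-hand side, arising from the functorial dependence on $\ast$ through $\ot^\ast$, transports under the adjunction to precisely the $\kk\db$-module structure on the right-hand side coming from the $\kk\ub$-bimodule structure of $\kk\ub^\sharp$; in case (2), one must additionally check that the orientation twists encoded in $(\kk\ub)_{(-;+)}^\sharp$ match those carried by $\spt^\ast$ through the symplectic form $\omega$. These verifications are routine but constitute the substantive content of the statement.
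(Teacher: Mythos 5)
Your proof is correct and follows essentially the same route as the paper's: the tensor--hom adjunction reduces the left-hand side to $\hom_{\kk\ub}(M,\hom_{\f(\vo)}(\ot^\bullet,\ot^\ast))$, full faithfulness (Theorem \ref{thm:vker_fully-faithful}) identifies the inner hom with $\kk\ub(\ast,\bullet)$, and the finite rank of the hom-spaces plus the universal property of the tensor product converts this into $\hom_\kk(\kk\ub(\ast,-)^\sharp\otimes_{\kk\ub}M,\kk)$. The only cosmetic difference is that you meet in the middle at $\hom_{\kk\ub}(M,\kk\ub(\ast,-))$ from both ends, whereas the paper chains the isomorphisms in one direction; the ingredients and the module-structure bookkeeping are the same.
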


\begin{proof}
We prove the first case; the second is proved similarly.

One has the natural adjunction isomorphism
\[
\hom _{\f (\vo)} ( \ot^\bullet \otimes_{\kk \ub} M, \ot^\ast) 
\cong 
\hom_{\kk \ub} (M, \hom_{\f (\vo)} (\ot^\bullet, \ot^\ast))
\cong 
\hom_{\kk \ub} (M, \kk \ub (\ast, \bullet) ),
\]
where the second isomorphism is given by Theorem \ref{thm:vker_fully-faithful}. Explicitly, considering $X$ as an object of $\kk \ub$, one has 
\[
\hom_{\kk \ub} (M, \kk \ub )(X) 
= 
\hom_{\kk \ub} (M, \kk \ub (X, -))
.
\]
Since $\kk\ub(X,-)$ takes finite dimensional values, by the universal property of the tensor product the latter is isomorphic to $
\hom_\kk ( \kk\ub(X,-)^\sharp \otimes _{\kk \ub} M, \kk)$, as required.
\end{proof}

Proposition \ref{prop:natural_iso_hom_vker_vker} has the following consequence (stated for the symplectic case): 

\begin{cor}
\label{cor:property_vker_sp_otimes_torsion}
For $N$ a torsion $(\kk \ub)_{(-;+)}$-module, one has 
\[
\hom_{\f (\vsp)} (\spt^\bullet \otimes_{(\kk \ub)_{(-;+)}} N, \spt^\ast) 
=0.
\]

Equivalently, $\spt^\bullet \otimes_{(\kk \ub)_{(-;+)}} N$ is torsion in $\falg (\vsp)$.
\end{cor}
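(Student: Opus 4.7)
The plan is to reduce the vanishing of the Hom in $\f(\vsp)$ to the vanishing result Proposition~\ref{prop:vanish-on-torsion} for the twisted category $(\kk\ub)_{(-;+)}$, via the identification provided by Proposition~\ref{prop:natural_iso_hom_vker_vker}(2). The latter gives a natural isomorphism
\[
\hom_{\f(\vsp)}(\spt^\bullet \otimes_{(\kk\ub)_{(-;+)}} N, \spt^\ast) \cong \hom_\kk\bigl((\kk\ub)_{(-;+)}^\sharp \otimes_{(\kk\ub)_{(-;+)}} N, \kk\bigr).
\]
Since $(\kk\ub)_{(-;+)}$ is an instance of $\kubg$ in the notation of Section~\ref{sect:torsion}, Proposition~\ref{prop:vanish-on-torsion} applies directly: for $N$ torsion, $(\kk\ub)_{(-;+)}^\sharp \otimes_{(\kk\ub)_{(-;+)}} N = 0$, and the first assertion follows at once.

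For the equivalence with the second (torsion) formulation, I first observe that $G := \spt^\bullet \otimes_{(\kk\ub)_{(-;+)}} N$ lies in $\falg(\vsp)$. Indeed, $N$ admits a presentation by free modules $(\kk\ub)_{(-;+)}(X,-)$, and the right-exact generalized Schur functor $\spt^\bullet \otimes_{(\kk\ub)_{(-;+)}} -$ (Lemma~\ref{lem:tensor_with_vker}) sends such a free module to $\spt^{|X|}$; hence $G$ is a quotient of a direct sum of $\spt^d$'s, so is algebraic. Next, by Corollary~\ref{cor:injective_cogenerators}, $\{\spt^d \mid d \in \nat\}$ is a set of injective cogenerators of $\falg(\vsp)/\falg_\tors(\vsp)$. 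Since each $\spt^d$ is injective in $\falg(\vsp)$ (Theorem~\ref{thm:injectives_falgvo_falgvsp}) and torsion-free, the localization functor induces an isomorphism
\[
\hom_{\falg(\vsp)}(G, \spt^d) \cong \hom_{\falg(\vsp)/\falg_\tors(\vsp)}(\loc G, \loc\spt^d),
\]
so the vanishing of the former for every $d$ forces $\loc G = 0$, i.e., $G$ is torsion. Conversely, if $G$ is torsion then any morphism from $G$ to the torsion-free functor $\spt^d$ is automatically zero, closing the equivalence.

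There is no genuine obstacle here: the argument is essentially a translation across three already-established results (Proposition~\ref{prop:natural_iso_hom_vker_vker}, Proposition~\ref{prop:vanish-on-torsion}, Corollary~\ref{cor:injective_cogenerators}). The one point that demands a little attention is checking that the cogenerator property of $\{\spt^d\}$, stated in the quotient category, translates to the required statement in $\falg(\vsp)$ itself; this is handled by the injectivity and torsion-freeness of $\spt^d$, which ensure the compatibility of Hom with localization.
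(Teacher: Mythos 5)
Your proof is correct and follows exactly the paper's route: Proposition~\ref{prop:natural_iso_hom_vker_vker} to transport the question to $(\kk\ub)_{(-;+)}$-modules, Proposition~\ref{prop:vanish-on-torsion} for the vanishing, and Corollary~\ref{cor:injective_cogenerators} for the equivalence with the torsion statement. You merely spell out in more detail the points the paper leaves implicit (that the generalized Schur functor lands in $\falg(\vsp)$, and that injectivity plus torsion-freeness of $\spt^d$ makes the Hom compatible with localization), which is a welcome elaboration rather than a divergence.
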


\begin{proof}
Proposition \ref{prop:natural_iso_hom_vker_vker} gives the identification 
$$\hom_{\f (\vsp)} (\spt^\bullet \otimes_{(\kk \ub)_{(-;+)}} N, \spt^\ast) 
\cong 
\hom_\kk ((\kk \ub)_{(-;+)} ^\sharp \otimes _{(\kk \ub)_{(-;+)}} N, \kk).
$$
 The first statement follows from Proposition \ref{prop:vanish-on-torsion}. The second equivalent statement then follows from Corollary \ref{cor:injective_cogenerators}.
\end{proof}

Proposition \ref{prop:natural_iso_hom_vker_vker} will be applied to objects that are induced up from $\kk \fb$-modules. For convenience we restate the conclusion as a Corollary: 

\begin{cor}
\label{cor:natural_iso_hom_vker_vker_induced_case}
For $L$ a $\kk \fb$-module, there are natural isomorphisms:
\begin{eqnarray*}
\hom _{\f (\vo)} ( \ot^\bullet  \otimes_{\kk \fb} L, \ot^\ast) 
&\cong &
\hom_\kk ( \kk \ub^\sharp \otimes_{\kk \fb} L, \kk)
\\
\hom_{\f (\vsp)} (\spt^\bullet \otimes_{\kk \fb} L, \spt^\ast) 
& \cong & 
\hom_\kk ((\kk \ub)_{(-;+)} ^\sharp \otimes _{\kk \fb} L, \kk).
\end{eqnarray*}
\end{cor}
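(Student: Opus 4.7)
The plan is to deduce both isomorphisms as straightforward specializations of Proposition \ref{prop:natural_iso_hom_vker_vker}, by taking the input module to be induced from a $\kk\fb$-module. I will treat the orthogonal case in detail; the symplectic case is identical with the obvious substitutions ($\ot \rightsquigarrow \spt$, $\kk\ub \rightsquigarrow (\kk\ub)_{(-;+)}$).

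First I would set $M := \kk\ub \otimes_{\kk\fb} L$, considered as a $\kk\ub$-module via the left regular action on the first factor. The standard associativity of the tensor product over a $\kk$-linear category yields a natural isomorphism
\[
\ot^\bullet \otimes_{\kk\ub} M
\;=\;
\ot^\bullet \otimes_{\kk\ub}(\kk\ub \otimes_{\kk\fb} L)
\;\cong\;
\ot^\bullet \otimes_{\kk\fb} L,
\]
using that $\ot^\bullet$ is a $\kk\db$-module (hence in particular a right $\kk\ub$-module via restriction, and its underlying $\kk\fb\op$-structure is the one that appears on the right). Analogously, on the `dual' side one has
\[
\kk\ub^\sharp \otimes_{\kk\ub} M
\;=\;
\kk\ub^\sharp \otimes_{\kk\ub}(\kk\ub \otimes_{\kk\fb} L)
\;\cong\;
\kk\ub^\sharp \otimes_{\kk\fb} L.
\]

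Substituting these two identifications into the conclusion of Proposition \ref{prop:natural_iso_hom_vker_vker} applied to $M$ gives
\[
\hom_{\f(\vo)}(\ot^\bullet \otimes_{\kk\fb} L,\ot^\ast)
\;\cong\;
\hom_{\f(\vo)}(\ot^\bullet \otimes_{\kk\ub} M,\ot^\ast)
\;\cong\;
\hom_\kk(\kk\ub^\sharp \otimes_{\kk\ub} M,\kk)
\;\cong\;
\hom_\kk(\kk\ub^\sharp \otimes_{\kk\fb} L,\kk),
\]
which is the required isomorphism. The same argument, using the $(\kk\db)_{(-;+)}$-module structure on $\spt^\bullet$ and $N := (\kk\ub)_{(-;+)} \otimes_{\kk\fb} L$, yields the symplectic case.

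There is essentially no obstacle here: the statement is a bookkeeping consequence of Proposition \ref{prop:natural_iso_hom_vker_vker} together with the transitivity of tensor products, and the only point worth spelling out is that the various module structures (left $\kk\ub$-, right $\kk\ub$-, and residual $\kk\db$- respectively $(\kk\db)_{(-;+)}$-structures) align correctly so that the identifications above respect the naturality in the wild-card $\ast$. This is immediate from unwinding the definitions.
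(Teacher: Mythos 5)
Your proposal is correct and is exactly the argument the paper intends: the Corollary is presented as a restatement of Proposition \ref{prop:natural_iso_hom_vker_vker} applied to the induced module $\kk\ub \otimes_{\kk\fb} L$, using the identifications $\ot^\bullet \otimes_{\kk\ub}(\kk\ub \otimes_{\kk\fb} L) \cong \ot^\bullet \otimes_{\kk\fb} L$ (spelled out in the remark after Lemma \ref{lem:tensor_with_vker}) and $\kk\ub^\sharp \otimes_{\kk\ub}(\kk\ub \otimes_{\kk\fb} L) \cong \kk\ub^\sharp \otimes_{\kk\fb} L$ (which is Lemma \ref{lem:composite_with_free}).
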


%%%%%%%%%%%%%%%%%%%%%%%%%%%%%%%%%%%%%%%%%%%%%%%%%%%%%%%%%%%%%%%%%%%%%%%%%%%%%
\subsection{Relating to symplectic invariants}

Weyl's results on (stable) symplectic invariants have been subsumed in Theorem \ref{thm:vker_fully-faithful}. 
To make this explicit we recall  the following well-known result: 

\begin{prop}
\label{prop:symplectic_invariants}
For $n \in \nat$ and $(V, \omega)$ a symplectic vector space with $\dim V \geq n$, there is an isomorphism of $\kk \sym_n$-modules 
\begin{eqnarray}
\label{eqn:V_otimes_n_invariants}
(V^{\otimes n} )^{\mathrm{Sp}(V)}
\cong 
(\kk\db)_{(-;+)} (\n, \mathbf{0})^\sharp.
\end{eqnarray}
In particular, this is zero if $n$ is odd.
\end{prop}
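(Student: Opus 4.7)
The plan is to deduce the statement from Theorem \ref{thm:vker_fully-faithful} combined with the reductivity of $\spgp (V, \omega)$ in characteristic zero. Since the $\sym_n$-action by place permutations on $V^{\otimes n}$ commutes with the $\spgp (V,\omega)$-action, the decomposition $V^{\otimes n} \cong (V^{\otimes n})^{\spgp (V, \omega)} \oplus W$ (with $W$ having no non-zero $\spgp (V, \omega)$-invariants) is $\sym_n$-equivariant; dualizing yields a natural isomorphism of $\kk \sym_n$-modules
$$
\hom_{\spgp (V, \omega)} (V^{\otimes n}, \kk) \cong \big( (V^{\otimes n})^{\spgp (V, \omega)}\big)^\sharp.
$$
It therefore suffices to exhibit a $\kk\sym_n$-equivariant isomorphism $(\kk\db)_{(-;+)}(\n,\mathbf{0}) \cong \hom_{\spgp (V, \omega)} (V^{\otimes n}, \kk)$.

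The key ingredient is evaluation at $(V, \omega)$, which gives a natural $\sym_n$-equivariant $\kk$-linear map
$$
\mathrm{ev}_{(V, \omega)} \colon \hom_{\f (\vsp)} (\spt^n , \spt^0) \rightarrow \hom_{\spgp (V, \omega)} (V^{\otimes n}, \kk).
$$
By Theorem \ref{thm:vker_fully-faithful}, the domain is identified with $(\kk\db)_{(-;+)} (\n, \mathbf{0})$ as a $\kk \sym_n$-module (the $\sym_n$-action on the domain coming from place permutations on $\spt^n$, equivalently from the automorphism action on $\n$). What remains is to check that $\mathrm{ev}_{(V, \omega)}$ is an isomorphism whenever $\dim V \geq n$.

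This final step is the heart of the proof and is the principal obstacle; it is precisely the classical content of Weyl's first and second fundamental theorems for the symplectic group (see \cite{MR1488158}). Explicitly, a basis element of $(\kk\db)_{(-;+)} (\n, \mathbf{0})$ is represented by an oriented, ordered perfect matching on $\n$, and $\mathrm{ev}_{(V,\omega)}$ sends it (up to the appropriate Koszul sign prescribed by the twist $(-;+)$) to the multilinear map on $V^{\otimes n}$ obtained by contracting the pairs with $\omega$. Surjectivity is the FFT (every invariant functional on $V^{\otimes n}$ is such a sum of $\omega$-contractions), and injectivity in the stable range $\dim V \geq n$ is the SFT. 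Both can also be extracted from the proof of Theorem \ref{thm:vker_fully-faithful} in \cite[Sections 4.1, 4.2]{MR3376738}, since the restriction statement there encodes precisely the stable range isomorphism.

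The vanishing for $n$ odd is immediate from either side: there are no morphisms $\n \rightarrow \mathbf{0}$ in $\db$ when $n$ is odd (every Brauer diagram is a perfect matching, forcing $n$ even); alternatively, $-\mathrm{id}_V \in \spgp (V, \omega)$ acts on $V^{\otimes n}$ by $(-1)^n$, killing all invariants for $n$ odd.
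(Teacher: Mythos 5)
Your argument is correct and is exactly the justification the paper intends: the paper states this result without proof as a classical consequence of Weyl's fundamental theorems, and the Remark immediately following it points to the same identification $\hom_{\f (\vsp)} (\spt^n, \spt^0) \cong (\kk \db)_{(-;+)} (\n, \mathbf{0})$ from Theorem \ref{thm:vker_fully-faithful} that you use. You correctly isolate the one non-formal step (that evaluation at a single $(V,\omega)$ with $\dim V \geq n$ is an isomorphism, which is the FFT/SFT in the stable range) and the reductivity argument identifying invariant functionals with the dual of the invariants.
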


\begin{rem}
Over a field of characteristic zero, finite-dimensional $\rat \sym_n$-modules are self-dual. The result has been stated in the above form, since it makes the relationship with the following isomorphism (given by Theorem \ref{thm:vker_fully-faithful}) more transparent: 
\[
\hom_{\f (\vsp)} (\spt^n, \spt^0) \cong (\kk \db) _{(-;+)} (\n, \mathbf{0}).
\]
\end{rem}

\section{Koszul complexes and generalized Schur functors}
\label{sect:forms_koszul}

The purpose of this section is to apply the theory reviewed in Section \ref{sect:forms} to the Koszul complexes of Section \ref{sect:koszul}. For instance, in the symplectic case, we first replace a complex of $(\kk \ub)_{(-;+)}$-modules by a complex of algebraic functors on $\vsp$ using the generalized Schur functor - and describe the structure of this complex. Then we consider the stabilization of the homology of this complex, exhibiting a complex of $(\kk \ub)_{(-;+)}$-modules such that the dual of its homology calculates our approximation to the stabilization.

Throughout this section, $\kk$ is a field of characteristic zero.

%%%%%%%%%%%%%%%%%%%%%%%%%%%%%%%%%%%%%%%%%%%%%%%%%%%%%%%%%%%%%%%%%%%%%
\subsection{Applying generalized Schur functors to Koszul complexes}
\label{subsect:compose_Koszul_functors}

Recall from Section \ref{sect:koszul} that
\begin{enumerate}
\item 
 $\mathscr{K}_+ \otimes _{(\kk \db)_{(+;-)}} -$ is a functor from $(\kk\db)_{(+;-)}$-modules to complexes of $\kk \ub$-modules; 
 \item 
 $\mathscr{K}_- \otimes _{(\kk \db)_{(-;-)}}-$ is a functor from $(\kk\db)_{(-; -)}$-modules to complexes of $(\kk \ub)_{(-;+)}$-modules.
\end{enumerate}
We may thus form the following composite functors by using the generalized Schur functors of Lemma  \ref{lem:tensor_with_vker}:
\begin{enumerate}
\item 
$\ot^\bullet  \otimes_{\kk \ub} \mathscr{K}_+ \otimes _{(\kk \db)_{(+;-)}} -$ from $(\kk\db)_{(+;-)}$-modules to complexes in $\falg (\vo)$; 
\item 
$\spt^\bullet  \otimes_{(\kk\ub)_{(-;+)} }\mathscr{K}_- \otimes _{(\kk \db)_{(-;-)}}-$ from $(\kk\db)_{(-; -)}$-modules to complexes in $\falg (\vsp)$.
\end{enumerate}
These are complexes of algebraic functors, in the sense of Definition \ref{defn:falg}.

\begin{rem}
\label{rem:grading_vker_otimes_Koszul}
One has to pay attention to the grading in forming these complexes. For example, in the second case, one can use the length grading of $(\kk\ub)_{(-;+)}$ to grade $\mathscr{K}_- \otimes _{(\kk \db)_{(-;-)}}M$, for $M$ a $(\kk \db)_{(-;-)}$-module; to apply $\spt^\bullet \otimes_{(\kk\ub)_{(-;+)} }-$, we require a compatible grading on $\spt^\bullet$. The following is a natural choice: 
\[
\mathrm{degree} (V^{\otimes n}):= \Bigl\lfloor 
\frac{n}{2}
\Bigr\rfloor.
\]
In some cases the $(\kk \db)_{(-;-)}$-module $M$ comes equipped with a grading compatible with the module structure; it may then be more natural to use this to define the grading of the complex.

Below, we will leave the grading unspecified; in applications, the grading will be made explicit.
\end{rem}

For concision, we introduce the following notation:

\begin{nota}
\label{nota:complexes}
\ 
\begin{enumerate}
\item 
For  $N$ a $(\kk\db)_{(+;-)}$-module, denote by $\ocx (N)$ the complex $\ot^\bullet \otimes_{\kk \ub} \mathscr{K}_+ \otimes _{(\kk \db)_{(+;-)}} N$ in $\falg (\vo)$.
\item 
For $M$ a $(\kk\db)_{(-;-)}$-module, denoted by $\spcx (M)$ the complex $\spt^\bullet  \otimes_{(\kk\ub)_{(-;+)} }\mathscr{K}_- \otimes _{(\kk \db)_{(-;-)}}M$ in $\falg (\vsp)$.
\end{enumerate}
\end{nota}

The underlying objects (without their differentials) of these complexes are easily identified:

\begin{lem}
\label{lem:underlying_complexes}
\ 
\begin{enumerate}
\item 
For $N$ a $(\kk\db)_{(+;-)}$-module, the complex $\ocx(N)$ evaluated on an orthogonal vector space $(V, \sbf )$ has underlying object isomorphic to $N(V)$, given by the (classical) Schur functor of $N$.
\item 
For $M$ a $(\kk\db)_{(-;-)}$-module, the complex $\spcx(M)$ evaluated on a symplectic vector space $(V,  \omega)$ has underlying object isomorphic to $M (V)$, given by the (classical) Schur functor of $M$.
\end{enumerate}
In  both cases, the underlying object is independent of the form and depends only upon the underlying $\kk \fb$-module.
\end{lem}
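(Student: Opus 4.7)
The plan is to unwind the definitions of the Koszul dualizing complexes $\mathscr{K}_+$ and $\mathscr{K}_-$ at the level of underlying graded objects and use associativity of tensor products to collapse the iterated tensor products down to a classical Schur functor construction.

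I will treat the symplectic statement first, the orthogonal case being entirely analogous. By construction,
\[
\mathscr{K}_- = (\kk \ub)_{(-;+)} \otimes_{\kk \fb} (\kk \db)_{(-;-)}
\]
as a bigraded $(\kk \ub)_{(-;+)} \otimes (\kk \ub)_{(-;-)}$-module (forgetting the differential). Thus for any $(\kk \db)_{(-;-)}$-module $M$, there is a natural isomorphism of graded $(\kk \ub)_{(-;+)}$-modules
\[
\mathscr{K}_- \otimes_{(\kk \db)_{(-;-)}} M \;\cong\; (\kk \ub)_{(-;+)} \otimes_{\kk \fb} (\kk \db)_{(-;-)} \otimes_{(\kk \db)_{(-;-)}} M \;\cong\; (\kk \ub)_{(-;+)} \otimes_{\kk \fb} M,
\]
where the right-hand side retains only the underlying $\kk \fb$-module structure of $M$.

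Tensoring with $\spt^\bullet$ over $(\kk \ub)_{(-;+)}$ then gives, by associativity together with the canonical isomorphism $\spt^\bullet \otimes_{(\kk \ub)_{(-;+)}} (\kk \ub)_{(-;+)} \cong \spt^\bullet$ (restricted to $\kk \fb$), a natural isomorphism
\[
\spcx(M) \;\cong\; \spt^\bullet \otimes_{\kk \fb} M
\]
of graded functors on $\vsp$. Evaluated on a symplectic vector space $(V, \omega)$, the right-hand side is $V^{\otimes \bullet} \otimes_{\kk \fb} M$, which is by definition the classical Schur functor $M(V)$ (using Notation \ref{nota:schur_components}). The dependence on $(V, \omega)$ is thus only through the underlying vector space $V$; in particular the symplectic form $\omega$ intervenes only via the differential, not the underlying graded object.

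The orthogonal case proceeds identically, starting from $\mathscr{K}_+ = \kk \ub \otimes_{\kk \fb} (\kk \db)_{(+;-)}$. One obtains the chain of isomorphisms
\[
\ocx(N) \;\cong\; \ot^\bullet \otimes_{\kk \ub} \kk \ub \otimes_{\kk \fb} N \;\cong\; \ot^\bullet \otimes_{\kk \fb} N,
\]
whose value on $(V, \sbf)$ is the classical Schur functor $N(V)$. There is no real obstacle here: the content of the lemma is simply that only the $\kk \fb$-module restrictions of $N$ and $M$ are visible on the underlying graded object, with the twisted Brauer-module structures and the forms entering only through the differential.
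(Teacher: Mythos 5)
Your proposal is correct and takes essentially the same route as the paper: the paper's proof likewise reduces to the description of the underlying bimodule of $\mathscr{K}_\pm$ as $(\kk\ub)_{(\pm;\mp)}\otimes_{\kk\fb}(\kk\db)_{(\pm;-\mp)}$, cancels the tensor factors, and invokes the definition of the classical Schur functor. Your version merely writes out the chain of isomorphisms that the paper leaves implicit.
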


\begin{proof}
This follows directly from the description of the underlying bimodule of $\mathscr{K}_+$ (respectively $\mathscr{K}_-$), together with the definition of the classical Schur functor, as recalled in Section \ref{subsect:gen_Schur}.
\end{proof}

It remains to identify the differentials of these complexes. For this, we first recall from Proposition \ref{prop:kub_twist_homogeneous_quadratic} that the morphisms of $(\kk \ub)_{(\pm; \mp)}$ are generated (over $\kk \fb$) by the $\kk \fb$-bimodule of degree one morphisms. For $(\kk\ub)_{(+;-)}$ and $(\kk \ub)_{(-;-)}$ the degree one morphisms are identified by the following special case of Lemma \ref{lem:bimodule_kub_twisted}:

\begin{lem}
\label{lem:kub_bimodule_special_case}
For $n \in \nat$, there are isomorphisms of $\kk \sym_n \otimes \kk \sym_{n+2}\op$-modules:
\begin{eqnarray*}
(\kk \ub)_{(+; -)}(\n , \mathbf{n+2}) & \cong & \kk \sym_{n+2} \otimes_{\kk \sym_2} \triv_2 \\
(\kk \ub)_{(-; -)}(\n , \mathbf{n+2}) & \cong & \kk \sym_{n+2} \otimes_{\kk \sym_2} \sgn_2,
\end{eqnarray*}
where the bimodule structures are as in Lemma \ref{lem:bimodule_kub_twisted}.
\end{lem}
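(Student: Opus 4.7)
The plan is to deduce both isomorphisms directly from Lemma \ref{lem:bimodule_kub_twisted} by specializing the parameters. Since the codomain $\mathbf{n+2}$ has two more elements than the domain $\n$, I would apply that lemma with $m=n$ and $t=1$, so that the relevant twisting group is $\sym_2 \wr \sym_1$, which is canonically isomorphic to $\sym_2$. The lemma then immediately gives an isomorphism of $\sym_{n+2} \times \sym_n\op$-modules
\[
(\kk\ub)_{(\pm;\mp)}(\n,\mathbf{n+2}) \;\cong\; \kk\sym_{n+2} \otimes_{\kk(\sym_2\wr\sym_1)} \kk_{(\pm;\mp)},
\]
reducing the task to identifying the underlying rank-one $\sym_2$-module in each of the two cases $(+;-)$ and $(-;-)$.

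Next, I would unpack Definition \ref{defn:twisting_representations} in the case $t=1$. The representation $\kk_{(+;-)}$ is by definition the restriction of $\sgn_t$ along the projection $\sym_2\wr\sym_t \twoheadrightarrow \sym_t$; for $t=1$, the target is the trivial group $\sym_1$, so $\sgn_1$ is trivial and hence $\kk_{(+;-)}$ restricts to $\triv_2$. For $\kk_{(-;-)} = \kk_{(+;-)}\otimes\kk_{(-;+)}$, the factor $\kk_{(-;+)}$ is the restriction of $\sgn_{2t} = \sgn_2$ along the inclusion $\sym_2\wr\sym_1 \subset \sym_2$, which is the identity and therefore gives $\sgn_2$; combined with the trivial factor $\kk_{(+;-)}$, this yields $\kk_{(-;-)}\cong\sgn_2$. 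Substituting these identifications into the display above gives the two claimed isomorphisms.

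Finally, I would verify that the $\kk\sym_n \otimes \kk\sym_{n+2}\op$-bimodule structure is the one stated. The left $\sym_{n+2}$-action is the regular action on $\kk\sym_{n+2}$, while the right $\sym_n$-action is inherited from the restriction along the inclusion $\sym_n\subset\sym_n\times(\sym_2\wr\sym_1)\subset\sym_{n+2}$ used in Lemma \ref{lem:bimodule_kub_twisted}; this is precisely the bimodule structure on the right-hand sides of the displayed isomorphisms. There is no real obstacle here: the content of the lemma is entirely bookkeeping, with the only care being the correct identification of the two one-dimensional $\sym_2$-representations arising from Definition \ref{defn:twisting_representations} when $t=1$.
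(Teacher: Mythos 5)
Your proof is correct and follows exactly the route the paper intends: the lemma is stated there as an immediate special case of Lemma \ref{lem:bimodule_kub_twisted} with $t=1$, and your identification of $\kk_{(+;-)}\cong\triv_2$ and $\kk_{(-;-)}\cong\sgn_2$ for $\sym_2\wr\sym_1\cong\sym_2$ agrees with the paper's own remark following Definition \ref{defn:twisting_representations}. Nothing is missing.
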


Thus, if $M$ is a $(\kk\db)_{(-;-)}$-module, for each $n \in \nat$, the action of the degree one morphisms corresponds to a morphism of $\kk \sym_{n+2}$-modules
\begin{eqnarray}
\label{eqn:deg1_(--)module}
M (\mathbf{n+2})
\rightarrow 
(M(\n) \boxtimes \sgn_2) \uparrow^{\sym_{n+2}} _{\sym_n \times \sym_2}.
\end{eqnarray}
Applying the functor $V^{\otimes n+2} \otimes_{\kk \sym_{n+2}}-$, for $V$ a finite-dimensional $\kk$-vector space,  this yields 
\begin{eqnarray}
\label{eqn:deg1_(--)module_schur}
M_{n+2} (V) \rightarrow M_n (V) \otimes \Lambda^2 (V),
\end{eqnarray}
where $\Lambda^2 (-)$ is the second exterior power functor. (Here we are using the notation introduced in  Notation \ref{nota:schur_components}.)

Similarly, if $N$ is a $(\kk\db)_{(+;-)}$-module, for each $n \in \nat$, the action of the  degree one morphisms corresponds to a  morphism of $\kk \sym_{n+2}$-modules
\begin{eqnarray}
\label{eqn:deg1_(+-)module}
N (\mathbf{n+2})
\rightarrow 
(N(\n) \boxtimes \triv_2) \uparrow^{\sym_{n+2}} _{\sym_n \times \sym_2}.
\end{eqnarray}
Applying the functor $V^{\otimes n+2} \otimes_{\kk \sym_{n+2}}-$, this yields 
\begin{eqnarray}
\label{eqn:deg1_(+-)module_schur}
N_{n+2} (V) \rightarrow N_n (V) \otimes S^2 (V),
\end{eqnarray}
where $S^2(-)$ is the second symmetric power functor.

\begin{rem}
Using the Day convolution product for $\kk \fb$-modules (recalled in Section \ref{subsect:cyclic_brauer_recollections} below), the morphisms (\ref{eqn:deg1_(--)module}) and (\ref{eqn:deg1_(+-)module}) can be written in the following global forms respectively:
\begin{eqnarray*}
M & \rightarrow & M \odot \sgn_2 
\\
N & \rightarrow & N \odot \triv_2.
\end{eqnarray*} 
\end{rem}

\begin{defn}
\label{defn:differentials}
\ 
\begin{enumerate}
\item 
For $M$  a $(\kk\db)_{(-;-)}$-module, $n \in \nat$, and $(V, \omega)$ a symplectic vector space, let 
\begin{eqnarray*}
\label{eqn:domega}
\domega_{n}^M(V,\omega) \colon M_{n+2} (V) \rightarrow M_n(V)
\end{eqnarray*}
be the natural morphism obtained by composing (\ref{eqn:deg1_(--)module_schur}) with the morphism induced by the form $\omega : \Lambda^2 (V) \rightarrow \kk$. 
\item 
For $N$  a $(\kk\db)_{(+;-)}$-module, $n \in \nat$, and $(V, \sbf )$ an orthogonal vector space, let 
\begin{eqnarray*}
\label{eqn:domega_orthog}
\domega_{n}^N(V,\sbf ) \colon N_{n+2} (V) \rightarrow N_n(V)
\end{eqnarray*}
be the natural morphism obtained by composing (\ref{eqn:deg1_(+-)module_schur}) with the morphism induced by the form $\sbf  : S^2 (V) \rightarrow \kk$.
\end{enumerate}
\end{defn}

This allows us to identify the complexes:

\begin{prop}
\label{prop:V_omega_complex}
\ 
\begin{enumerate}
\item 
For $M$ a $(\kk\db)_{(-;-)}$-module, the complex 
$
\spcx( M)
$ 
evaluated on $(V, \omega)$ has underlying object $M(V)$ and differential  given by the  natural morphisms $\domega_n^M(V, \omega)$.
\item 
For $N$  a $(\kk\db)_{(+;-)}$-module, the complex 
$
\ocx (N) 
$ 
evaluated on $(V, \sbf )$ has underlying object $N (V)$ and differential given by the natural morphisms $\domega_n^N(V, \sbf )$.
\end{enumerate}
\end{prop}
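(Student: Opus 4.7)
The plan is to start from the identification of the underlying graded object provided by Lemma \ref{lem:underlying_complexes}, and then trace carefully how the Koszul differential of $\mathscr{K}_-$ (respectively $\mathscr{K}_+$) translates after applying the generalized Schur functor. I treat the symplectic case in detail; the orthogonal case is strictly parallel, swapping $\Lambda^2$ for $S^2$, the sign twist for the trivial one, and $\omega$ for $\sbf$.

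First I would fix a symplectic vector space $(V,\omega)$ and, for each $s \in \nat$, identify
\[
\big(\mathscr{K}_-\otimes_{(\kk\db)_{(-;-)}}M\big)(X) \cong (\kk\ub)_{(-;+)}(-,X)\otimes_{\kk\fb}M
\]
via Lemma \ref{lem:kz-complexes_identify_underlying} (the counterpart for $\mathscr{K}_-$). Applying $\spt^\bullet\otimes_{(\kk\ub)_{(-;+)}}-$ collapses this, using the classical Schur isomorphism $V^{\otimes\bullet}\otimes_{\kk\fb}M\cong M(V)$, to the stated underlying object. The real content is that the Koszul differential, after these two reductions, is precisely the composition $\domega_n^M(V,\omega)$.

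The next step is to localise the differential on generators. By Corollary \ref{cor:coevaluation} (dualised), the differential of $\mathscr{K}_-$ is determined on the canonical generator $[\mathbf{1}_\mathbf{s}]$ by
\[
[\mathbf{1}_\mathbf{s}]\mapsto \sum_{1\le i<j\le s}[d_{i,j}],
\]
where $[d_{i,j}]$ is the chord diagram of Remark \ref{rem:graphical_K} (with the orientation sign arising from the ordering of the red chords inherent to $\mathscr{K}_-$). Under the isomorphism $\mathscr{K}_-\otimes_{(\kk\db)_{(-;-)}}M\cong (\kk\ub)_{(-;+)}(-,\cdot)\otimes_{\kk\fb}M$, the action of the lower chord $[d_{i,j}]$ on the $M$-factor reduces, modulo the $\sym_n$-action, to the universal degree-one structure morphism (\ref{eqn:deg1_(--)module})
\[
M(\mathbf{n+2})\to \big(M(\n)\boxtimes\sgn_2\big)\uparrow^{\sym_{n+2}}_{\sym_n\times\sym_2},
\]
since the morphisms of $(\kk\db)_{(-;-)}$ are generated in degree one over $\kk\fb$. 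Then, when the remaining upper chord is absorbed by applying $\spt^\bullet\otimes_{(\kk\ub)_{(-;+)}}-$, the corresponding generator of $(\kk\ub)_{(-;+)}^1(\n,\mathbf{n+2})$ acts on $\spt^\bullet$ by Proposition \ref{prop:tensor_functors_sp_o} via $\omega\colon\Lambda^2 V\to\kk$ applied to the two new tensor factors. Composing these two steps yields exactly (\ref{eqn:deg1_(--)module_schur}) followed by $\mathrm{id}_{M_n(V)}\otimes\omega$, which is $\domega_n^M(V,\omega)$ by Definition \ref{defn:differentials}.

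The main technical obstacle is sign bookkeeping: the $(-;-)$ twist on $\mathscr{K}_-$ introduces chord-orientation signs, and the $(-;+)$ twist introduces chord-order signs. Both are absorbed coherently because $\spt^\bullet$ is a $(\kk\db)_{(-;+)}$-module (so the $(-;+)$ signs match the symmetry of $\omega$ as a map on $\Lambda^2 V$), and because the $\sgn_2$-isotypic factor in (\ref{eqn:deg1_(--)module}) pairs with the alternating nature of $\Lambda^2 V$ to make the induced map on Schur functors well-defined. Once this matching is spelled out on the universal generator $[\mathbf{1}_\mathbf{s}]$, the full identification of the differential follows by $(\kk\ub)_{(-;+)}\otimes (\kk\db)_{(-;-)}\op$-linearity and naturality in $M$. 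The orthogonal statement is obtained {\em mutatis mutandis}: the $\mathscr{K}_+$ complex replaces alternating chord orderings with symmetric ones, so $\sgn_2$ is replaced by $\triv_2$, $\Lambda^2 V$ by $S^2 V$, and $\omega$ by $\sbf$, giving $\domega_n^N(V,\sbf)$.
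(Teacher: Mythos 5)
Your proposal is correct and follows the same route as the paper's proof, which simply observes that the underlying objects are identified by Lemma \ref{lem:underlying_complexes} and that the differential follows from the definition of the differential of $\mathscr{K}_-$ (respectively $\mathscr{K}_+$). You spell out in more detail what the paper leaves implicit — tracing the coevaluation on $[\mathbf{1}_\mathbf{s}]$ through the two tensor reductions and checking the sign bookkeeping — but the argument is the same.
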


\begin{proof}
The underlying objects were identified in Lemma \ref{lem:underlying_complexes}. The identification of the differential follows from the definition of the differential of the respective complexes $\mathscr{K}_-$ and $\mathscr{K}_+$.
\end{proof}

%%%%%%%%%%%%%%%%%%%%%%%%%%%%%%%%%%%%%%%%%%%%%%%%%%%%%%
\subsection{Homology and stabilizing}

Recall from Section \ref{sect:forms} that the stabilization of an algebraic functor on $\vsp$ is given by applying the functor $\stabvsp : \falg (\vsp) \rightarrow \rep (\spgp)$. Likewise, for an algebraic functor on $\vo$, it is given by applying $\stabvo : \falg (\vo) \rightarrow \rep (\ogp)$.

The following is immediate:

\begin{lem}
\label{lem:stabilization_homology}
\ 
\begin{enumerate}
\item 
For $M$ a $(\kk \db)_{(-;-)}$-module, the homology $H_* \spcx (M)$ is a graded object in $\falg (\vsp)$. 
 Hence the stabilization $\stabvsp (H_* \spcx (M))$ is a graded object in $\falg (\vsp)/\falg_\tors (\vsp) \simeq \rep (\spgp)$.
\item 
For $N$ a $\kk \db$-module, the homology $H_* \ocx (N)$ is a graded object in $\falg (\vo)$. Hence the stabilization $\stabvo (H_* \ocx (N))$ is a graded object in $\falg (\vo)/ \falg_\tors (\vo) \simeq \rep (\ogp)$.
\end{enumerate}
\end{lem}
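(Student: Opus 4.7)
The plan is to verify, for each case, that every term of the complex lies in the corresponding algebraic subcategory $\falg(\vsp)$ (resp.\ $\falg(\vo)$), then deduce that the homology does too by the closure properties of these subcategories, and finally invoke exactness of the stabilization functor.

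First, I would unpack the description of the complexes given by Proposition~\ref{prop:V_omega_complex}, or equivalently Lemma~\ref{lem:underlying_complexes}. Evaluated on $(V,\omega)$, the underlying ungraded object of $\spcx(M)$ is the classical Schur functor $M(V) \cong \bigoplus_n V^{\otimes n} \otimes_{\kk\sym_n} M(\n)$. Hence, in $\f(\vsp)$, each graded component of $\spcx(M)$ is a finite (or, in general, arbitrary) direct sum of functors of the form $\spt^n \otimes_{\kk\sym_n} W$ for a $\kk\sym_n$-module $W$. Since $\kk$ has characteristic zero, $W$ is a direct summand of a finite direct sum of copies of $\kk\sym_n$, so $\spt^n \otimes_{\kk\sym_n} W$ is a direct summand of a finite direct sum of copies of $\spt^n$. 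By Definition~\ref{defn:falg}, this places every graded component of $\spcx(M)$ in $\falg(\vsp)$.

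Next, I would appeal to the fact that $\falg(\vsp)$ is an abelian (indeed, Grothendieck) subcategory of $\f(\vsp)$ that is closed under subquotients, a point already used throughout Section~\ref{sect:forms}. The differential of $\spcx(M)$ is a morphism in $\f(\vsp)$, and its kernels, images and cokernels, computed in $\f(\vsp)$, are automatically subquotients of objects of $\falg(\vsp)$, hence themselves objects of $\falg(\vsp)$. Therefore $H_*\spcx(M)$ is naturally a graded object of $\falg(\vsp)$.

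For the stabilization statement, I would invoke Proposition~\ref{prop:stab_exact}, which tells us that $\stabvsp$ is exact. Exactness implies that $\stabvsp$ commutes with the formation of kernels and cokernels, hence with homology:
\[
\stabvsp(H_*\spcx(M)) \cong H_*\big(\stabvsp(\spcx(M))\big).
\]
The right-hand side is a graded object of $\rep(\spgp)$, and under the equivalence $\falg(\vsp)/\falg_\tors(\vsp) \simeq \rep(\spgp)$ (Proposition~\ref{prop:factor_stab} and the Sam--Snowden equivalence), this yields the desired graded object. The orthogonal case is entirely analogous: the underlying object of $\ocx(N)$ on $(V,\sbf)$ is the classical Schur functor $N(V)$, so each graded component lies in $\falg(\vo)$ by the same argument, homology lies in $\falg(\vo)$ by closure under subquotients, and one concludes by the exactness of $\stabvo$.

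There is no serious obstacle: the whole statement is essentially a matter of compiling closure and exactness properties already available in the preceding sections. The only point requiring a moment's care is the observation that $\falg(\vsp)$ (and $\falg(\vo)$) is closed under subquotients and arbitrary direct sums, so that forming kernels, cokernels, and direct sums of objects of $\falg$ in the ambient functor category stays within $\falg$; this is immediate from Definition~\ref{defn:falg}.
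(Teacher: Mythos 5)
Your argument is correct and follows exactly the route the paper intends (the paper states the lemma without proof as "immediate"): the terms of $\ocx(N)$ and $\spcx(M)$ are already noted to be algebraic in Section~\ref{subsect:compose_Koszul_functors}, $\falg(\vo)$ and $\falg(\vsp)$ are closed under subquotients by Definition~\ref{defn:falg}, and the stabilization functors restrict to the algebraic subcategories. The only cosmetic point is that $M(\n)$ need not be finite-dimensional, so $W$ is in general a summand of a possibly infinite direct sum of copies of $\kk\sym_n$ — which is harmless, since Definition~\ref{defn:falg} allows infinite direct sums, as you yourself note parenthetically.
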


We are interested both in the functors $H_* \spcx (M)$ (respectively $H_* \ocx (N)$) and in their respective stabilizations. Now, as in Section \ref{subsect:analyse_algebraic}, we can approximate the stabilizations (see Remark \ref{rem:approximating_stabilization}) by applying the respective functors 
\begin{eqnarray*}
\hom_{\falg (\vsp)} (-, \spt^\ast) &:& \falg (\vsp) \rightarrow (\kk \db)_{(-;+)}\dash\modules \\
\hom_{\falg (\vo)} (-, \ot ^\ast ) &:& \falg (\vo) \rightarrow \kk \db \dash\modules.
\end{eqnarray*}

These functors are exact, by Theorem \ref{thm:injectives_falgvo_falgvsp}. This implies the following:

\begin{lem}
\label{lem:weak_stabilization_homology}
\ 
\begin{enumerate}
\item 
For $M$ a $(\kk \db)_{(-;-)}$-module, the graded $(\kk\db)_{(-;+)}$-module   $\hom_{\falg (\vsp)} ( H_* \spcx (M), \spt^\ast)$ is naturally isomorphic to the homology of the complex of $(\kk\db)_{(-;+)}$-modules
$$
\hom_{\falg (\vsp)} ( \spcx (M), \spt^\ast).
$$ 
\item 
For $N$ a $(\kk \db)_{(+;-)}$-module, the graded $\kk\db$-module   $\hom_{\falg (\vo)} ( H_* \ocx (N), \ot^\ast)$ is naturally isomorphic to the homology of the complex of $\kk\db$-modules
$$
\hom_{\falg (\vo)} ( \ocx (N), \ot^\ast).
$$ 
\end{enumerate}
\end{lem}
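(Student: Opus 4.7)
The plan is to deduce the result from the exactness of $\hom_{\falg(\vsp)}(-,\spt^\ast)$ (respectively $\hom_{\falg(\vo)}(-,\ot^\ast)$), using the fact that exact contravariant functors commute with passage to (co)homology of a chain complex. I will write the argument for the symplectic case; the orthogonal case is identical \emph{mutatis mutandis}.

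The first step is to record that $\hom_{\falg(\vsp)}(-,\spt^\ast)$ is exact. This is immediate from Theorem \ref{thm:injectives_falgvo_falgvsp}, which asserts that each $\spt^d$ is injective in $\falg(\vsp)$. Evaluating $\hom_{\falg(\vsp)}(-,\spt^\ast)$ at a fixed value of $\ast = \mathbf{d}$ is exactness against an injective, and the $(\kk\db)_{(-;+)}$-module structure obtained by varying $\ast$ is preserved by naturality.

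The second step is the standard diagram chase showing that for any exact contravariant functor $F$ and any chain complex $C_\bullet$ in the source category, $H^n(F(C_\bullet)) \cong F(H_n(C_\bullet))$. Concretely, writing $Z_n$ and $B_n$ for the cycles and boundaries of $C_\bullet$, the two short exact sequences
\[
0 \to B_n \to Z_n \to H_n(C_\bullet) \to 0, \qquad 0 \to Z_n \to C_n \to B_{n-1} \to 0
\]
are taken by $F$ to short exact sequences; factoring the differential $d_{n+1}\colon C_{n+1}\twoheadrightarrow B_n \hookrightarrow C_n$ and applying $F$ identifies $\ker F(d_{n+1})$ with the preimage of $\ker(F(Z_n)\twoheadrightarrow F(B_n))$ in $F(C_n)$, while $\mathrm{im}\, F(d_n)$ coincides with $F(B_{n-1}) = \ker(F(C_n)\twoheadrightarrow F(Z_n))$. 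Passage to the quotient yields $\ker(F(Z_n) \twoheadrightarrow F(B_n)) = F(H_n(C_\bullet))$. Taking $F = \hom_{\falg(\vsp)}(-,\spt^\ast)$ and $C_\bullet = \spcx(M)$ gives the desired natural isomorphism.

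There is no genuine obstacle; the only point requiring a little care is the bookkeeping, namely that the identification respects both the grading and the $(\kk\db)_{(-;+)}$-module structure coming from naturality in $\ast$. This however is automatic: every arrow used in the diagram chase is produced by functoriality of $\hom$, and so is natural in $\ast$; the induced isomorphism $H^n\bigl(\hom_{\falg(\vsp)}(\spcx(M),\spt^\ast)\bigr) \cong \hom_{\falg(\vsp)}(H_n\spcx(M),\spt^\ast)$ is then tautologically a morphism of graded $(\kk\db)_{(-;+)}$-modules. The orthogonal statement follows by the same argument with $\spgp$ replaced by $\ogp$, $\spcx$ by $\ocx$, and $\spt^\ast$ by $\ot^\ast$, the required injectivity again being part of Theorem \ref{thm:injectives_falgvo_falgvsp}.
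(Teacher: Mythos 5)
Your proposal is correct and follows the same route as the paper: the paper derives the lemma immediately from the exactness of $\hom_{\falg (\vsp)} (-, \spt^\ast)$ and $\hom_{\falg (\vo)} (-, \ot^\ast)$, which is exactly the consequence of Theorem \ref{thm:injectives_falgvo_falgvsp} you invoke. The only difference is that you spell out the standard diagram chase showing an exact functor commutes with homology, which the paper leaves implicit.
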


We can identify these complexes by  Proposition \ref{prop:natural_iso_hom_vker_vker}:

\begin{prop}
\label{prop:identify_complexes}
\ 
\begin{enumerate}
\item 
For $M$ a $(\kk \db)_{(-;-)}$-module, there is a natural  isomorphism of complexes of $(\kk\db) _{(-;+)}$-modules 
\begin{eqnarray}
\label{eqn:2nd_Kz_M_dual}
\hom_{\falg (\vsp)} ( \spcx (M), \spt^\ast) &\cong & \Big( 
(\kk\ub)_{(-;+)}^\sharp  \otimes_{(\kk\ub)_{(-;+)}} \mathscr{K}_- \otimes_{(\kk\db)_{(-;-)}}M \Big) ^\sharp. 
\end{eqnarray}
\item 
For $N$ a $(\kk \db)_{(+;-)}$-module,  there is a natural  isomorphism of complexes of $\kk\db$-modules 
\begin{eqnarray}
\label{eqn:2nd_Kz_N_dual}
\hom_{\falg (\vo)} ( \ocx (N), \ot^\ast)
&\cong &
\Big( \kk\ub^\sharp  \otimes_{\kk\ub} \mathscr{K}_+ \otimes_{(\kk\db)_{(+;-)}}N \Big)^\sharp .
\end{eqnarray}
\end{enumerate}
\end{prop}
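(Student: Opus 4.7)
The strategy is to reduce Proposition \ref{prop:identify_complexes} to the module-level isomorphism already given by Proposition \ref{prop:natural_iso_hom_vker_vker}, applied termwise to each of the two complexes, and then to verify compatibility with the differentials and the module structures.

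For part (1), let me set $P_\bullet := \mathscr{K}_- \otimes_{(\kk\db)_{(-;-)}} M$, a complex of $(\kk\ub)_{(-;+)}$-modules. By definition $\spcx(M) = \spt^\bullet \otimes_{(\kk\ub)_{(-;+)}} P_\bullet$, which is a complex in $\falg(\vsp)$ obtained by applying the generalized Schur functor of Lemma \ref{lem:tensor_with_vker} termwise. Since $\hom_{\falg(\vsp)}(-,\spt^\ast)$ is an additive contravariant functor, one obtains a complex of $(\kk\db)_{(-;+)}$-modules $\hom_{\falg(\vsp)}(\spcx(M), \spt^\ast)$. Now apply Proposition \ref{prop:natural_iso_hom_vker_vker}(2) in each homological degree $n$ with the $(\kk\ub)_{(-;+)}$-module $P_n$: this provides a natural isomorphism of $(\kk\db)_{(-;+)}$-modules
\[
\hom_{\falg(\vsp)}(\spt^\bullet \otimes_{(\kk\ub)_{(-;+)}} P_n, \spt^\ast) \ \cong \ \hom_\kk\big((\kk\ub)_{(-;+)}^\sharp \otimes_{(\kk\ub)_{(-;+)}} P_n, \kk\big),
\]
i.e., an isomorphism in each degree between the two sides of the asserted formula. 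The second step is then to check that these termwise isomorphisms assemble into an isomorphism of complexes, which amounts to verifying compatibility with the differentials. This is a naturality statement: the differential of $P_\bullet$ is a morphism of $(\kk\ub)_{(-;+)}$-modules and Proposition \ref{prop:natural_iso_hom_vker_vker}(2) is natural in the module variable $N$, so the isomorphism commutes with applying $\spt^\bullet \otimes_{(\kk\ub)_{(-;+)}} -$ (which is a functor of $N$) on the left and $(\kk\ub)_{(-;+)}^\sharp \otimes_{(\kk\ub)_{(-;+)}} -$ on the right, with the appropriate contravariant direction dictated by $\hom_{\falg(\vsp)}(-,\spt^\ast)$ and $\hom_\kk(-,\kk)$.

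The match of $(\kk\db)_{(-;+)}$-module structures is already built into Proposition \ref{prop:natural_iso_hom_vker_vker}(2): on the left, naturality in $\ast$ gives the module structure via the $(\kk\db)_{(-;+)}$-module structure of $\spt^\ast$; on the right, this corresponds (under vector space duality) to the $(\kk\ub)_{(-;+)}$-structure of $(\kk\ub)_{(-;+)}^\sharp \otimes_{(\kk\ub)_{(-;+)}} P_n$, noting that $(\kk\ub)_{(-;+)} = (\kk\db)_{(-;+)}\op$. Finally, I would rewrite the target more symmetrically. The codomain in (\ref{eqn:2nd_Kz_M_dual}) is the $\kk$-linear dual of
\[
(\kk\ub)_{(-;+)}^\sharp \otimes_{(\kk\ub)_{(-;+)}} \mathscr{K}_- \otimes_{(\kk\db)_{(-;-)}} M,
\]
which (up to the $\sym$-coinvariants involved) has termwise finite-dimensional values over $\kk$, thanks to the finiteness properties of the Koszul complex $\mathscr{K}_-$ established previously; thus the dualization is well-behaved and (\ref{eqn:2nd_Kz_M_dual}) is a genuine isomorphism of complexes.

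The proof of part (2) is entirely parallel: replace $\spcx$, $\spt^\bullet$, $(-;-)$, $(-;+)$ by $\ocx$, $\ot^\bullet$, $(+;-)$ and the untwisted cases respectively, and invoke part (1) of Proposition \ref{prop:natural_iso_hom_vker_vker} with $N$ replaced by $\mathscr{K}_+ \otimes_{(\kk\db)_{(+;-)}} N$. The only point requiring minor attention is the naturality in $\ast$, which yields the $\kk\db$-module structure compatible with the right-hand side. Overall no step is genuinely difficult; the main bookkeeping concern is simply  tracking the three algebraic structures (the complex grading, the $(\kk\ub)$-module action controlling the Schur/duality interplay, and the $(\kk\db)$-module action arising from the wildcard $\ast$) through the contravariance of $\hom(-,\spt^\ast)$ and $\hom(-,\kk)$.
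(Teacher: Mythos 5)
Your proof is correct and follows exactly the route the paper intends: the paper gives no separate proof of Proposition \ref{prop:identify_complexes}, treating it as an immediate termwise application of Proposition \ref{prop:natural_iso_hom_vker_vker} to the complex $\mathscr{K}_-\otimes_{(\kk\db)_{(-;-)}}M$ (respectively $\mathscr{K}_+\otimes_{(\kk\db)_{(+;-)}}N$), with naturality in the module variable supplying compatibility with the differentials, which is precisely what you spell out. Your closing finiteness remark is harmless but not needed for the statement, since the right-hand side is by definition the $\kk$-linear dual.
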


Proposition \ref{prop:identify_complexes} has the following Corollary:

\begin{cor}
\label{cor:indentify_homology}
\ 
\begin{enumerate}
\item 
For $M$ a $(\kk \db)_{(-;-)}$-module, there is a natural  isomorphism of graded  $(\kk\db) _{(-;+)}$-modules 
\begin{eqnarray*}
\hom_{\falg (\vsp)} ( H_* \spcx (M), \spt^\ast) &\cong & 
H_* \big((\kk\ub)_{(-;+)}^\sharp  \otimes_{(\kk\ub)_{(-;+)}} \mathscr{K}_- \otimes_{(\kk\db)_{(-;-)}}M\big)  ^\sharp. 
\end{eqnarray*}
\item 
For $N$ a $(\kk \db)_{(+;-)}$-module,  there is a natural  isomorphism of graded $\kk\db$-modules 
\begin{eqnarray*}
\hom_{\falg (\vo)} ( H_* \ocx (N), \ot^\ast)
&\cong &
H_* \big( \kk\ub^\sharp  \otimes_{\kk\ub} \mathscr{K}_+ \otimes_{(\kk\db)_{(+;-)}}N \big)^\sharp
\end{eqnarray*}
\end{enumerate}
\end{cor}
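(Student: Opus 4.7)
The plan is to derive both statements of Corollary \ref{cor:indentify_homology} by taking (co)homology of the isomorphisms of complexes provided by Proposition \ref{prop:identify_complexes}, then commuting (co)homology past the two functors that intervene. I treat the symplectic statement in detail; the orthogonal case is completely analogous.

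First, since $\spt^\ast$ is injective in $\falg(\vsp)$ by Theorem \ref{thm:injectives_falgvo_falgvsp}, the functor $\hom_{\falg(\vsp)}(-,\spt^\ast)$ is exact. Applied degreewise to the complex $\spcx(M)$, it exchanges homology and cohomology in the usual way:
$$
H^*\bigl(\hom_{\falg(\vsp)}(\spcx(M),\spt^\ast)\bigr) \;\cong\; \hom_{\falg(\vsp)}(H_*\spcx(M),\spt^\ast),
$$
as graded $(\kk\db)_{(-;+)}$-modules. This is just Lemma \ref{lem:weak_stabilization_homology} reinterpreted at the level of homology (the grading on the right-hand side comes from the homological grading of $\spcx(M)$, and the $(\kk\db)_{(-;+)}$-structure comes from the naturality of $\spt^\ast$ in $\ast$).

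Second, Proposition \ref{prop:identify_complexes} identifies the complex of $(\kk\db)_{(-;+)}$-modules $\hom_{\falg(\vsp)}(\spcx(M),\spt^\ast)$ with the $\kk$-linear dual of $C_\bullet(M) := (\kk\ub)_{(-;+)}^\sharp \otimes_{(\kk\ub)_{(-;+)}} \mathscr{K}_- \otimes_{(\kk\db)_{(-;-)}} M$. Since $\kk$ is a field, every short exact sequence of $\kk$-vector spaces splits, so the duality functor $(-)^\sharp$ is exact and commutes with (co)homology termwise, giving a natural isomorphism $H^n(C_\bullet(M)^\sharp) \cong H_n(C_\bullet(M))^\sharp$ for every $n$. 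This isomorphism respects the $(\kk\db)_{(-;+)}$-module structures, because the $(\kk\db)_{(-;+)}$-action on the right-hand side of Proposition \ref{prop:identify_complexes} is by construction the one inherited via $(-)^\sharp$ from the left $(\kk\ub)_{(-;+)}^{\mathrm{op}} = (\kk\db)_{(-;+)}$-action on $(\kk\ub)_{(-;+)}^\sharp$ (see Remark \ref{rem:compare_2nd_Kz}). Composing this with the previous step yields the first statement.

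The second statement is obtained by the entirely parallel chain of arguments, substituting $\ot^\ast$ for $\spt^\ast$, invoking the $\kk\ub$-module clause of Proposition \ref{prop:identify_complexes} and the orthogonal injectivity clause of Theorem \ref{thm:injectives_falgvo_falgvsp}, and identifying the $\kk\db$-module structures analogously. I do not expect any genuine obstacle here: all of the Koszul-theoretic work has already been done in Proposition \ref{prop:identify_complexes} (via Proposition \ref{prop:natural_iso_hom_vker_vker}), so the proof consists only of formal consequences of the exactness of $(-)^\sharp$ and of $\hom_{\falg}(-,\spt^\ast)$ (respectively $\hom_{\falg}(-,\ot^\ast)$). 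The one point that deserves a brief check is the coherence of module structures through the two exchanges, which follows directly by tracing the conventions established in Sections \ref{sect:forms} and \ref{sect:forms_koszul}.
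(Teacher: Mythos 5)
Your proposal is correct and is essentially the paper's own argument: the corollary is obtained by combining Lemma \ref{lem:weak_stabilization_homology} (exactness of $\hom_{\falg(\vsp)}(-,\spt^\ast)$, resp.\ $\hom_{\falg(\vo)}(-,\ot^\ast)$, via the injectivity statement of Theorem \ref{thm:injectives_falgvo_falgvsp}) with the isomorphism of complexes in Proposition \ref{prop:identify_complexes}, and then commuting the exact functor $(-)^\sharp$ past homology. Your additional check that the $(\kk\db)_{(-;+)}$- (resp.\ $\kk\db$-) module structures are preserved through these exchanges is the right point to verify and is handled correctly.
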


\begin{rem}
This result isolates the vector space duality in this approach to approximating $\stabvsp H_* \spcx (M)$ and $\stabvo H_* \ocx (N)$. This shows that the graded $(\kk \ub)_{(-;+)}$-module
$$
H_* \big(\kk\ub)_{(-;+)}^\sharp  \otimes_{(\kk\ub)_{(-;+)}} \mathscr{K}_- \otimes_{(\kk\db)_{(-;-)}}M\big) 
$$ 
is very close to determining $\stabvsp H_* \spcx (M)$.

Likewise, the graded $\kk \ub$-module
$$
H_* \big( \kk\ub^\sharp  \otimes_{\kk\ub}  \mathscr{K}_+ \otimes_{(\kk\db)_{(+;-)}}N \big)
$$
is very close to determining $\stabvo H_* \ocx (N)$.
\end{rem}

\section{Cyclic operads and modules over twisted Brauer categories}
\label{sect:cyclic_brauer}

The purpose of this section is to show how to associate  a  $(\kk \db)_{(-;-)}$-module to a non-unital cyclic operad $\cpd$; this is a signed version of a more direct construction which yields a $\kk \db$-module. This result, stated as Theorem \ref{thm:cpd_db_kdbm}, is related to Stoll's characterization of modular operads in terms of the Brauer properads \cite{MR4541945}.

As will be seen in Section \ref{sect:kontsevich_lie}, one significance of the $(\kk \db)_{(-;-)}$-module structure is that the associated generalized Schur functor construction yields the Lie algebra associated to a cyclic operad $\cpd$ and a symplectic vector space $(V, \omega)$, as constructed by Conant and Vogtmann \cite{MR2026331} generalizing a construction of Kontsevich \cite{MR1247289,MR1341841}. 

In this section, we take $\kk$ to be a field; for subsequent applications it will have characteristic zero.

%%%%%%%%%%%%%%%%%%%%%%%%%%%%%%%%%%%%%%%%%%%%%%%%%%%%%%%%%%%%%%%%%%%%%%%%%%%%%%%%%%%%%%%%%
\subsection{Recollections}
\label{subsect:cyclic_brauer_recollections}

Recall that the category of $\kk \fb$-modules is equipped with the Day convolution product, denoted $\odot$ here. For $F$, $G$ two $\kk \fb$-modules, this is defined explicitly by 
\[
(F\odot G) (X) := \bigoplus_{X = S_1 \amalg S_2} F(S_1) \otimes G(S_1),
\]
where the sum is over ordered decompositions of $X$ into two subsets. 
This defines a symmetric monoidal structure on $\kk \fb$-modules with unit $\kk_\mathbf{0}$, the $\kk\fb$-module supported on $\emptyset$ with value $\kk$. Thus, for a $\kk \fb$-module $F$ and $n \in \nat$, one can form the iterated Day convolution product $F^{\odot n}$; the symmetric group $\sym_n$ acts by permuting the $\odot$ factors. By convention $F^{\odot 0}$ is $\kk_\mathbf{0}$.

\begin{nota}
\label{nota:sfb_lfb}
Write $\sfb^*$ (respectively $\lfb^*$) for the symmetric (resp. exterior) algebra functor in the category of $\kk \fb$-modules, so that, for $n \in \nat$ and an $\kk\fb$-module $F$,
\begin{eqnarray*}
\sfb ^n (F) &=& F^{\odot n}/ \sym_n\\
\lfb ^n (F) &=& (\sgn_n \otimes F^{\odot n}) / \sym_n,
\end{eqnarray*}
where $\sym_n$ acts diagonally on $\sgn_n \otimes F^{\odot n}$. 
\end{nota}

\begin{rem}
\label{rem:schur_sfb_lfb}
The classical Schur functor construction (as reviewed at the beginning of Section \ref{subsect:gen_Schur}) is symmetric monoidal from $\kk \fb$-modules (equipped with the Day convolution product) to functors on $\kk$-vector spaces (equipped with the pointwise tensor product). Namely, for $\kk\fb$-modules $M$ and $N$, and $V $ a $\kk$-vector space, there is a natural isomorphism 
$$
(M\odot N) (V) \cong M(V) \otimes N(V)
$$
that is compatible with the symmetry and the associativity constraints. 

This implies that, for $F$ a $\kk\fb$-module and $n\in \nat$, there are natural isomorphisms of functors (with respect to the vector space $V$):
\begin{eqnarray*}
\sfb ^n (F) (V) & \cong & S^n (F (V)) \\
\lfb ^n (F)(V) & \cong & \Lambda^n (F(V)),
\end{eqnarray*}
where, on the right, $S^n$ and $\Lambda^n$ are the usual symmetric and exterior power functors respectively. 
\end{rem}

\begin{lem}
\label{lem:sfb_lfb}
For  $0<n \in \nat$ and $F$ a $\kk \fb$-module such that $F (\mathbf{0})=0$, there are isomorphisms of $\kk$-vector spaces 
\[
\sfb ^n (F) (X) 
\cong 
\lfb^n (F) (X)
\cong 
\bigoplus_{\substack{(U_i \mid i\in \n) \\ \amalg_i U_i =X}} 
\bigotimes_i F(U_i), 
\]
where the sum is over unordered decompositions of $X$ into $n$ non-empty subsets.
\end{lem}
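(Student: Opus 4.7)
The plan is to unpack the Day convolution and then exploit the hypothesis $F(\mathbf{0}) = 0$ to reduce to a free action of $\sym_n$.

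First I would write out the definition of the $n$-fold Day convolution as
$$
F^{\odot n}(X) = \bigoplus_{(S_1,\ldots,S_n)} \bigotimes_{i=1}^n F(S_i),
$$
where the sum is indexed by ordered decompositions $X = S_1 \amalg \cdots \amalg S_n$. The group $\sym_n$ acts by permuting indices: $\sigma$ sends the summand indexed by $(S_1,\ldots,S_n)$ to the one indexed by $(S_{\sigma^{-1}(1)},\ldots,S_{\sigma^{-1}(n)})$, combined with the appropriate permutation of tensor factors.

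The key observation is then that the hypothesis $F(\mathbf{0})=0$ forces all contributing summands to have every $S_i$ non-empty. Since the $S_i$ are pairwise disjoint subsets of $X$, if any two were equal they would both be $\emptyset$; hence in every non-vanishing summand the subsets $S_1,\ldots,S_n$ are pairwise distinct. Consequently the induced $\sym_n$-action on the indexing set of ordered decompositions-into-non-empty-parts is free, with orbits in bijection with unordered decompositions of $X$ into $n$ non-empty subsets.

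Given this freeness, I would conclude by the standard fact that, for a free $G$-set $T$ and a one-dimensional $\kk G$-module $L$, the diagonal coinvariants of $\bigoplus_{t\in T} V_t$ (with coefficients twisted by $L$) reduce to one copy of the fibre per orbit after choosing orbit representatives. Applied to $L = \triv_n$ this yields $\sfb^n(F)(X) \cong \bigoplus_{\{U_i\}} \bigotimes_i F(U_i)$, and applied to $L = \sgn_n$ it yields $\lfb^n(F)(X) \cong \bigoplus_{\{U_i\}} \bigotimes_i F(U_i)$ as well, the only difference being that the second isomorphism depends non-canonically on a choice of ordering of the parts of each unordered decomposition (so that the sign twist trivializes on each orbit).

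There is no real obstacle here; the only point worth flagging is the distinction between the two isomorphisms: the symmetric one is canonical, whereas the exterior one requires an auxiliary choice of ordering on the set of parts of each unordered decomposition. This is precisely why the hypothesis $F(\mathbf{0})=0$ is essential — without it, a summand with some $S_i = \emptyset$ could have a non-trivial stabilizer in $\sym_n$, and then $\sgn_n$ restricted to that stabilizer would generally differ from $\triv_n$, breaking the isomorphism between $\sfb^n(F)(X)$ and $\lfb^n(F)(X)$.
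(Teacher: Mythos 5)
Your proof is correct and follows the same route as the paper's (one-line) argument: unpack the Day convolution and use $F(\mathbf{0})=0$ to see that $\sym_n$ permutes the non-zero summands freely, so that both the trivial and the sign twist give the same coinvariants, up to a choice of orbit representatives. Your explicit verification of freeness (distinct parts because disjoint non-empty sets cannot coincide) and your remark on the non-canonicity of the exterior isomorphism are exactly the details the paper leaves implicit.
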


\begin{proof}
This follows  from the explicit description of the Day convolution product $\odot$, together with the fact that, since $F(\mathbf{0})=0$, the $\sym_n$-action on $F^{\odot n} (X)$ is free.
\end{proof}

\begin{exam}
\label{exam:sfb_lfb_differ}
We give an example exhibiting that, for $n>1$, the $\kk\fb$-modules $\sfb^n (F)$ and $\lfb^n (F)$ differ if $F\neq 0$.
 Consider the $\kk \fb$-module $I$ that is supported on $\mathbf{1}$ with value $\kk$. Then, as $\kk$-vector spaces, 
\[
\sfb^n  (I)(\mathbf{t})  \cong \lfb^n (I) (\mathbf{t}) \cong
\left\{
 \begin{array}{ll} 
 \kk & n=t \\
 0 &\mbox{otherwise.}
 \end{array}
 \right.
\]
For $t=n$,  if $n>1$ these $\kk \sym_t$-modules are not isomorphic, since $\sfb^t  (I)(\mathbf{t}) \cong \triv_t$, whereas $\lfb^t (I) (\mathbf{t}) \cong \sgn_t$.
\end{exam}

For later use, we record the following:

\begin{lem}
\label{lem:split_F_odot_F}
Suppose that $\kk$ is a field of characteristic $\neq 2$. Then, for $F$ a $\kk \fb$-module, there is a natural splitting
\[
F\odot F \cong \sfb^2 (F) \oplus \lfb^2 (F).
\]
\end{lem}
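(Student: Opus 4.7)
The plan is to use the standard idempotent decomposition of the group algebra $\kk\sym_2$, which is available precisely because $\mathrm{char}(\kk) \neq 2$. The $\kk\fb$-module $F \odot F = F^{\odot 2}$ carries a natural $\sym_2$-action by the symmetry constraint $\tau \colon F \odot F \stackrel{\cong}{\rightarrow} F \odot F$ of the Day convolution (swap of the two tensor factors); this is an action by $\kk\fb$-module automorphisms. Since $2 \in \kk^\times$, one has the orthogonal idempotents $e_+ := \tfrac{1}{2}(1 + \tau)$ and $e_- := \tfrac{1}{2}(1 - \tau)$ in $\kk\sym_2$ with $e_+ + e_- = 1$, giving a natural splitting
\[
F\odot F \ \cong \ e_+(F \odot F) \ \oplus \ e_-(F \odot F)
\]
in $\kk\fb$-modules.

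It then remains to identify the two summands. The first step is the general observation that, over a field of characteristic $\neq 2$, for any $\kk\sym_2$-module $M$ the canonical map $M^{\sym_2} \hookrightarrow M \twoheadrightarrow M/\sym_2$ is an isomorphism; likewise, twisting by $\sgn_2$, the $(-1)$-eigenspace of $\tau$ maps isomorphically to $(\sgn_2 \otimes M)/\sym_2$. Applying this to $M = F^{\odot 2}$, one identifies the image of $e_+$, which is the $(+1)$-eigenspace $(F^{\odot 2})^{\sym_2}$, with $\sfb^2(F) = F^{\odot 2}/\sym_2$; symmetrically, the image of $e_-$, the $(-1)$-eigenspace of $\tau$, is identified with $\lfb^2(F) = (\sgn_2 \otimes F^{\odot 2})/\sym_2$.

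Naturality in $F$ is automatic: the idempotents $e_\pm$ are defined intrinsically from the symmetry $\tau$, which is natural in $F$, so every morphism $F \rightarrow G$ of $\kk\fb$-modules intertwines the two decompositions. The only thing requiring minor care is the sign convention in the identification with $\lfb^2(F)$, which ultimately amounts to the fact that the $\sgn_2$-twist of a $\sym_2$-module exchanges $(+1)$- and $(-1)$-eigenspaces of $\tau$; no serious obstacle is expected. All steps are essentially formal, so I do not anticipate a genuinely hard step — the whole content of the lemma is packaged in the existence of $e_\pm$, i.e.\ in the hypothesis $\mathrm{char}(\kk) \neq 2$.
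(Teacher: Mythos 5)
Your argument is correct and is essentially the paper's proof made explicit: the paper simply invokes the splitting $\kk\sym_2 \cong \triv_2 \oplus \sgn_2$ in characteristic $\neq 2$, which is exactly the idempotent decomposition $1 = e_+ + e_-$ you write down, and your identification of the $(\pm 1)$-eigenspaces of $\tau$ with the coinvariants defining $\sfb^2(F)$ and $\lfb^2(F)$ fills in the routine details the paper leaves implicit.
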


\begin{proof}
This follows from the fact that, if $\kk$ has characteristic other than two,  $\kk \sym_2$  considered as a $\kk \sym_2$-module splits  as $\triv_2 \oplus \sgn_2$.
\end{proof}

The objects $\sfb^* (F)$ and $\lfb^* (F)$ come equipped with product and coproduct maps that are  analogous to those for the classical symmetric (respectively exterior) algebras:
\begin{eqnarray*}
\sfb^m (F) \odot \sfb^n (F) 
&\rightarrow & 
\sfb^{m+n} (F) 
\\
\lfb^m (F) \odot \lfb^n (F) 
&\rightarrow & 
\lfb^{m+n} (F)
\\
\sfb^{m+n} (F)
&\rightarrow & 
\sfb^m (F) \odot \sfb^n (F) 
\\
\lfb^{m+n} (F)
&\rightarrow & 
\lfb^m (F) \odot \lfb^n (F) ,
\end{eqnarray*}
for $m, n \in \nat$. (Over a field of characteristic zero, the analogy evoked above can be made precise using the Schur correspondence and the identifications in Remark \ref{rem:schur_sfb_lfb}.)

Using these natural structure morphisms, we have the following:

\begin{lem}
\label{lem:induced structure morphisms}
For $F, G$ two $\kk \fb$-modules and $2 \leq n \in \nat$,
\begin{enumerate}
\item 
a morphism $\sfb^{ 2} (F) \rightarrow F \odot G$  induces 
$
\sfb^{ n} (F) \rightarrow \sfb^{n-1} (F)  \odot G
$
 via the composite 
$$
\sfb^{n} (F) 
\rightarrow 
\sfb^{ n-2 } (F) \odot \sfb^{2} (F) 
\rightarrow 
\sfb^{ n-2}(F) \odot F \odot G
\rightarrow 
\sfb^{ n-1}(F) \odot G,
$$
where the first morphism is the coproduct of $\sfb^* (F)$, the second the given map applied to $\sfb^{ 2} (F)$, and the last is induced by the product of $\sfb^* (F)$;
\item 
a morphism $\lfb^{ 2} (F) \rightarrow F \odot G$  induces $
\lfb^{n} (F) \rightarrow \lfb^{n-1}(F)  \odot G
$ 
via the composite 
$$
\lfb^{n} (F) 
\rightarrow 
\lfb^{ n-2 } (F) \odot \lfb^{ 2} (F) 
\rightarrow 
\lfb^{n-2}(F) \odot F \odot G
\rightarrow 
\lfb^{ n-1}(F) \odot G,
$$
where the first morphism is the coproduct of $\lfb^* (F)$, the second the given map applied to $\lfb^{2} (F)$, and the last is induced by the product of $\lfb^* (F)$.
\end{enumerate}
For $n<2$, these morphisms are taken to be zero.
\end{lem}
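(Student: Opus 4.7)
The proof is essentially a construction, so my plan is to make sure each morphism appearing in the composite is well-defined in $\kk\fb\dash\modules$ and that the composite makes sense. The main input is that $\sfb^*(F)$ and $\lfb^*(F)$ carry canonical bialgebra-type structures in the symmetric monoidal category $(\kk\fb\dash\modules,\odot,\kk_\mathbf{0})$. Namely, $\sfb^*(F)$ is the free commutative algebra on $F$ and, dually, the cofree cocommutative coalgebra (suitably interpreted via the idempotent projecting onto the symmetric invariants), and $\lfb^*(F)$ is its Koszul-signed analogue. First, I would recall explicitly how the graded-piece components
\begin{eqnarray*}
\sfb^m(F)\odot \sfb^n(F) &\to& \sfb^{m+n}(F),\qquad \sfb^{m+n}(F)\ \to\ \sfb^m(F)\odot \sfb^n(F),\\
\lfb^m(F)\odot \lfb^n(F) &\to& \lfb^{m+n}(F),\qquad \lfb^{m+n}(F)\ \to\ \lfb^m(F)\odot \lfb^n(F)
\end{eqnarray*}
are defined by (anti)symmetrization/projection from $F^{\odot (m+n)}$, and observe that these are morphisms of $\kk\fb$-modules.

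With this in hand, the composite in (i) is simply the composition of three morphisms of $\kk\fb$-modules: the coproduct component $\sfb^n(F)\to \sfb^{n-2}(F)\odot\sfb^2(F)$, the functor $\sfb^{n-2}(F)\odot(-)$ applied to the given map $\sfb^2(F)\to F\odot G$, and the product component $\sfb^{n-2}(F)\odot F\to \sfb^{n-1}(F)$ smashed with $\id_G$ (followed by the evident associator). Each factor is natural in $F$ and $G$, so the composite is too. The construction in (ii) proceeds in exactly the same way, replacing $\sfb^*$ by $\lfb^*$.

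The only mildly delicate step will be in the $\lfb^*$ case, where one must track the Koszul signs coming from the $\sgn_n$-twist in the definition $\lfb^n(F)=(\sgn_n\otimes F^{\odot n})/\sym_n$. I would handle this by checking on homogeneous representatives using the explicit description of Lemma \ref{lem:sfb_lfb} (assuming $F(\mathbf{0})=0$ for clarity, and extending by naturality otherwise): a section of $\lfb^n(F)(X)$ corresponds to an unordered decomposition $X=\amalg_i U_i$ together with an element of $\bigotimes_i F(U_i)$ up to sign; the coproduct selects an ordered partition of the index set into a pair of size $n-2$ and a pair of size $2$, and the product reassembles the $(n-2)$-piece with a single $F$-factor, each step contributing a sign compatible with the Koszul rule.

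Finally, the convention that the induced morphism is zero for $n<2$ is forced: when $n<2$, $\sfb^{n-2}(F)$ (resp.\ $\lfb^{n-2}(F)$) is either zero or the unit $\kk_\mathbf{0}$ in the wrong degree, and the coproduct component producing $\sfb^{n-2}(F)\odot\sfb^2(F)$ (resp.\ its $\lfb^*$ analogue) from $\sfb^n(F)$ (resp.\ $\lfb^n(F)$) vanishes for degree reasons. Thus the statement is consistent and requires no further verification.
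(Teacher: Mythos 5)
Your proposal is correct and takes essentially the same approach as the paper, which in fact offers no proof at all: the lemma is treated as an immediate construction from the product and coproduct of $\sfb^*(F)$ and $\lfb^*(F)$ introduced just before it, with the sign-tracking for $\lfb^*$ only illustrated afterwards in Example \ref{exam:extended_morphism_n=3}. Your explicit verification (well-definedness of each factor as a morphism of $\kk\fb$-modules, the Koszul signs in the $\lfb^*$ case, and the degree-reasons vanishing for $n<2$) supplies exactly the content the paper leaves implicit.
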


\begin{rem}
In the applications of the above Lemma, the $\kk \fb$-module $G$ will either be $\triv_2$ or $\sgn_2$, $F$ will be the underlying $\kk \fb$-module of a non-unital cyclic operad $\cpd$, and the given morphisms will arise from the structure morphism of $\cpd$ (see Lemmas \ref{lem:cpd_composite_map} and \ref{lem:cpd_composite_map_lfb} below).
\end{rem}

To make the construction more concrete, we offer the following:

\begin{exam}
\label{exam:extended_morphism_n=3}
Take $n=3$ and consider $F$ a $\kk \fb$-module such that $F(\mathbf{0})=0$. Consider
 the morphism $\sfb^3 (F) \rightarrow \sfb^2(F) \odot \triv_2$ obtained as in Lemma \ref{lem:induced structure morphisms} from a morphism $\sfb^2 (F) \rightarrow F \odot \triv_2$.

Evaluated on $X$, this gives the $\kk$-linear map
\begin{eqnarray}
\label{eqn:sfb_eval_X}
\bigoplus_{(U_1, U_2, U_3)} F (U_1 ) \otimes F(U_2) \otimes F(U_3) \rightarrow \bigoplus_{\substack{((V_1, V_2),V_3 ) \\|V_3|=2}} F (V_1) \otimes F(V_2),
\end{eqnarray}
where the sums are over decompositions of $X$ into three non-empty subsets, with the $(U_i)$ unordered, $(V_1, V_2)$ unordered and $|V_3|=2$, so that $\triv_2(V_3)= \kk$.

The component indexed by $(U_1, U_2, U_3)$ and $((V_1, V_2), V_3)$ is zero unless there exists $i\in \{1, 2,3\}$ and $j \in \{1, 2\}$ such that $U_i=V_j$. In the latter case, without loss of generality, we may assume that $U_1 = V_1$ and thus $U_2 \amalg U_3 = V_2 \amalg V_3 $. 

The morphism  $\sfb^2 (F) \rightarrow F \odot \triv_2$ provides the component (indexed by $(U_2, U_3)$ and $(V_2, V_3)$ and using that $\triv_2 (V_3) = \kk$): 
\[
F (U_2 ) \otimes F(U_3) \rightarrow F(V_2).
\]
Tensoring with $F(V_1)$ gives the component of (\ref{eqn:sfb_eval_X}) indexed by $(U_1, U_2, U_3)$ and $((V_1, V_2), V_3)$.

A similar analysis applies with $\lfb^*$ in place of $\sfb^*$. The only difference is that one must take into account that transposing tensor factors introduces a sign.
\end{exam}

%%%%%%%%%%%%%%%%%%%%%%%%%%%%%%%%%%%%%%%%%%%%%%%%%%%%%%%%%%%%%%%%%%%%%%%%%%%%%%%%%%%%%%
\subsection{Cyclic operads}

We work with non-unital  cyclic operads in $\kk$-vector spaces; these form a category $\nuco$. (See \cite[Definitions 6.1 and 6.2]{MR4199072} for example  for definitions of cyclic operads and morphisms between such; for $\nuco$ we simply omit the axioms involving the unit.)

\begin{hyp}
\label{hyp:nuco_0} 
For $\cpd$ in $\nuco$, we will always assume that the underlying $\kk\fb$-module satisfies $\cpd (\mathbf{0})=0$. 
\end{hyp}

\begin{lem}
\label{lem:cpd_composite_map}
The composition structure map for $\cpd$ in $\nuco$ defines a morphism of $\kk \fb$-modules
\begin{eqnarray}
\label{eqn:cpd-structure}
\sfb^2 (\cpd) \rightarrow \cpd \odot \triv_2.
\end{eqnarray}
\end{lem}

\begin{proof}
We require to prove that, for each finite set $X$, the  structure map  of $\cpd$ yields  a $\kk \aut (X)$-equivariant map
\[
\bigoplus_{\substack{(U_1, U_2)\\ U_1 \amalg U_2 = X}} \cpd (U_1) \otimes \cpd (U_2) 
\rightarrow 
\bigoplus _{\substack{Y \subset X \\ |X|= |Y|+2}} \cpd (Y),
\]
where the sum in the domain is over unordered decompositions $(U_1, U_2)$.

For given $(U_1, U_2)$ and $Y$ as above, we take the component 
\[
\cpd (U_1) \otimes \cpd (U_2) 
\rightarrow 
 \cpd (Y)
\]
to be zero unless $|Y\cap U_1| = |U_1|-1$ (equivalently $|Y\cap U_2| = |U_2|-1$), when it is given by the structure map of $\cpd$. 
The fact that this defines a morphism of the form (\ref{eqn:cpd-structure}) follows from the equivariance properties of the structure morphism of a non-unital cyclic operad. 
\end{proof}

\begin{rem}
\label{rem:cpd-structure-condition}
The proof identifies an additional condition on (\ref{eqn:cpd-structure}) that is satisfied when the morphism corresponds to the composition map of a non-unital cyclic operad. Namely, the component of (\ref{eqn:cpd-structure}) 
\[
\cpd (U_1) \otimes \cpd (U_2) 
\rightarrow 
 \cpd (Y)
\]
is zero if either $U_1 \subset Y$ or $U_2 \subset Y$. Of course, this condition does not hold in general for modular operads as in \cite{MR4199072} (cf. also \cite{MR1601666}).
\end{rem}

There is a counterpart of Lemma \ref{lem:cpd_composite_map} obtained by replacing $\sfb^2 (\cpd) $ by $\lfb^2 (\cpd)$: 

\begin{lem}
\label{lem:cpd_composite_map_lfb}
The structure map of  $\cpd$ in $\nuco$ defines a morphism of $\kk \fb$-modules
\begin{eqnarray}
\label{eqn:cpd-structure_lfb}
\lfb^2 (\cpd) \rightarrow \cpd \odot \sgn_2.
\end{eqnarray}
\end{lem}

\begin{proof}
We claim that the passage to $\lfb^2 (\cpd)$ means that an orientation sign is required corresponding to an order of the pair of elements $X \backslash Y$ (using the notation of the proof of Lemma \ref{lem:cpd_composite_map}). 

This is seen as follows. Given $(U_1, U_2)$ and $Y \subset X$ such that $|U_1 \cap Y|= |U_1 |-1 $ and $|U_2 \cap Y| = |U_2|-1$, set $x_i := U_i \backslash (U_i\cap Y)$ so that $X\backslash Y = \{x_1, x_2\}$. A choice of ordering of $U_1$ and $U_2$ is equivalent to a choice of ordering of the set $\{x_1, x_2\}$. Thus, the sign appearing in defining $\lfb^2 (\cpd)$ requires the usage of $\sgn_2$.  
\end{proof}

%%%%%%%%%%%%%%%%%%%%%%%%%%%%%%%%%%%%%%%%%%%%%%%%%%%%%%%%%%%%%%%%%%%%%%%
\subsection{Modules over Brauer categories associated to non-unital cyclic operads}

Recall that $\db$ is the downward Brauer category, with $\kk$-linearization $\kk \db$;  $\kdbmm$ is the twisted variant, as  defined in  Section \ref{sect:twist}.

Using that these are homogeneous quadratic $\kk$-linear categories (see Proposition \ref{prop:kub_twist_homogeneous_quadratic}),  we have the following (in which the restriction to characteristic $\neq 2$ is due to the appeal to Lemma \ref{lem:split_F_odot_F}):

\begin{prop}
\label{prop:kdb_modules}
Suppose that $\kk$ is a field of characteristic $\neq 2$.
\begin{enumerate}
\item 
A $\kk \db$-module structure with underlying $\kk\fb$-module $M$ is uniquely determined by the structure morphism 
 $ \psi : M \rightarrow M \odot \triv_2$ corresponding to the action of the degree one morphisms.
 
Conversely, given such a structure morphism, this defines a $\kk \db$-module structure on $M$ if and only if the composite  
\[
M \stackrel{\psi}{\rightarrow}
 M \odot \triv_2 
\stackrel{\psi \odot \id} {\rightarrow}
M \odot \triv_2 \odot \triv_2 
\rightarrow 
M \odot \lfb^2 (\triv_2)
\]
is zero, using $\triv _2 \odot \triv_2 \rightarrow \lfb^2 (\triv_2)$,  the projection furnished by Lemma \ref{lem:split_F_odot_F}.
\item 
A $\kdbmm$-module structure with underlying $\kk\fb$-module $N$ is uniquely determined by the structure morphism 
 $ \xi : N \rightarrow N \odot \sgn_2$ corresponding to the action of the degree one morphisms.
 
Conversely, given such a structure morphism, this defines a $\kdbmm$-module structure on $N$ if and only if the composite  
\[
N \stackrel{\xi}{\rightarrow}
 N \odot \sgn_2 
\stackrel{\xi \odot \id} {\rightarrow}
N \odot \sgn_2 \odot \sgn_2 
\rightarrow 
N \odot \sfb^2 (\sgn_2)
\]
is zero, using $\sgn _2 \odot \sgn_2 \rightarrow \sfb^2 (\sgn_2)$,  the projection furnished by Lemma \ref{lem:split_F_odot_F}.
\end{enumerate}
\end{prop}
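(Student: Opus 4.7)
The plan is to exploit the homogeneous quadratic presentation of $(\kk \db)_{(\pm;\mp)}$ over $\kk\fb$ furnished by Proposition \ref{prop:kub_twist_homogeneous_quadratic} (applied via the identification $(\kk\db)_{(\pm;\mp)} = ((\kk\ub)_{(\pm;\mp)})\op$). By that result, a $(\kk\db)_{(\pm;\mp)}$-module structure on a $\kk\fb$-module is equivalent to the data of a $\kk\fb$-bilinear action of the degree-one morphisms, together with the vanishing of the action on the degree-two relation subbimodule $\mathscr{R}\subset (\kk\db)_{(\pm;\mp)}^1\otimes_{\kk\fb}(\kk\db)_{(\pm;\mp)}^1$.

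First I would identify the equivalence between giving a $\kk\fb$-bilinear degree-one action and giving the structure morphism $\psi$ (respectively $\xi$). By Lemma \ref{lem:bimodule_kub_twisted} (transposed to $\db$ via Remark \ref{rem:fbop}), the degree-one $\kk\fb$-bimodules are
\begin{eqnarray*}
\kk\db(\mathbf{n+2},\n) &\cong & \kk\sym_{n+2}\otimes_{\kk(\sym_n\times\sym_2)}(\triv_n\boxtimes\triv_2),\\
(\kk\db)_{(-;-)}(\mathbf{n+2},\n) &\cong & \kk\sym_{n+2}\otimes_{\kk(\sym_n\times\sym_2)}(\triv_n\boxtimes\sgn_2).
\end{eqnarray*}
Via the tensor-hom adjunction and unwinding the Day convolution product, the datum of a $\kk\fb$-bilinear map $(\kk\db)^1\otimes_{\kk\fb}M\to M$ is exactly a morphism $M\to M\odot\triv_2$; analogously, the twisted case yields a morphism $N\to N\odot\sgn_2$.

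Next I would identify $\mathscr{R}$. Geometrically, $(\kk\db)^2(\mathbf{n+4},\n)$ is spanned (up to the $\sym_{n+4}$-action) by \emph{unordered} pairs of chords, while the composition map $\mu$ builds these by selecting the chords in a specified order. Hence $\ker\mu$ is precisely the antisymmetric part under the chord-swap involution; using the splitting $\triv_2\odot\triv_2\cong\sfb^2(\triv_2)\oplus\lfb^2(\triv_2)$ from Lemma \ref{lem:split_F_odot_F}, this identifies $\mathscr{R}$ with the $\lfb^2(\triv_2)$-summand (promoted to a $\kk\fb$-bimodule via induction). For $(\kk\db)_{(-;-)}$, the signs decorating the chords mean that transposing two chords introduces an extra sign, so the rôles of symmetric and antisymmetric exchange: $\ker\mu$ becomes the $\sfb^2(\sgn_2)$-summand of $\sgn_2\odot\sgn_2$. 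The condition that the iterated composite of $\psi$ (resp.\ $\xi$) vanishes on $M\odot\lfb^2(\triv_2)$ (resp.\ $N\odot\sfb^2(\sgn_2)$) is then precisely the statement that $\mathscr{R}$ acts by zero; equivalently (by Lemma \ref{lem:induced structure morphisms}), the morphism extends uniquely to a $\kk\fb$-linear, hence module, action.

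The main obstacle will be the precise identification of $\ker\mu$ as the $\lfb^2$ (respectively $\sfb^2$) summand — the direction that the quadratic relations are \emph{exactly} those coming from the unordered nature of chord pairs (and nothing more). This can be verified either directly on bases via Lemma \ref{lem:basis_twisted_kub} (matching dimensions on each $(\mathbf{n+4},\n)$) or, more conceptually, by invoking Koszul duality (Proposition \ref{prop:quadratic_duals_kub}), which encodes exactly the symmetry behaviour of the quadratic relations dual to the known presentation of the symmetric/exterior algebra of the Koszul-dual category. The hypothesis $\mathrm{char}\,\kk\neq 2$ enters only through Lemma \ref{lem:split_F_odot_F}, so that the vanishing condition can be cleanly formulated as vanishing of a single projection onto the $\lfb^2$ (resp.\ $\sfb^2$) summand rather than as a symmetrization-type relation.
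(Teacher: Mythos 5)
Your proposal is correct and follows essentially the same route as the paper's (much terser) proof: generation of $(\kk\db)_{(\pm;\mp)}$ over $\kk\fb$ in degree one gives the equivalence between the degree-one action and $\psi$ (resp.\ $\xi$) via the identification of the degree-one bimodule and Frobenius reciprocity, and the quadratic relations are identified with the chord-swap (anti)symmetrization, i.e.\ the $\lfb^2(\triv_2)$ (resp.\ $\sfb^2(\sgn_2)$) summand, whose vanishing is exactly the stated condition. The only blemish is the closing appeal to Lemma \ref{lem:induced structure morphisms}, which concerns extending a map $\sfb^2(F)\to F\odot G$ to higher $\odot$-powers and is not what yields the extension of the degree-one action to the whole category (that is just iteration of $\psi$ using generation in degree one); this does not affect the validity of the argument.
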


\begin{proof}
In both cases, the first statement follows from the fact that the $\kk$-linear category is generated over $\kk \fb$ by the degree $1$ morphisms, together with the identification of these (cf. Lemma \ref{lem:kub_bimodule_special_case}, which treats different twists). The converse then follows by taking into account the quadratic relations; these correspond to imposing the appropriate (signed) commutativity corresponding to the orientation sign associated to the order of chords. 
\end{proof}

Putting together Lemmas \ref{lem:cpd_composite_map} and \ref{lem:cpd_composite_map_lfb} with Lemma \ref{lem:induced structure morphisms}, if $\cpd$ is a non-unital cyclic operad, then the composition morphism induces natural morphisms
\begin{eqnarray}
\label{eqn:psi}
\psi &\colon & \sfb^* (\cpd) \rightarrow \sfb^*(\cpd) \odot \triv_2 \\
\label{eqn:xi}
\xi & \colon & \lfb^* (\cpd) \rightarrow \lfb^* (\cpd) \odot \sgn_2.
\end{eqnarray}

\begin{thm}
\label{thm:cpd_db_kdbm}
For $\cpd$ a non-unital cyclic operad,  
\begin{enumerate}
\item 
$\sfb^{*} (\cpd)$ is a  $\kk \db$-module with structure morphism (\ref{eqn:psi}); 
\item 
$\lfb^{*} (\cpd)$ is a  $\kdbmm$-module with structure morphism (\ref{eqn:xi}).
\end{enumerate}
These define functors from $\nuco$ to $\kk\db\dash\modules$ and $\kdbmm\dash\modules$ respectively.
\end{thm}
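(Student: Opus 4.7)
The plan is to apply the criterion of Proposition \ref{prop:kdb_modules}, since the structure morphisms $\psi$ and $\xi$ of (\ref{eqn:psi}) and (\ref{eqn:xi}) have already been constructed from the composition of $\cpd$ via Lemmas \ref{lem:cpd_composite_map}, \ref{lem:cpd_composite_map_lfb} and \ref{lem:induced structure morphisms}. Thus the content of the theorem lies in verifying the two quadratic relations: that the composite
\[
\sfb^*(\cpd) \stackrel{\psi}{\longrightarrow} \sfb^*(\cpd) \odot \triv_2 \stackrel{\psi \odot \id}{\longrightarrow} \sfb^*(\cpd) \odot \triv_2 \odot \triv_2 \longrightarrow \sfb^*(\cpd) \odot \lfb^2(\triv_2)
\]
vanishes and, similarly, that the corresponding composite involving $\xi$ lands in the kernel of the projection onto $\sfb^*(\cpd) \odot \sfb^2(\sgn_2)$.

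The second step is to identify both composites explicitly on sections. Using Lemma \ref{lem:sfb_lfb}, a section of $\sfb^n(\cpd)(X)$ is represented by an unordered partition $X = \amalg_{i\in \n} U_i$ together with tensors $c_i \in \cpd(U_i)$; iterating Example \ref{exam:extended_morphism_n=3}, one sees that $(\psi \odot \id)\circ \psi$ corresponds to choosing two disjoint pairs $\{x_1,y_1\},\{x_2,y_2\}\subset X$ of elements lying in summands $U_{i_j},U_{j'_j}$ and performing two successive cyclic operad compositions along these pairs of legs; the factor $\triv_2 \odot \triv_2$ records the two unordered pairs of contracted elements. The same analysis applies in the $\lfb^*$ case, with the extra sign data of $\sgn_2\odot\sgn_2$ recording orientations of the two contracted pairs.

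The third step is the heart of the proof: commuting the two compositions. When the two pairs of legs are disjoint (which is automatic here), performing first the composition along $\{x_1,y_1\}$ and then along $\{x_2,y_2\}$ yields the same element of $\cpd$ (on the contracted set $X\setminus\{x_1,y_1,x_2,y_2\}$) as the reverse order; this is precisely the `commutativity of disjoint contractions' axiom for a cyclic operad, which is encoded by the associativity and equivariance axioms in \cite{MR4199072}. In the $\sfb^*$ case, this shows that the composite is symmetric in the two $\triv_2$-factors, hence is killed by the projection onto $\lfb^2(\triv_2)$. In the $\lfb^*$ case, one has to combine (i) the Koszul sign produced by transposing the two $\cpd$-factors attached to the two pairs within $\lfb^*(\cpd)$, (ii) the sign coming from swapping the two $\sgn_2$-factors, and (iii) the orientation signs attached to each pair (as in the proof of Lemma \ref{lem:cpd_composite_map_lfb}); a careful bookkeeping shows that these signs combine to make the composite \emph{antisymmetric} in the two $\sgn_2$-factors, so that it is annihilated by the projection onto $\sfb^2(\sgn_2)$. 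This sign-tracking in the $\lfb^*$ case is the only real obstacle in the argument; the key point is that the sign produced by swapping two adjacent transpositions in $\sym_{2t} \supset \sym_2 \wr \sym_t$ is exactly the one packaged into $\kk_{(-;-)}$ by Definition \ref{defn:twisting_representations}.

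Finally, functoriality in $\cpd$ is automatic: a morphism $\cpd \to \cpd'$ of cyclic operads induces morphisms of $\kk\fb$-modules $\sfb^*(\cpd)\to\sfb^*(\cpd')$ and $\lfb^*(\cpd)\to\lfb^*(\cpd')$ that intertwine the structure morphisms $\psi$ and $\xi$, since these were constructed naturally from the composition structure map.
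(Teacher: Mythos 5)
Your proposal is correct and follows essentially the same route as the paper: apply the criterion of Proposition \ref{prop:kdb_modules} and verify the quadratic relation using the commutativity of disjoint cyclic-operad compositions (symmetry in the two $\triv_2$-factors for $\sfb^*$, antisymmetry after Koszul-sign bookkeeping for $\lfb^*$), with functoriality being automatic. One small slip: the target of the second projection in the exterior case should read $\lfb^*(\cpd)\odot\sfb^2(\sgn_2)$ rather than $\sfb^*(\cpd)\odot\sfb^2(\sgn_2)$.
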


\begin{proof}
This result is proved by applying Proposition \ref{prop:kdb_modules}. In each case, we require to show that the specified structure morphism satisfies the `quadratic relation' condition given in that Proposition.

For the case of $\sfb^* (\cpd)$, this is a direct consequence of the condition for composing structure maps for a cyclic operad, given as \cite[Definition 6.1, equation (6.3)]{MR4199072}. 

For the case of $\lfb^* (\cpd)$, one has to take into account the `Koszul-type' signs that are introduced by working with $\lfb^* (\cpd)$. Notably, these intervene when considering the coproduct and the product of $\lfb^* (\cpd)$ used in Lemma \ref{lem:induced structure morphisms}. This accounts for the anti-symmetric behaviour of the permutation of chords. 
(This can be checked explicitly by using the  description of the structure morphisms, as in Example \ref{exam:extended_morphism_n=3}. The explicit argument is encapsulated by Example \ref{exam:lfb-module_explicit} below.)
\end{proof}

%%%%%%%%%%%%%%%%%%%%%%%%%%%%%%%%%%%%%%%%%%%%%%%%%%%%%%%%%%%%%%%%%%%%%%%%%%%%%%%%%%%%%%%%%%%%%%%%%%%%%%%%%%
\subsection{The case of algebras with involution}
\label{subsect:alg_inv}

Recall that an algebra with involution is an associative algebra $B$ equipped with an involution $\sigma : B \stackrel{\cong}{\rightarrow} B\op$ (i.e., $\sigma$ is an isomorphism of the underlying abelian groups such that $\sigma^2 = \id$ and $\sigma (b_1b_2) = \sigma (b_2)\sigma (b_1)$). These will be referred to as $\kk$-algebras with involution.

\begin{rem}
The algebra is unital (as an algebra with involution) if there is a unit  $1 \in B$ such that $\sigma (1) =1$. Working with $\kk$-algebras, we require that $\sigma$ is $\kk$-linear and that $\kk \subset B$ (induced by the unit) is central.  
\end{rem}

The $\kk$-algebras with involution form a category; in the unital case, morphisms must respect the unit. The following is well-known in the unital case; here we do not require a unit.

\begin{prop}
\label{prop:cyclic_operads_alg_inv}
There is an equivalence of categories between $\kk$-algebras with involution and the category of non-unital cyclic operads in $\kk$-modules  that are supported on $\mathbf{2}$.
\end{prop}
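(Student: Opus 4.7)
The plan is to construct mutually quasi-inverse functors in both directions and verify that they implement the claimed equivalence.

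In one direction, given a $\kk$-algebra with involution $(B, \sigma)$, define a cyclic operad $\cpd_{(B,\sigma)}$ by setting $\cpd_{(B,\sigma)}(X) = 0$ for $|X| \neq 2$ and, for each $2$-element set $X$, by taking $\cpd_{(B,\sigma)}(X)$ to have underlying $\kk$-module $B$, with the action of the non-identity element of $\aut(X) = \sym_2$ given by $\sigma$. (To make this functor completely natural in $X$, one can use the canonical presentation as $\kk \aut(X) \otimes_{\sym_2} B$, but up to canonical isomorphism this reduces to the description above.) The cyclic operad composition has non-trivial components only of the form $\cpd_{(B,\sigma)}(X_1) \otimes \cpd_{(B,\sigma)}(X_2) \to \cpd_{(B,\sigma)}((X_1 \sqcup X_2) \setminus \{a,b\})$ with $a \in X_1$, $b \in X_2$, all three sets having cardinality two; these are defined by the multiplication of $B$ up to a prescribed combination of $\sigma$'s dictated by the required $\sym_2$-equivariance.

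In the other direction, given a cyclic operad $\cpd$ supported on $\mathbf{2}$, set $B := \cpd(\mathbf{2})$, let $\sigma$ be the action of the non-identity element of $\sym_2 = \aut(\mathbf{2})$, and let $\mu : B \otimes B \to B$ be the composition obtained from cyclic operad composition after fixing identifications of all the $2$-element sets involved with $\mathbf{2}$. The core step is to show that the axioms of a cyclic operad translate exactly into the axioms of a $\kk$-algebra with involution, namely: the $\sym_2$-equivariance relating the different components of composition (corresponding to switching the two output elements, or equivalently swapping the rôles of the marked inputs) produces the identity $\sigma(xy) = \sigma(y)\sigma(x)$; and the cyclic operad associativity condition, applied to three copies of $\cpd(\mathbf{2})$ composed in both possible orders, produces associativity of $\mu$. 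No unit condition intervenes, since neither side is assumed unital.

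This verification can be packaged more compactly using Lemma \ref{lem:cpd_composite_map}: the structure morphism $\sfb^2(\cpd) \to \cpd \odot \triv_2$ becomes, when $\cpd$ is supported on $\mathbf{2}$ and evaluated on a $4$-element set $X$, a map that unpacks into the multiplication of $B$ together with all its $\sigma$-twisted variants; the quadratic relation of Proposition \ref{prop:kdb_modules} (which follows from associativity of cyclic operad composition) then encodes associativity of $B$, while the $\aut(X)$-equivariance encodes the anti-involution property. The two functors constructed are clearly mutually inverse up to canonical natural isomorphism on the level of the underlying data, and morphisms in each category correspond under the equivalence.

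The main obstacle is purely bookkeeping: the cyclic operad composition has several components (indexed by the choice of marked element in each input and by the identification of the output), each being $B \otimes B \to B$ up to pre- and post-composition with $\sigma$, and one must verify carefully that the relations imposed by $\sym_2$-equivariance collapse to the single relation $\sigma(xy) = \sigma(y)\sigma(x)$, with no further constraints. Once this is set up, the equivalence follows at once.
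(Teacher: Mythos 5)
Your proposal is correct and follows essentially the same route as the paper: extract $(B,\sigma)=(\cpd(\mathbf{2}),\cpd(\tau))$ with multiplication obtained from the unique non-trivial composition component after fixing identifications of the two-element sets with $\mathbf{2}$, construct the inverse functor from $(B,\sigma)$, and check that the cyclic operad axioms translate into associativity and the anti-involution identity. The paper's proof is likewise a sketch that leaves the equivariance bookkeeping to a direct verification, so no substantive difference to report; your optional repackaging via Lemma \ref{lem:cpd_composite_map} and Proposition \ref{prop:kdb_modules} is a reasonable but inessential addition.
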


\begin{proof}
Given a non-unital cyclic operad $\cpd$ supported on $\mathbf{2}$ (i.e., $\cpd (\n) = 0$ if $n \neq 2$), set $(B,\sigma)$ to be the pair $(\cpd (\mathbf{2}), \cpd (\tau))$, were $\tau  \in \sym_2$ is the transposition. Now, the cyclic operad composition defines 
\[
\cpd (\{ 1, 2\} ) \otimes \cpd (\{ 3, 4 \}) \stackrel{\iota_{2,3}}{\rightarrow } \cpd (\{1, 4\}).
\] 
Identifying $\cpd (\{3, 4\})$ and $\cpd (\{1, 4\})$ with $B$ via the order preserving bijections $\mathbf{2} \cong \{3,4\}$ and $\mathbf{2} \cong \{1 , 4\}$, this defines
$$
B \otimes B 
\rightarrow 
B.
$$
Using the axioms of a cyclic operad, one checks that this makes $(B,\sigma)$ into a $\kk$-algebra with involution.

One also has the converse: given $(B,\sigma)$ a $\kk$-algebra with involution, there is a non-unital cyclic operad $\cpd_{(B,\sigma)}$ supported on $\mathbf{2}$ with $\cpd _{(B,\sigma)} (\mathbf{2})= B$ and $\kk \sym_2$-module structure given by $\sigma$; the composition is induced by the algebra structure of $B$ so as to be compatible with the previous construction.

One checks directly that these constructions induce an equivalence of categories, as required.
\end{proof}

The above proof was sketched so as to stress the fact that the identification of $\cpd_{(B,\sigma)}(\{x,y\})$ with $B$ depends on a choice of bijection $\mathbf{2} \cong \{x, y\}$; this can be thought of as a choice of order of $\{x, y\}$. This gives a way of representing elements of $\cpd_{(B,\sigma)}(\{x,y\})$ by labelled directed chords, such as 
\begin{center}
\begin{tikzpicture}[scale = .2]
%\draw[step=1cm,gray,very thin] (0,0) grid (7,-7);
\begin{scope}
    \clip (-3,0) rectangle (3,3);
    \draw (0,0) circle(2);
\end{scope}
\draw [-latex](0,2) -- (.1,2);
\node at (90:3) {$\scriptstyle{b}$};
\draw [lightgray, thick] (-3,0) -- (3,0);
\draw [fill=black] (-2,0) circle (0.2);
\node [below] at (-2,0) {$\scriptstyle{x}$};
\draw [fill=black] (2,0) circle (0.2);
\node [below] at (2,0) {$\scriptstyle{y}$};
\node [right] at (3,0) {,};
\end{tikzpicture}
\end{center}
for an element $b \in B$. The orientation of the chord specifies the choice of order of $\{x, y\}$. This chord is defined to be equivalent to 
\begin{center}
\begin{tikzpicture}[scale = .2]
%\draw[step=1cm,gray,very thin] (0,0) grid (7,-7);
\begin{scope}
    \clip (-3,0) rectangle (12,3);
    \draw (0,0) circle(2);
    \draw (9,0) circle(2);
\end{scope}
\draw [-latex](0,2) -- (-.1,2);
\node at (90:3) {$\scriptstyle{\sigma(b)}$};
\draw [lightgray, thick] (-3,0) -- (3,0);
\draw [fill=black] (-2,0) circle (0.2);
\node [below] at (-2,0) {$\scriptstyle{x}$};
\draw [fill=black] (2,0) circle (0.2);
\node [below] at (2,0) {$\scriptstyle{y}$};

\node at (4.5, 1) {$\scriptstyle{=}$}; 

\draw [-latex](9,2) -- (9.1,2);
\node at (9,3) {$\scriptstyle{\sigma (b)}$};
\draw [lightgray, thick] (6,0) -- (12,0);
\draw [fill=black] (7,0) circle (0.2);
\node [below] at (7,0) {$\scriptstyle{y}$};
\draw [fill=black] (11,0) circle (0.2);
\node [below] at (11,0) {$\scriptstyle{x}$};
\node [right] at (11.5,0) {,};
\end{tikzpicture}
\end{center}
i.e., changing the orientation of the chord invokes $\sigma$ (the equality reflects the fact that the diagram is independent of the planar embedding). This relation will be referred to as the {\em $\sigma$-chord orientation relation}. Once this relation is taken into account, there is a bijection between equivalence classes of such diagrams and $B$. 

 The  $\kk$-module structure on such diagrams is the obvious one, by imposing $\kk$-linearity with respect to $b\in B$.

Using this diagrammatic interpretation, the multiplication of $B$ corresponds (for elements $b, c \in B$) to 
\begin{equation}
\label{eqn:iota_uv}
\begin{tikzpicture}[scale = .2]
%\draw[step=1cm,gray,very thin] (-3,0) grid (20,3);
\begin{scope}
    \clip (-3,0) rectangle (20,3);
    \draw (0,0) circle(2);
     \draw (7,0) circle(2);
         \draw (16,0) circle(2);
\end{scope}
\draw [-latex](0,2) -- (.1,2);
\node at (0,3) {$\scriptstyle{b}$};
\draw [lightgray, thick] (-3,0) -- (3,0);
\draw [fill=black] (-2,0) circle (0.2);
\node [below] at (-2,0) {$\scriptstyle{x}$};
\draw [fill=black] (2,0) circle (0.2);
\node [below] at (2,0) {$\scriptstyle{u}$};

\node at (3.5, 1) {$\scriptstyle{\otimes}$}; 

\draw [-latex](7,2) -- (7.1,2);
\node at (7,3) {$\scriptstyle{c}$};
\draw [lightgray, thick] (4,0) -- (10,0);
\draw [fill=black] (5,0) circle (0.2);
\node [below] at (5,0) {$\scriptstyle{v}$};
\draw [fill=black] (9,0) circle (0.2);
\node [below] at (9,0) {$\scriptstyle{y}$};

\draw [|->](10,1) -- (13,1);
\node at (11.5,2) {$\scriptstyle{\iota_{u,v}}$};

\draw [-latex](16,2) -- (16.1,2);
\node at (16,3) {$\scriptstyle{bc}$};
\draw [lightgray, thick] (13,0) -- (19,0);
\draw [fill=black] (14,0) circle (0.2);
\node [below] at (14,0) {$\scriptstyle{x}$};
\draw [fill=black] (18,0) circle (0.2);
\node [below] at (18,0) {$\scriptstyle{y}$};
\node [right] at (19,0) {,};
\end{tikzpicture}
\end{equation}
in which the two nodes contracted are adjacent and all the  chords have compatible orientations, as indicated.  
All other contraction structure maps are reduced to this case by exploiting the $\sigma$-chord orientation relation (for this, one can use Lemma \ref{lem:normalization} below). 

Recall from Section \ref{sect:fiord} that, for $X$ a set of cardinality $2t$, for $0<t \in \nat$, we consider $\fb (\mathbf{2t}, X)$ as the  set of decorated chord diagrams on $X$,  in which each chord is directed and the chords are ordered. Explicitly, the $i$th chord (for $1 \leq i \leq t$) is given by the ordered pair $(f(2i-1), f(2i))$. 
 There is a free right action of $\sym_2 \wr \sym_t \subset \sym_{2t}$ by precomposition, and the quotient set $\fb (\mathbf{2t}, X)/ \sym_2 \wr \sym_t$ identifies as the set of  undecorated chord diagrams, which also identifies as  
$$
\ub (\mathbf{0}, X) = \fb (\mathbf{2t}, X)/ \sym_2\wr \sym_t. 
$$

The following is clear; it is stated so as  to distinguish the two cases: 

\begin{lem}
\label{lem:cases}
For $f\in \fb (\mathbf{2t}, X)$ as above and $(u,v)$ an ordered pair  of distinct elements of $X$, precisely one of the following holds: 
\begin{enumerate}
\item 
$\exists j^f_{u,v} \in \mathbf{t}$ such that $\{ f(2j^f_{u,v}-1) , f(2j^f_{u,v}) \} = \{ u, v\}$ (as unordered sets); 
\item 
$\exists i^f_u \neq i^f_v \in \mathbf{t}$ such that $u \in \{ f(2i^f_u-1) , f(2i^f_u) \}$ and $v \in \{ f(2i^f_v-1) , f(2i^f_v) \}$.
\end{enumerate}
\end{lem}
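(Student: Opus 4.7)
The plan is immediate from the bijectivity of $f$. Since $f : \mathbf{2t} \stackrel{\cong}{\to} X$, the $t$ unordered pairs $\{ f(2i-1), f(2i) \}$ for $i \in \mathbf{t}$ partition $X$ into $t$ subsets of cardinal two. Consequently, for every $x \in X$ there exists a unique $i^f_x \in \mathbf{t}$ with $x \in \{ f(2i^f_x-1), f(2i^f_x) \}$. I would make this the starting observation and then organize the argument around the dichotomy $i^f_u = i^f_v$ versus $i^f_u \neq i^f_v$.

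Given distinct $u, v \in X$, the index $i^f_u$ is defined as above, and similarly $i^f_v$. If $i^f_u = i^f_v$, call this common index $j^f_{u,v}$; then $u, v$ are two distinct elements of the two-element set $\{ f(2j^f_{u,v}-1), f(2j^f_{u,v}) \}$, which forces equality $\{ f(2j^f_{u,v}-1), f(2j^f_{u,v}) \} = \{ u,v\}$, giving the first case. If instead $i^f_u \neq i^f_v$, these indices directly realize the second case.

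Mutual exclusivity is the only mild point worth noting: in case~(1) the elements $u$ and $v$ lie in a single chord, so $i^f_u = i^f_v = j^f_{u,v}$; this contradicts the inequality $i^f_u \neq i^f_v$ demanded by case~(2). Hence exactly one of the two alternatives occurs, as claimed. There is no real obstacle here; the lemma simply records, for later bookkeeping, the partition of ordered pairs $(u,v)$ of distinct elements of $X$ into those that form a chord of $f$ and those that straddle two distinct chords.
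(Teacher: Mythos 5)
Your proof is correct. The paper itself gives no argument, simply declaring the lemma obvious, and your observation that the $t$ chords of the bijection $f$ partition $X$ into two-element blocks (so the dichotomy is just $i^f_u = i^f_v$ versus $i^f_u \neq i^f_v$) is exactly the intended justification.
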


Using the notation of this lemma, we define the following elements of $\{0, 1\}$ (viewed as the group $\zed/2$): 
\begin{enumerate}
\item 
in the first case, $\epsilon^f_{u,v}$ is $0$ if $u=  f(2j^f_{u,v}-1) $ and $1$ otherwise; 
\item 
in the second case, $\epsilon^f_u$ is $0$ if $u = f(2i^f_u) $ and $1$ otherwise; $\epsilon^f_v$ is $0$ if $v = f(2i^f_v-1) $ and $1$ otherwise.
\end{enumerate}

Recall that $\sym_2 \wr \sym_t$ is the semi-direct product $\sym_2^{\times t} \rtimes \sym_t$, in particular contains the distinguished subgroups $\sym_2 ^{\times t}$ and $\sym_t$. 
We introduce the following group elements:
\begin{enumerate}
\item 
For $i \in \mathbf{t}$, let $\tau_i \in \sym_2^{\times t}$ be the element given by the transposition $\tau \in \sym_2$ on the $i$th factor, the identity element elsewhere.
\item 
For $j \in \mathbf{t}$, let $\rho_j \in \sym_t$ be the shuffle given by $t \mapsto j$ and the order preserving map $\mathbf{t-1} \rightarrow \mathbf{t} \backslash \{j\}$. 
\item 
For $i_1 \neq  i_2 \in \mathbf{t}$, let $\psi_{i_1,i_2}$ be the shuffle given by $t-1 \mapsto i_1$, $t \mapsto i_2$ and the order preserving map $\mathbf{t-2} \rightarrow \mathbf{t} \backslash \{i_1, i_2\}$.  
\end{enumerate}

\begin{defn}
\label{defn:group_elements}
For $f\in \fb (\mathbf{2t}, X)$ and  $(u,v)$ an ordered pair of distinct elements in $X$, define the following elements of $\sym_2 \wr \sym_t$:
\begin{enumerate}
\item 
in case (1) of Lemma \ref{lem:cases}, $\alpha_{u,v}^f :=  \rho_{j^f_{u,v}}\tau_t^{\epsilon^f_{u,v}}$; 
\item 
in case (2) of Lemma \ref{lem:cases}, $\beta_{u,v}^f :=  \psi_{i^f_u,i^f_v}\tau_ {t-1}^{\epsilon^f_u} \tau_t ^{\epsilon^f _v}$. 
\end{enumerate}
\end{defn}

By construction of $\alpha_{u,v}^f$ and $\beta_{u,v}^f$, the following normalization result holds, designed to normalize to the `contraction' $\iota_{u,v}$ as illustrated in (\ref{eqn:iota_uv}) above.

\begin{lem}
\label{lem:normalization}
For $X$ a set of cardinality $2t>0$ and $f \in \fb (\mathbf{2t}, X)$ and an ordered pair $(u,v)$ of distinct elements of $X$: 
\begin{enumerate}
\item 
in the first case of Lemma \ref{lem:cases},  $f\circ \alpha^f_{u,v} (2t-1) = u$ and $f \circ \alpha^f _{u,v} (2t)=v$; 
\item 
in the second case of Lemma \ref{lem:cases}, $f\circ \beta^f_{u,v} (2t-2) = u$ and $f \circ \beta^f _{u,v} (2t-1)=v$.
\end{enumerate}
\end{lem}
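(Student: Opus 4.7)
The plan is to establish both statements by a direct unwinding of the definitions, using the embedding $\sym_2 \wr \sym_t = \sym_2^{\times t} \rtimes \sym_t \hookrightarrow \sym_{2t}$ in which each $\sym_2$-factor swaps the two elements of one pair $\{2k-1, 2k\}$ and $\sym_t$ permutes the $t$ pairs as ordered blocks. In particular, the action of the distinguished generators on the last one or two pairs of $\mathbf{2t}$ can be read off at once:
\begin{itemize}
\item $\tau_t$ exchanges $2t-1$ and $2t$ and fixes all smaller elements;
\item $\rho_j$ sends $(2t-1, 2t)$ to $(2j-1, 2j)$;
\item $\psi_{i_1, i_2}$ sends $(2t-3, 2t-2)$ to $(2i_1-1, 2i_1)$ and $(2t-1, 2t)$ to $(2i_2-1, 2i_2)$.
\end{itemize}

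First I would treat case (1). With $j := j^f_{u,v}$ and $\epsilon := \epsilon^f_{u,v}$, a direct computation gives
\[
\alpha^f_{u,v}(2t-1) = \rho_j\bigl(\tau_t^\epsilon(2t-1)\bigr)
\quad\text{and}\quad
\alpha^f_{u,v}(2t) = \rho_j\bigl(\tau_t^\epsilon(2t)\bigr).
\]
If $\epsilon = 0$ these equal $2j-1$ and $2j$ respectively, and by definition $\epsilon^f_{u,v}=0$ means $u = f(2j-1)$, so $v = f(2j)$ (using that $\{f(2j-1), f(2j)\} = \{u,v\}$) and the conclusion follows. If $\epsilon = 1$ the values are $2j$ and $2j-1$; the definition forces $u = f(2j)$ and $v = f(2j-1)$, again giving the required identities.

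Next I would treat case (2) by the same method: $\tau_t^{\epsilon^f_v}$ only affects positions $2t-1$ and $2t$, and $\tau_{t-1}^{\epsilon^f_u}$ only affects positions $2t-3$ and $2t-2$, so these two commute and act independently on the relevant inputs. Computing
\[
\beta^f_{u,v}(2t-2) = \psi_{i^f_u, i^f_v}\bigl(\tau_{t-1}^{\epsilon^f_u}(2t-2)\bigr),
\qquad
\beta^f_{u,v}(2t-1) = \psi_{i^f_u, i^f_v}\bigl(\tau_t^{\epsilon^f_v}(2t-1)\bigr),
\]
one gets $2i^f_u$ or $2i^f_u - 1$ (according as $\epsilon^f_u$ is $0$ or $1$) and $2i^f_v - 1$ or $2i^f_v$ (according as $\epsilon^f_v$ is $0$ or $1$). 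Applying the definitions of $\epsilon^f_u$ (which dictates whether $u$ occupies the even or odd position of the $i^f_u$-th pair under $f$) and of $\epsilon^f_v$ shows that $f$ applied to these values returns $u$ and $v$ respectively.

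There is no real obstacle here; the only point requiring attention is fixing a convention for the embedding $\sym_2 \wr \sym_t \hookrightarrow \sym_{2t}$ and the order of composition in the product defining $\alpha^f_{u,v}$ and $\beta^f_{u,v}$, since all the rest amounts to finite case analysis on the values of $\epsilon^f_{u,v}$, $\epsilon^f_u$, $\epsilon^f_v \in \{0,1\}$.
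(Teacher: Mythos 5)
Your proof is correct and is exactly the verification the paper leaves implicit: the paper asserts the lemma "by construction of $\alpha^f_{u,v}$ and $\beta^f_{u,v}$" without writing out the case analysis, and your unwinding of the definitions (including the correct reading of the asymmetric conventions for $\epsilon^f_{u,v}$ versus $\epsilon^f_u$, $\epsilon^f_v$, and of the block action of $\rho_j$ and $\psi_{i_1,i_2}$) supplies precisely the missing details.
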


 We now proceed to analyse $\sfb^* (\cpd_{(B,\sigma)})$ and $\lfb^* (\cpd_{(B,\sigma)})$. For this, we extend  
the above graphical representation to decorated chord diagrams, with nodes labelled by elements of a fixed set. Taking three chords with endpoints labelled by $\mathbf{6}$, one has for example,

\begin{center}
\begin{tikzpicture}[scale = .5]
%\draw[step=1cm,gray,very thin] (0,0) grid (7,-7);

\begin{scope}
    \clip (0,-7) rectangle (7,-4.5);
    \draw (3,-7) circle(2);
    \draw (3,-7) circle (1); 
    \draw (4.5, -7) circle (1.5);
\end{scope}

\draw [-latex](3,-5) -- (3.1,-5);
\draw [-latex](3,-6) -- (3.1,-6);

%\draw [white, fill= white](4.35,-5.5) circle (0.05);
\draw [-latex] (4.6,-5.5) -- (4.7,-5.5);

\draw [gray, thick] (0,-7) -- (7,-7);
\draw [fill=black] (1,-7) circle (0.1);
\node [below] at (1,-7) {$\scriptstyle{1}$};
\draw [fill=black] (2,-7) circle (0.1);
\node [below] at (2,-7) {$\scriptstyle{2}$};
\draw [fill=black] (3,-7) circle (0.1);
\node [below] at (3,-7) {$\scriptstyle{3}$};
\draw [fill=black] (4,-7) circle (0.1);
\node [below] at (4,-7) {$\scriptstyle{4}$};
\draw [fill=black] (5,-7) circle (0.1);
\node [below] at (5,-7) {$\scriptstyle{5}$};
\draw [fill=black] (6,-7) circle (0.1);
\node [below] at (6,-7) {$\scriptstyle{6}$};

\node [above] at (3,-5) {$\scriptstyle{a}$};
\node [above] at (3,-6) {$\scriptstyle{b}$};
\node [above right] at (4.4, -5.5) {$\scriptstyle{c}$};
\node [right] at (7,-7) {,};
\end{tikzpicture}
\end{center} 
 for $a, b , c \in B$. (Here the ordering of $\mathbf{6}$ has been used to induce the standard choice of orientation of the chords and can be used to order the chords using the order of their left hand endpoints.)

 These diagrams are subject to the obvious $\kk$-multilinearity condition and the $\sigma$-chord orientation relation.
 The automorphism group (in the above example, $\sym_6$) acts  by relabelling the nodes and then rewriting the diagram using the canonical order on $\mathbf{6}$, keeping track of the orientation of the chords and their order. The $\sigma$-chord orientation relation can be used to ensure that all chords have the `clockwise' orientation with respect to this planar representation given by the canonical order on $\mathbf{6}$.
 
To state the identification of the underlying $\kk \fb$-modules in Lemma \ref{lem:sfb_lfb_algebra_inv} below, 
we recall the following:

\begin{lem}
\label{lem:action_wreath_product}
Suppose that $M$ is a $\kk \sym_2$-module and $t$ is a positive integer, then there is a canonical action of $\sym_2 \wr \sym_t$ on $M^{\otimes t}$ characterized by the following:
\begin{enumerate}
\item 
the subgroup $\sym_t\subset \sym_2 \wr \sym_t$ acts via place permutations on $M^{\otimes t}$; 
\item 
restricted to the subgroup $\sym_2 ^{\times t} \subset \sym_2 \wr \sym_t$, the module identifies as the $t$-fold exterior tensor product $M^{\boxtimes t}$. 
\end{enumerate}
\end{lem}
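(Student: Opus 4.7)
The plan is to construct the action explicitly using the semi-direct product decomposition $\sym_2\wr\sym_t \cong \sym_2^{\times t} \rtimes \sym_t$, and then verify that the required compatibility (coming from the conjugation action of $\sym_t$ on $\sym_2^{\times t}$) is automatic once we use place permutations and factor-wise actions. The uniqueness part of the characterization will then follow because the two subgroups $\sym_2^{\times t}$ and $\sym_t$ generate the whole wreath product.

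More precisely, I would proceed as follows. First, define an action of $\sym_t$ on $M^{\otimes t}$ by place permutations, namely $\sigma \cdot (m_1 \otimes \cdots \otimes m_t) := m_{\sigma^{-1}(1)} \otimes \cdots \otimes m_{\sigma^{-1}(t)}$ (this is the standard left action). Next, define an action of $\sym_2^{\times t}$ by letting $(a_1, \ldots, a_t)$ act diagonally via the given $\sym_2$-module structure on each factor, so that $(a_1,\ldots,a_t)\cdot (m_1\otimes\cdots\otimes m_t):=(a_1m_1)\otimes\cdots\otimes(a_tm_t)$; this is precisely the exterior tensor product structure of $M^{\boxtimes t}$ restricted along the inclusion of $\sym_2^{\times t}$.

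The main verification is that these two actions are compatible with the semi-direct product structure: for $\sigma \in \sym_t$ and $(a_1,\ldots,a_t)\in \sym_2^{\times t}$, one must check the commutation relation
\[
\sigma \cdot \big((a_1,\ldots,a_t)\cdot (m_1\otimes\cdots\otimes m_t)\big)
=
(a_{\sigma^{-1}(1)},\ldots,a_{\sigma^{-1}(t)})\cdot \big(\sigma \cdot (m_1\otimes\cdots\otimes m_t)\big),
\]
since this is exactly the identity $\sigma (a_1,\ldots,a_t) = (a_{\sigma^{-1}(1)},\ldots,a_{\sigma^{-1}(t)}) \sigma$ that defines $\sym_2\wr\sym_t$ as a semi-direct product. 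This is a direct computation: both sides equal $(a_{\sigma^{-1}(1)}m_{\sigma^{-1}(1)})\otimes\cdots\otimes (a_{\sigma^{-1}(t)}m_{\sigma^{-1}(t)})$, by straightforward reindexing.

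Finally, the characterization is immediate: any action of $\sym_2\wr\sym_t$ on $M^{\otimes t}$ satisfying conditions (1) and (2) coincides with the constructed action on generators, and since $\sym_2^{\times t}$ together with $\sym_t$ generate $\sym_2\wr\sym_t$, the two actions coincide. There is no real obstacle here; the only point requiring care is the bookkeeping of indices in the commutation relation, which is a standard verification.
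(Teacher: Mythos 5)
Your proof is correct and is exactly the standard verification that the paper leaves implicit (the lemma is stated without proof as a recollection of a well-known fact): construct the two subgroup actions, check the semi-direct product commutation relation by reindexing, and conclude uniqueness since $\sym_2^{\times t}$ and $\sym_t$ generate $\sym_2 \wr \sym_t$. Nothing further is needed.
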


This allows the $\kk \fb$-module structure of $\sfb^* (\cpd_{(B, \sigma)})$ and  of $\lfb^* (\cpd_{(B, \sigma)})$ to be described, formalizing the chord diagram approach sketched above.

 \begin{lem}
 \label{lem:sfb_lfb_algebra_inv}
 For $(B,\sigma)$ a $\kk$-algebra with involution and $\cpd_{(B,\sigma)}$ the associated non-unital cyclic operad, the underlying $\kk \sym_n$-modules of $\sfb^* (\cpd_{(B, \sigma)})$ and $\lfb^* (\cpd_{(B, \sigma)})$, for $n \in \nat$, are given respectively by  
\begin{eqnarray*}
\sfb^* (\cpd_{(B,\sigma)})(\n) 
&= &
\left\{ 
\begin{array}{ll}
0 & n \mbox{ odd}
\\
\fb (\mathbf{2t} , \n) \otimes _{\kk \sym_2 \wr \sym_t} B^{\otimes t} & 
n =2t;
\end{array} 
\right. 
 \\
\lfb^* (\cpd_{(B,\sigma)})(\n) 
&= &
\left\{ 
\begin{array}{ll}
0 & n \mbox{ odd}
\\
\fb (\mathbf{2t} , \n)\otimes _{\kk \sym_2 \wr \sym_t} (B^{\otimes t} \otimes \kk_{(+;-)}^{[t]})& 
n = 2t,
\end{array} 
\right.  
\end{eqnarray*} 
where $\sym_2 \wr \sym_t$ acts diagonally upon $B^{\otimes t} \otimes \kk_{(+;-)}^{[t]}$, where the superscript $^{[t]}$ references the group $\sym_2 \wr \sym_t$.
\end{lem}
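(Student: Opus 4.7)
My plan is to combine the explicit description of $\cpd_{(B,\sigma)}$ from Proposition \ref{prop:cyclic_operads_alg_inv} with the combinatorial formula for $\sfb^n(F)$ and $\lfb^n(F)$ provided by Lemma \ref{lem:sfb_lfb}. Since $\cpd_{(B,\sigma)}$ is supported on $\mathbf{2}$, the summand $\sfb^k(\cpd_{(B,\sigma)})(\n)$ (and analogously $\lfb^k$) is a direct sum indexed by unordered decompositions of $\n$ into $k$ non-empty subsets, which is non-zero only when every subset has cardinality two, forcing $n=2k$. This gives both the vanishing for $n$ odd and the reduction, for $n=2t$, to identifying $\sfb^t(\cpd_{(B,\sigma)})(\n)$ and $\lfb^t(\cpd_{(B,\sigma)})(\n)$.

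For each pair $U$ I would use $\cpd_{(B,\sigma)}(U) \cong \kk\fb(\mathbf{2}, U) \otimes_{\kk\sym_2} B$, where $\sym_2$ acts on $B$ via $\sigma$. Exploiting the standard identification whereby a bijection $\mathbf{2t}\to \n$ is the same datum as an ordered decomposition $\n = \amalg_i U_i$ into $t$ pairs together with an ordering of each pair, one obtains
\[
\bigoplus_{(U_i) \text{ ordered}} \bigotimes_{i\in\mathbf{t}} \cpd_{(B,\sigma)}(U_i) \cong \kk\fb(\mathbf{2t}, \n) \otimes_{\kk\sym_2^{\times t}} B^{\otimes t}.
\]
Passing to unordered decompositions introduces a further quotient by $\sym_t$, acting simultaneously on $\kk\fb(\mathbf{2t}, \n)$ through the block embedding $\sym_t \subset \sym_2\wr\sym_t \subset \sym_{2t}$ and on $B^{\otimes t}$ by permutation of tensor factors. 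Combining both quotients yields the stated formula
\[
\sfb^t(\cpd_{(B,\sigma)})(\n) \cong \kk\fb(\mathbf{2t}, \n)\otimes_{\kk\sym_2\wr\sym_t} B^{\otimes t},
\]
with $\sym_2\wr\sym_t$ acting on $B^{\otimes t}$ via the canonical action of Lemma \ref{lem:action_wreath_product} applied to $M=B$; the $\kk\sym_n$-module structure arises from post-composition on $\kk\fb(\mathbf{2t},\n)$.

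For $\lfb^t$, the same argument applies, except that Notation \ref{nota:sfb_lfb} inserts a $\sgn_t$-twist into $F^{\odot t}$ before quotienting by $\sym_t$. This twist is triggered only by permutations of the $t$ tensor factors of $\cpd_{(B,\sigma)}^{\odot t}$, i.e., by the $\sym_t$-quotient of $\sym_2\wr\sym_t$; the $\sym_2^{\times t}$-part, which corresponds to reversing individual chord orientations, does not permute these tensor factors and is absorbed into the $\sigma$-action on $B$. Hence the relevant twisting representation of $\sym_2\wr\sym_t$ is the pullback of $\sgn_t$ along the surjection $\sym_2\wr\sym_t \twoheadrightarrow \sym_t$, which by Definition \ref{defn:twisting_representations} is precisely $\kk_{(+;-)}^{[t]}$. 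I expect the main obstacle to be the bookkeeping needed to keep the two sources of $\sym_2$-signs distinct: within-pair flips (which predate the wedge and introduce no sign in $\lfb^t$, being absorbed by $\sigma$) versus pair-level transpositions (which do carry the $\lfb^t$ sign); once this distinction is made precise, the identification reduces to a routine verification.
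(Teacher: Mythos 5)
Your proposal is correct and is essentially the argument the paper intends: its own proof is a one-line assertion that the identification "follows directly from an analysis of the construction of the $t$-fold iterated Day convolution product and the action of the group $\sym_t$ on this," which is precisely what you spell out — the support-on-$\mathbf{2}$ reduction via Lemma \ref{lem:sfb_lfb}, the identification of ordered decompositions into ordered pairs with $\fb(\mathbf{2t},\n)$, and the careful separation of the $\sym_2^{\times t}$-signs (absorbed by $\sigma$) from the $\sym_t$-sign (giving $\kk_{(+;-)}^{[t]}$). No gaps.
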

 
\begin{proof}
The identification of the underlying representations follows directly from an analysis of the construction of the $t$-fold iterated Day convolution product and the action of the group $\sym_t$ on this. 
\end{proof}

So as to state Proposition \ref{prop:sfb_lfb_module_Bsigma} below, we introduce the following notation. 

\begin{nota}
\label{nota:overline_f_b}
For $f \in \fb (\mathbf{2t} , X)$ and $\underline{b} \in B^{\otimes t}$, write
\begin{enumerate}
\item 
$\overline{[f] \otimes \underline{b}}$ for the class of $[f] \otimes \underline{b}$ in 
$\sfb^* (\cpd_{(B,\sigma)})(\mathbf{2t}) = \fb (\mathbf{2t} , X)\otimes _{\kk \sym_2 \wr \sym_t} B^{\otimes t} $;
\item 
$\overline{[f] \otimes (\underline{b}\otimes 1)}$ for the class of $[f] \otimes (\underline{b}\otimes 1)$ in 
$\lfb^* (\cpd_{(B,\sigma)})(\mathbf{2t}) = \fb (\mathbf{2t} , X)\otimes _{\kk \sym_2 \wr \sym_t} (B^{\otimes t} \otimes \sgn_t)$.
\end{enumerate}
\end{nota}

Thus, any element of  $\sfb^* (\cpd_{(B,\sigma)})(\mathbf{2t}) $ can be written as a linear combination of elements of the form $\overline{[f] \otimes \underline{b}}$. Similarly for 
$\lfb^* (\cpd_{(B,\sigma)})(\mathbf{2t}) $.

To describe the  $\kk \db$-module  structure of $\sfb^* (\cpd_{(B, \sigma)})$ and the $(\kk \db)_{(-;-)}$-module structure of $\lfb^* (\cpd_{(B, \sigma)})$, we introduce the following notation for  `generators' for the morphisms.

\begin{nota}
\label{nota:g_uv}
For $0<t \in \nat$ and  $1 \leq u<v\leq 2t$, denote by
\begin{enumerate}
\item 
$g_{u,v}$  the morphism of $\db (\mathbf{2t}, \mathbf{2(t-1)})$ corresponding to the order-preserving bijection 
$\mathbf{2(t-1)} \cong \mathbf{2t} \backslash \{u, v \}$; 
\item 
$[g_{u,v}]$ the corresponding element of $ \kk \db (\mathbf{2t}, \mathbf{2(t-1)})$;
\item 
$[g_{u,v}]'$ the corresponding element of  $(\kk\db)_{(-;-)}(\mathbf{2t}, \mathbf{2(t-1)})$, using the given order $u<v$  (compare Lemma \ref{lem:basis_twisted_kub});
\item 
$\mathfrak{i}_t : \mathbf{2(t-1)} \hookrightarrow \mathbf{2t}$ the order preserving injection with image $\mathbf{2t} \backslash \{ 2t-2, 2t-1 \}$.
\end{enumerate} 
\end{nota}

Lemma \ref{lem:sfb_lfb_algebra_inv} has already determined the underlying $\kk \fb$-module structures of $\sfb^* (\cpd_{(B, \sigma)})$ and  of $\lfb^* (\cpd_{(B, \sigma)})$. 
To determine the full module structures, since $\kk\db$ and $(\kk \db)_{(-;-)}$ are both homogeneous quadratic $\kk$-linear categories, in particular generated in degree one, it suffices to specify for each $t, u,v$ as above, the action of $[g_{u,v}]$ (respectively $[g_{u,v}]'$).

In the following statement,  $\mu_B : B \otimes B \rightarrow B$ is the product of $B$ and $\beta^f_{u,v}$ is as in Definition \ref{defn:group_elements}

\begin{prop}
\label{prop:sfb_lfb_module_Bsigma}
Let $(B,\sigma)$ be a $\kk$-algebra with involution and  $\cpd_{(B,\sigma)}$ be the associated non-unital cyclic operad;  
fix  $0 <t \in \nat$ and $1 \leq u<v \leq 2t$.
\begin{enumerate}
\item 
The action of $[g_{u,v}]$ on $\sfb^* (\cpd_{(B,\sigma)}) (\mathbf{2t})$ is determined by the following.
For an element $\overline{[f] \otimes \underline{b}}$ of $\sfb^* (\cpd_{(B,\sigma)}) (\mathbf{2t})$, 
\begin{enumerate}
\item 
in  case (1) of Lemma \ref{lem:cases}, $[g_{u,v}] \big(\overline{ [f] \otimes \underline{b}}\big)=0$; 
\item 
in case (2) of Lemma \ref{lem:cases}, $[g_{u,v}] \big(\overline{ [f] \otimes \underline{b}}\big)  = \overline{[f'] \otimes \underline{b}'}$, 
where $f' = f \circ \beta^f_{u,v} \circ \mathfrak{i}_t$ and $\underline{b}'= (\id_{B^{\otimes t-2}} \otimes \mu_B)\circ (\beta^f _{u,v})^{-1} (\underline{b})$.
\end{enumerate}
\item 
The action of $[g_{u,v}]'$ on $\lfb^* (\cpd_{(B,\sigma)}) (\mathbf{2t})$ is determined by the following.
For an element $\overline{[f] \otimes (\underline{b}\otimes 1)}$ of $\lfb^* (\cpd_{(B,\sigma)}) (\mathbf{2t})$, 
\begin{enumerate}
\item 
in  case (1) of Lemma \ref{lem:cases}, $[g_{u,v}]' \big(\overline{ [f] \otimes (\underline{b}\otimes 1)}\big)=0$; 
\item 
in case (2) of Lemma \ref{lem:cases}, $[g_{u,v}]' \big(\overline{ [f] \otimes (\underline{b}\otimes 1)}\big)  = \overline{[f'] \otimes (\underline{b}'\otimes 1)}$, 
where $f' = f \circ \beta^f_{u,v} \circ \mathfrak{i}_t$ and $\underline{b}'\otimes 1= (\id_{B^{\otimes t-2}} \otimes \mu_B \otimes \id_{\sgn_t})\circ (\beta^f _{u,v})^{-1} (\underline{b}\otimes 1)$.
\end{enumerate}
\end{enumerate}
\end{prop}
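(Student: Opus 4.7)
The plan is to use that both $\kk\db$ and $(\kk\db)_{(-;-)}$ are generated over $\kk\fb$ by degree-one morphisms (Proposition \ref{prop:kub_twist_homogeneous_quadratic}), so that it suffices to describe the action of $[g_{u,v}]$ and $[g_{u,v}]'$. We then exploit the $\sym_2\wr\sym_t$-equivariance of the quotient presentation from Lemma \ref{lem:sfb_lfb_algebra_inv}: writing $\overline{[f]\otimes \underline{b}} = \overline{[f\circ\beta^f_{u,v}]\otimes (\beta^f_{u,v})^{-1}\underline{b}}$, Lemma \ref{lem:normalization} reduces us to the normalized situation where $u$ lies at position $2t-2$ and $v$ at position $2t-1$ (the case (2) configuration; case (1) normalizes via $\alpha^f_{u,v}$ analogously).

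For the $\sfb^*$-case, the action of $[g_{u,v}]$ is given by Lemma \ref{lem:induced structure morphisms} applied to the structure morphism of Lemma \ref{lem:cpd_composite_map}: it is the composite
\[
\sfb^t(\cpd) \to \sfb^{t-2}(\cpd)\odot\sfb^2(\cpd) \to \sfb^{t-2}(\cpd)\odot\cpd\odot\triv_2 \to \sfb^{t-1}(\cpd)\odot\triv_2,
\]
evaluated at the pair $\{u,v\}$ in the $\triv_2$-factor. In case (1), where $\{u,v\}$ is a single chord, this composite vanishes on $\overline{[f]\otimes \underline{b}}$: in every coproduct term that places this chord in the $\sfb^2$-factor, the component of $\sfb^2(\cpd)\to\cpd\odot\triv_2$ at the pair $\{u,v\}$ is zero by the condition recalled in Remark \ref{rem:cpd-structure-condition} (both contracted endpoints would lie in a single $\cpd$-factor); the remaining coproduct terms place the $\{u,v\}$-chord into $\sfb^{t-2}$, so the contracted pair would not be $\{u,v\}$ and these contribute to a different $[g_{u',v'}]$. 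In case (2), the surviving coproduct terms place the two chords carrying $u$ and $v$ into the $\sfb^2$-factor; after normalization via $\beta^f_{u,v}$ (so these become chords $t-1$ and $t$ with $u,v$ in positions $2t-2, 2t-1$), the diagrammatic identity \eqref{eqn:iota_uv} exhibits the middle map $\cpd(U_2)\otimes\cpd(U_3)\to\cpd(V_2)$ as the multiplication $\mu_B$; the final product with $\sfb^{t-2}(\cpd)$ then recombines into the chord diagram $f'=f\circ\beta^f_{u,v}\circ\mathfrak{i}_t$, yielding the stated formula.

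The $\lfb^*$-case follows by the identical scheme, using Lemma \ref{lem:cpd_composite_map_lfb} in place of Lemma \ref{lem:cpd_composite_map}. The additional bookkeeping consists of the Koszul-type sign from the $\sgn_t$-factor appearing in Lemma \ref{lem:sfb_lfb_algebra_inv}, together with the sign built into the generator $[g_{u,v}]'$ via Lemma \ref{lem:basis_twisted_kub} and the $\sgn_2$-twist in the target of \eqref{eqn:cpd-structure_lfb}. The latter two contributions are, by construction of the $(\kk\db)_{(-;-)}$-module structure in Theorem \ref{thm:cpd_db_kdbm}, absorbed into the definition of the action; the $\sgn_t$-contribution is absorbed into the shuffle part $\psi_{i^f_u,i^f_v}$ of $(\beta^f_{u,v})^{-1}$ acting on $\sgn_t$, while the $\sym_2^{\times t}$-part $\tau_{t-1}^{\epsilon^f_u}\tau_t^{\epsilon^f_v}$ acts trivially on $\sgn_t$ and produces exactly the $\sigma$-twists on $B$-factors dictated by the $\sigma$-chord orientation relation.

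The main obstacle is the sign-matching in the $\lfb^*$-case: one must verify that after enumerating all sign contributions (from chord orientations in the coproduct, from the order of chords, from the generator $[g_{u,v}]'$, and from the $(\pm;\mp)$-twist in Lemma \ref{lem:basis_twisted_kub}), the only residual sign is precisely that produced by the $\sym_2\wr\sym_t$-action of $(\beta^f_{u,v})^{-1}$ on $\underline{b}\otimes 1$. A direct computation in the standard normalized configuration, modelled on Example \ref{exam:extended_morphism_n=3} with the signs inserted, should resolve this.
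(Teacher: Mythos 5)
Your proposal is correct and follows essentially the same route as the paper's proof, which simply cites the construction of $\beta^f_{u,v}$, the normalization Lemma \ref{lem:normalization}, and the module structure of Theorem \ref{thm:cpd_db_kdbm}; you have unpacked exactly these ingredients (adding the explicit vanishing argument in case (1) via the condition of Remark \ref{rem:cpd-structure-condition}). The residual sign-check you flag in the $\lfb^*$-case is likewise left implicit in the paper, which defers it to the diagrammatic verification encapsulated in Example \ref{exam:lfb-module_explicit}.
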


\begin{proof}
This follows from the construction of the group element $\beta^f_{u,v}$, the normalization result Lemma \ref{lem:normalization}, together with the construction of the respective module structures as given by Theorem \ref{thm:cpd_db_kdbm}. 
\end{proof}

\begin{exam}
\label{exam:initial_unital_alg_inv}
For $(B, \sigma) = (\kk, \id)$, the initial unital $\kk$-algebra with involution, one has the identifications of $\kk \sym_n$-modules.
\begin{eqnarray*}
\sfb^* (\cpd_{(\kk,\id)})(\n) 
&\cong & 
\kk \ub (\mathbf{0}, \n) 
\\
\lfb^* (\cpd_{(\kk,\id)})(\n) 
&\cong & 
(\kk \ub) _{(+;-)} (\mathbf{0}, \n).
\end{eqnarray*}

The identification of the  $\kk \db$-module (respectively $(\kk \db)_{(-;-)}$-module) structures is straightforward. For the first case $\sfb^* (\cpd_{(\kk,\id)})$, suppose that $n=2t$,  and consider an undecorated chord diagram with $t$ chords on $\mathbf{2t}$. Let $g_{u,v} \in \db (\mathbf{2t}, \mathbf{2(t-1)})$ be as in Notation \ref{nota:g_uv}. Then there are two possibilities: 
\begin{enumerate}
\item 
the pair $u<v$ is a chord; 
\item 
the pair $u<v$ is not a chord. 
\end{enumerate}

The action of $[g_{u,v}]$ on the generator given by the chord diagram sends this to zero in the first case; in the second case, it sends the generator to that corresponding to the chord diagram obtained by concatenating the chords containing $u$ and $v$ and relabelling the remaining endpoints by elements of $\mathbf{2(t-1)}$  preserving the order.

The analysis in the case of $\lfb^* (\cpd_{(\kk,\id)})$ is similar; the only difference being that there is an orientation sign that may intervene.
\end{exam}

\begin{exam}
\label{exam:lfb-module_explicit}
Consider $\lfb^3 (\cpd _{(B,\sigma)} )$ for a $\kk$-algebra with involution $(B,\sigma)$ and three elements $a$, $b$, and $c$ of $B$, decorating a chord diagram on the set $\{ s, t, x, u, v, y \}$ as illustrated below, considering the iteration first contracting $(u,v)$ and then contracting $(t,x)$. This gives:

\begin{center}
\begin{tikzpicture}[scale = .2]
%\draw[step=1cm,gray,very thin] (-10,0) grid (40,3);
\begin{scope}
    \clip (-10,0) rectangle (40,3);
    \draw (-7,0) circle (2);
    \draw (0,0) circle(2);
    \draw (7,0) circle(2);
    \draw (16,0) circle(2);
	\draw (23,0) circle (2);          
    \draw (32,0) circle (2); 
\end{scope}

\draw [-latex](-7,2) -- (-6.9,2);
\node at (-7,3) {$\scriptstyle{a}$};
\draw [lightgray, thick] (-10,0) -- (-4,0);
\draw [fill=black] (-9,0) circle (0.2);
\node [below] at (-9,0) {$\scriptstyle{s}$};
\draw [fill=black] (-5,0) circle (0.2);
\node [below] at (-5,0) {$\scriptstyle{t}$};

\draw [-latex](0,2) -- (.1,2);
\node at (0,3) {$\scriptstyle{b}$};
\draw [lightgray, thick] (-3,0) -- (3,0);
\draw [fill=black] (-2,0) circle (0.2);
\node [below] at (-2,0) {$\scriptstyle{x}$};
\draw [fill=black] (2,0) circle (0.2);
\node [below] at (2,0) {$\scriptstyle{u}$};

\node at (3.5, 1) {$\scriptstyle{\otimes}$}; 
\node at (-3.5, 1) {$\scriptstyle{\otimes}$}; 
\node at (19.5, 1) {$\scriptstyle{\otimes}$}; 

\draw [-latex](7,2) -- (7.1,2);
\node at (7,3) {$\scriptstyle{c}$};
\draw [lightgray, thick] (4,0) -- (10,0);
\draw [fill=black] (5,0) circle (0.2);
\node [below] at (5,0) {$\scriptstyle{v}$};
\draw [fill=black] (9,0) circle (0.2);
\node [below] at (9,0) {$\scriptstyle{y}$};

\draw [|->](10.5,1) -- (12.5,1);
\node at (11.5,2) {$\scriptstyle{\iota_{u,v}}$};
\draw [|->](26.5,1) -- (28.5,1);
\node at (27.5,2) {$\scriptstyle{\iota_{t,x}}$};

\draw [-latex](16,2) -- (16.1,2);
\node at (16,3) {$\scriptstyle{a}$};
\draw [lightgray, thick] (13,0) -- (19,0);
\draw [fill=black] (14,0) circle (0.2);
\node [below] at (14,0) {$\scriptstyle{s}$};
\draw [fill=black] (18,0) circle (0.2);
\node [below] at (18,0) {$\scriptstyle{t}$};

\draw [-latex](23,2) -- (23.1,2);
\node at (23,3) {$\scriptstyle{bc}$};
\draw [lightgray, thick] (20,0) -- (26,0);
\draw [fill=black] (21,0) circle (0.2);
\node [below] at (21,0) {$\scriptstyle{x}$};
\draw [fill=black] (25,0) circle (0.2);
\node [below] at (25,0) {$\scriptstyle{y}$};

\draw [-latex](32,2) -- (32.1,2);
\node at (32,3) {$\scriptstyle{abc}$};
\draw [lightgray, thick] (29,0) -- (35,0);
\draw [fill=black] (30,0) circle (0.2);
\node [below] at (30,0) {$\scriptstyle{s}$};
\draw [fill=black] (34,0) circle (0.2);
\node [below] at (34,0) {$\scriptstyle{y}$};

\node [right] at (34.5,0) {.};
\end{tikzpicture}
\end{center}

\noindent
In this case, no signs arise, since we do not require to reorder the chords.

If we reverse the order of the contractions, then a sign arises, similarly to the appearance of signs in the Chevalley-Eilenberg complex:

\begin{center}
\begin{tikzpicture}[scale = .2]
%\draw[step=1cm,gray,very thin] (-10,0) grid (40,3);
\begin{scope}
    \clip (-10,0) rectangle (40,3);
    \draw (-7,0) circle (2);
    \draw (0,0) circle(2);
    \draw (7,0) circle(2);
    \draw (16,0) circle(2);
	\draw (23,0) circle (2);          
    \draw (32,0) circle (2); 
\end{scope}

\draw [-latex](-7,2) -- (-6.9,2);
\node at (-7,3) {$\scriptstyle{a}$};
\draw [lightgray, thick] (-10,0) -- (-4,0);
\draw [fill=black] (-9,0) circle (0.2);
\node [below] at (-9,0) {$\scriptstyle{s}$};
\draw [fill=black] (-5,0) circle (0.2);
\node [below] at (-5,0) {$\scriptstyle{t}$};

\draw [-latex](0,2) -- (.1,2);
\node at (0,3) {$\scriptstyle{b}$};
\draw [lightgray, thick] (-3,0) -- (3,0);
\draw [fill=black] (-2,0) circle (0.2);
\node [below] at (-2,0) {$\scriptstyle{x}$};
\draw [fill=black] (2,0) circle (0.2);
\node [below] at (2,0) {$\scriptstyle{u}$};

\node at (3.5, 1) {$\scriptstyle{\otimes}$}; 
\node at (-3.5, 1) {$\scriptstyle{\otimes}$}; 
\node at (19.5, 1) {$\scriptstyle{\otimes}$}; 

\draw [-latex](7,2) -- (7.1,2);
\node at (7,3) {$\scriptstyle{c}$};
\draw [lightgray, thick] (4,0) -- (10,0);
\draw [fill=black] (5,0) circle (0.2);
\node [below] at (5,0) {$\scriptstyle{v}$};
\draw [fill=black] (9,0) circle (0.2);
\node [below] at (9,0) {$\scriptstyle{y}$};

\draw [|->](10.5,1) -- (12.5,1);
\node at (11.5,2) {$\scriptstyle{\iota_{t,x}}$};
\draw [|->](26.5,1) -- (28.5,1);
\node at (27.5,2) {$\scriptstyle{\iota_{u,v}}$};

\draw [-latex](16,2) -- (16.1,2);
\node at (16,3) {$\scriptstyle{ab}$};
\node at (13.5,1) {$\scriptstyle{-}$};
\draw [lightgray, thick] (13,0) -- (19,0);
\draw [fill=black] (14,0) circle (0.2);
\node [below] at (14,0) {$\scriptstyle{s}$};
\draw [fill=black] (18,0) circle (0.2);
\node [below] at (18,0) {$\scriptstyle{u}$};

\draw [-latex](23,2) -- (23.1,2);
\node at (23,3) {$\scriptstyle{c}$};
\draw [lightgray, thick] (20,0) -- (26,0);
\draw [fill=black] (21,0) circle (0.2);
\node [below] at (21,0) {$\scriptstyle{v}$};
\draw [fill=black] (25,0) circle (0.2);
\node [below] at (25,0) {$\scriptstyle{y}$};

\draw [-latex](32,2) -- (32.1,2);
\node at (32,3) {$\scriptstyle{abc}$};
\node at (29.5,1) {$\scriptstyle{-}$};
\draw [lightgray, thick] (29,0) -- (35,0);
\draw [fill=black] (30,0) circle (0.2);
\node [below] at (30,0) {$\scriptstyle{s}$};
\draw [fill=black] (34,0) circle (0.2);
\node [below] at (34,0) {$\scriptstyle{y}$};

\node [right] at (35,0) {.};
\end{tikzpicture}
\end{center}

\noindent
Namely, to apply $\iota_{t,x}$ we first reorder the chords so that the labels are in the order $c$, $a$, $b$ (this introduces no sign) and contracting yields the chords with labels in the order $c$, $ab$. Reordering so that $u$ is to the left of $v$ (as illustrated) introduces the sign. 

This illustrates the fact that $\lfb^* (\cpd)$ is a $(\kk \db) _{(-;-)}$-module, with orientation signs arising both from the directions of the chords (corresponding to $(u,v)$ and $(t,x)$ above) and from the ordering of these. 
\end{exam}

\section{The Conant-Vogtmann-Kontsevich Lie algebra}
\label{sect:kontsevich_lie}

The purpose of this section is to show how the $(\kk \db)_{(-;-)}$-module associated to a cyclic operad $\cpd$ given by Theorem \ref{thm:cpd_db_kdbm} relates to the Lie algebra structure introduced by Kontsevich and generalized to all cyclic operads by Conant and Vogtmann. This uses the generalized Schur functor construction applied to the  Koszul complex of the module, together with its relation with the   Chevalley-Eilenberg complex of the Lie algebra. This approach  is motivated by the observation that the above structures should be related by Brauer-Schur-Weyl duality (as presented in Section \ref{sect:forms}). 

Throughout this section, we work over a field $\kk$ of characteristic zero.

%%%%%%%%%%%%%%%%%%%%%%%%%%%%%%%%%%%%%%%%%%%%%%%%%%%%%%%%%%%%%%%%%%%%%%%%%%%%%%%%%%%%%%
\subsection{Comparison with the Chevalley-Eilenberg complex}
\label{subsect:compare_CE}

Kontsevich \cite{MR1247289,MR1341841} exploited Lie algebras arising in his formal symplectic geometry; the construction was generalized by Conant and Vogtmann \cite{MR2026331}, who showed that, for $\cpd$ a cyclic operad and $(V, \omega)$ a symplectic vector space, the evaluation of its Schur functor $\cpd (V)$ is equipped with a canonical Lie algebra structure induced by the cyclic operad structure map and the symplectic form. The association $(V, \omega) \mapsto \cpd (V)$ is a functor from $\vsp$ to the category of Lie algebras over $\kk$ (using the notation of Section \ref{sect:forms}).

\begin{exam}
\label{exam:symplectic_matrices}
Let $(B, \sigma)$ be a $\kk$-algebra with involution and $(V,\omega)$ be a symplectic vector space. Thus one has the associated (non-unital) cyclic operad $\cpd_{(B,\sigma)}$ and the associated Schur functor $\cpd_{(B,\sigma)}(V)$  given by 
$
V^{\otimes 2} \otimes_{\kk\sym_2} B$; this is equipped with  the Conant-Vogtmann Lie algebra structure. 

For example, taking $(B, \sigma)$ to be the initial unital  $\kk$-algebra with involution $(\kk , \id)$, this gives 
$
\cpd_{(\kk, \id)} (V) = S^2 (V),
$ 
which identifies as a Lie algebra as $\mathfrak{sp}(V)$ (this is often written as $\mathfrak{sp} (2n)$, where $\dim V = 2n$). 

This identification generalizes, using the Lie algebra of symplectic matrices associated to a $\kk$-algebra with involution, as in \cite[Section 10.5]{MR1600246} for example. This is a sub Lie algebra $sp_V (B)$ of $gl_V (B)$ defined as in \cite[Section 10.5.3]{MR1600246} (modifying the notation to index by the symplectic vector space $(V, \omega)$ rather than its dimension). 

There is an isomorphism of Lie algebras 
\[
\cpd_{(B,\sigma)}(V) \cong sp_{ V} (B)
\]
that is natural with respect to the symplectic space $(V,\omega)$ and with respect to the $\kk$-algebra with involution $(B, \sigma)$. 
 This is proved by using the  isomorphism $V \cong V^\sharp$ induced by the symplectic form $\omega$.  Using a symplectic basis $\{x_i\}$ of $(V,\omega)$ such that,  for $1 \leq i \leq n= \frac{1}{2}\dim V$,  $\omega (x_i, x_{i+n}) =1$ and $\omega (x_i, x_j)=0$ otherwise, this is encoded by the matrix 
$$
\begin{pmatrix}
0 & -I_n 
\\
I_n & 0,
\end{pmatrix}
$$
which is the inverse of the matrix $J_n$ in {\em loc. cit.} that is used in defining $sp_V (B)$.

The above isomorphism induces  $V \otimes V \cong V \otimes  V^\sharp \cong \mathrm{End}(V)$. Using that, in characteristic zero, invariants and coinvariants for $\zed/2$ coincide, one shows that this induces an isomorphism of Lie algebras as stated.
\end{exam}

Let us return to the general case of a non-unital cyclic operad $\cpd$.  One can form the associated Chevalley-Eilenberg complex:
\[
(\Lambda^*\cpd (V), d_\mathsf{CE}).
\]
This is natural with respect to both  $\cpd \in \nuco$ and the symplectic vector space $(V, \omega)$. The underlying graded object $\Lambda^ * (\cpd (V))$ only depends on the underlying $\kk \fb$-module of $\cpd$ and on the underlying $\kk$-vector space $V$. The differential depends upon both the symplectic structure $(V, \omega)$ and the cyclic operad composition map of $\cpd$.

\begin{rem}
The differential $d_\mathsf{CE}$ encodes the Lie algebra structure of $\cpd (V)$: the component $\Lambda^2 (\cpd (V)) \rightarrow \cpd (V)$ is the Lie bracket; the fact that $d^2_\mathsf{CE}$ is zero is equivalent to the Jacobi identity. Thus, one can recover  the Lie algebra structure on $\cpd (V)$ from the structure of the Chevalley-Eilenberg complex.
\end{rem}

We show how to recover these structures by using  the $(\kk \db)_{(-;-)}$-module $\lfb^*(\cpd)$ given by Theorem \ref{thm:cpd_db_kdbm}. 

\begin{rem}
We consider $\lfb^*(\cpd)$ as a graded $(\kk \db)_{(-;-)}$-module using the degree of $\lfb^*$. This is compatible with the length grading of $(\kk \db)_{(-;-)}$ in an obvious sense (compare the discussion of gradings in Remark \ref{rem:grading_vker_otimes_Koszul}). 
\end{rem}

We form the Koszul complex 
\[
\mathscr{K}_- \otimes_{(\kk \db)_{(-;-)}} \lfb^* (\cpd),
\]
which is a complex in $(\kk \ub)_{(-;+)}$-modules.  Then, as in Proposition \ref{prop:V_omega_complex}, we have the complex 
\begin{eqnarray}
\label{eqn:vker_K-_lfb}
\spt^\bullet \otimes_{(\kk\ub)_{(-;+)} }\mathscr{K}_- \otimes _{(\kk \db)_{(-;-)}} \lfb^* (\cpd).
\end{eqnarray}
Evaluated on $(V, \omega)$, this has underlying object $(\lfb^* \cpd)(V)$, by {\em loc. cit.}. As observed in Remark \ref{rem:schur_sfb_lfb}, there  is a natural isomorphism of functors of the vector space $V$:  
\[
(\lfb^* \cpd)(V)
\cong 
\Lambda^* (\cpd (V)).
\]
This is the first step in establishing the following:

\begin{thm}
\label{thm:vker_K-_versus_CE}
For $\cpd$ a non-unital cyclic operad and $(V,\omega)$ a symplectic vector space, the complex (\ref{eqn:vker_K-_lfb}) is naturally isomorphic to the Chevalley-Eilenberg complex $(\Lambda^*\cpd (V), d_\mathsf{CE})$.
\end{thm}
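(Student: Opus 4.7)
The plan is to match the underlying graded objects and the differentials separately, using the fact that each ingredient of our complex has already been unwound in earlier sections.

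For the underlying graded object: by Lemma \ref{lem:underlying_complexes}(2) the complex $\spcx(\lfb^*(\cpd))$ evaluated on $(V,\omega)$ has underlying graded object the classical Schur functor $\lfb^*(\cpd)(V)$. Since the Schur functor is symmetric monoidal from $(\kk\fb\dash\modules,\odot)$ to functors on $\kk$-vector spaces (Remark \ref{rem:schur_sfb_lfb}), this identifies naturally with $\Lambda^*(\cpd(V))$, which is the underlying graded object of $(\Lambda^*\cpd(V), d_\mathsf{CE})$.

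To identify the differentials, I would invoke Proposition \ref{prop:V_omega_complex}(1): the differential of our complex evaluated on $(V,\omega)$ is $\domega^{\lfb^*(\cpd)}(V,\omega)$, i.e., the composite of the Schur functor of the structure morphism $\xi \colon \lfb^*(\cpd) \to \lfb^*(\cpd) \odot \sgn_2$ given by Theorem \ref{thm:cpd_db_kdbm}(2) with the form $\omega \colon \Lambda^2(V) \to \kk$. By the construction in Lemma \ref{lem:induced structure morphisms}(2) (applied to the basic cyclic operad composition map $\lfb^2(\cpd) \to \cpd \odot \sgn_2$ of Lemma \ref{lem:cpd_composite_map_lfb}), the component $\xi^{k+1}$ factors as
\begin{equation*}
\lfb^{k+1}(\cpd) \to \lfb^{k-1}(\cpd) \odot \lfb^2(\cpd) \to \lfb^{k-1}(\cpd) \odot \cpd \odot \sgn_2 \to \lfb^k(\cpd) \odot \sgn_2,
\end{equation*}
via the coproduct, the basic map, and the product of the exterior algebra $\lfb^*(\cpd)$.

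Applying the Schur functor and evaluating on $V$, this becomes the coproduct--product formula
\begin{equation*}
\Lambda^{k+1}(\cpd(V)) \to \Lambda^{k-1}(\cpd(V)) \otimes \Lambda^{2}(\cpd(V)) \to \Lambda^{k-1}(\cpd(V)) \otimes \cpd(V) \otimes \Lambda^2(V) \to \Lambda^{k}(\cpd(V)) \otimes \Lambda^2(V),
\end{equation*}
and composing with $\id \otimes \omega$ one obtains a morphism $\Lambda^{k+1}(\cpd(V)) \to \Lambda^k(\cpd(V))$. I would then observe that the binary operation $[-,-] \colon \Lambda^2(\cpd(V)) \to \cpd(V)$ obtained from the case $k=1$ is by definition the Conant--Vogtmann Lie bracket of \cite{MR2026331}: it takes $x \wedge y$, extracts one input from each of $x$ and $y$, pairs them via $\omega$, and composes in the cyclic operad. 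The fact that the exterior coproduct-then-product construction on $\Lambda^*(\cpd(V))$ turns such a binary operation into the standard Chevalley--Eilenberg differential $\sum_{i<j}(-1)^{i+j-1}[x_i,x_j]\wedge x_1 \wedge \cdots \widehat{x_i} \cdots \widehat{x_j} \cdots \wedge x_{k+1}$ is the universal characterization of $d_\mathsf{CE}$, so the two complexes agree.

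The main obstacle is sign bookkeeping. The CE signs arise from the shuffle terms in the exterior coproduct, whereas our differential inherits three different sources of signs: the Koszul signs built into the $\lfb^*$ construction, the $\sgn_2$ appearing on the right of $\xi$ (corresponding to the ordering of the two contracted endpoints introduced in the proof of Lemma \ref{lem:cpd_composite_map_lfb}), and the orientation data carried by the length-one generators of $(\kk\db)_{(-;-)}$ and the complex $\mathscr{K}_-$. I would verify that these three contributions combine to yield precisely the shuffle sign in $d_\mathsf{CE}$, using naturality with respect to the $\sym_2$-action on $\Lambda^2$ and the fact that the basic map $\Lambda^2(\cpd(V)) \to \cpd(V)\otimes \Lambda^2(V)$ followed by $\omega$ is automatically symmetric in the two contracted endpoints (since $\omega$ is anti-symmetric and $\sgn_2$ is). Once the sign conventions are aligned, naturality of the identification in both $\cpd$ and $(V,\omega)$ is immediate.
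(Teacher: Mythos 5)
Your proposal follows the same route as the paper's proof: identify the underlying graded objects via Lemma \ref{lem:underlying_complexes} and the symmetric monoidal property of the Schur functor, match the degree-two component of the differential with the Conant--Vogtmann bracket, and then propagate to higher degrees using the coproduct--bracket--product description of $d_\mathsf{CE}$ together with the compatibility of the product and coproduct of $\lfb^*$ with those of the classical exterior algebra. Your additional paragraph tracking the three sources of signs is a more explicit account of what the paper leaves to the symmetric monoidal identification, but the argument is the same.
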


\begin{proof}
In the preceding discussion, we have already established that the underlying graded objects are naturally isomorphic. It remains to show that the differentials correspond across this isomorphism. 

First we consider the respective differentials on the  second exterior powers. For the Chevalley-Eilenberg complex, this is the Lie bracket 
$ 
\Lambda^2 (\cpd (V)) \rightarrow \cpd (V).
$ 
For the complex (\ref{eqn:vker_K-_lfb}), the differential is induced by the structure map $\lfb^2 (\cpd) \rightarrow \cpd \odot \sgn_2$, which leads again to $\Lambda^2 (\cpd (V))\rightarrow \cpd (V)$, by Proposition \ref{prop:V_omega_complex}. One shows that these coincide by a direct comparison of the  Conant-Vogtmann construction of the Lie bracket of $\cpd (V)$ and the morphism given by Proposition \ref{prop:V_omega_complex}. 

It remains to show that this implies that the differentials coincide in higher degree.
For this, one compares the construction of (\ref{eqn:xi}) using Lemma \ref{lem:induced structure morphisms} with the construction of the Chevalley-Eilenberg differential.

The Chevalley-Eilenberg differential for a Lie algebra $\mathfrak{g}$ can be defined as the following composite: 
\[
\Lambda^* \mathfrak{g}
\rightarrow 
\Lambda^{*-2} \mathfrak{g}
\otimes
\Lambda^2 \mathfrak{g}
\rightarrow 
\Lambda^{*-2} \mathfrak{g}
\otimes \mathfrak{g}
\rightarrow 
\Lambda^{*-1} \mathfrak{g},
\]
where the first map is given by the coproduct of $\Lambda^*$, the second by the Lie bracket $\Lambda^2 \mathfrak{g}\rightarrow \g$ and the last by the product of $\Lambda^*$.
Thus, taking $\mathfrak{g}= \cpd(V)$, one sees that applying the classical Schur functor to (\ref{eqn:xi}) yields the Chevalley-Eilenberg differential, using the fact that the Schur functor is symmetric monoidal and the product and coproduct of $\lfb^*$ correspond under this to the respective structures on the classical exterior algebra.
\end{proof}

\begin{rem}
\label{rem:original_approach}
The author's original approach to proving such a  result was to consider the induced $(\kk \ub)_{(-;+)}$-module  
$ 
(\kk \ub)_{(-;+)} \otimes_{\kk \fb} \cpd
$ 
and to show that this has a natural Lie algebra structure. This then induces the Conant-Vogtmann Lie algebra structure on $\cpd (V)$ (for $(V,\omega)$ a symplectic vector space) on applying the generalized
 Schur functor $\spt^\bullet \otimes_{(\kk \ub)_{(-;+)} }-$ and evaluating on $(V, \omega)$.

Kazuo Habiro and Mai Katada informed the author in June 2025 that they have  results to this approach in their work in progress \cite{HK}.
\end{rem}

%%%%%%%%%%%%%%%%%%%%%%%%%%%%%%%%%%%%%%%%%%%%%%%%%%%%%%%%%%%%%%%%%%%%%%%%%%%%%%%%%%%%%%%%%%
\subsection{The complexes and their homology}
\label{subsect:complexes_homology}
In Section \ref{subsect:compare_CE} we explained that the generalized Schur functor relates the Koszul complex 
\begin{eqnarray}
\label{eqn:complex_ext_K-}
\mathscr{K}_- \otimes_{(\kk \db)_{(-;-)}} \lfb^* (\cpd).
\end{eqnarray}
and the Chevalley-Eilenberg complex of the Lie algebra $\cpd (V)$. The homology of this complex $(\Lambda^* \cpd (-), d_\mathsf{CE})$, considered as a functor of $(V, \omega)$, is denoted $H_*^\mathsf{CE} ( \cpd (-))$.

Hence Proposition \ref{prop:identify_complexes} in conjunction with Lemma \ref{lem:weak_stabilization_homology} implies the following:

\begin{prop}
\label{prop:weakly_stabilize_CE_complex}
For $\cpd$ a non-unital cyclic operad,  there is a natural isomorphism of complexes of $(\kk \db)_{(-;+)}$-modules:
$$
\hom_{\f (\vsp)} ((\Lambda^* \cpd (-), d_\mathsf{CE}), \spt^\ast)
\cong 
\big( (\kk \ub)_{(-;+)}^\sharp \otimes_{(\kk \ub)_{(-;+)} } \mathscr{K}_- \otimes_{(\kk \db)_{(-;-)} } \lfb^* (\cpd) \big)^\sharp.
$$
In homology, there is a natural isomorphism of graded $(\kk \db)_{(-;+)}$-modules:
$$
\hom_{\f (\vsp)} (H_*^\mathsf{CE} ( \cpd (-)), \spt^\ast)
\cong 
H_* \big( (\kk \ub)_{(-;+)}^\sharp \otimes_{(\kk \ub)_{(-;+)} } \mathscr{K}_- \otimes_{(\kk \db)_{(-;-)} } \lfb^* (\cpd) \big)^\sharp.
$$
\end{prop}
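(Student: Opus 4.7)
The plan is to deduce this proposition directly from Theorem \ref{thm:vker_K-_versus_CE} and Proposition \ref{prop:identify_complexes}(2), taking as input the $(\kk\db)_{(-;-)}$-module structure on $\lfb^*(\cpd)$ furnished by Theorem \ref{thm:cpd_db_kdbm}. No new structural work is required: the argument is essentially a composition of natural isomorphisms that have already been established, together with one elementary exactness check.

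First, Theorem \ref{thm:vker_K-_versus_CE} provides a natural isomorphism of complexes in $\falg(\vsp)$
\[
\spcx(\lfb^*(\cpd)) \;=\; \spt^\bullet \otimes_{(\kk\ub)_{(-;+)}} \mathscr{K}_- \otimes_{(\kk\db)_{(-;-)}} \lfb^*(\cpd) \;\cong\; (\Lambda^*\cpd(-), d_\mathsf{CE}).
\]
The terms of the Chevalley-Eilenberg complex are classical Schur functors associated to the $\kk\fb$-module $\lfb^*(\cpd)$, hence lie in $\falg(\vsp)$; consequently $\hom_{\f(\vsp)}(-, \spt^\ast)$ and $\hom_{\falg(\vsp)}(-, \spt^\ast)$ agree on this complex. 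I then apply Proposition \ref{prop:identify_complexes}(2) with $M := \lfb^*(\cpd)$ to obtain the natural isomorphism of complexes of $(\kk\db)_{(-;+)}$-modules
\[
\hom_{\falg(\vsp)}(\spcx(\lfb^*(\cpd)), \spt^\ast) \;\cong\; \big((\kk\ub)_{(-;+)}^\sharp \otimes_{(\kk\ub)_{(-;+)}} \mathscr{K}_- \otimes_{(\kk\db)_{(-;-)}} \lfb^*(\cpd)\big)^\sharp.
\]
Composing these two natural isomorphisms yields the first assertion.

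For the homology statement, two exactness facts are needed. First, $\hom_{\falg(\vsp)}(-, \spt^\ast)$ is exact by the injectivity of each $\spt^k$ in $\falg(\vsp)$ (Theorem \ref{thm:injectives_falgvo_falgvsp}), a property already recorded in Lemma \ref{lem:weak_stabilization_homology}; this allows me to commute the hom with $H_*$ on the left-hand side and identify it with $\hom_{\f(\vsp)}(H_*^\mathsf{CE}(\cpd(-)), \spt^\ast)$. Second, $(-)^\sharp$ commutes with $H_*$ on the right-hand side, since evaluated on each $\mathbf{n}$ the terms of $(\kk\ub)_{(-;+)}^\sharp \otimes_{(\kk\ub)_{(-;+)}} \mathscr{K}_- \otimes_{(\kk\db)_{(-;-)}} \lfb^*(\cpd)$ are finite-dimensional $\kk$-vector spaces: the bimodule $\mathscr{K}_-$ takes finite rank free values and $\lfb^*(\cpd)(\mathbf{n})$ is finite-dimensional in each degree.

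The main potential obstacle, such as it is, is purely bookkeeping: one must verify that naturality in $(V,\omega)$ is compatible with naturality in the $(\kk\db)_{(-;+)}$-variable $\ast$ throughout the combination of Theorem \ref{thm:vker_K-_versus_CE}, Proposition \ref{prop:identify_complexes}(2), and the exactness arguments above, and that the grading conventions chosen in Notation \ref{nota:complexes} and Remark \ref{rem:grading_vker_otimes_Koszul} align with the natural grading of the Chevalley-Eilenberg complex. Once these compatibilities are confirmed, the statement follows immediately.
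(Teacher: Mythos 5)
Your proof is correct and follows exactly the paper's route: the result is obtained by combining Theorem \ref{thm:vker_K-_versus_CE}, Proposition \ref{prop:identify_complexes}, and Lemma \ref{lem:weak_stabilization_homology}, with the same supporting exactness and finiteness observations. Note only a citation slip: the symplectic case you invoke is part (1) of Proposition \ref{prop:identify_complexes}, not part (2), which treats the orthogonal case.
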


This establishes the significance of the complex of $(\kk \ub)_{(-;+)}$-modules:
\begin{eqnarray}
\label{eqn:complex_tor_K-}
(\kk\ub)_{(-;+)}^\sharp  \otimes_{(\kk\ub)_{(-;+)}} \mathscr{K}_- \otimes_{(\kk\db)_{(-;-)}} \lfb^* (\cpd).
\end{eqnarray}

\begin{rem}
The underlying objects of (\ref{eqn:complex_ext_K-}) and (\ref{eqn:complex_tor_K-}) are  isomorphic respectively  to 
\begin{eqnarray*}
&&
(\kk \ub)_{(-;+)} \otimes_{\kk \fb} \lfb^* (\cpd)
\\
&&
(\kk\ub)_{(-;+)}^\sharp  \otimes_{\kk \fb} \lfb^* (\cpd)
.
\end{eqnarray*}
These are equipped with the respective Koszul differentials induced by that of $\mathscr{K}_-$.
\end{rem}

Theorem  \ref{thm:ext_tor_variant} gives the following  description of the (co)homology of these complexes in terms of the homological algebra of $(\kk \db)_{(-;-)}$-modules. 

\begin{cor}
\label{cor:homology_complexes_cpd}
For $\cpd$ a non-unital cyclic operad, 
\begin{enumerate}
\item 
the cohomology of (\ref{eqn:complex_ext_K-})
 is isomorphic to $\ext^*_{(\kk \db)_{(-;-)} } (\kk \fb , \lfb^*(\cpd))$;
\item 
the  homology of 
(\ref{eqn:complex_tor_K-})
is isomorphic to $\tor_*^{(\kk \db)_{(-;-)}}(\kk \fb, \lfb^* (\cpd))$.
\end{enumerate}
Moreover, the natural $(\kk \ub)_{(-;+)}$-module structures on the (co)homologies of these complexes identify  with the $\ext^* _{(\kk \db)_{(-;-)}}(\kk \fb, \kk \fb)$-module structures given respectively by the Yoneda product and the cap product.
\end{cor}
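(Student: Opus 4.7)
The statement is, in essence, an application of Theorem \ref{thm:ext_tor_variant} to the specific $(\kk \db)_{(-;-)}$-module furnished by the cyclic operad $\cpd$. The plan is therefore to (i) identify the relevant input, (ii) quote the general theorem, and (iii) carry the module structure through.

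The first step is to invoke Theorem \ref{thm:cpd_db_kdbm}(2), which supplies $\lfb^*(\cpd)$ with a natural $(\kk \db)_{(-;-)}$-module structure induced from the cyclic composition map of $\cpd$. Specializing $N = \lfb^*(\cpd)$ in Theorem \ref{thm:ext_tor_variant} then gives the two (co)homological identifications announced in the Corollary:
\begin{eqnarray*}
H^*(\mathscr{K}_- \otimes_{(\kk \db)_{(-;-)}} \lfb^*(\cpd)) & \cong & \ext^*_{(\kk \db)_{(-;-)}}(\kk \fb, \lfb^*(\cpd))
\\
H_*((\kk \ub)_{(-;+)}^\sharp \otimes_{(\kk \ub)_{(-;+)}} \mathscr{K}_- \otimes_{(\kk \db)_{(-;-)}} \lfb^*(\cpd)) & \cong & \tor_*^{(\kk \db)_{(-;-)}}(\kk \fb, \lfb^*(\cpd)).
\end{eqnarray*}
For the $\ext$-identification, this uses that the complex $\mathscr{K}_- \otimes_{(\kk \db)_{(-;-)}} -$ evaluated on a fixed object $V$ is naturally isomorphic to $\hom_{(\kk \db)_{(-;-)}}$ from the projective resolution of $\kk\aut(V)$ given by Corollary \ref{cor:projective_resolution}(2). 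For the $\tor$-identification, one uses that the underlying graded $(\kk \ub)_{(-;+)}$-module of $\mathscr{K}_- \otimes_{(\kk \db)_{(-;-)}} -$ is projective, yielding (after applying $(\kk \ub)_{(-;+)}^\sharp \otimes_{(\kk \ub)_{(-;+)}} -$) a resolution computing $\tor$.

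The second step is to verify that the natural $(\kk \ub)_{(-;+)}$-module structure on the respective (co)homologies of these complexes coincides with the $\ext^*_{(\kk \db)_{(-;-)}}(\kk \fb, \kk \fb)$-module structures by Yoneda and cap products. This is the content of the final assertions of Theorem \ref{thm:ext_tor_variant}, which is the analogue for $(\kk \db)_{(-;-)}$ of Proposition \ref{prop:yoneda_and_cap_product}. Concretely, the identification $\ext^*_{(\kk \db)_{(-;-)}}(\kk \fb, \kk \fb) \cong (\kk \ub)_{(-;+)}$ (as already recorded in Theorem \ref{thm:ext_tor_variant}) ensures the two module structures live on the same object; their equality then follows from unwinding the explicit Koszul resolutions and tracking how composition in $\mathscr{K}_-$ induces the Yoneda (respectively cap) product at the chain level.

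As the ingredients are all already proved in the paper, there is no serious obstacle; the only point requiring care is the last compatibility with the Yoneda/cap product structures, which is standard once one has the explicit Koszul resolutions of Corollary \ref{cor:projective_resolution} and Proposition \ref{prop:second_projective_resolution}. In fact, the statement can be viewed as a direct corollary of Theorem \ref{thm:ext_tor_variant}, simply specialized to $N = \lfb^*(\cpd)$.
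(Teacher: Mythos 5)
Your proposal is correct and matches the paper's own (implicit) proof exactly: the Corollary is presented there as a direct specialization of Theorem \ref{thm:ext_tor_variant} to $N = \lfb^*(\cpd)$, with the $(\kk \db)_{(-;-)}$-module structure supplied by Theorem \ref{thm:cpd_db_kdbm}(2), including the Yoneda/cap product identifications which are already part of that theorem's statement. The only quibble is in your supplementary justification of the $\ext$-identification: Corollary \ref{cor:projective_resolution}(2) gives a resolution in $(\kk \db)_{(-;+)}$-modules, whereas computing $\ext^*_{(\kk \db)_{(-;-)}}$ requires the analogous resolution in $(\kk \db)_{(-;-)}$-modules (one of the unstated ``other choices of $(\pm;\mp)$'' variants), but this does not affect the argument since Theorem \ref{thm:ext_tor_variant} is already established.
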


For the $\kk \db$-module $\sfb^* (\cpd)$, we have the counterparts:
\begin{eqnarray}
\label{eqn:complex_ext_K+}
&&\mathscr{K}_+ \otimes_{\kk \db} \sfb^*(\cpd) \\
\label{eqn:complex_tor_K+}
&&
(\kk \ub)_{(+;-)} ^\sharp \otimes _{(\kk \ub) _{(+;-)}} \mathscr{K}_+ \otimes_{\kk \db} \sfb^*(\cpd)
\end{eqnarray}
that have underlying objects 
\begin{eqnarray*}
&&
(\kk \ub)_{(+;-)} \otimes_{\kk \fb} \sfb^*(\cpd) \\
&&
(\kk \ub)_{(+;-)} ^\sharp \otimes _{\kk \fb} \sfb^*(\cpd) .
\end{eqnarray*}

Theorem \ref{thm:ext_tor} gives the following counterpart of Corollary \ref{cor:homology_complexes_cpd}:

\begin{cor}
For $\cpd$ a  non-unital cyclic operad, 
\begin{enumerate}
\item 
the cohomology of (\ref{eqn:complex_ext_K+})
 is isomorphic to $\ext^*_{\kk \db } (\kk \fb , \sfb^*(\cpd))$;
\item 
the  homology of 
(\ref{eqn:complex_tor_K+})
is isomorphic to $\tor_*^{\kk \db}(\kk \fb, \sfb^* (\cpd))$.
\end{enumerate}
\end{cor}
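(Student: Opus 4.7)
The plan is to deduce this Corollary as a direct application of Theorem \ref{thm:ext_tor} to the $\kk\db$-module $\sfb^*(\cpd)$. The $\kk\db$-module structure on $\sfb^*(\cpd)$ is supplied by Theorem \ref{thm:cpd_db_kdbm}, so no further preparation is required; everything substantive has already been carried out in the previous sections.

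For part (1), substituting $M=\sfb^*(\cpd)$ into the first isomorphism of Theorem \ref{thm:ext_tor} yields exactly
$$
H^*\bigl(\mathscr{K}_+ \otimes_{\kk\db} \sfb^*(\cpd)\bigr) \cong \ext^*_{\kk\db}\bigl(\kk\fb,\sfb^*(\cpd)\bigr),
$$
which is the claim made about the cohomology of (\ref{eqn:complex_ext_K+}). For part (2), the corresponding substitution into the second isomorphism of Theorem \ref{thm:ext_tor} gives
$$
H_*\bigl((\kk\db)_{(+;-)}^\sharp \otimes_{(\kk\ub)_{(+;-)}} \mathscr{K}_+ \otimes_{\kk\db}\sfb^*(\cpd)\bigr) \cong \tor_*^{\kk\db}\bigl(\kk\fb,\sfb^*(\cpd)\bigr),
$$
and this matches the stated complex (\ref{eqn:complex_tor_K+}) via the identity $(\kk\db)_{(+;-)}^\sharp = (\kk\ub)_{(+;-)}^\sharp$ that follows from the defining relation $(\kk\ub)_{(+;-)}=(\kk\db)_{(+;-)}^{\mathrm{op}}$, exactly as already used at the start of Section \ref{subsect:cohom_consequences}.

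There is no essential obstacle: the substantive content — namely the Koszul property of $\kk\db$ over $\kk\fb$ (Theorem \ref{thm:koszul_categories}) together with the resulting explicit projective resolutions that underlie Theorem \ref{thm:ext_tor}, and the identification of $\sfb^*(\cpd)$ as a $\kk\db$-module in Theorem \ref{thm:cpd_db_kdbm} — has already been established. The corollary is the verbatim $\sfb^*$-analogue of Corollary \ref{cor:homology_complexes_cpd} obtained by replacing $((\kk\db)_{(-;-)},\lfb^*)$ by $(\kk\db,\sfb^*)$ and invoking the untwisted (rather than twisted) version of the Koszul input. If desired, one may additionally record that the natural $(\kk\ub)_{(+;-)}$-module structures induced on the two sides coincide, by Proposition \ref{prop:yoneda_and_cap_product} together with the computation $\ext^*_{\kk\db}(\kk\fb,\kk\fb) \cong (\kk\ub)_{(+;-)}$ of Example \ref{exam:case_M=kkfb}; this is the exact parallel of the final assertion of Corollary \ref{cor:homology_complexes_cpd}.
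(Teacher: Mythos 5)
Your proposal is correct and matches the paper's own argument exactly: the corollary is stated as an immediate consequence of Theorem \ref{thm:ext_tor} applied to the $\kk\db$-module $\sfb^*(\cpd)$ furnished by Theorem \ref{thm:cpd_db_kdbm}, which is precisely what you do. The identification $(\kk\db)_{(+;-)}^\sharp=(\kk\ub)_{(+;-)}^\sharp$ and the optional remark about module structures via Proposition \ref{prop:yoneda_and_cap_product} are likewise consistent with the surrounding text.
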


%%%%%%%%%%%%%%%%%%%%%%%%%%%%%%%%%%%%%%%%%%%%%%%%%
\subsection{Relating to Lie algebra homology}

For $\cpd$ a non-unital cyclic operad, 
Theorem \ref{thm:vker_K-_versus_CE}  leads to the question of the relationship between the homology of $
\mathscr{K}_- \otimes_{(\kk \db)_{(-;-)}} \lfb^* (\cpd)
$ and the homology of $(\Lambda^* (\cpd (V)), d_\mathsf{CE})$ for $(V, \omega)$ a symplectic vector space, since the latter is obtained from the former by applying the generalized Schur functor $\spt^\bullet \otimes_{(\kk \ub)_{(-;+)}} -$ and evaluating on $(V,\omega)$. Whilst the evaluation functor is exact, the generalized Schur functor is only right exact; thus a universal coefficients spectral sequence intervenes, as outlined below. 

Using the identification given by Corollary \ref{cor:homology_complexes_cpd}, there is a universal coefficients spectral sequence  (compare  Section \ref{subsect:relate_ext_tor}):
\[
\tor^{(\kk \ub)_{(-;+)} } _* \big(
\spt^\bullet, \ext^*_{(\kk \db)_{(-;-)} } (\kk \fb , \lfb^*(\cpd)) 
\big) 
\Rightarrow 
H^\mathsf{CE} _*(\cpd (-))
\]
as functors on $\vsp$, where $H^\mathsf{CE} _*(\cpd (-))$ is the Lie algebra homology of the Conant-Vogtmann Lie algebra $\cpd (-)$. (Here one must put an appropriate grading on $\ext^*_{(\kk \db)_{(-;-)} } (\kk \fb , \lfb^*(\cpd))$ and $H^\mathsf{CE} _*(\cpd (-))$.)

In particular, there is an edge homomorphism:
\[
\spt^\bullet
\otimes_{(\kk \ub)_{(-;+)} }
\ext^*_{(\kk \db)_{(-;-)} } (\kk \fb , \lfb^*(\cpd)) 
\rightarrow 
H^\mathsf{CE} _*(\cpd (-)).
\]
The domain can be viewed as a first approximation to calculating $H^\mathsf{CE} _*(\cpd (-))$ as a functor on $\vsp$.

\begin{rem}
In order to implement this strategy, we require not only to calculate $\ext^*_{(\kk \db)_{(-;-)} } (\kk \fb , \lfb^*(\cpd))  $ but also to 
 understand its  $(\kk \ub)_{(-;+)} $-module structure. A natural first case to consider  is when  $\cpd$ is the non-unital cyclic operad associated to a $\kk$-algebra with involution (see Section \ref{sect:examples_alg_inv}).

In Section \ref{subsect:non-torsion_Ext} we consider $\ext^*_{(\kk \db)_{(-;-)} } (\kk \fb , \lfb^*(\cpd_{(\kk, \id)}))$, for the initial unital $\kk$-algebra with involution $(\kk ,\id)$. In particular, we show that this $\ext^*$ is not torsion. This provides some evidence that the above edge homomorphism contains useful information.
\end{rem}

\section{An acyclicity result}
\label{sect:acyclicity}

A cornerstone of Kontsevich's result relating the stable Lie algebra homology of $\cpd (V)$, for $\cpd$ a cyclic operad with unit, is the fact that the stabilization 
$$
\stabvsp H^\mathsf{CE}_* (\cpd (-))
$$ 
has trivial $\spgp_\infty$-action. This is based on the fact that, for any Lie algebra $\mathfrak{g}$, the adjoint action of $\mathfrak{g}$ on its homology $H^\mathsf{CE}_* (\mathfrak{g})$ is trivial (see, for instance, \cite[Proposition 10.1.7]{MR1600246}). This is then applied at the level of the chain complexes (as in \cite[Proposition 10.1.8]{MR1600246}).

This result can be reinterpreted by using Theorem \ref{thm:stabilization_trivial_action}. Combined with the result of Corollary \ref{cor:indentify_homology}, this implies that the complex 
$$
(\kk\ub)_{(-;+)}^\sharp  \otimes_{(\kk\ub)_{(-;+)}} \mathscr{K}_- \otimes_{(\kk\db)_{(-;-)}} \lfb^{*} (\cpd)
$$
of $(\kk\ub)_{(-;+)}$-modules has zero homology when evaluated on a non-empty finite set. 

The purpose of this section is to sketch a direct proof of this result, without appealing to the Lie algebra argument.  This also applies to the complex associated to the $\kk \db$-module $\sfb^{*} (\cpd)$.

\begin{thm}
\label{thm:acyclicity}
Let $\cpd$ be a cyclic operad with unit. Then the chain complexes 
\begin{eqnarray*}
&&(\kk\ub)_{(-;+)}^\sharp  \otimes_{(\kk\ub)_{(-;+)}} \mathscr{K}_- \otimes_{(\kk\db)_{(-;-)}} \lfb^{*} (\cpd)
\\
&&(\kk \ub)_{(+;-)}^\sharp \otimes_{(\kk \ub)_{(+;-)} } \mathscr{K}_+ \otimes_{\kk \db} \sfb^{*} (\cpd)
\end{eqnarray*}
both have zero homology when evaluated on a non-empty finite set.
\end{thm}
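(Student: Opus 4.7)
The plan is to construct an explicit contracting chain homotopy using the unit $\mathbf{1}\in\cpd(\mathbf{2})$, in the spirit of the Cartan-magic-formula argument showing that the adjoint action of a Lie algebra on its Chevalley--Eilenberg homology is trivial. I concentrate on the first complex (the $\lfb^*(\cpd)$ case); the $\sfb^*(\cpd)$ case proceeds by exactly the same scheme with strictly fewer sign sources.

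By Remark~\ref{rem:discuss_sharp_otimes_K}, the complex in question has underlying graded object $(\kk\ub)_{(-;+)}^\sharp\otimes_{\kk\fb}\lfb^*(\cpd)$. Evaluated on a non-empty finite set $X$, using Lemmas~\ref{lem:kz-complexes_identify_underlying} and~\ref{lem:sfb_lfb} a homogeneous element is represented by data $(\iota,\pi,\lambda)$: an injection $\iota:X\hookrightarrow\mathbf{s}$, an oriented ordered pairing $\pi$ of $\mathbf{s}\setminus\iota(X)$, and a $\cpd$-decorated partition $\lambda$ of $\mathbf{s}$ into non-empty blocks. The Koszul differential contracts a single pair of $\pi$ via the $(\kk\db)_{(-;-)}$-module action on $\lfb^*(\cpd)$, which is the cyclic-operadic composition of the decorations on the two blocks meeting the contracted pair, and vanishes when both ends of the pair lie in the same block (by case~(1) of Lemma~\ref{lem:cases}); this is made explicit in Proposition~\ref{prop:sfb_lfb_module_Bsigma}(2) in the algebra-with-involution case.

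Fix $x\in X$. Define $h_x$ by \emph{inserting a unit-decorated bivalent vertex on the leg $x$}: the value $h_x(\iota,\pi,\lambda)$ is the datum on $\mathbf{s+2}$ in which $\iota$ is modified so that $x\mapsto s+1$, the pairing $\pi$ is extended by the new oriented pair $(\iota(x),s+2)$ placed first in the chord ordering, and the partition $\lambda$ is extended by the new block $\{s+1,s+2\}$ placed first in the block ordering and decorated by $\mathbf{1}$. The crucial observation is that, when $d$ contracts this newly-inserted pair, the unit axiom of the cyclic operad makes the associated composition graft $\mathbf{1}$ onto the block originally containing $\iota(x)$ so as to reproduce that block verbatim (with $s+1$ playing the role of $\iota(x)$, after relabelling $\mathbf{s+2}\setminus\{\iota(x),s+2\}\cong\mathbf{s}$), thereby returning the original chain. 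All other contributions to $dh_x$ come from contracting pairs that were already in $\pi$, and each such contribution is designed to cancel with the corresponding contribution to $h_x d$ obtained by first contracting the same pair. Assuming the signs are aligned, this yields $dh_x+h_x d=\mathrm{id}$, proving the desired acyclicity.

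The main obstacle is sign bookkeeping: three sign sources interact in the $\lfb^*$-case, namely the orientation sign on chords coming from the $(-;+)$-twist, the Koszul ordering sign on chords in $\mathscr{K}_-$, and the antisymmetric sign under reordering blocks in $\lfb^*$. I expect that placing the inserted pair first in the chord ordering (with sign $+1$ for contracting it), combined with placing the inserted block first in the block ordering and choosing the orientation $(\iota(x),s+2)$ on the new pair, makes the three sign contributions conspire so that the ``contract pre-existing pair'' terms in $dh_x$ and $h_x d$ cancel term-by-term while the ``contract new pair'' term contributes $+\mathrm{id}$. Verifying this reduces to a direct local computation analogous to Proposition~\ref{prop:sfb_lfb_module_Bsigma}(2). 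The $\sfb^*$-case is treated identically using Proposition~\ref{prop:sfb_lfb_module_Bsigma}(1), with neither the chord-orientation nor the block-antisymmetry signs present.
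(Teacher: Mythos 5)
Your proposal is correct and is essentially the paper's own argument: the paper likewise constructs a contracting homotopy by grafting a unit-decorated block onto the block carrying a chosen leg (it takes $x=f(1)$, using non-emptiness of the evaluation set), adding a new chord so that contracting it recovers the identity by the unit axiom, with the remaining terms cancelling as in the two-sided bar resolution. The only cosmetic difference is that you detail the $\lfb^*$ case while the paper details the $\sfb^*$ case, and both leave the final sign verification to a direct local check.
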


\begin{proof}
We consider the second case; the proof of the first is similar. 

After evaluating on an object $\n$ (considered as an object of $(\kk \ub)_{(+;-)}$)  the underlying graded object of the complex  $(\kk \db)_{(+;-)}^\sharp \otimes_{(\kk \ub)_{(+;-)} } \mathscr{K}_+ \otimes_{\kk \db} \sfb^{*} (\cpd)$ identifies as
$$
(\kk \ub)_{(+;-)}^\sharp (-, \n)  \otimes_{\kk \fb}  \sfb^{*} (\cpd),
$$
with homological grading given by the degree of $\sfb^*$. (Here, $(\kk \ub)_{(+;-)}^\sharp (-, \n) = (\kk \ub)_{(+;-)} ( \n, -) ^\sharp$, which is naturally a right $(\kk \ub)_{(+; -)}$-module, hence a right $\kk \fb$-module, by restriction.)

Fix homological degree $d$, then the above can be written as 
$$
\bigoplus_{n\leq t \in \nat} 
(\kk \ub)_{(+;-)} ( \n, \mathbf{t})^\sharp  \otimes_{\kk \sym_t}  \sfb^d (\cpd) (\mathbf{t}).
$$
(Recall that $ \sfb^d (\cpd) (\mathbf{t})$ is described explicitly by Lemma \ref{lem:sfb_lfb}, which applies since $\cpd (\mathbf{0})=0$.) Henceforth we suppose that $n>0$, which implies that $t>0$ in the above expression; in particular, for this to be non-zero we require that $d>0$.

The result is proved by exhibiting a chain nulhomotopy. This is built out of morphisms of the form:
\begin{eqnarray}
\label{eqn:chain_homotopy}
(\kk \ub)_{(+;-)} ( \n, \mathbf{t})^\sharp  \otimes_{\kk \sym_t}  \sfb^d (\cpd) (\mathbf{t})
\rightarrow 
(\kk \ub)_{(+;-)} ( \n, \mathbf{t+2})^\sharp  \otimes_{\kk \sym_{t+2}}  \sfb^{d+1} (\cpd) (\mathbf{t+2}).
\end{eqnarray}

The basic ingredient of the construction is the following. Choose $x \in \mathbf{t}$ (this is possible, since $t>0$, by the hypothesis $n>0$). Then there is a $\kk$-linear map:
\begin{eqnarray}
\label{eqn:unit_map}
\cpd (\mathbf{t}) 
\rightarrow 
\cpd (\{x, t+1\}) \otimes \cpd (\{t+2\} \amalg \mathbf{t} \backslash \{x \} ) \subset \sfb^2 (\cpd) (\mathbf{t+2}).
\end{eqnarray}
This is defined by using the bijection $\cpd (\mathbf{t}) \stackrel{\cong}{\rightarrow} \cpd (\{t+2\} \amalg \mathbf{t} \backslash \{x \} )$ induced by the bijection $\mathbf{t} \stackrel{\cong}{\rightarrow} \{t+2\} \amalg \mathbf{t} \backslash \{x \}$ defined by sending $x$ to $t+2$, together with the element of $\cpd(\{ x, t+1 \})$ provided by the unit of $\cpd$. This map is $\aut (\mathbf{t} \backslash \{x\})$-equivariant. 

This is used as follows. Fix an element of the dual basis of $(\kk \ub)_{(+;-)} ( \n, \mathbf{t})^\sharp $ using the basis of $(\kk \ub)_{(+;-)} ( \n, \mathbf{t})$ provided by Lemma \ref{lem:basis_twisted_kub};  this is indexed by a map $f$ of $\ub (\n, \mathbf{t})$ (together with the standard ordering of the chords). We take $x:= f(1) \in \mathbf{t}$. Also, set $\tilde{f}$ to be the element of $\ub (\n , \mathbf{t+2})$ obtained by composing with $\mathbf{t} \subset \mathbf{t+2}$.

Now,  we consider $\bigotimes _{i=1}^d \cpd (U_i)$, one of the direct summands of $\sfb^d (\cpd) (\mathbf{t})$, 
 where $\amalg U_i = \mathbf{t}$ corresponds to  an unordered decomposition into non-empty subsets (see Lemma \ref{lem:sfb_lfb}). Without loss of generality we may suppose that $x \in U_1$.
Adapting (\ref{eqn:unit_map}) by replacing $\mathbf{t}$ by $U_1$, we obtain the $\kk$-linear map
$$
\xymatrix{
\bigotimes _{i=1}^d \cpd (U_i)
\ar[r]
\ar@{^(->}[d]
&
\cpd (\{ x, t+1\}) \otimes \cpd (\{t+2 \} \amalg U_1 \backslash \{x\} ) \otimes  \bigotimes _{i=2}^d \cpd (U_i)
\ar@{^(->}[d]
\\
\sfb^d (\cpd) (\mathbf{t}) 
&
\sfb^{d+1} (\cpd) (\mathbf{t+2}).
}
$$
Here, we consider the domain as being associated to the dual basis element corresponding to $f$ and the codomain to the dual basis element corresponding to $\tilde{f}$.

Assembling these, this construction induces the required morphism (\ref{eqn:chain_homotopy}). It remains to check that this defines a chain homotopy as required, using the properties of the unit of a cyclic operad. This is analogous to the proof that the two-sided bar construction for a unital associative algebra $A$ gives a resolution of the algebra $A$. 
 For this, one must pay attention to the orientation signs arising from the order of the `chords', corresponding to the fact that the complex is expressed using $(\kk \ub) _{(+;-)}$. Details are left to the reader.
\end{proof}

\begin{rem}
As stated above, the proof for the first complex is similar. The essential difference is that, in this case the,  orientation signs arise from the direction of chords and the ordering of the factors appearing in $\lfb^* (\cpd)$.    
\end{rem}

\begin{rem}
\label{rem:avoid_acyclicity}
There are standard ways of avoiding the acyclicity that is exhibited by Theorem \ref{thm:acyclicity}:
\begin{enumerate}
\item 
If the cyclic operad $\cpd$ is augmented, then one can replace $\cpd$ by the `augmentation ideal' $\overline{\cpd}$, which has the structure of a cyclic operad without unit.
\item 
In general (in particular, if $\cpd$ is not augmented), one can consider $\cpd_{\geq 3} \subset \cpd$, the subobject supported on sets of cardinality greater than two. This inherits the structure of a cyclic operad without unit.
\end{enumerate}
\end{rem}

\section{Examples from $\kk$-algebras with involution}
\label{sect:examples_alg_inv}

Recall from Section \ref{subsect:alg_inv} that, if $(B, \sigma)$ is a $\kk$-algebra with involution, we have the corresponding non-unital cyclic operad $\cpd_{(B,\sigma)}$ and thus the $\kk \db$-module $\sfb^*(\cpd_{(B,\sigma)})$ and the $(\kk\db)_{(-;-)}$-module $\lfb^* (\cpd_{(B,\sigma)})$.  Here we  focus upon the second case and the two associated Koszul complexes 
\begin{eqnarray}
\label{eqn:K-Bsigma}
&&\mathscr{K}_- \otimes_{(\kk \db)_{(-;-)}} \lfb^* (\cpd_{(B, \sigma)}) 
\\
\label{eqn:sharp_K-Bsigma}
&&
(\kk\ub)_{(-;+)}^\sharp\otimes_{(\kk \ub)_{(-;+)}} \mathscr{K}_-\otimes_{(\kk \db)_{(-;-)}}\lfb^* (\cpd_{(B, \sigma)}) .
\end{eqnarray}

%%%%%%%%%%%%%%%%%%%%%%%%%%%%%%%%%%%%%%%%%%%%%%%%%%%%%%%%%%%%%%%%ù
\subsection{The first complex}

The underlying object of (\ref{eqn:K-Bsigma}) is isomorphic to 
\begin{eqnarray}
\label{eqn:underlying_K-Bsigma}
(\kk \ub)_{(-;+)} \otimes _{\kk \fb} \lfb^* (\cpd_{(B, \sigma)})
\end{eqnarray}
and the underlying $\kk \fb$-module of $\lfb^* (\cpd_{(B, \sigma)})$ was identified in Lemma \ref{lem:sfb_lfb_algebra_inv}.

\begin{lem}
\label{lem:underlying_K-Bsigma}
For $(B, \sigma)$ a $\kk$-algebra with involution, 
the underlying $\kk \fb$-module of $(\kk \ub)_{(-;+)} \otimes _{\kk \fb} \lfb^* (\cpd_{(B, \sigma)})$ is supported on sets of even parity. 

For $n \in \nat$, there is an isomorphism of $\kk \sym_{2n}$-modules
\[
\big(
(\kk \ub)_{(-;+)} \otimes _{\kk \fb} \lfb^* (\cpd_{(B, \sigma)})
\big) 
(\mathbf{2n}) 
\cong 
\bigoplus_{t \leq n} 
(\kk \ub)_{(-;+)}(\mathbf{2t}, \mathbf{2n})  \otimes _{\kk \sym_2 \wr \sym_t} (B^{\otimes t} \otimes \kk_{(+;-)}^{[t]}),
\]
where the term indexed by $t$ is placed in cohomological degree $n-t$. 
The right hand side is isomorphic to 
$$
\bigoplus_{t \leq n} 
\kk \sym_{2n}  \otimes _{\kk (\sym_2 \wr \sym_t\  \times \ \sym_2\wr\sym_{n-t} )} \big( (B^{\otimes t} \otimes \kk_{(+;-)}^{[t]})\boxtimes \kk^{[n-t]}_{(-;+)}\big),
$$
where we use the bijection $\mathbf{2n} \cong  \mathbf{2t} \amalg \mathbf{2 (n-t)}$ to identify $\sym_2 \wr \sym_t\  \times \ \sym_2\wr\sym_{n-t}$ as a subgroup of $\sym_{2n}$; 
 $\kk_{(-;+)}^{[n-t]}$ indicates the representation $\kk_{(-;+)}$ of $\sym_2\wr\sym_{n-t}$.
\end{lem}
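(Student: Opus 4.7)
The plan is to unfold the tensor product over $\kk\fb$ as a groupoid coend, then apply the structural identifications provided by Lemma \ref{lem:sfb_lfb_algebra_inv} (describing $\lfb^*(\cpd_{(B,\sigma)})$ as an induced representation) and Lemma \ref{lem:bimodule_kub_twisted} (describing $(\kk\ub)_{(-;+)}(\mathbf{2t},\mathbf{2n})$ as an induced bimodule), and finally reassemble the result via the standard identification of iterated induction over commuting subgroups.

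First, since $\kk\fb$ is a groupoid with skeleton $\{\n \mid n\in\nat\}$ and $\lfb^{*}(\cpd_{(B,\sigma)})$ vanishes on sets of odd cardinality (by Lemma \ref{lem:sfb_lfb_algebra_inv}, using that $\cpd_{(B,\sigma)}$ is supported on $\mathbf{2}$), evaluation of the tensor product on $\mathbf{2n}$ gives
\[
\big((\kk\ub)_{(-;+)}\otimes_{\kk\fb}\lfb^{*}(\cpd_{(B,\sigma)})\big)(\mathbf{2n})
\cong
\bigoplus_{t\in\nat}(\kk\ub)_{(-;+)}(\mathbf{2t},\mathbf{2n})\otimes_{\kk\sym_{2t}}\lfb^{*}(\cpd_{(B,\sigma)})(\mathbf{2t}).
\]
The summands vanish for $t>n$, since $(\kk\ub)_{(-;+)}(\mathbf{2t},\mathbf{2n})=0$ when $|\mathbf{2n}|<|\mathbf{2t}|$, and this also shows that the module is supported on sets of even cardinality. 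The cohomological degree $n-t$ comes from the length grading on $(\kk\ub)_{(-;+)}$ chosen in Remark \ref{rem:grading_vker_otimes_Koszul} to define the grading on $\mathscr{K}_-$: a morphism from $\mathbf{2t}$ to $\mathbf{2n}$ has length $n-t$.

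Next, substituting the expression $\lfb^{*}(\cpd_{(B,\sigma)})(\mathbf{2t})\cong\kk\sym_{2t}\otimes_{\kk\sym_2\wr\sym_t}(B^{\otimes t}\otimes\kk_{(+;-)}^{[t]})$ from Lemma \ref{lem:sfb_lfb_algebra_inv} and using the standard identity $A\otimes_{\kk\sym_{2t}}(\kk\sym_{2t}\otimes_{\kk H}W)\cong A\otimes_{\kk H}W$ for $H\subset\sym_{2t}$ a subgroup and $A$ a right $\kk\sym_{2t}$-module, collapses each summand to $(\kk\ub)_{(-;+)}(\mathbf{2t},\mathbf{2n})\otimes_{\kk\sym_2\wr\sym_t}(B^{\otimes t}\otimes\kk_{(+;-)}^{[t]})$. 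This yields the first claimed isomorphism.

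For the second isomorphism, I would apply Lemma \ref{lem:bimodule_kub_twisted} to rewrite
\[
(\kk\ub)_{(-;+)}(\mathbf{2t},\mathbf{2n})\cong\kk\sym_{2n}\otimes_{\kk\sym_2\wr\sym_{n-t}}\kk_{(-;+)}^{[n-t]}
\]
as a $\sym_{2n}\times\sym_{2t}^{\mathrm{op}}$-bimodule, the right $\sym_{2t}$-action being the restriction to $\sym_{2t}\subset\sym_{2t}\times\sym_2\wr\sym_{n-t}\subset\sym_{2n}$ via the chosen bijection $\mathbf{2n}\cong\mathbf{2t}\amalg\mathbf{2(n-t)}$. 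Since the commuting subgroups $\sym_2\wr\sym_t\subset\sym_{2t}$ and $\sym_2\wr\sym_{n-t}$ together generate the direct product Young subgroup $\sym_2\wr\sym_t\times\sym_2\wr\sym_{n-t}\subset\sym_{2n}$, the two tensor products can be amalgamated into a single tensor product over this product group, giving the second displayed form.

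The main bookkeeping obstacle, and the only substantive point, is verifying that the right-$\sym_{2t}$-action on the quotient $\kk\sym_{2n}\otimes_{\kk\sym_2\wr\sym_{n-t}}\kk_{(-;+)}^{[n-t]}$ is precisely the restriction along the Young embedding, so that the subgroup $\sym_2\wr\sym_t$ acts on the $\kk\sym_{2n}$-factor on the right through the first component of $\sym_{2t}\times\sym_2\wr\sym_{n-t}$ while commuting with the $\sym_2\wr\sym_{n-t}$-action; this is exactly the content of Lemma \ref{lem:bimodule_kub_twisted}, and once it is invoked the reshuffling of induced modules is routine.
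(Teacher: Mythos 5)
Your proposal is correct and follows essentially the same route as the paper, which simply cites Lemma \ref{lem:sfb_lfb_algebra_inv} for the support statement and first isomorphism and then invokes the definition of $(\kk \ub)_{(-;+)}$ (i.e.\ Lemma \ref{lem:bimodule_kub_twisted}) for the second. Your version just spells out the groupoid coend decomposition and the amalgamation of the two inductions over the commuting wreath-product subgroups, which the paper leaves implicit.
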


\begin{proof}
By Lemma \ref{lem:sfb_lfb_algebra_inv}, the underlying $\kk \fb$-module of $\lfb^* (\cpd_{(B, \sigma)})$ is supported on sets of even parity, hence the same is true of (\ref{eqn:underlying_K-Bsigma}). The first identification  follows directly from Lemma \ref{lem:sfb_lfb_algebra_inv}; the second then follows from the definition of $(\kk \ub)_{(-;+)}$.
\end{proof}

Using this Lemma, the differential from cohomological degree $n-t$ to $n-t+1$ is of the form:
$$
(\kk \ub)_{(-;+)}(\mathbf{2t}, \mathbf{2n})  \otimes _{\kk \sym_2 \wr \sym_t } (B^{\otimes t} \otimes \kk_{(+;-)}^{[t]})
\rightarrow 
(\kk \ub)_{(-;+)}(\mathbf{2(t-1)}, \mathbf{2n})  \otimes _{\kk \sym_2 \wr  \sym_{t-1} } (B^{\otimes t-1} \otimes \kk_{(+;-)}^{[t-1]})
$$
This can be rewritten as the $\kk \sym_n$-module morphism:
$$
\kk \sym_{2n}  \otimes _{\kk (\sym_2 \wr \sym_{t}\  \times \ \sym_2\wr\sym_{n-t} )} \big( (B^{\otimes t} \otimes \kk_{(+;-)}^{[t]})\boxtimes \kk^{[n-t]}_{(-;+)}\big)
\rightarrow 
\kk \sym_{2n}  \otimes _{\kk (\sym_2 \wr \sym_{t-1}\ \times \ \sym_2\wr\sym_{n-t+1} )} \big( (B^{\otimes t-1} \otimes \kk_{(+;-)}^{[t-1]})\boxtimes \kk^{[n-t+1]}_{(-;+)}\big).
$$

\begin{rem}
\label{rem:identify_first_complex_B_sigma}
In cohomological  degree $n-t$, the term of the complex (\ref{eqn:K-Bsigma}) evaluated on $\mathbf{2n}$ is 
$$\kk \sym_{2n} \otimes _{\kk (\sym_2 \wr \sym_t\  \times \ \sym_2\wr\sym_{n-t} )} \big( (B^{\otimes t} \otimes \kk_{(+;-)}^{[t]})\boxtimes \kk_{(-;+)}^{[n-t]}\big).
$$

This makes it clear how elements can be represented by generalized decorated chord diagrams on the set $\mathbf{2n}$, modulo the appropriate relations. There are two types of chords: 
\begin{enumerate}
\item 
$t$ chords decorated by elements of $B$; there is an orientation sign associated to the order of the chords; the $\sigma$-chord orientation relation applies to these; 
\item 
$n-t$ unlabelled directed chords, with an orientation sign associated to the direction of each chord (but no orientation sign associated to the order of the chords). 
\end{enumerate}
The differential decreases the number of $B$-decorated chords by one and increases the number of unlabelled directed chords by one. The differential is best illustrated by an example, as below. 
\end{rem}

\begin{exam}
\label{exam:chord_complex}
For $n=3$ and cohomological degree $1$ (i.e., $t=2$), the following is an example of such a generalized decorated chord diagram:

\begin{center}
\begin{tikzpicture}[scale = .5]
%\draw[step=1cm,gray,very thin] (0,0) grid (7,-7);

\begin{scope}
    \clip (0,-7) rectangle (7,-4.5);
    \draw (3,-7) circle(2);
    \draw [very thick, red] (3,-7) circle (1); 
    \draw (4.5, -7) circle (1.5);
\end{scope}

\draw [very thick, -latex, red](3,-6) -- (3.1,-6);
\draw [-latex](3,-5) -- (3.1,-5);

%\draw [white, fill= white](4.35,-5.5) circle (0.05);
\draw [-latex] (4.6,-5.5) -- (4.7,-5.5);

\draw [lightgray, thick] (0,-7) -- (7,-7);
\draw [fill=black] (1,-7) circle (0.1);
\node [below] at (1,-7) {$\scriptstyle{1}$};
\draw [fill=red] (2,-7) circle (0.1);
\node [below] at (2,-7) {$\scriptstyle{2}$};
\draw [fill=black] (3,-7) circle (0.1);
\node [below] at (3,-7) {$\scriptstyle{3}$};
\draw [fill=red] (4,-7) circle (0.1);
\node [below] at (4,-7) {$\scriptstyle{4}$};
\draw [fill=black] (5,-7) circle (0.1);
\node [below] at (5,-7) {$\scriptstyle{5}$};
\draw [fill=black] (6,-7) circle (0.1);
\node [below] at (6,-7) {$\scriptstyle{6}$};

\node [above] at (3,-5) {$\scriptstyle{a}$};
\node [above right] at (4.4, -5.5) {$\scriptstyle{b}$};
\node [right] at (7,-7) {.};
\end{tikzpicture}
\end{center} 
The black chords are labelled by elements of $B$ and are ordered so that the chord labelled $a$ precedes that labelled by $b$ (recall that an orientation sign is associated to the ordering). The thick red chord is directed (an orientation sign is associated to this datum). The diagram distinguishes the subset $\{1, 3, 5, 6\} \subset \mathbf{6}$ of black nodes, which correspond to the `image' of $\mathbf{2t}$ in $\mathbf{2n}$.

In this case, the differential is given by the sum of the operation described below associated to ordered pairs $u<v$ with $u, v \in \{1, 3, 5, 6\}$. Thus there are six possibilities, namely the ordered pairs $(1,3)$, $(1,5)$, $(1,6)$, $(3,5)$, $(3,6)$, $(5,6)$. For a given $(u,v)$, if these are the endpoints of a single (black) chord, the corresponding operation is zero.  (Here this applies to the ordered pairs $(1,5)$ and $(3,6)$, leaving the four ordered pairs $(1,3)$, $(1,6)$, $(3,5)$, $(5,6)$ to consider.)
 Otherwise one applies the following steps:
\begin{enumerate}
\item 
reorder the black chords so that $v$ appears as a node of the last chord and $u$ as a node of the penultimate chord; this potentially introduces an orientation sign; 
\item 
use the $\sigma$-chord orientation relation to ensure that $u$ is the `exit' node of its chord and $v$ the `entry' node of its chord (using the specified direction of the chord); 
\item 
replace these two black chords by the chord corresponding to the product operation in $B$; 
\item 
add a new red chord from $u$ to $v$.
\end{enumerate}

For example, taking $(u,v) = (1,3)$, the two chords are already ordered correctly. However, we must apply the $\sigma$-chord orientation relation
 to ensure that $1$ is the exit node, thus replacing the diagram by 
\begin{center}
\begin{tikzpicture}[scale = .5]
%\draw[step=1cm,gray,very thin] (0,0) grid (7,-7);

\begin{scope}
    \clip (0,-7) rectangle (7,-4.5);
    \draw (3,-7) circle(2);
    \draw [very thick, red] (3,-7) circle (1); 
    \draw (4.5, -7) circle (1.5);
\end{scope}

\draw [very thick, -latex, red](3,-6) -- (3.1,-6);
\draw [-latex](3.1,-5) -- (3,-5);

%\draw [white, fill= white](4.35,-5.5) circle (0.05);
\draw [-latex] (4.6,-5.5) -- (4.7,-5.5);

\draw [lightgray, thick] (0,-7) -- (7,-7);
\draw [fill=black] (1,-7) circle (0.1);
\node [below] at (1,-7) {$\scriptstyle{1}$};
\draw [fill=red] (2,-7) circle (0.1);
\node [below] at (2,-7) {$\scriptstyle{2}$};
\draw [fill=black] (3,-7) circle (0.1);
\node [below] at (3,-7) {$\scriptstyle{3}$};
\draw [fill=red] (4,-7) circle (0.1);
\node [below] at (4,-7) {$\scriptstyle{4}$};
\draw [fill=black] (5,-7) circle (0.1);
\node [below] at (5,-7) {$\scriptstyle{5}$};
\draw [fill=black] (6,-7) circle (0.1);
\node [below] at (6,-7) {$\scriptstyle{6}$};

\node [above] at (3,-5) {$\scriptstyle{\sigma(a)}$};
\node [above right] at (4.4, -5.5) {$\scriptstyle{b}$};
\node [right] at (7,-7) {.};
\end{tikzpicture}
\end{center} 

\noindent
Then, multiplying and adding the new red chord gives:
\begin{center}
\begin{tikzpicture}[scale = .5]
%\draw[step=1cm,gray,very thin] (0,0) grid (7,-7);

\begin{scope}
    \clip (0,-7) rectangle (7,-4.5);
    \draw [very thick, red] (2,-7) circle (1); 
    \draw [very thick, red] (3,-7) circle (1); 
    \draw (5.5, -7) circle (.5);
\end{scope}

\draw [very thick, -latex, red](3,-6) -- (3.1,-6);
\draw [very thick, -latex, red](2,-6) -- (2.1,-6);

%\draw [white, fill= white](4.35,-5.5) circle (0.05);
\draw [-latex] (5.5,-6.5) -- (5.6,-6.5);

\draw [lightgray, thick] (0,-7) -- (7,-7);
\draw [fill=red] (1,-7) circle (0.1);
\node [below] at (1,-7) {$\scriptstyle{1}$};
\draw [fill=red] (2,-7) circle (0.1);
\node [below] at (2,-7) {$\scriptstyle{2}$};
\draw [fill=red] (3,-7) circle (0.1);
\node [below] at (3,-7) {$\scriptstyle{3}$};
\draw [fill=red] (4,-7) circle (0.1);
\node [below] at (4,-7) {$\scriptstyle{4}$};
\draw [fill=black] (5,-7) circle (0.1);
\node [below] at (5,-7) {$\scriptstyle{5}$};
\draw [fill=black] (6,-7) circle (0.1);
\node [below] at (6,-7) {$\scriptstyle{6}$};

\node [above] at (5.5,-6.5) {$\scriptstyle{\sigma(a)b}$};
\node [right] at (7,-7) {.};
\end{tikzpicture}
\end{center} 

As a second example, consider the case $(3,5)$. In this case, there will be an orientation sign $-1$ resulting from reordering the two black chords; moreover, we require to apply the $\sigma$-chord orientation relation to both black chords, resulting in 

\begin{center}
\begin{tikzpicture}[scale = .5]
%\draw[step=1cm,gray,very thin] (0,0) grid (7,-7);

\begin{scope}
    \clip (0,-7) rectangle (7,-4.5);
    \draw (3,-7) circle(2);
    \draw [very thick, red] (3,-7) circle (1); 
    \draw (4.5, -7) circle (1.5);
\end{scope}

\draw [very thick, -latex, red](3,-6) -- (3.1,-6);
\draw [-latex](3.1,-5) -- (3,-5);

\node at (0,-6) {$\scriptstyle{-}$}; 

\draw [-latex] (4.7,-5.5) -- (4.6,-5.5);

\draw [lightgray, thick] (0,-7) -- (7,-7);
\draw [fill=black] (1,-7) circle (0.1);
\node [below] at (1,-7) {$\scriptstyle{1}$};
\draw [fill=red] (2,-7) circle (0.1);
\node [below] at (2,-7) {$\scriptstyle{2}$};
\draw [fill=black] (3,-7) circle (0.1);
\node [below] at (3,-7) {$\scriptstyle{3}$};
\draw [fill=red] (4,-7) circle (0.1);
\node [below] at (4,-7) {$\scriptstyle{4}$};
\draw [fill=black] (5,-7) circle (0.1);
\node [below] at (5,-7) {$\scriptstyle{5}$};
\draw [fill=black] (6,-7) circle (0.1);
\node [below] at (6,-7) {$\scriptstyle{6}$};

\node [above] at (3,-5) {$\scriptstyle{\sigma(a)}$};
\node [above right] at (4.4, -5.5) {$\scriptstyle{\sigma (b)}$};
\node [right] at (7,-7) {.};
\end{tikzpicture}
\end{center} 

\noindent
Then, multiplying and adding the new red chord gives the following:
\begin{center}
\begin{tikzpicture}[scale = .5]
%\draw[step=1cm,gray,very thin] (0,0) grid (7,-7);

\begin{scope}
    \clip (0,-7) rectangle (7,-4.5);
    \draw (3.5,-7) circle(2.5);
    \draw [very thick, red] (3,-7) circle (1); 
    \draw [very thick, red] (4,-7) circle (1); 
\end{scope}

\draw [very thick, -latex, red](3,-6) -- (3.1,-6);
\draw [very thick, -latex, red](4,-6) -- (4.1,-6);
\draw [-latex](3.5,-4.5) -- (3.4,-4.5);

\node at (0,-6) {$\scriptstyle{-}$}; 

\draw [lightgray, thick] (0,-7) -- (7,-7);
\draw [fill=black] (1,-7) circle (0.1);
\node [below] at (1,-7) {$\scriptstyle{1}$};
\draw [fill=red] (2,-7) circle (0.1);
\node [below] at (2,-7) {$\scriptstyle{2}$};
\draw [fill=red] (3,-7) circle (0.1);
\node [below] at (3,-7) {$\scriptstyle{3}$};
\draw [fill=red] (4,-7) circle (0.1);
\node [below] at (4,-7) {$\scriptstyle{4}$};
\draw [fill=red] (5,-7) circle (0.1);
\node [below] at (5,-7) {$\scriptstyle{5}$};
\draw [fill=black] (6,-7) circle (0.1);
\node [below] at (6,-7) {$\scriptstyle{6}$};

\node [above] at (3.5,-4.5) {$\scriptstyle{\sigma(b)\sigma(a)}$};
\node [right] at (7,-7) {.};
\end{tikzpicture}
\end{center} 

\noindent
Finally, one can  apply the $\sigma$-chord orientation relation to recover the following:
\begin{center}
\begin{tikzpicture}[scale = .5]
%\draw[step=1cm,gray,very thin] (0,0) grid (7,-7);

\begin{scope}
    \clip (0,-7) rectangle (7,-4.5);
    \draw (3.5,-7) circle(2.5);
    \draw [very thick, red] (3,-7) circle (1); 
    \draw [very thick, red] (4,-7) circle (1); 
\end{scope}

\draw [very thick, -latex, red](3,-6) -- (3.1,-6);
\draw [very thick, -latex, red](4,-6) -- (4.1,-6);
\draw [-latex](3.4,-4.5) -- (3.5,-4.5);

\node at (0,-6) {$\scriptstyle{-}$};

\draw [lightgray, thick] (0,-7) -- (7,-7);
\draw [fill=black] (1,-7) circle (0.1);
\node [below] at (1,-7) {$\scriptstyle{1}$};
\draw [fill=red] (2,-7) circle (0.1);
\node [below] at (2,-7) {$\scriptstyle{2}$};
\draw [fill=red] (3,-7) circle (0.1);
\node [below] at (3,-7) {$\scriptstyle{3}$};
\draw [fill=red] (4,-7) circle (0.1);
\node [below] at (4,-7) {$\scriptstyle{4}$};
\draw [fill=red] (5,-7) circle (0.1);
\node [below] at (5,-7) {$\scriptstyle{5}$};
\draw [fill=black] (6,-7) circle (0.1);
\node [below] at (6,-7) {$\scriptstyle{6}$};

\node [above] at (3.5,-4.5) {$\scriptstyle{ab}$};
\node [right] at (7,-7) {.};
\end{tikzpicture}
\end{center} 

This can also  be seen in a different manner: we could have changed the order of $(3,5)$, which means that the new red chord will have the opposite direction (which corresponds to the orientation sign). Then, the black chords are already in the correct order and their directions are compatible, whence one obtains the same result upon multiplying.
\end{exam}

\begin{rem}
The description of the differential given in Example \ref{exam:chord_complex} simplifies when $(B, \sigma)$ satisfies $\sigma = \id$ (this implies that $B$ is commutative). In this case, there is no need to apply the $\sigma$-chord orientation relation (so that we may forget the orientation of the $B$-labelled chords). However, we do still require to keep track of the order of the black chords.
\end{rem}

\begin{exam}
\label{exam:initial_example}
For the case of the initial unital $\kk$-algebra with involution, $(\kk , \id)$, the complex $(\kk \ub)_{(-;+)} \otimes _{\kk \fb} \lfb^* (\cpd_{(\kk, \id)})$ is the precursor of the Chevalley-Eilenberg complex of the symplectic Lie algebras $\mathfrak{sp} (V)$ (for $(V, \omega)$ a symplectic vector space):
\[
(\Lambda^* (\mathfrak{sp} (V)), d_\mathsf{CE}).
\] 
Heuristically, the idea is that each black chord corresponds to a copy of $\mathfrak{sp}(V)$; the red chords are `phantom';  they are `contracted away' by using the symplectic form $\omega$.
\end{exam}

%%%%%%%%%%%%%%%%%%%%%%%%%%%%%%%%%%%%%%%%%%%%%%%%%%%%%%%%%%%%%%%%%%%%%%%%%%%%%%%%%%%%%%%%%%%%%%%%%%%%%%%%
\subsection{The first complex for $(\kk,\id)$}
\label{subsect:non-torsion_Ext}

In this section we expand upon Example \ref{exam:initial_example}. The main aim is to establish Theorem  \ref{thm:non-torsion}, which shows that the torsion-free part of $\ext^* _{(\kk \db)_{(-;-)} }(\kk \fb, \lfb^* (\cpd_{(\kk, \id)}) )$ is  non-trivial.

We exploit the Schur correspondence between $\kk \fb$-modules and functors on $\kk$-vector spaces. Thus, the decomposition of a $\kk \fb$-module as a direct sum of simples corresponds to the decomposition of the associated Schur functor as a direct sum of Schur functors of the form $\schur_\lambda (-)$ (the Schur functor associated to the simple $\kk \sym_d$-module $S_\lambda$ indexed by a partition $\lambda \vdash d$). In this context,  Schur's lemma gives for any two partitions $\lambda$, $\mu$
$$
\hom (\schur_\lambda(-), \schur_\mu(-)) 
=
\left\{ 
\begin{array}{ll}
0 & \lambda \neq \mu \\
\kk & \lambda = \mu,
\end{array}
\right.
$$
where the left hand side indicates natural transformations between Schur functors. (Our indexing convention is such that $S_{(d)}$ corresponds to $\triv_d$, the trivial representation.)

When writing functors on vector spaces below, $F(W)$ will  be used as shorthand for the functor $F: W \mapsto F(W)$. Recall that a  Schur functor of the form $W^{\otimes d} \otimes_{\kk \sym_d} M$, for a $\kk \sym_d$-module $M$, is said to have (homogeneous) polynomial degree $d$.

Using the identification given in Remark \ref{rem:identify_first_complex_B_sigma}, we have the following:

\begin{lem}
\label{lem:underlying_schur_functor}
The Schur functor associated to the underlying $(\kk \ub)_{(-;+)} \otimes _{\kk \fb} \lfb^* (\cpd_{(\kk, \id)})$ is isomorphic to 
\[
W \mapsto \Lambda^* (S^2 (W)) \otimes S^* (\Lambda^2 (W)).
\]
Here the cohomological degree is given by the degree in the symmetric power term $S^* (\Lambda^2 (W))$. 
\end{lem}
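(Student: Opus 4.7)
The plan is to compute the Schur functor directly from the description of the underlying $\kk\fb$-module provided by Lemma \ref{lem:underlying_K-Bsigma}, specialized to $(B,\sigma) = (\kk,\id)$. First, via Example \ref{exam:initial_unital_alg_inv}, $\lfb^*(\cpd_{(\kk,\id)})$ is concentrated on sets of even cardinality, with $\lfb^*(\cpd_{(\kk,\id)})(\n)\cong (\kk\ub)_{(+;-)}(\mathbf{0},\n)$. Combined with Lemma \ref{lem:underlying_K-Bsigma}, the value on $\mathbf{2n}$ of the underlying $\kk\fb$-module of $(\kk\ub)_{(-;+)}\otimes_{\kk\fb}\lfb^*(\cpd_{(\kk,\id)})$ is
\[
\bigoplus_{t \leq n} \kk\sym_{2n}\otimes_{\kk(\sym_2\wr\sym_t\,\times\,\sym_2\wr\sym_{n-t})} \bigl(\kk^{[t]}_{(+;-)}\boxtimes \kk^{[n-t]}_{(-;+)}\bigr),
\]
with the summand indexed by $t$ in cohomological degree $n-t$.

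Next, I would apply the (classical) Schur functor $W\mapsto W^{\otimes \bullet}\otimes_{\kk\fb}-$. Since this functor is exact and sends induction from a Young-type subgroup to the corresponding tensor decomposition, using the bijection $\mathbf{2n}\cong \mathbf{2t}\amalg\mathbf{2(n-t)}$ to decompose $W^{\otimes 2n}\cong W^{\otimes 2t}\otimes W^{\otimes 2(n-t)}$, the summand indexed by $t$ becomes
\[
\bigl(W^{\otimes 2t}\otimes_{\kk\sym_2\wr\sym_t}\kk_{(+;-)}\bigr) \otimes \bigl(W^{\otimes 2(n-t)}\otimes_{\kk\sym_2\wr\sym_{n-t}}\kk_{(-;+)}\bigr).
\]

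The heart of the argument is to identify each factor by unwinding the twists of Definition \ref{defn:twisting_representations}. For the first, $\kk_{(+;-)}$ is trivial on $\sym_2^{\times t}$ and sign on $\sym_t$, so one first takes invariants under the $\sym_2^{\times t}$-action swapping pairs of tensor factors, yielding $S^2(W)^{\otimes t}$, then takes $\sym_t$-coinvariants with sign twist, yielding $\Lambda^t(S^2(W))$. For the second, $\kk_{(-;+)}$ restricts to $\sgn_2$ on each $\sym_2$ factor (since a transposition in $\sym_2\subset \sym_{2(n-t)}$ is odd) and to the trivial representation on $\sym_{n-t}$ (since each transposition of pairs is even in $\sym_{2(n-t)}$), giving $\Lambda^2(W)^{\otimes (n-t)}$ after the first step and $S^{n-t}(\Lambda^2(W))$ after the second. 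Assembling these across all $t\leq n$ and summing over $n$ yields the bigraded identification
\[
\bigoplus_{n}\bigoplus_{t\leq n}\Lambda^t(S^2(W))\otimes S^{n-t}(\Lambda^2(W))\;\cong\; \Lambda^*(S^2(W))\otimes S^*(\Lambda^2(W)),
\]
with cohomological degree equal to $n-t$, i.e.\ the degree in the $S^*(\Lambda^2(W))$ factor, as asserted.

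The only real subtlety, and hence the main point to be careful about, is the book-keeping of the two twist characters: one must check that the parity of $\sym_{n-t}\hookrightarrow \sym_{2(n-t)}$ is indeed even (so that the $\sym_t$- and $\sym_{n-t}$-actions give the anticipated sign versus trivial behaviours), and that the identifications $(W\otimes W)^{\sym_2} = S^2(W)$ and $(W\otimes W)\otimes_{\sym_2}\sgn_2 = \Lambda^2(W)$ (valid in characteristic zero) are applied on the correct tensor slots. Everything else is a straightforward use of exactness and symmetric monoidality of the Schur functor, together with the tensor-product formula for induced representations.
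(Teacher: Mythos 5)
Your proposal is correct and follows essentially the same route as the paper: identify the polynomial-degree-$2n$, cohomological-degree-$(n-t)$ piece as the induced module $\kk\sym_{2n}\otimes_{\kk(\sym_2\wr\sym_t\times\sym_2\wr\sym_{n-t})}\bigl(\kk_{(+;-)}^{[t]}\boxtimes\kk_{(-;+)}^{[n-t]}\bigr)$, split the Schur functor across the Young-type subgroup, and identify the two resulting factors as $\Lambda^t(S^2(W))$ and $S^{n-t}(\Lambda^2(W))$. The only difference is that you unwind the wreath-product characters explicitly (and your sign computations are correct) where the paper simply invokes standard plethysm identities.
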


\begin{proof}
The underlying $\kk \fb$-module of $(\kk \ub)_{(-;+)} \otimes _{\kk \fb} \lfb^* (\cpd_{(\kk, \id)})$ is supported on sets of even parity, so we restrict to these. Namely, it suffices to consider the even polynomial degree part of the associated Schur functor.

By Remark \ref{rem:identify_first_complex_B_sigma}, the polynomial degree $2n$ part is given in cohomological degree $n-t$ by 
$$
W^{\otimes 2n} \otimes_{\sym_{2n}} \kk \sym_{2n}\otimes _{\kk (\sym_2 \wr \sym_t\  \times \ \sym_2\wr\sym_{n-t} )} \big( \kk_{(+;-)}^{[t]} \boxtimes \kk_{(-;+)}^{[n-t]}\big)
\cong 
W^{\otimes 2n} \otimes _{\kk (\sym_2 \wr \sym_t\  \times \ \sym_2\wr\sym_{n-t} )} \big( \kk_{(+;-)}^{[t]} \boxtimes \kk_{(-;+)}^{[n-t]}\big),
$$
where $\kk_{(+;-)}^{[t]}$ denotes $\sgn_t$ considered as a $\sym_2 \wr \sym_t$-module. This  is isomorphic to 
$$
\big(
W^{\otimes 2t} \otimes _{\kk \sym_2 \wr \sym_t} \kk_{(+;-)}^{[t]}
\big) 
\ \otimes \ 
\big(
W^{\otimes 2(n-t)} \otimes_{\kk\sym_2\wr\sym_{n-t} }  \kk_{(-;+)}^{[n-t]}
\big) .
$$
By standard results on plethysm, the first term is isomorphic to $\Lambda^t (S^2 (W)) $ and the second to $S^{n-t} (\Lambda^2 (W))$. The result follows.
\end{proof}

\begin{rem}
\label{rem:identify_plethysms}
For $n \in \nat$, the Schur functors $\Lambda^n (S^2 (W))$ and $S^n (\Lambda^2 (W))$  can be decomposed  as follows:
\begin{eqnarray}
S^n (\Lambda^2 W) &\cong & \bigoplus_{\substack{\lambda \vdash 2n \\ \lambda'_i \equiv 0 \mod (2) } } \schur_\lambda (W)
\\
\Lambda^n (S^2 W) & \cong & \bigoplus_{\substack{\lambda \in Q_1(2n)}} \schur _\lambda (W).
\end{eqnarray}
Here, $\lambda'$ is the conjugate partition of $\lambda$;   $Q_1(2n)$ is the set of partitions of $2n$ such that $\lambda_i -i = \lambda'_i- i+1$, whenever $\lambda'_i \geq i$.
(These results are given in \cite[Propositions 2.3.8 and 2.3.9]{MR1988690} for example (note that Weyman indexes by the conjugate partition) and also in \cite{MR3443860}.) 

In particular, these functors are multiplicity free: for any partition $\lambda \vdash 2n$, the Schur functor $\schur_\lambda$ occurs with multiplicity at most one.
\end{rem}

In polynomial degree $2n$, the complex at the level of Schur functors has the form
\begin{eqnarray}
\label{eqn:complex_schur_functors}
\Lambda^n (S^2 (W) ) 
\rightarrow 
\Lambda^{n-1} (S^2 (W))
\otimes 
\Lambda^2 (W) 
\rightarrow 
\Lambda^{n-2} (S^2 (W))
\otimes 
S^2 (\Lambda^2 (W))
\rightarrow 
\ldots
\rightarrow 
S^n (\Lambda^2 (W)),
\end{eqnarray}
concentrated in cohomological degrees $[0, n]$. Moreover, the final differential in the complex is always zero.

\begin{rem}
\label{rem:differential}
The differential can be described explicitly  as follows. The fundamental ingredient is  the case $n=2$. This is given by the composite
$$
\Lambda^2 (S^2 (W) ) 
\hookrightarrow 
S^2 (W) \otimes S^2 (W) 
\hookrightarrow 
W^{\otimes 4} 
\twoheadrightarrow 
W \otimes \Lambda^2 (W) \otimes W \cong W \otimes W \otimes \Lambda^2 (W) 
\twoheadrightarrow 
S^2(W) \otimes \Lambda ^2 (W), 
$$
where the codomain is understood to be  $\Lambda^1 (S^2 (W)) \otimes S^1 (\Lambda^2 (W))$;  the first map  is induced by the natural inclusion $\Lambda^2 (V) \hookrightarrow V^{\otimes 2}$ (taking $V= S^2 (W)$); the second is induced by the natural inclusion $S^2 (W) \hookrightarrow W^{\otimes 2}$; the third map is given by applying the projection $W^{\otimes 2} \twoheadrightarrow \Lambda^2 (W)$ to the middle two factors and the subsequent isomorphism reorders  the tensor factors; the final map uses the projection $W^{\otimes 2} \twoheadrightarrow S^2 (W)$.

Decomposing these functors as Schur functors, this identifies as a natural transformation 
$$
\schur_{(3,1) } (W) \rightarrow 
\schur_{(3, 1)} (W) \oplus \schur_{(2,1,1)} (W),
$$
using the identification recalled in Remark \ref{rem:identify_plethysms} for the domain and the Pieri formula for the codomain. Wince it is easily seen to be non-zero, by Schur's lemma, it is the unique such natural transformation,  up to non-zero scalar multiple. 

Using this, the differential $\Lambda^{n-t}  (S^2 (W) ) \otimes S^t (\Lambda^2(W)) 
\rightarrow 
\Lambda^{n-t-1}  (S^2 (W) ) \otimes S^{t+1} (\Lambda^2(W))$ can be described as follows. It is zero if $n-t <2$ and, otherwise, it is the composite:
\begin{eqnarray*}
\Lambda^{n-t}  (S^2 (W) ) \otimes S^t (\Lambda^2(W)) 
\rightarrow 
\Lambda^{n-t-2}  (S^2 (W) )\otimes \Lambda^2 (S^2 (W)) \otimes S^t (\Lambda^2(W)) 
\rightarrow 
\\
\Lambda^{n-t-2}  (S^2 (W) )\otimes S^2 (W)\otimes \Lambda^2 (W)  \otimes S^t (\Lambda^2(W)) 
\rightarrow 
\Lambda^{n-t-1}  (S^2 (W) ) \otimes S^{t+1} (\Lambda^2(W)),
\end{eqnarray*}
in which the first map is induced by the coproduct $\Lambda^{n-t} \rightarrow \Lambda^{n-t-2} \otimes \Lambda^2$, the second is the morphism $\Lambda^2 (S^2 (W))\rightarrow S^2(W) \otimes \Lambda^2(W) $ described above, and the last is induced by the products $\Lambda^{n-t-2} \otimes \Lambda^1 \rightarrow \Lambda^{n-t-1}$ and $S^1 \otimes S^t \rightarrow S^{t+1}$.
\end{rem}

\begin{rem}
In \cite[Section 4.3]{MR3376738}, Sam and Snowden give a model for  $\mathrm{Rep} (\mathbf{Sp})$ in terms of modules over $\mathrm{Sym}(\Lambda^2)$. From our point of view, using the Schur correspondence, this is equivalent to the fact that the category of $(\kk \ub)_{(-;+)} $-modules is equivalent to the category of $S^* (\Lambda^2 (W))$-modules (functorially with respect to $W$). 
\end{rem}

The following Proposition corresponds to the fact that the complex $(\kk \ub)_{(-;+)} \otimes _{\kk \fb} \lfb^* (\cpd_{(\kk, \id)})$ is a complex of $(\kk \ub)_{(-;+)}$-modules. 

\begin{prop}
\label{prop:S_Lambda2_module_complex}
The complex $\Lambda^* (S^2 (W)) \otimes S^* (\Lambda^2 (W))$  is a complex of $S^* (\Lambda^2 (W))$-modules, naturally with respect to $W$.
\end{prop}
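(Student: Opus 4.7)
My plan is to derive this proposition from two ingredients: (a) the complex $(\kk\ub)_{(-;+)} \otimes_{\kk\fb} \lfb^*(\cpd_{(\kk,\id)})$ is, by construction, a complex in $(\kk\ub)_{(-;+)}$-modules (it is precisely the Koszul complex $\mathscr{K}_- \otimes_{(\kk\db)_{(-;-)}} \lfb^*(\cpd_{(\kk,\id)})$ discussed in Section \ref{subsect:cohom_consequences}); and (b) via the generalized Schur functor construction, $(\kk\ub)_{(-;+)}$-modules translate naturally (in $W$) into $S^*(\Lambda^2 W)$-modules, whereupon morphisms of $(\kk\ub)_{(-;+)}$-modules induce $S^*(\Lambda^2 W)$-linear maps. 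Given these two facts, applying the Schur functor (term-by-term) to the complex of $(\kk\ub)_{(-;+)}$-modules yields the desired complex of $S^*(\Lambda^2 W)$-modules.

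The only non-formal step is ingredient (b), which I would establish as follows. Using Lemma \ref{lem:bimodule_kub_twisted} together with the plethysm identification already used in the proof of Lemma \ref{lem:underlying_schur_functor}, one computes that the Schur functor applied to $(\kk\ub)_{(-;+)}(\mathbf{0},\mathbf{2t})$ is naturally isomorphic to $S^t(\Lambda^2 W)$. Summing over $t$, composition in the category $(\kk\ub)_{(-;+)}$ endows
\[
A(W) \ := \ \bigoplus_{t \geq 0} \bigl(W^{\otimes 2t} \otimes_{\kk \sym_{2t}} (\kk\ub)_{(-;+)}(\mathbf{0},\mathbf{2t})\bigr)
\]
with the structure of a graded $\kk$-algebra. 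Unpacking, the quadratic relations of $(\kk\ub)_{(-;+)}$ (cf.\ Proposition \ref{prop:kub_twist_homogeneous_quadratic}) in the regime where chord directions are signed but chord orderings are not, together with the plethysm identification, make this algebra  isomorphic to $S^*(\Lambda^2 W)$ (with $\Lambda^2 W$ in degree one). Then, by the universal property of the Schur functor, for any $(\kk\ub)_{(-;+)}$-module $N$ the generalized Schur functor $N(W) = W^{\otimes \bullet} \otimes_{(\kk\ub)_{(-;+)}} N$ carries a canonical $A(W) \cong S^*(\Lambda^2 W)$-module structure, natural in $W$; moreover, a morphism of $(\kk\ub)_{(-;+)}$-modules induces a morphism of $S^*(\Lambda^2 W)$-modules.

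The main obstacle is purely bookkeeping: pinning down the identification of the algebra $A(W)$ with $S^*(\Lambda^2 W)$ in a way that is manifestly compatible with the module structure on $N(W)$. As a sanity check (and as a back-up route avoiding the abstract identification), one can verify $S^*(\Lambda^2 W)$-linearity of the differential directly from the explicit description in Remark \ref{rem:differential}: the action of $S^*(\Lambda^2 W)$ is by multiplication in the rightmost $S^t(\Lambda^2 W)$-factor, while the differential's interaction with that factor is itself multiplication by an element of $\Lambda^2 W$ produced by the fundamental map $\Lambda^2(S^2 W) \to S^2 W \otimes \Lambda^2 W$; associativity and commutativity of multiplication in $S^*(\Lambda^2 W)$ then give the required compatibility. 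I expect this direct check to be routine once the formula for the differential is written out, with no sign subtleties beyond those already built into the Koszul construction.
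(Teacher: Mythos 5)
Your proposal is correct, and it actually contains the paper's proof as your ``back-up route'': the paper proves this Proposition in one line, by observing that the explicit description of the differential in Remark \ref{rem:differential} makes $S^*(\Lambda^2 W)$-linearity manifest (the differential only touches the $S^*(\Lambda^2(W))$-factor by multiplying in the element of $\Lambda^2(W)$ produced by the map $\Lambda^2(S^2(W))\to S^2(W)\otimes\Lambda^2(W)$, so it commutes with the module action for the reasons you give). Your primary, more structural route --- identifying the Schur functor of $(\kk\ub)_{(-;+)}(\mathbf{0},-)$ with the algebra $S^*(\Lambda^2 W)$ and deducing that Schur functors of $(\kk\ub)_{(-;+)}$-modules are naturally $S^*(\Lambda^2 W)$-modules --- is essentially the content of the remark the paper places immediately \emph{before} the Proposition (the Sam--Snowden equivalence between $(\kk\ub)_{(-;+)}$-modules and $S^*(\Lambda^2 W)$-modules), which the paper invokes only as motivation rather than as the formal proof. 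Two small points of bookkeeping in that route: the algebra structure on $A(W)$ comes from the Day convolution/disjoint-union of chord diagrams on $(\kk\ub)_{(-;+)}(\mathbf{0},-)$ (equivalently the Yoneda product on $\ext^*_{(\kk\db)_{(-;-)}}(\kk\fb,\kk\fb)$) rather than literally from composition out of $\mathbf{0}$; and the expression $W^{\otimes\bullet}\otimes_{(\kk\ub)_{(-;+)}}N$ presupposes a symplectic form on $W$, so for naturality in the plain vector space $W$ one should instead define the action via the structure maps $(\kk\ub)_{(-;+)}(\mathbf{2s},\mathbf{2(s+t)})\otimes N(\mathbf{2s})\to N(\mathbf{2(s+t)})$ after applying the classical Schur functor. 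Neither issue affects correctness; the direct check you describe at the end is exactly what the paper does, and suffices on its own.
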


\begin{exam}
\label{exam:cohomology}
\ 
\begin{enumerate}
\item 
For $n=1$, the complex reduces to $S^2(W) \stackrel{0}{\rightarrow }\Lambda^2 (W)$; in particular the cohomology in degree zero is isomorphic to $S^2 (W) = \schur_{(2)} (W)$.
\item 
For $n=2$, the complex reduces to  $\Lambda^2 (S^2 (W)) \rightarrow S^2 (W) \otimes \Lambda^2 (W) \rightarrow S^2 (\Lambda^2 (W))$, which identifies as 
\[
\schur_{(3,1)} (W)
\hookrightarrow 
\schur_{(3, 1)} (W) \oplus \schur_{(2,1,1)} (W)
\stackrel{0}{\rightarrow}
\schur_{(1^4)} (W) \oplus \schur _{(2, 2) } (W).
\]
In particular, the cohomology in degree zero is zero.
\item 
For $n=3$, the complex reduces to 
\begin{eqnarray*}
\schur_{(3,3)} (W) \oplus \schur_{(4,1,1)} (W)
\rightarrow 
\schur _{(3,1,1,1)} (W) \oplus \schur_{(3,2,1)} (W) \oplus \schur_{(4,1,1)} (W) \oplus \schur _{(4, 2) } (W)
\rightarrow 
\ldots 
\\
\schur_{(2, 1^4)} (W) \oplus \schur_{(2^3)}(W) \oplus \schur_{(3,1^3)} (W) \oplus \schur _{(3,2,1)} (W) \oplus \schur_{(4,2)} (W)
\stackrel{0}{\rightarrow}
\schur_{(1^6)}(W) \oplus \schur_{(2^2, 1^2)} (W) \oplus \schur_{(3,3)}(W).
\end{eqnarray*}
In particular, the cohomology in degree zero is isomorphic to $\schur_{(3,3)} (W)$.

\end{enumerate}
\end{exam}

Extending these examples, we have the following non-triviality result:

\begin{prop}
\label{prop:non_trivial_cohomology}
For $2 \leq \ell \in \nat$, the cohomology of (\ref{eqn:complex_schur_functors}) for $2n = \ell (\ell -1)$ in cohomological degree zero contains 
$\schur_{(\ell^{\ell-1})}(W)$. Hence there is an inclusion of $\kk \sym_{2n}$-modules for $2n = \ell (\ell -1)$:
$$
S_{(\ell^{\ell-1})} \subseteq \ext^0_{(\kk \db)_{(-;-)} }(\kk \fb, \lfb^* (\cpd_{(\kk, \id)}) ) (\mathbf{2n})
.
$$
\end{prop}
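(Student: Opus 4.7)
The plan is to isolate the $\schur_{(\ell^{\ell-1})}$-isotypic component of the cohomological degree zero term of the complex (\ref{eqn:complex_schur_functors}) and show that it survives to cohomology, by exploiting the multiplicity-free plethysm decompositions recalled in Remark \ref{rem:identify_plethysms} together with Schur's lemma for natural transformations of polynomial functors. It suffices to show that $\schur_{(\ell^{\ell-1})}(W)$ is a summand of $\Lambda^n(S^2 W)$ but does not appear as a summand of $\Lambda^{n-1}(S^2 W) \otimes \Lambda^2 W$; Schur's lemma then forces the first differential to vanish on this summand, so it lies in the kernel. The statement about $\kk\sym_{2n}$-modules then follows by passing through the Schur correspondence, Lemma \ref{lem:underlying_schur_functor}, and the identification of $H^*(\mathscr{K}_- \otimes_{(\kk\db)_{(-;-)}} \lfb^*(\cpd_{(\kk,\id)}))$ with $\ext^*_{(\kk\db)_{(-;-)}}(\kk\fb, \lfb^*(\cpd_{(\kk,\id)}))$ furnished by Corollary \ref{cor:homology_complexes_cpd}.

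For the first step, I verify that $(\ell^{\ell-1}) \in Q_1(2n)$. The defining condition $\lambda_i - i = \lambda'_i - i + 1$ simplifies to $\lambda_i = \lambda'_i + 1$ at every index $i$ with $\lambda'_i \geq i$. For $\lambda = (\ell^{\ell-1})$ the conjugate is $\lambda' = ((\ell-1)^\ell)$, whence $\lambda_i - \lambda'_i = \ell - (\ell-1) = 1$ for $1 \leq i \leq \ell - 1$, which is precisely the range of indices where $\lambda'_i \geq i$. Hence $\schur_{(\ell^{\ell-1})}(W)$ is (with multiplicity one) a summand of $\Lambda^n(S^2 W)$.

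For the second step, by Pieri's rule $\schur_{(\ell^{\ell-1})}$ occurs in $\schur_\mu \otimes \Lambda^2 W$ exactly when $(\ell^{\ell-1})$ is obtained from $\mu$ by adding a vertical strip of size two. Since all $\ell - 1$ rows of $(\ell^{\ell-1})$ have the common length $\ell$, removing two boxes one per row while keeping a valid partition forces both removed boxes to sit at the ends of the bottom two rows; otherwise an interior row would become strictly shorter than the row below it. Thus for $\ell = 2$ no candidate $\mu$ exists, while for $\ell \geq 3$ the unique candidate is $\mu = (\ell^{\ell-3}, (\ell-1)^2)$, with conjugate $\mu' = ((\ell-1)^{\ell-1}, \ell-3)$. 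A direct check at the index $i = \ell - 2$ gives $\mu_{\ell-2} - \mu'_{\ell-2} = (\ell-1) - (\ell-1) = 0 \neq 1$, so $\mu \notin Q_1(2n-2)$; consequently no summand of $\Lambda^{n-1}(S^2 W) \otimes \Lambda^2 W$ is isomorphic to $\schur_{(\ell^{\ell-1})}$, completing the argument.

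The main obstacle is the combinatorial verification in the preceding paragraph: one must enumerate the candidate shapes $\mu$ that could yield $(\ell^{\ell-1})$ via a vertical strip of size two and then rule each out of $Q_1(2n-2)$. This is made tractable precisely by the uniform row-length of $(\ell^{\ell-1})$, which sharply restricts the allowed removals to a single candidate (or none, when $\ell = 2$), and by the fact that the failure of the $Q_1$-condition at a single index already suffices.
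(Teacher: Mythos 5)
Your proof is correct and follows essentially the same two-step strategy as the paper: verify that $(\ell^{\ell-1})$ lies in $Q_1(2n)$ so that $\schur_{(\ell^{\ell-1})}(W)$ occurs (with multiplicity one) in $\Lambda^n(S^2(W))$, then rule it out of $\Lambda^{n-1}(S^2(W))\otimes\Lambda^2(W)$ so that Schur's lemma kills the first differential on this summand. The only (immaterial) difference is the direction of the exclusion step: you enumerate the Pieri candidates $\mu$ with $(\ell^{\ell-1})/\mu$ a vertical $2$-strip and show the unique candidate $(\ell^{\ell-3},(\ell-1)^2)$ fails the $Q_1$ condition, whereas the paper enumerates the partitions of $Q_1(2(n-1))$ with $\mu_1\leq\ell$ and shows the unique candidate $(\ell^{\ell-2},\ell-2)$ fails the Pieri condition.
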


\begin{proof}
The final statement follows from the first by the Schur correspondence, so it suffices to establish the first.

One checks that $\schur_{(\ell^{\ell-1})}(W)$ occurs in $\Lambda^n (S^2 (W))$ for this value of $n$ using the identification recalled in Remark \ref{rem:identify_plethysms}. This follows since the partition $(\ell^{\ell -1}) $ lies in $Q_1 (2n)$, as can easily be checked by induction on $\ell$. 

To conclude it suffices to show that $\schur_{(\ell^{\ell-1})}(W)$ does not occur in $\Lambda^{n-1} (S^2 (W))\otimes \Lambda^2 (W)$. The case $n=2$ is treated in the example above. 
For $n>2$, one uses the Pieri formula and the identification of $Q_1 (2(n-1))$. Namely, by inspection, the only partition $\mu \in Q_1 (2(n-1))$ with $\mu_1 \leq \ell$ is the partition $(\ell^{\ell -2}, \ell -2)$. The Pieri formula shows that $\schur_{(\ell^{\ell -2}, \ell -2)} (W) \otimes \Lambda^2 (W)$ does not contain a summand $\schur_{(\ell^{\ell -1})}(W)$.
\end{proof}

An interesting phenomenon is exhibited by Example \ref{exam:cohomology}.  Namely, for $n=1$ we have the identification of the cohomology in degree zero as $\schur_{(2)} (W)$ (as in Proposition \ref{prop:non_trivial_cohomology}). This propagates using the $S^* (\Lambda^2 (W))$-module structure to give the following cohomology:
\begin{enumerate}
\item 
for $n=2$, $\schur_{(2,1,1)} (W)$ in cohomological degree $1$;
\item 
for $n=3$,  $\schur_{(2,1^4)}(W)$, in cohomological degree $2$.
\end{enumerate}

This pattern extends: 

\begin{thm}
\label{thm:non-torsion}
For $2 \leq \ell \in \nat$ and $2n := \ell (\ell -1)$, the  cohomology given by the $\kk \sym_{2n}$-module
$$
S_{(\ell^{\ell -1})} \subseteq 
\ext^0_{(\kk \db)_{(-;-)} }(\kk \fb, \lfb^* (\cpd_{(\kk, \id)}) )(\mathbf{2n})
$$
is not torsion with respect to the  $(\kk \ub)_{(-;+)}$-module structure. 
\end{thm}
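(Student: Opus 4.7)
The strategy is to transport the problem to the Schur functor side using Proposition \ref{prop:S_Lambda2_module_complex} and the Sam-Snowden equivalence noted in the remark preceding it: the category of $(\kk\ub)_{(-;+)}$-modules is equivalent to the category of polynomial $S^{*}(\Lambda^{2}W)$-modules functorially in $W$, and under this equivalence the $(\kk\ub)_{(-;+)}$-module structure on $H^{*}(\mathscr{K}_{-}\otimes_{(\kk\db)_{(-;-)}}\lfb^{*}(\cpd_{(\kk,\id)}))$ becomes the $S^{*}(\Lambda^{2}W)$-module structure on $H^{*}(\Lambda^{*}(S^{2}W)\otimes S^{*}(\Lambda^{2}W))$ arising from multiplication in the $S^{*}(\Lambda^{2}W)$-factor. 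Since torsion corresponds precisely to bounded polynomial degree (Proposition \ref{prop:torsion_vs_finite_support} via the Schur correspondence), the claim reduces to showing that the sub-$S^{*}(\Lambda^{2}W)$-module of $H^{*}$ generated by $\schur_{(\ell^{\ell-1})}(W)\subseteq H^{0}$ is non-zero in polynomial degree $2(n+k)$ for infinitely many $k$.

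Given a cocycle $\tilde{x}\in\Lambda^{n}(S^{2}W)$ representing a class in $\schur_{(\ell^{\ell-1})}\subseteq H^{0}$ and $\xi\in S^{k}(\Lambda^{2}W)$, the product $\tilde{x}\otimes\xi$ sits in the cohomological degree $k$ term $\Lambda^{n}(S^{2}W)\otimes S^{k}(\Lambda^{2}W)$ of the complex in polynomial degree $2(n+k)$. I aim to exhibit, for each $k\geq 1$ (or at least for infinitely many $k$), a partition $\mu_{k}\vdash 2(n+k)$ and a summand $\schur_{\nu_{k}}\subseteq S^{k}(\Lambda^{2}W)$ such that: (i) $\schur_{\mu_{k}}$ occurs in $\schur_{(\ell^{\ell-1})}\otimes\schur_{\nu_{k}}$ by Pieri/Littlewood-Richardson, so that multiplication by a generic element of $\schur_{\nu_{k}}$ produces a non-zero $\schur_{\mu_{k}}$-component; (ii) $\schur_{\mu_{k}}$ does not occur in $\Lambda^{n+1}(S^{2}W)\otimes S^{k-1}(\Lambda^{2}W)$, so the class is not in the image of $d_{k-1}$; (iii) $\schur_{\mu_{k}}$ does not occur in $\Lambda^{n-1}(S^{2}W)\otimes S^{k+1}(\Lambda^{2}W)$, so the class is automatically in the kernel of $d_{k}$. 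The multiplicity-freeness of the plethysms $\Lambda^{*}(S^{2})$ and $S^{*}(\Lambda^{2})$ recorded in Remark \ref{rem:identify_plethysms}, combined with Schur's lemma, then guarantees that the $\schur_{\mu_{k}}$-component of the class descends to a non-zero element of $H^{k}$.

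For $k=1$, a direct Pieri computation gives $\mu_{1}=(\ell+1,\ell+1,\ell^{\ell-3})$ with $\schur_{\nu_{1}}=\Lambda^{2}W=\schur_{(1,1)}$: (i) is immediate from Pieri; (ii) follows from Weyman's $Q_{1}$-criterion, since the required equality $(\mu_{1})_{i}=(\mu_{1})'_{i}+1$ fails already at $i=1$ (where $(\mu_{1})_{1}=\ell+1$ but $(\mu_{1})'_{1}+1=\ell$); (iii) is a short Littlewood-Richardson check against $\Lambda^{n-1}(S^{2}W)\otimes S^{2}(\Lambda^{2}W)=\bigoplus_{\lambda\in Q_{1}(2n-2)}\schur_{\lambda}\otimes(\schur_{(2,2)}\oplus\schur_{(1^{4})})$. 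The main obstacle is the uniform construction of such $\mu_{k}$ for all $k\geq 1$. A natural candidate takes $\schur_{\nu_{k}}=\Lambda^{2k}W=\schur_{(1^{2k})}\subseteq S^{k}(\Lambda^{2}W)$ and $\mu_{k}$ obtained by adding a vertical $2k$-strip to the upper-left of the Young diagram of $(\ell^{\ell-1})$; verifying (ii)-(iii) uniformly in $k$ requires careful combinatorial analysis of the $Q_{1}$-sets and of the relevant Littlewood-Richardson coefficients. Alternatively, one can proceed inductively, propagating the $k=1$ non-vanishing to higher $k$ via further multiplication by $\Lambda^{2}W$ using the same Pieri/$Q_{1}$ mechanism, thereby reducing the entire claim to a single asymptotic combinatorial statement.
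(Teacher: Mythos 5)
Your overall strategy coincides with the paper's: pass to the Schur-functor side, use Proposition \ref{prop:S_Lambda2_module_complex} to view the complex as $S^*(\Lambda^2 W)$-modules, identify torsion with bounded polynomial degree, and produce a nonzero class in cohomological degree $k$ for every $k$ by multiplying the cocycle $\schur_{(\ell^{\ell-1})}(W)\subseteq \Lambda^n(S^2W)$ by elements of $S^k(\Lambda^2 W)$ and isolating a Schur constituent that cannot be a coboundary. However, the central step is left open: you explicitly defer the uniform-in-$k$ verification of your condition (ii) to "careful combinatorial analysis," and you only complete the case $k=1$ (with a choice $\mu_1=(\ell+1,\ell+1,\ell^{\ell-3})$ whose first part exceeds $\ell$, which is precisely what prevents the argument from propagating). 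The paper closes this gap with the candidate you name in passing, $\mu_k=(\ell^{\ell-1},1^{2k})$, obtained from $\schur_{(\ell^{\ell-1})}\otimes\Lambda^{2k}(W)$ by Pieri, together with one clean observation: every partition $\mu$ in $Q_1(2(n+1))$ satisfies $\mu_1>\ell$, so by the Littlewood--Richardson rule every constituent of $\Lambda^{n+1}(S^2W)\otimes S^{k-1}(\Lambda^2W)$ has first part $>\ell$, whereas $(\ell^{\ell-1},1^{2k})$ has first part exactly $\ell$. This single inequality disposes of (ii) for all $k$ simultaneously; without it (or an equivalent), your proof is not complete.

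A secondary point: your condition (iii) is unnecessary. Since the differential is a morphism of $S^*(\Lambda^2 W)$-modules (Proposition \ref{prop:S_Lambda2_module_complex}) and $\schur_{(\ell^{\ell-1})}(W)$ consists of cocycles, the entire subobject $\schur_{(\ell^{\ell-1})}(W)\otimes S^k(\Lambda^2 W)$ of the degree-$k$ cochains already lies in the kernel of $d_k$; there is no need to exclude $\schur_{\mu_k}$ from $\Lambda^{n-1}(S^2W)\otimes S^{k+1}(\Lambda^2W)$, and insisting on (iii) would impose an extra Littlewood--Richardson constraint that your candidate would otherwise have to satisfy. Dropping (iii) and supplying the first-part bound on $Q_1(2(n+1))$ turns your outline into the paper's proof.
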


\begin{proof}
We work at the level of the associated Schur functors and the $S^* (\Lambda^2 (W))$-module structure, which corresponds to the $(\kk \ub)_{(-;+)}$-module structure (as explained above). Recall that, by Proposition \ref{prop:S_Lambda2_module_complex}, the complex $\Lambda^* (S^2 (W)) \otimes S^* (\Lambda^2 (W))$  is a complex of $S^* (\Lambda^2 (W))$-modules.

Since $\Lambda^n (S^2 (W))$ is multiplicity free, it contains a unique copy of $\schur_{(\ell^{\ell -1})}(W)$ and this lies in the cocycles, by Proposition \ref{prop:non_trivial_cohomology}. 

Take $d \in \nat$. Using the $S^* (\Lambda^2 (W))$-module structure on the complex yields the following subobject of the cohomological degree $d$ cochains
\[
\schur_{(\ell^{\ell -1})}(W) \otimes S^d (\Lambda^2 (W)) \subseteq \Lambda^n (S^2 (W)) \otimes S^d (\Lambda^2 (W)) 
\]
and this subobject lies in the cocycles. 

Now, $S^d (\Lambda^2 (W))$ contains a direct summand $\Lambda^{2d} (W)$ and, by the Pieri formula, $\schur_{(\ell^{\ell -1})}(W) \otimes \Lambda^{2d} (W)$ contains a (unique) direct summand $\schur_{(\ell^{\ell -1}, 1^{2d})} (W)$. (In fact, $\schur_{(\ell^{\ell -1}, 1^{2d})} (W)$ occurs with multiplicity one in $\Lambda^n (S^2 (W)) \otimes S^d (\Lambda^2 (W)) $; this follows from the fact that the only partition $\lambda$ in $Q_1(2n)$ with $\lambda_1= \ell $ is $\lambda  =(\ell ^{\ell -1})$,  together with elementary properties of the Littlewood-Richardson rule.) 

We claim  that $\schur_{(\ell^{\ell -1}, 1^{2d})} (W)$ does not lie in the coboundaries. The  cochains in cohomological degree $d-1$ and of the correct polynomial degree are given by  
$$
 \Lambda^{n+1} (S^2 (W)) \otimes S^{d-1} (\Lambda^2 (W)). 
$$
Now, $\Lambda^{n+1} (S^2 (W))$ is identified as in Remark \ref{rem:identify_plethysms} as the direct sum of Schur functors indexed by $Q_1 (2(n+1))$. One checks by inspection that, for $2n=\ell (\ell -1)$,   all partitions $\mu \in Q_1 (2(n+1))$ satisfy $\mu_1 >\ell$. It follows, again  from elementary properties of the Littlewood-Richardson rule, 
 that $\schur_{(\ell^{\ell -1}, 1^{2d})} (W)$ does not occur as a direct summand in these cochains. This gives the claimed result.

This implies that, for the given $n$  and $d$, the $(\kk \ub) _{(-;+)}$-module action map
\[ 
(\kk \ub) _{(-;+)} (\mathbf{2n} , \mathbf{2(n+d)}) 
\otimes 
\ext^0_{(\kk \db)_{(-;-)} }(\kk \fb, \lfb^* (\cpd_{(\kk, \id)}) )(\mathbf{2n})
\rightarrow 
\ext^0_{(\kk \db)_{(-;-)} }(\kk \fb, \lfb^* (\cpd_{(\kk, \id)}) )(\mathbf{2(n+d)})
\]
restricted to $S_{(\ell^{\ell -1})}$ is non-trivial. By the definition of torsion (see Section \ref{sect:torsion}), it follows that all the elements in $S_{(\ell^{\ell -1})}$ are non-torsion.
\end{proof}

%%%%%%%%%%%%%%%%%%%%%%%%%%%%%%%%%%%%%%%%%%%%%%%%%%%%%%%%%%%%%%%%%%%%%%%%%%%%%%%%%%%%%%%%%%%%%%%%%%%%%%%%%%
\subsection{The second complex}
The complex (\ref{eqn:sharp_K-Bsigma}) should seem more familiar; it is related to the complex calculating dihedral homology in characteristic zero.

The underlying object of (\ref{eqn:sharp_K-Bsigma}) is isomorphic to 

\begin{eqnarray}
\label{eqn:sharp_underlying_K-Bsigma}
(\kk \ub)_{(-;+)}^\sharp \otimes _{\kk \fb} \lfb^* (\cpd_{(B, \sigma)}).
\end{eqnarray}

\begin{lem}
\label{lem:B_sigma_second_complex}
For $(B, \sigma)$ a $\kk$-algebra with involution, 
the underlying $\kk \fb$-module of (\ref{eqn:sharp_underlying_K-Bsigma}) is supported on sets of even parity. 
 For $s\in \nat$, there are identifications of $\kk \sym_{2s}$-modules:
\begin{eqnarray*}
\big(
(\kk \ub)_{(-;+)}^\sharp \otimes _{\kk \fb} \lfb^* (\cpd_{(B, \sigma)})
\big) 
(\mathbf{2s}) 
&= & 
\bigoplus_{t \geq s} 
(\kk \ub)_{(-;+)} (\mathbf{2s}, \mathbf{2t}) ^\sharp
\otimes_{\kk (\sym_2 \wr \sym_t)} 
(B^{\otimes t} \otimes \kk_{(+;-)}^{[t]})
\\
&\cong &
 \bigoplus_{t \leq s} 
 \kk_{(-;+)}^{[t-s]} \otimes _{\kk (\sym_2\wr \sym_{t-s})} \kk \sym_{2t} \otimes _{\kk (\sym_2 \wr \sym_t)} (B^{\otimes t} \otimes \kk_{(+;-)}^{[t]}).
\end{eqnarray*}
Here, the term indexed by $s$ is placed in homological degree $t-s$; the left $\kk \sym_{2s}$-action on the $t$th term of the second expression comes from the inclusion $\sym_{2s} \times (\sym_2 \wr \sym_{t-s}) \subset \sym_{2s} \times \sym_{2 (t-s)} \subset \sym_{2t}$.
\end{lem}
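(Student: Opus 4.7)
The plan is to combine Lemma \ref{lem:sfb_lfb_algebra_inv}, which identifies the underlying $\kk\fb$-module of $\lfb^*(\cpd_{(B,\sigma)})$, with Lemma \ref{lem:bimodule_kub_twisted}, which presents the hom-modules of $(\kk\ub)_{(-;+)}$ as induced representations, and then to dualize. The parity assertion is immediate: $\lfb^*(\cpd_{(B,\sigma)})$ is supported on sets of even cardinal by Lemma \ref{lem:sfb_lfb_algebra_inv}, and this support is preserved under $(\kk\ub)_{(-;+)}^\sharp \otimes_{\kk\fb} -$.

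For the first identification, I would unfold $(\kk\ub)_{(-;+)}^\sharp \otimes_{\kk\fb} \lfb^*(\cpd_{(B,\sigma)})$ at $\mathbf{2s}$ using a skeleton of $\kk\fb$, obtaining $\bigoplus_{n\in\nat} (\kk\ub)_{(-;+)}(\mathbf{2s},\mathbf{n})^\sharp \otimes_{\kk\sym_n} \lfb^*(\cpd_{(B,\sigma)})(\mathbf{n})$. The support of $\lfb^*(\cpd_{(B,\sigma)})$ restricts the sum to even $n = 2t$, and the vanishing $(\kk\ub)_{(-;+)}(\mathbf{2s},\mathbf{2t}) = 0$ for $2t < 2s$ restricts it further to $t \geq s$. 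Substituting the explicit description $\lfb^*(\cpd_{(B,\sigma)})(\mathbf{2t}) \cong \kk\sym_{2t} \otimes_{\kk(\sym_2\wr\sym_t)} (B^{\otimes t} \otimes \kk_{(+;-)}^{[t]})$ from Lemma \ref{lem:sfb_lfb_algebra_inv} causes the inner $\otimes_{\kk\sym_{2t}} \kk\sym_{2t}$ to collapse, yielding the first formula.

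For the second identification, Lemma \ref{lem:bimodule_kub_twisted} (applied with the parameters $m = 2s$ and "$t$" in that lemma equal to $t-s$ here) provides a $\sym_{2t} \times \sym_{2s}\op$-equivariant isomorphism
$$
(\kk\ub)_{(-;+)}(\mathbf{2s},\mathbf{2t}) \cong \kk\sym_{2t} \otimes_{\kk(\sym_2\wr\sym_{t-s})} \kk_{(-;+)}^{[t-s]},
$$
with $\sym_2\wr\sym_{t-s}$ embedded as the second factor of $\sym_{2s}\times\sym_2\wr\sym_{t-s}\subset \sym_{2t}$. Taking $\kk$-linear duals swaps the two sides of the bimodule structure; since $\kk_{(-;+)}^{[t-s]}$ is a character of order two hence self-dual, Remark \ref{rem:duality_permutation_modules} identifies the dual as $\kk_{(-;+)}^{[t-s]} \otimes_{\kk(\sym_2\wr\sym_{t-s})} \kk\sym_{2t}$ as a $\sym_{2s} \times \sym_{2t}\op$-module, with $\kk_{(-;+)}^{[t-s]}$ now on the left. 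Substituting this into the first formula gives the second. The only point requiring care (rather than being routine bookkeeping) is tracking the bimodule structures through the duality; this follows from the explicit action on the dual basis in Remark \ref{rem:duality_permutation_modules} together with the transpose analysis of Section \ref{subsect:transpose}, and is entirely mechanical once set up correctly.
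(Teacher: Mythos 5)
Your proposal is correct and follows the same route the paper intends (the paper leaves this lemma unproved, but its twin Lemma \ref{lem:underlying_K-Bsigma} is proved exactly this way: the parity and first identification from Lemma \ref{lem:sfb_lfb_algebra_inv}, the second from the description of $(\kk\ub)_{(-;+)}$ in Lemma \ref{lem:bimodule_kub_twisted}, here combined with the duality of Section \ref{subsect:transpose}). Your unfolding also implicitly corrects the paper's index range in the second display, which should read $t \geq s$.
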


There is a  geometric description in terms of suitable graphs, as explained below.  This is related to the general case of the hairy graph complexes given in Section \ref{sect:complexes}. However, in this section, we have `blown up' the vertices of the usual hairy graph complexes to give `black edges';  this allows us to contract hairs, giving rise to `white vertices'.

Fix $t\geq s$; we consider $\kk$-linear combinations of the following labelled directed graphs (modulo relations):
\begin{itemize}
\item 
$t$ directed black edges labelled by elements of $B$, subject to the $\sigma$-chord orientation relation and with an order (controlling  orientation signs);
\item 
$t-s$ directed red edges; changing the direction gives a sign; 
\item 
$2 (t-s)$ gray vertices; these are bivalent with one black and one red edge attached; 
\item 
$2s$  univalent white vertices, labelled bijectively by elements of $\mathbf{2s}$,  with one black edge attached. 
\end{itemize}

As usual, one can focus upon the connected components of such graphs; there are two possibilities: 
\begin{enumerate}
\item 
a `string' with endpoints given by two white vertices; 
\item 
a `wheel' with no white vertices. 
\end{enumerate} 

For instance, (ignoring the ordering of the black edges and omitting labels of the white vertices): 
\begin{center}
\begin{tikzpicture} [scale=.4]
\draw (0,0)-- (6,0);
\draw [-latex] (0,0) -- (1.3,0);
\draw [-latex] (4,0)-- (5.3,0);
\draw [very thick, -latex, red](2,0) -- (3.5,0);
\draw [very thick, red](3,0) -- (4,0);
\draw [fill = white] (0,0) circle (0.2);
\draw [fill = gray] (2,0) circle (0.2);
\draw [fill = gray] (4,0) circle (0.2);
\draw [fill = white] (6,0) circle (0.2);
\node [above] at (1,0) {$\scriptstyle{a}$};
\node [above] at (5,0) {$\scriptstyle{b}$};
\node [right] at (6,0) {.};
\end{tikzpicture}
\end{center}
The direction of the edges can be changed, modulo the $\sigma$-chord orientation relation for black edges, or up to orientation sign for red edges.

Or (again ignoring the ordering of the black edges):
\begin{center}
\begin{tikzpicture} [scale=.4]
\draw (1,1) -- (-1,1) -- (-1,-1) -- (1,-1) -- (1,1);
\draw [very thick, red] (1,-1) -- (1,1);
\draw [very thick, red] (-1,1) -- (-1,-1);
\draw [very thick, -latex, red] (1,-1) -- (1,.5);
\draw [very thick, -latex,  red] (-1,1) -- (-1,-.5);
\draw [-latex] (1,1)-- (-.3,1); 
\draw [-latex ] (-1,-1) -- (.1, -1);
\draw [fill = gray] (1,1) circle (0.2);
\draw [fill = gray] (1,-1) circle (0.2);
\draw [fill = gray] (-1,-1) circle (0.2);
\draw [fill = gray] (-1,1) circle (0.2);
\node [above] at (0,1) {$\scriptstyle{a}$};
\node [below] at (0,-1) {$\scriptstyle{b}$};
\node [right] at (1,-1) {.};
\end{tikzpicture}
\end{center}

The differential  is induced by contracting red edges, using multiplication in $B$ to label the new black edge obtained by concatenation. 
In the case  of a bigon (i.e., one black, one red edge, connected at their two endpoints), the contraction yields zero.

There are the usual technical requirements:
\begin{enumerate}
\item 
black edges must be reordered so that the operation is on the last two edges, with order of these dictated by that of the red edge (this may introduce a sign); 
\item 
the directions of the black and red edges involved in the contraction must be the same; adjustment involves the $\sigma$-chord orientation relation and the orientation sign for red edges.
\end{enumerate}
Clearly, for each connected component, one can fix once and for all a choice of compatible orientations (there are only two possibilities); this is already done in the two illustrated examples. 

\begin{rem}
\ 
\begin{enumerate}
\item 
In the case $s=0$, all connected components are `wheels'. The homological degree is one half the number of gray vertices. 
 One then reduces to considering each connected component separately. The associated complex corresponds to the dihedral complex  (see \cite[Section 10.5.4]{MR1600246}, which is based on \cite{MR937318}). Indeed Loday and Procesi's proof essentially contains this identification.
\item 
More generally, the relationship between hairy graph complexes and dihedral complexes has been used and investigated in \cite{MR3347586} for example.
\end{enumerate}
\end{rem}

\begin{rem}
Using the $\kk \db$-module $\sfb^* (\cpd_{(B,\sigma)})$ in place of $\lfb^* (\cpd_{(B,\sigma)})$, together with the appropriate Koszul complex,  gives rise to an {\em odd} variant of the above.
\end{rem}

\section{Hairy graph complexes}
\label{sect:complexes}

Throughout this section, $\kk$ is a field of characteristic zero and $\cpd$ is a non-unital cyclic operad such that $\cpd (\mathbf{0})=0$.
We are interested in the complexes 
\begin{eqnarray}
\label{eqn:complex_K-}
(\kk\ub)_{(-;+)}^\sharp  \otimes_{(\kk\ub)_{(-;+)}} \mathscr{K}_- \otimes_{(\kk\db)_{(-;-)}} \lfb^* (\cpd)
&\cong &
(\kk\ub)_{(-;+)}^\sharp  \otimes_{\kk \fb} \lfb^* (\cpd)
\\
\label{eqn:complex_K+}
(\kk \ub)_{(+;-)} ^\sharp \otimes _{(\kk \ub) _{(+;-)}} \mathscr{K}_+ \otimes_{\kk \db} \sfb^*(\cpd)
&\cong &
(\kk \ub)_{(+;-)} ^\sharp \otimes _{\kk \fb} \sfb^*(\cpd) .
\end{eqnarray}
These can be considered  as even (respectively odd) versions of the same construction, in the same way that graph complexes come in even and odd flavours. 

More specifically, we relate the above complexes to {\em hairy graph complexes}, extending Kontsevich's theorem (as generalized by Conant and Vogtmann \cite{MR2026331}), which concerns the case with no hairs  (see \cite[Theorem 1]{MR2026331}). The relationship between the methods used here and the work of Kontsevich and Conant-Vogtmann can be explained by using Proposition \ref{prop:symplectic_invariants}.  

\begin{rem}
If $\cpd$ has a unit, to avoid the acyclicity property of Theorem \ref{thm:acyclicity}, one should seek to exclude the unit. If $\cpd$ is augmented, this can be done by restricting to the `augmentation ideal' $\overline{\cpd}$. Otherwise, one can truncate, working with the subobject $\cpd_{\geq 3}$ supported on finite sets of cardinality at least three. The latter is (implicitly) the approach taken in \cite{MR3029423}, where bivalent vertices are not allowed in the graph complexes. 
\end{rem}

In considering {\em hairy} graph complexes, our approach is related to that of Conant, Kassabov, and Vogtmann in \cite{MR3029423}. In particular, our complexes are homological, using edge contraction, rather than the dual using edge expansion.

\begin{rem}
The study of hairy graph complexes can be subsumed (up to duality) in that of the {\em Feynman transform} of Getzler and Kapranov \cite{MR1601666} for modular operads, which provides the appropriate general framework. Indeed, Stoll proves the more general analogue of the result considered here: using his result characterizing modular operads as modules over the Brauer properad, in \cite[Theorem 4.5.4]{MR4541945} he relates the properadic (co)bar construction with the Feynman transform.

Moreover, Stoll explains how the orientation data in the properadic framework gives rise to {\em hyperoperads} in the sense of Getzler and Kapranov; these are used in the definition of the Feynman transform. Our  approach, restricted to cyclic operads,  allows us to avoid such considerations.
\end{rem}

We show that, by introducing  a suitable category of graphs with hairs using  the downward Brauer category $\db$ (see Section \ref{subsect:review_graphs}), the fact that  
the complexes (\ref{eqn:complex_K-}) and (\ref{eqn:complex_K+}) are respectively even and odd hairy graph complexes is transparent. The result is stated in Section \ref{subsect:hairy_graph_thm} as Theorem \ref{thm:hairy-graph_complexes}, where a proof is sketched.

%%%%%%%%%%%%%%%%%%%%%%%%%%%%%%%%%%%%%%%%%%%%%%%%%%%%%%%%%%%%%%%%%%%%%%%%%%%%%%%%%%%%%%%%%
\subsection{A quick review of graphs}
\label{subsect:review_graphs}

 We present a variant of the definition of the category of graphs given in \cite[Appendix A]{MR3636409}, using the category $\db$ (or, equivalently, $\ub$) rather than involutions to encode edges. 

\begin{rem}
All graphs that we consider are finite (have finitely many half edges and vertices), and have no isolated vertices. They are not necessarily connected. 
\end{rem}

\begin{nota}
\label{nota:fs}
Write $\fs$ to denote the category of finite sets and surjections, a wide subcategory of the category of finite sets and all maps,  $\fin$. 
\end{nota}

\begin{defn}
\label{defn:graphs}
A graph $\Gamma$ is  a triple of finite sets $(V_\Gamma, X_\Gamma, L_\Gamma)$ (corresponding respectively to the vertices, the half edges, and the hairs (or legs) of the graph), together with structure maps 
\begin{eqnarray*}
p_\Gamma &\in & \fs (X_\Gamma, V_\Gamma)\\
f_\Gamma &\in&  \db (X_\Gamma, L_\Gamma) = \ub (L_\Gamma, X_\Gamma)
.
\end{eqnarray*}

Let $\Gamma'$ be a second graph with $L_\Gamma = L_{\Gamma'}$. A morphism $\Phi : \Gamma \rightarrow \Gamma'$  is given by a pair of maps $\Phi^X \in \db (X_\Gamma, X_{\Gamma'})$ and $\Phi^V \in \fs (V_\Gamma, V_{\Gamma'})$ such that:
\begin{enumerate}
\item 
 the following diagram commutes in $\db$:
\begin{eqnarray}
\label{diag:Phi^X}
\xymatrix{
X_\Gamma 
\ar[rd]_{f_\Gamma}
\ar[rr]^{\Phi^X}
&&
X_{\Gamma'} 
\ar[ld]^{f_{\Gamma'}}
\\
&
L_{\Gamma}; 
}
\end{eqnarray}
\item 
the following diagram commutes in $\fin$:
\begin{eqnarray}
\label{diag:Phi^V}
\xymatrix{
X_{\Gamma'} 
\ar@{^(->}[r]^{\widetilde{\Phi^X}} 
\ar@{->>}[d]_{p_{\Gamma'}}
&
X_\Gamma 
\ar@{->>}[d]^{p_{\Gamma}}
\\
V_{\Gamma'}
&
V_\Gamma 
\ar@{->>}[l]^{\Phi^V},
}
\end{eqnarray}
where $\widetilde{\Phi^X}$ is the injective map underlying $\Phi^X$;
\item
for a `chord' of $\Phi^X$, corresponding to a pair $\{h_1, h_2\} \subset X_\Gamma$, we have $\Phi^V p_\Gamma(h_1) = \Phi^V p_\Gamma (h_2)$.
\end{enumerate} 
\end{defn}

\begin{rem}
\ 
\begin{enumerate}
\item 
The map $p_\Gamma$ determines to which vertex a half edge is attached and $f_\Gamma$ labels the hairs and determines the edges, which correspond to the `chords' given by a morphism in $\db$.
\item 
The hypothesis that $p_\Gamma$ is surjective is equivalent to the hypothesis that the graph $\Gamma$ has no isolated vertices (vertices with no half edges attached).
\item 
Above we only consider morphisms of graphs that preserve the labelling of the hairs.  
\item 
The final condition in the definition of a morphism ensures that the edges of $\Gamma$ that do not correspond to edges of $\Gamma'$ are `contracted': the endpoints of such an edge are identified.
\item 
If $V_\Gamma$ is empty, then the surjectivity of $p_\Gamma$ implies that $X_\Gamma$ and hence $L_\Gamma$ are both empty. Conversely, if $X_\Gamma$ is empty, then so are $L_\Gamma$ and $V_\Gamma$. 
\item 
We may take $X_\Gamma$, $V_\Gamma$, and $L_\Gamma$ to belong to the standard skeleton of $\fin$. 
\end{enumerate}
\end{rem}

From the definition, we have:

\begin{lem}
\label{lem:set_of_graphs}
For given finite sets $(X, V, L)$, the set of graphs $\Gamma$ with $X_\Gamma = X$, $V_\Gamma = V$, and $L_\Gamma = L$ is $\db (X, L) \times \fs (X, V)$. 
In particular, this set is finite. Hence, the set of isomorphism classes of graphs $\Gamma$ with specified $|X_\Gamma|$, $|V_\Gamma|$, $|L_\Gamma|$ is finite. 

Moreover, for fixed $(X,L)$, the set of graphs with $X_\Gamma =X$, $L_\Gamma= L$ and $V_\Gamma = \mathbf{v}$, for some $v \in \nat$, is finite.
\end{lem}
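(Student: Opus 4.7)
The plan is to unpack Definition \ref{defn:graphs} and observe that, for fixed finite sets $(X,V,L)$, a graph $\Gamma$ with $X_\Gamma = X$, $V_\Gamma = V$ and $L_\Gamma = L$ is by definition nothing more than a choice of the two structure maps $f_\Gamma \in \db(X,L)$ and $p_\Gamma \in \fs(X,V)$ (no further condition is imposed on this pair). This gives the identification with $\db(X,L) \times \fs(X,V)$ directly. I would then note that this set is finite: $\fs(X,V)$ is a subset of $\mathrm{Map}(X,V)$, which has cardinality $|V|^{|X|}$, and $\db(X,L)$ is a finite set (as already used, for example, in the proof of Lemma \ref{lem:finiteness_projectivity}), since a morphism in $\db(X,L)$ consists of an injection $L \hookrightarrow X$ together with a perfect matching on its complement, of which there are $|X|!/(|L|!\,2^t\,t!)$ with $2t = |X|-|L|$.

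For the statement about isomorphism classes, I would work in the standard skeleton of $\fin$, so that fixing the cardinals $|X_\Gamma|$, $|V_\Gamma|$, $|L_\Gamma|$ determines the underlying sets uniquely, and then apply the finiteness of the parametrising set established in the previous step; any two isomorphic graphs certainly have the same cardinals, so the finite set of graphs with these prescribed underlying sets surjects onto the set of isomorphism classes.

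The \emph{moreover} assertion requires one additional observation: although $V_\Gamma$ is allowed to vary over all $\mathbf{v}$ with $v \in \nat$, the surjectivity of $p_\Gamma \colon X_\Gamma \twoheadrightarrow V_\Gamma$ (which expresses the condition that $\Gamma$ has no isolated vertices) forces $v \leq |X|$. Hence only finitely many values of $v$ contribute, and for each such value the set of graphs is finite by the first part. Taking the finite union gives finiteness in this case as well. No single step is genuinely an obstacle here; the content of the lemma is purely bookkeeping, and the only subtlety worth flagging is the use of the surjectivity hypothesis on $p_\Gamma$ to bound $v$ in the final assertion.
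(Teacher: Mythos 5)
Your proposal is correct and follows essentially the same route as the paper: the identification with $\db(X,L)\times\fs(X,V)$ is read off directly from Definition \ref{defn:graphs}, and the final assertion rests on the observation that $\fs(X,\mathbf{v})$ is empty for $v>|X|$, which is exactly the point the paper makes. Your version merely supplies the explicit cardinality counts and the skeleton/surjection remark that the paper leaves implicit.
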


\begin{proof}
The first statement is clear from the definition. Since $\fs (X,\mathbf{v})$ is empty if $v > |X|$, the final statement follows.
\end{proof}

The previous lemma does not take into account isomorphisms between graphs. These are characterized by the following:

\begin{lem}
\label{lem:iso_graphs}
For $\Gamma$, $\Gamma'$ two graphs such that $L_\Gamma= L_{\Gamma'}$, a morphism $(\Phi^X, \Phi^V)$ is an isomorphism if and only if both $\Phi^X$ and $\Phi^V$ are isomorphisms. 
Moreover, in this case, $\Phi^V$ is determined by $\Phi^X$. 
\end{lem}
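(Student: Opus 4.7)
The plan is to prove the three assertions separately: the ``if'' direction of the equivalence, the ``only if'' direction, and the fact that $\Phi^V$ is determined by $\Phi^X$ once $\Phi$ is known to be an isomorphism.

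For the ``if'' direction, I would assume that $\Phi^X$ is an isomorphism in $\db$ (equivalently, lies in $\fb \subset \db$, so that the underlying map $\widetilde{\Phi^X}$ is a bijection with no chords) and that $\Phi^V$ is a bijection. The candidate inverse is $(\Psi^X, \Psi^V) := ((\Phi^X)^{-1}, (\Phi^V)^{-1})$, and I would verify the three conditions of Definition \ref{defn:graphs} for the pair $(\Psi^X, \Psi^V) \colon \Gamma' \to \Gamma$: diagram (\ref{diag:Phi^X}) for $\Psi^X$ is obtained by inverting $\widetilde{\Phi^X}$ in the original triangle; diagram (\ref{diag:Phi^V}) for $\Psi^V$ follows by pre- and post-composing the original square with $\widetilde{\Phi^X}^{-1}$ and $(\Phi^V)^{-1}$ respectively; the third condition (about chords) is vacuous since $\Psi^X$ is a bijection.

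For the ``only if'' direction, suppose that $(\Phi^X, \Phi^V)$ admits an inverse $(\Psi^X, \Psi^V)$, so that $\Psi^X \circ \Phi^X = \mathrm{id}_{X_\Gamma}$ and $\Phi^X \circ \Psi^X = \mathrm{id}_{X_{\Gamma'}}$ in $\db$. Recall that $\db(A,B) = \emptyset$ unless $|A| \geq |B|$, whence $|X_\Gamma| = |X_{\Gamma'}|$; since morphisms of $\db$ decompose as bijections composed with chord contractions and any non-trivial contraction strictly decreases the cardinality, both $\Phi^X$ and $\Psi^X$ necessarily lie in $\fb$. Thus $\Phi^X$ is a bijection. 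The analogous argument applied to the wide subcategory $\fs \subset \fin$ (two surjections whose composite is the identity must be bijections) shows that $\Phi^V$ is a bijection.

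For the final statement, assume $\Phi^X$ is an isomorphism, so that $\widetilde{\Phi^X} \colon X_{\Gamma'} \stackrel{\cong}{\to} X_\Gamma$ is a bijection. The commutativity of (\ref{diag:Phi^V}) reads $\Phi^V \circ p_\Gamma \circ \widetilde{\Phi^X} = p_{\Gamma'}$; since $p_\Gamma$ is surjective (graphs have no isolated vertices), so is $p_\Gamma \circ \widetilde{\Phi^X} \colon X_{\Gamma'} \twoheadrightarrow V_\Gamma$, and $\Phi^V$ is thereby uniquely determined by the explicit formula $\Phi^V(v) = p_{\Gamma'}(\widetilde{\Phi^X}^{-1}(h))$ for any $h \in p_\Gamma^{-1}(v)$. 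The only non-routine ingredient is the appeal in the ``only if'' direction to the structure of $\db$, namely that invertibility forces the absence of chords; this follows from the length grading of morphisms of $\ub$ (and hence of $\db$) recalled in Proposition \ref{prop:properties_ub}.
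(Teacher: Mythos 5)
Your proof is correct and follows essentially the same route as the paper: the paper simply declares the "if and only if" part clear and gives the same determinacy argument you do (surjectivity of $p_\Gamma$ plus commutativity of the square forces $\Phi^V$ to be the map you write down). Your expansion of the "clear" step — using the length grading of $\db$ to see that invertibility forbids chords, and the analogous observation for surjections in $\fs$ — is a faithful unpacking of what the paper leaves implicit.
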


\begin{proof}
The first statement is clear. The fact that $\Phi^X$ is an isomorphism implies, in particular, that no edge is contracted. Since we have supposed that $p_\Gamma$ is surjective and, by the above, $\widetilde{\Phi^X}$ is a bijection, the diagram (\ref{diag:Phi^V}) shows that $\Phi^V$ is determined by $\widetilde{\Phi^X}$, whence the result.
\end{proof}

\begin{rem}
\label{rem:graph_automorphisms}
Consider a graph $\Gamma$ and the group of automorphisms $\aut (\Gamma)$ (preserving the leg structure). An automorphism of $\aut (X_\Gamma)$ defines an automorphism of $\Gamma$ if and only if the following conditions are both satisfied. 
\begin{enumerate}
\item 
It permutes the edges, corresponding to the commutativity in $\db$ of the diagram (\ref{diag:Phi^X}); if the graph $\Gamma$ has $t$ edges, the edge structure can be represented  by an element of $\fb (\mathbf{2t}, X_\Gamma \backslash L_\Gamma)$ (abusively writing $ X_\Gamma \backslash L_\Gamma$ for the set of half edges that are not legs). The automorphism of $X_\Gamma$ fixes $L_\Gamma$, hence reduces to an automorphism of $\aut (X_\Gamma \backslash L_\Gamma) \cong \sym_{2t}$; the edge condition implies that the automorphism must live in $\sym_2 \wr \sym_t \subset \sym_{2t}$. 
\item 
It is compatible with the vertex structure, as encoded by the commutativity of (\ref{diag:Phi^V}).
\end{enumerate}
 This identifies $\aut (\Gamma)$ as a subgroup of $\sym_2 \wr \sym_t \subset \aut (X_\Gamma\backslash L_\Gamma)$.
\end{rem}

\begin{exam}
\label{exam:contraction}
Let $\Gamma$ be a graph and suppose that $\Gamma$ has at least one edge, i.e., $L_\Gamma$ is not isomorphic to $X_\Gamma$. Then for a `chord' appearing in $f_\Gamma \in \db (X_\Gamma, L_\Gamma)$, define $X_{\Gamma'}$ to be the subset of $X_\Gamma$ obtained by omitting the corresponding pair of elements $\{h_1,h_2\}$ and $\Phi^X \in \db (X_\Gamma, X_{\Gamma'})$ to be the morphism defined by the inclusion.  Define $V_{\Gamma'}:= V_\Gamma/ _{h_1 \sim h_2}$ and $\Phi^V$ to be the canonical surjection. Take $p_{\Gamma'}$ to be the map $X_{\Gamma'} \rightarrow V_{\Gamma'}$ that makes (\ref{diag:Phi^V}) commute. 

Then, if $p_{\Gamma'}$ is surjective, there is a unique graph structure on $(X_{\Gamma'},V_{\Gamma'}, L_\Gamma)$ such that $\Phi^X$ and $\Phi^V$ induce a morphism of graphs $\Phi : \Gamma \rightarrow \Gamma'$. This defines the {\em contraction} of the edge $\{h_1, h_2\}$.

(If one relaxes the requirement that the graph has no isolated vertices, then the requirement that $p_{\Gamma'}$ be surjective can be dropped.)
\end{exam}

%%%%%%%%%%%%%%%%%%%%%%%%%%%%%%%%%%%%%%%%%%%%%%%%%%%%%%%%%%%ù
\subsection{Hairy graph complexes}
\label{subsect:hairy_graph_thm}

Fix finite sets $X, V, L$; we may take these to be in the skeleton of $\fin$, hence the triple can be written $(\mathbf{x}, \mathbf{v}, \mathbf{l})$ for natural numbers $x$, $v$, $l$. By Lemma \ref{lem:set_of_graphs}, a graph $\Gamma$ on these sets is given by an element  $(f_\Gamma,p_\Gamma)\in  \db (\mathbf{x}, \mathbf{l}) \times \fs (\mathbf{x}, \mathbf{v})$.

Now, given  $\cpd \in \ob \nuco$, we associate to  the map $p_\Gamma$ the vector space:
$
\bigotimes_{i\in \mathbf{v}} 
\cpd (p_\Gamma^{-1} (i))$.
 After summing over all possible $p_\Gamma \in \fs (\mathbf{x}, \mathbf{v})$, we have the identification
\[
\bigoplus_{p_\Gamma \in \fs (\mathbf{x}, \mathbf{v})} \bigotimes_{i\in \mathbf{v}} 
\cpd (p_\Gamma^{-1} (i))
\cong 
\cpd^{\odot v} (\mathbf{x}),
\]
by the definition of the convolution product $\odot$ (also using that $\cpd (\mathbf{0})=0$). 
 Then, summing over all graphs gives 
$ 
\kk \db (\mathbf{x}, \mathbf{l}) \otimes _\kk \cpd^{\odot v} (\mathbf{x})$. Letting $v$ vary, then yields 
\[
\kk \db (\mathbf{x}, \mathbf{l}) \otimes _\kk \cpd^{\odot \bullet} (\mathbf{x}).
\]

This is the first step towards the construction of the hairy graph complex. However, there is a missing ingredient: we have not introduced the appropriate orientation signs. For these, there are two possibilities: 
\begin{enumerate}
\item 
the even case: an orientation sign is associated to the order of the vertices and edges have an orientation sign associated to their direction; 
\item 
the odd case: an orientation sign is associated to the order of the edges.
\end{enumerate}

In each case, this requires replacing $\kk \db$ by the appropriate twisted variant; in the first case, one also introduces  the sign $\Lambda^v (\kk \mathbf{v})$. 

Having introduced these orientation signs, for each $v$, we pass to the quotient by the action of $\aut (\mathbf{v})$; this corresponds to forgetting the labelling of the vertices (whilst taking into account the orientation data). This yields respectively:
\begin{eqnarray*}
&&
(\kk \db)_{(-;+)} (\mathbf{x}, \mathbf{l}) \otimes _\kk \lfb^\bullet (\cpd) (\mathbf{x})
\\
&& 
(\kk \db)_{(+;-)} (\mathbf{x}, \mathbf{l}) \otimes _\kk   \sfb^\bullet (\cpd) (\mathbf{x}).
\end{eqnarray*}

Then we form the quotient by the action of the groupoid of isomorphisms between such graphs (by Lemma \ref{lem:iso_graphs}, the action on vertices is determined by $\aut(\mathbf{x})$);  this corresponds to forming the coinvariants for the action of $\aut (\mathbf{x})$. This gives respectively:
\begin{eqnarray*}
&&
(\kk \db)_{(-;+)} (\mathbf{x}, \mathbf{l}) \otimes _{\kk \aut(\mathbf{x})} \lfb^\bullet (\cpd) (\mathbf{x})
\\
&& 
(\kk \db)_{(+;-)} (\mathbf{x}, \mathbf{l}) \otimes _{\kk \aut(\mathbf{x})} \sfb^\bullet (\cpd) (\mathbf{x}).
\end{eqnarray*}

Now, as representations of $\aut (\mathbf{x})$, we have isomorphisms 
\begin{eqnarray*}
(\kk \db)_{(-;+)} (\mathbf{x}, \mathbf{l})
&\cong &(\kk \db)_{(-;+)} (\mathbf{x}, \mathbf{l})^\sharp
;
\\
(\kk \db)_{(+;-)} (\mathbf{x}, \mathbf{l}) 
& \cong &(\kk \db)_{(+;-)} (\mathbf{x}, \mathbf{l}) ^\sharp
\end{eqnarray*}
(adjusting variance as usual). Therefore, upon summing over $x \in \nat$, this gives 
\begin{eqnarray*}
(\kk \db)_{(-;+)} (-, \mathbf{l})^\sharp \otimes _{\kk \fb} \lfb^\bullet (\cpd)
\\
(\kk \db)_{(+;-)} (-, \mathbf{l})^\sharp \otimes _{\kk \fb} \sfb^\bullet (\cpd)
\end{eqnarray*}
respectively. These are the underlying objects of the respective hairy graph complexes; by inspection, these identify with those of the complexes (\ref{eqn:complex_K-}) and (\ref{eqn:complex_K+}).

Now, the respective hairy graph complex differentials are defined by edge contraction, using the composition in the cyclic operad. Tracing through the above constructions, one shows that these correspond to the respective Koszul complex differentials. 

\begin{rem}
\label{rem:CKV_hairy}
The above presentation of the hairy graph complexes can be compared with the complex of hairy $\mathscr{O}$-graphs introduced by Conant, Kassabov, and Vogtmann in \cite[Section 3]{MR3029423}, where the cyclic operad is denoted by $\mathscr{O}$. We underline the following differences: 
\begin{enumerate}
\item 
The authors work with graphs with no bivalent vertices; in our framework, this corresponds to replacing $\mathscr{C}$ by $\mathscr{C}_{\geq 3}$. 
\item 
The authors label the hairs by elements of a symplectic vector space $(W,\omega)$,  sometimes restricting to labels in the symplectic basis $\mathcal{B}$ (the notation  $W$ is used here rather than $V$ so as not to confuse with our notation for vertices). However, the authors point out that the symplectic form is irrelevant (see \cite[Section 3.4]{MR3029423}, for example). Hence, using the Schur correspondence, one can reduce to labelling legs {\em bijectively} by elements of a finite set, as we do. 
\item 
The authors only work with the even case, as exemplified by their orientation datum given by \cite[Definition 3.1]{MR3029423}. (Note that, our vertices are equivalent to the internal vertices of {\em loc. cit.}, similarly for the edges.)
\end{enumerate}

Conant, Kassabov, and Vogtmann define their hairy $\mathscr{O}$-graph complex 
$$
\mathcal{H}_W = \bigoplus_ k C_k \mathcal{H}_W,
$$
where $k$ corresponds to the number of vertices of the graph. The differential is described in \cite[Section 3.2]{MR3029423}, corresponding to the usual contraction differential. \end{rem}

The above discussion has provided a sketch proof of the following theorem, in which we have replaced (this is only a question of notation) $(\kk \db)_{(-;+)}^\sharp$ by $(\kk \ub)_{(-;+)}^\sharp$ and  $(\kk \db)_{(+;-)}^\sharp$ by $(\kk \ub)_{(+;-)}^\sharp$. 

\begin{thm}
\label{thm:hairy-graph_complexes}
For $\cpd \in \ob \nuco$,  the complexes
\begin{eqnarray*}
(\kk\ub)_{(-;+)}(\mathbf{l}, -) ^\sharp  \otimes_{(\kk\ub)_{(-;+)}} \mathscr{K}_- \otimes_{(\kk\db)_{(-;-)}} \lfb^* (\cpd_{\geq 3})
&\cong &
(\kk\ub)_{(-;+)}(\mathbf{l}, -) ^\sharp  \otimes_{\kk \fb} \lfb^* (\cpd_{\geq 3})
\\
(\kk \ub)_{(+;-)}(\mathbf{l}, -) ^\sharp \otimes _{(\kk \ub) _{(+;-)}} \mathscr{K}_+ \otimes_{\kk \db} \sfb^*(\cpd_{\geq 3})
&\cong &
(\kk \ub)_{(+;-)} (\mathbf{l}, -)^\sharp \otimes _{\kk \fb} \sfb^*(\cpd_{\geq 3}) 
\end{eqnarray*}
are the even and odd hairy graph complexes with legs labelled by $\mathbf{l}$.
\end{thm}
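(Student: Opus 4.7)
The plan is to identify, in each graded degree, the underlying $\kk$-module of the Koszul complex as the $\kk$-span of isomorphism classes of graphs (in the sense of Definition \ref{defn:graphs}) with legs labelled by $\mathbf{l}$, decorated at each vertex by the value of $\cpd_{\geq 3}$ on its set of incident half-edges, equipped with the correct orientation line; and then to check that the Koszul differential is edge contraction. The stated isomorphisms of the underlying graded modules are immediate from Lemma \ref{lem:kz-complexes_identify_underlying} and the general rule $M\otimes_\cala \cala(X,-)\cong M(X)$: the underlying object of $\mathscr{K}_-\otimes_{(\kk\db)_{(-;-)}}\lfb^*(\cpd_{\geq 3})$ is $(\kk\ub)_{(-;+)}\otimes_{\kk\fb}\lfb^*(\cpd_{\geq 3})$, and tensoring on the left with $(\kk\ub)_{(-;+)}(\mathbf{l},-)^\sharp$ over $(\kk\ub)_{(-;+)}$ produces $(\kk\ub)_{(-;+)}(\mathbf{l},-)^\sharp\otimes_{\kk\fb}\lfb^*(\cpd_{\geq 3})$; the odd case is analogous.

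The first substantive step is to analyse the graded pieces combinatorially. By Lemma \ref{lem:sfb_lfb} applied to $\cpd_{\geq 3}$ (which satisfies the hypothesis $\cpd_{\geq 3}(\mathbf{0})=0$), the value of $\lfb^n(\cpd_{\geq 3})(\mathbf{x})$ (respectively $\sfb^n(\cpd_{\geq 3})(\mathbf{x})$) decomposes as a direct sum over unordered decompositions $(U_i)_{i\in\mathbf{n}}$ of $\mathbf{x}$ into $n$ subsets of cardinality $\geq 3$ of the tensor products $\bigotimes_i\cpd_{\geq 3}(U_i)$; this is precisely the datum of a surjection $p_\Gamma\colon X_\Gamma\twoheadrightarrow V_\Gamma$ with $|V_\Gamma|=n$ (modulo relabelling of $V_\Gamma$) together with a vertex decoration. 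The factor $(\kk\ub)_{(\pm;\mp)}(\mathbf{l},\mathbf{x})^\sharp$ is isomorphic as a $\kk\sym_x$-module, up to the appropriate orientation twist, to $\kk\ub(\mathbf{l},\mathbf{x})$ via Lemma \ref{lem:basis_twisted_kub} and Remark \ref{rem:duality_permutation_modules}; and $\ub(\mathbf{l},\mathbf{x})$ is the set of pairs of an injection $\mathbf{l}\hookrightarrow\mathbf{x}$ with a perfect matching on the complement, which is exactly the datum $f_\Gamma\in\db(X_\Gamma,L_\Gamma)$ of Definition \ref{defn:graphs}. The tensor over $\kk\fb$ takes $\aut(\mathbf{x})$-coinvariants, identifying half-edge sets up to isomorphism and so realising the $n$th graded piece as the $\kk$-span of isomorphism classes of graphs $\Gamma$ with $|V_\Gamma|=n$ and $L_\Gamma=\mathbf{l}$, weighted by $\bigotimes_v\cpd_{\geq 3}(p_\Gamma^{-1}(v))$ and by the orientation line.

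The second step is to match the orientation data. In the even complex, $\lfb^n$ contributes $\sgn_n$ on permutations of vertices, while $\kk_{(-;+)}$ is $\sgn_{2t}$ restricted along $\sym_2\wr\sym_t\subset\sym_{2t}$; this restriction is $-1$ on each transposition inside a chord (reversal of an oriented edge, an odd transposition of $2t$ letters) and $+1$ on permutations of the $t$ chord-blocks as wholes (each such transposition is even in $\sym_{2t}$). The combined orientation datum is therefore an ordering of the vertices together with a direction on each edge, which is one of the standard forms of the \emph{even} graph orientation. In the odd complex, $\sfb^n$ is trivial on vertex permutations, while $\kk_{(+;-)}=\sgn_t$ pulled back along $\sym_2\wr\sym_t\twoheadrightarrow\sym_t$ gives the sign of permutations of the $t$ edges and no sign on edge reversal; this is the \emph{odd} orientation.

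The hardest step, and the main obstacle, will be to verify that the Koszul differential coincides with edge contraction, with matching signs. By Remark \ref{rem:graphical_K} and Corollary \ref{cor:coevaluation}, the Koszul differential on $\mathscr{K}_\pm\otimes_{(\kk\db)_{(\pm;\mp)}}(\lfb^*/\sfb^*)(\cpd_{\geq 3})$ is the sum over pairs of indices of the action of the corresponding degree-one morphism of $(\kk\db)_{(\pm;\mp)}$ on $\lfb^*(\cpd_{\geq 3})$ (resp.\ $\sfb^*(\cpd_{\geq 3})$); by Theorem \ref{thm:cpd_db_kdbm} this action is precisely the cyclic operad composition at the two vertices incident to the chosen pair (and vanishes when these lie on the same vertex). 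Graphically this is edge contraction along the edge joining those two vertices, and the $\cpd_{\geq 3}$ hypothesis is compatible, since contracting an edge between two vertices of valence $\geq 3$ yields a vertex of valence $\geq 4$. The obstacle is sign bookkeeping: after applying the duality $(-)^\sharp$ and invoking the transpose bimodule analysis of Section \ref{subsect:transpose}, one must check that the Koszul signs produced by the coevaluation of Corollary \ref{cor:coevaluation}, combined with the $\sgn_n$ (respectively $\sgn_t$) coming from $\lfb^n$ (resp.\ $(\kk\ub)_{(+;-)}$), agree with the sign prescription of the hairy graph differential under the correspondence of orientations established in the previous step.
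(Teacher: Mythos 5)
Your proposal is correct and follows essentially the same route as the paper's own (sketched) proof: both identify the graded pieces of the Koszul complex with spans of $\cpd_{\geq 3}$-decorated graphs via Lemma \ref{lem:sfb_lfb} and the chord-diagram description of $\ub(\mathbf{l},\mathbf{x})$, match the twists $\kk_{(-;+)}$ and $\kk_{(+;-)}$ (together with the $\sgn$ from $\lfb^*$ versus $\sfb^*$) with the even and odd orientation data in exactly the way you describe, and identify the differential with edge contraction through the $(\kk\db)_{(\mp;-)}$-module structure of Theorem \ref{thm:cpd_db_kdbm} and the transpose action on the dual $\ub$-factor. The final sign bookkeeping you flag as the remaining obstacle is likewise left as a ``tracing through'' in the paper, so your proposal is at the same level of completeness as the published argument.
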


\begin{rem}
\ 
\begin{enumerate}
\item 
The restriction to $\cpd_{\geq 3}$ is only imposed so as to give compatibility with the definition given in \cite{MR3029423}, which uses graphs with no bivariant vertices.
\item 
We have forgotten structure as compared to the Feynman transform, in the same way that considering $\sfb^*(\cpd)$ as a $\kk \db$-module (respectively $\lfb^* (\cpd)$ as a $(\kk\db)_{(-;-)}$-module) forgets that these structures are derived from $\cpd$ considered as an algebra of the downward Brauer properad.   
\item 
The complexes appearing in Theorem \ref{thm:hairy-graph_complexes} arise from  complexes of $(\kk \ub)_{(-;+)}$-modules and $(\kk \ub)_{(+;-)}$ respectively, encoding functoriality with respect to $\mathbf{l}$.  This may be compared to constructions of Conant, Kassabov, and Vogtmann \cite[Section 4]{MR3029423}, for example.
\end{enumerate}
\end{rem}

%\nocite{*}
%\bibliographystyle{alphaurl}
%\bibliography{graph.bib}

\end{document}